\newtheorem*{theorem*}{Theorem}
\newtheorem*{claim*}{Claim}
\newtheorem{theorem}{Theorem}[section]
\newtheorem{lemma}[theorem]{Lemma}
\newtheorem{conj}[theorem]{Conjecture}
\newtheorem{corollary}[theorem]{Corollary}
\newtheorem{remark}[theorem]{Remark}
\newtheorem{proposition}[theorem]{Proposition}
\def\i{\sqrt{-1}}
\def\del{\partial}
\def\dbar{\bar\partial}
\def\ddbar{\del\dbar}
\def\del{\partial}
\DeclareMathOperator{\Ent}{Ent}
\DeclareMathOperator{\PSH}{PSH}
\def\o{\omega}
\newcommand{\Ecp}{\mathcal{E}^p_{\omega}}
\newcommand{\setdef}{\; ; \; }
\title{Geodesic stability, the space of rays, and uniform convexity in Mabuchi geometry}
\author{Tam\'as Darvas and Chinh H. Lu\vspace{-0.1in}}
\date{}
\begin{document}

\maketitle

\begin{abstract} We establish the essentially optimal form of Donaldson's geodesic stability conjecture regarding existence of constant scalar curvature K\"ahler metrics. We carry this out by exploring in detail the metric geometry of  Mabuchi geodesic rays, and the uniform convexity properties of the space of K\"ahler metrics. 
\end{abstract}

\section{Introduction}

In this paper we prove the essentially optimal form of Donaldson's geodesic stability conjecture \cite{Do99}. The main result is obtained via a detailed analysis  of the rays associated to the space of K\"ahler metrics. 

Suppose $(X,\omega)$ is a compact K\"ahler manifold with $\dim X =n$. We  consider $\mathcal H$, the space of K\"ahler metrics cohomologous to $\omega$, with its $L^p$ type Mabuchi metric structures $(\mathcal H,d_p), \ p \geq 1$ \cite{Da15}. For simplicity, to describe our motivation, let us momentarily assume that $X$ has no non-trivial holomorphic vector fields. In the recent breakthrough papers \cite{ChCh1,ChCh2,ChCh3} Chen--Cheng provided the first existence theorems of constant scalar curvature K\"ahler (csck) metrics inside the class $\mathcal H$. Such metrics are minimizers of Mabuchi's K-energy functional $\mathcal K:\mathcal H \to \mathbb{R}$ \cite{Ma87}. Together with \cite{BDL16}, the Chen--Cheng results provided a full characterization of existence of csck metrics  in terms of $d_1$-properness of $\mathcal K$. As $d_1$-properness is actually equivalent with properness in terms of Aubin's $J$-functional \cite{Da15}, this also verified an old conjecture of Tian  \cite{Ti94},\cite[Conjecture 7.12]{Ti00}, with the precise statement appearing in \cite[Conjecture 2.8]{DR17}.  

Energy properness is the strongest form of stability. Contrasting this is uniform K-stability, one of the weakest such conditions. When the K\"ahler structure is induced by an ample line bundle, this criterion was first considered by Sz\'ekelyhidi \cite{Sz06}, and was further studied by Dervan, Berman--Boucksom--Jonsson, Boucksom--Hisamoto--Jonsson \cite{De16,BBJ15,BHJ16,BHJ17} and many others. The ultimate hope is that (uniform) K-stability is weak enough to be verified using computational techniques of algebraic geometry, this being the main motivation behind the  Yau--Tian--Donaldson (YTD) conjecture, seeking to show that some form of K-stability is equivalent with existence of csck metrics. 

In this paper we focus on Donaldson's geodesic stability conjecture \cite[Conjecture 12]{Do99},  lying between energy properness and uniform K-stability (see Conjecture \ref{conj: geod_stability} below). This conjecture  predicts that it is enough to check properness of the K-energy along the geodesic rays of $\mathcal H$ to insure existence of csck metrics. Initially, the predictions of Donaldson  advocated for the use of  smooth geodesic rays \cite{Do99}. As we know now, the typical regularity of geodesics is merely $C^{1,1}$ \cite{Ch00, Bl13, DL12, CTW17}, even when connecting smooth endpoints. Hence the present expectation is that (in its optimal form) Donaldson's geodesic stability conjecture should hold for rays that have at most two bounded derivatives. In Theorem \ref{thm: Linftygeod_stability_intr} we essentially verify this form of the conjecture.

To carry out our plan, we  first explore in depth the metric geometry of $L^p$ geodesic rays (i.e. rays running inside the $d_p$-completions of $\mathcal H$), a topic of independent interest. To do this, perhaps surprisingly, we need to understand uniform convexity of the $L^p$ Mabuchi geometry when $p>1$, extending work of Calabi--Chen in the particular case $p=2$ \cite{CC02}.  After exploring the metric space of $L^p$ geodesic rays, we show that such rays can always be approximated via rays  of $C^{1,\bar 1}$ potentials, \emph{with} converging radial K-energy. With slightly different formulation, the uniform $L^1$ geodesic stability conjecture was verified in \cite{ChCh2,ChCh3}, pointing out that it is enough to test energy properness along $L^1$ geodesic rays  to guarantee existence of csck metrics. This result, together with our approximation theorems just mentioned will yield the geodesic stability theorem for rays of $C^{1, \bar 1}$ potentials, i.e., potentials with bounded complex Hessian.

In addition to the above, our results resolve a number of related open questions in K\"ahler geometry, specified in the paragraphs below. Also, in the particular case when the K\"ahler structure is induced by an ample line bundle, our theorems also make connection with the variational program designed to attack the uniform YTD conjecture (see \cite{Bo18,ChCh3}). Roughly speaking, to verify the uniform YTD conjecture using our results, it is now enough to show that specific $C^{1, \bar 1}$ geodesic rays can be approximated by geodesic rays induced by the so called test configurations of algebraic geometry \cite{Ti97,Do02} (with converging radial K-energy). On the surface this sounds simpler than approximating $L^1$ metric geodesic rays \cite[p.2]{Bo18}, and time will tell what role our results will play in the solution of this problem.
\vspace{-0.3cm}

\paragraph{Uniform convexity and uniqueness of geodesic segments. } By $\mathcal H_{\omega}$ we denote the space of K\"ahler potentials associated to $\mathcal H$.  The metric completions of $(\mathcal H_{\omega},d_p)$ are $(\mathcal E^p_{\omega},d_p)$, and the latter spaces are complete geodesic metric spaces for any $p \geq 1$ \cite{Da15}. The distinguished $d_p$-geodesics running between the points of $\mathcal E^p_{\omega}$ are called $L^p$ \emph{finite energy geodesics} (or simply finite energy geodesics, or $L^p$ geodesics, if no confusion arises). These curves arise as limits of solutions to  degenerate equations of complex Monge--Amp\`ere type. We recall the basic properties of these spaces in Section  \ref{subsect: Lp finsler}. 

For any $p \in [1,\infty)$ it was shown in \cite[Theorem 1.5]{ChCh3} that the metrics $d_p$ are ``convex": if $[0,1] \ni t \to u_t,v_t \in \mathcal E^p$ are two finite energy geodesic segments then  
\begin{equation}\label{eq: ChCh3_metric_convex}
d_p(u_\lambda,v_\lambda) \leq (1-\lambda)d_p(u_0,v_0) + \lambda d_p(u_1,v_1), \ \lambda  \in [0,1].
\end{equation}
This property is called Buseman convexity in the metric geometry literature \cite[Section 2.2]{Jo97}, going back to \cite{Bu55}. In the particular case $p=1$, \eqref{eq: ChCh3_metric_convex} was established in \cite[Proposition 5.1]{BDL17}, having applications to the convergence of the weak Calabi flow. In case $p=2$, \eqref{eq: ChCh3_metric_convex} follows from the fact that $(\mathcal E^2_{\omega},d_2)$ is a complete CAT(0) metric space, as shown in \cite[Theorem 1]{Da17}, building on estimates of \cite[Theorem 1.1]{CC02}. 

The CAT(0) property consists of the following estimate: if $u \in \mathcal E^2_{\omega}$ and $[0,1] \ni t \to v_t \in \mathcal E^2_{\omega}$ is a finite energy geodesic segment then 
\begin{equation}\label{eq: CAT(0)}
d_2(u,v_\lambda )^2 \leq (1-\lambda)d_2(u,v_{0})^2 +\lambda d_2(u,v_1)^2 - \lambda(1-\lambda )d_2(v_0,v_1)^2, \ \lambda \in [0,1].
\end{equation}
As is well known, \eqref{eq: CAT(0)} implies \eqref{eq: ChCh3_metric_convex} \cite[Prop 2.3.2]{Jo97}. Unfortunately, there is very strong evidence that \eqref{eq: CAT(0)} can not hold for the  $d_p$ metrics when $p \neq 2$. Indeed, when restricting to a toric K\"ahler manifold and toric K\"ahler metrics, the spaces $(\mathcal E^p_\o,d_p)$ are isometric to the flat $L^p$ metric spaces of convex functions  defined on a  convex polytope of $\mathbb{R}^n$ \cite[Section 6]{DG16}. It is well known however that  CAT(0) Banach spaces are in fact Hilbert spaces \cite{BH99}, evidencing that only $(\mathcal E^2,d_2)$ can be CAT(0). 

Despite this, in the first main result of this paper we show that adequate generalizations of the CAT(0) inequality \eqref{eq: CAT(0)} do hold for the $d_p$ metrics, in case $p>1$. These can be viewed as the K\"ahler analogs of classical inequalities of Clarkson and Ball--Carlen--Lieb, regarding the uniform convexity of $L^p$ spaces \cite{Cl36,BCL94}. Consequently, the metric spaces $(\mathcal E^p_{\omega},d_p)$ are \emph{uniformly convex} for $p>1$, giving them extra structure that will be explored in the latter parts of the paper:

\begin{theorem}  \label{thm: uniform_convex_intr} Let $p \in (1,\infty)$. Suppose that $u \in \mathcal E^p_{\omega}, \ \lambda \in [0,1]$ and $[0,1] \ni t \to v_t \in \mathcal E^p_{\omega}$ is a finite energy geodesic segment. Then the following hold:\vspace{0.1cm}\\
(i) $d_p(u,v_\lambda )^2 \leq (1-\lambda)d_p(u,v_{0})^2 +\lambda d_p(u,v_1)^2 - (p-1)\lambda(1-\lambda )d_p(v_0,v_1)^2$, if $1 < p \leq 2.$\\
(ii) $d_p(u,v_\lambda)^p \leq (1-\lambda )d_p(u,v_{0})^p +\lambda d_p(u,v_1)^p - \lambda^{\frac{p}{2}} (1-\lambda)^{\frac{p}{2}}d_p(v_0,v_1)^p$, if $2 \leq  p.$
\end{theorem}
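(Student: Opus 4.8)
The plan is to reduce the two inequalities to their pointwise counterparts for the $L^p$ norm and then exploit the known structure of $L^p$ finite energy geodesics, namely that the $d_p$-distance is controlled by integrals of the geodesic derivative and that geodesics linearize the relevant functionals. First I would recall the Clarkson-type and Ball--Carlen--Lieb-type inequalities: for real numbers $a,b$ and $\lambda\in[0,1]$ one has, in the range $1<p\le 2$,
\begin{equation*}
\bigl|(1-\lambda)a+\lambda b\bigr|^2 \le (1-\lambda)|a|^2 + \lambda|b|^2 - (p-1)\lambda(1-\lambda)|a-b|^2,
\end{equation*}
while for $p\ge 2$,
\begin{equation*}
\bigl|(1-\lambda)a+\lambda b\bigr|^p \le (1-\lambda)|a|^p + \lambda|b|^p - \lambda^{p/2}(1-\lambda)^{p/2}|a-b|^p.
\end{equation*}
The first is the infinitesimal form of $2$-uniform convexity of $\RR$ with exponent $p-1$; the second follows from the elementary inequality underlying the $p$-uniform convexity of $L^p$. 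I would then pass from scalars to functions by integrating against a fixed measure and using Minkowski/H\"older to convert the integrated version into an inequality between $L^p$ norms, which is exactly the Ball--Carlen--Lieb statement.

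Next I would set up the geometric picture. For a finite energy geodesic $t\mapsto v_t$ and a fixed potential $u$, the idea is to compare the segment $v_\lambda$ with the ``linear interpolant'' obtained by running, for each relevant parameter, the geodesics from $u$ to $v_0$ and from $u$ to $v_1$; more precisely, I would use the fact (from Section \ref{subsect: Lp finsler} / \cite{Da15, ChCh3}) that $d_p(u,v_\lambda)$ is realized by a geodesic whose derivative has $L^p$-norm equal to $d_p(u,v_\lambda)$, that $d_p$ is nonincreasing along geodesics in a suitable sense, and that the Pythagorean-type formula / convexity \eqref{eq: ChCh3_metric_convex} already holds. Concretely, the scheme parallels Calabi--Chen's proof of \eqref{eq: CAT(0)} for $p=2$: one writes the ``comparison quadrilateral'' estimate in terms of the Monge--Amp\`ere energy, differentiates the relevant $d_p^p$ (or $d_p^2$) quantity in $\lambda$, and at the second-derivative level one sees precisely the pointwise Clarkson/BCL defect term $(p-1)$ or $\lambda^{p/2}(1-\lambda)^{p/2}$ appear after integrating the Hessian of the geodesic against $\omega_{v_\lambda}^n$. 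Because a $d_p$-geodesic has $\ddot v_t = 0$ in the appropriate weak (Monge--Amp\`ere) sense, the only surviving contribution is the pointwise convexity defect of $|{\cdot}|^p$ applied to the ``horizontal'' differences, and integrating this over $X$ and over $t$ yields exactly $-(p-1)\lambda(1-\lambda)d_p(v_0,v_1)^2$ or $-\lambda^{p/2}(1-\lambda)^{p/2}d_p(v_0,v_1)^p$.

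I would carry this out first on $\mathcal H_\omega$, i.e.\ for smooth geodesics with smooth endpoints, where all the differentiation under the integral sign is legitimate; then extend to $\mathcal E^p_\omega$ by the standard approximation: every finite energy geodesic is a decreasing (or monotone) limit of smooth $\epsilon$-geodesics connecting smooth potentials, and both sides of the inequalities are continuous under $d_p$-convergence of endpoints (using that $d_p$ is continuous and that the energy functionals are continuous along such approximations). The main obstacle, I expect, is not the algebra of the pointwise inequalities but the \emph{regularity bookkeeping}: genuine $L^p$ geodesics have only $C^{1,\bar1}$ regularity, so the ``$\ddot v=0$'' computation must be justified in the weak Monge--Amp\`ere sense, and one must ensure the comparison geodesics (from $u$ to $v_0$, $u$ to $v_1$) interact well enough with $v_\lambda$ to let the pointwise BCL inequality be applied $\omega_{v_\lambda}^n$-a.e.\ with the correct measure. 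Handling this cleanly — presumably by first proving a clean finite-dimensional / toric model inequality and then transferring it via the isometric embeddings and the $\epsilon$-geodesic approximation of \cite{Da15,ChCh3} — is where the real work lies; the split into the cases $1<p\le 2$ (quadratic form, defect linear in $p-1$) and $p\ge 2$ ($p$-th power form, defect $\lambda^{p/2}(1-\lambda)^{p/2}$) simply mirrors the two regimes of the classical Clarkson inequalities.
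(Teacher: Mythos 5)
Your proposal correctly identifies that the weighted Clarkson and Ball--Carlen--Lieb inequalities are the analytic engine, and that one should reduce to smooth endpoints and Chen's $\varepsilon$-geodesics. But the heart of your scheme — ``one writes the comparison quadrilateral estimate \ldots differentiates the relevant $d_p^p$ (or $d_p^2$) quantity in $\lambda$, and at the second-derivative level one sees precisely the pointwise Clarkson/BCL defect'' — is exactly the Calabi--Chen route that the paper explicitly discards, because \emph{the $d_p$-metric is not differentiable for $p\neq 2$}. The Finsler norm $\|\cdot\|_{p,u}$ fails to be smooth where the tangent vector vanishes or changes sign, so there is no clean first- or second-variation formula for $d_p$ (and a fortiori none for $d_p^2$ when $p\neq 2$, where you have a fractional outer power as well). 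Your sentence ``Because a $d_p$-geodesic has $\ddot v_t=0$ in the appropriate weak (Monge--Amp\`ere) sense'' compounds the problem: the geodesic equation is $(\ddot v_t - |\nabla\dot v_t|^2_{\omega_{v_t}})\omega_{v_t}^n=0$, not $\ddot v_t=0$, and this nonlinear right-hand side is precisely why the second variation of $d_p$ is not just the pointwise convexity defect of $|\cdot|^p$.

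The missing idea is a ``spread'' estimate that replaces differentiation altogether: for two finite energy geodesics $t\mapsto u_t,v_t$ with a common smooth starting point $u=u_0=v_0$, one has
$$\Big[\int_X |\dot u_0-\dot v_0|^p\,\omega_u^n\Big]^{1/p}\;\le\;\frac{d_p(u_t,v_t)}{t},\qquad t\in(0,l].$$
This is proved via Busemann convexity \eqref{eq: ChCh3_metric_convex}, the Pythagorean formula, and the rough double bound on $d_p$, without ever differentiating $d_p$. The actual proof of the theorem then considers the geodesics from $v_\lambda^\varepsilon$ to each of $u$, $v_0$, $v_1$ (not from $u$ to $v_0$ and $u$ to $v_1$ as you suggest), applies the spread estimate twice at the common vertex $v_\lambda^\varepsilon$ to bound $\int_X|\dot\alpha_1+\lambda\dot h_\lambda|^p\,\omega^n_{v_\lambda^\varepsilon}$ and $\int_X|\dot\alpha_1-(1-\lambda)\dot k_\lambda|^p\,\omega^n_{v_\lambda^\varepsilon}$ by $d_p(u,v_0)^p$ and $d_p(u,v_1)^p$, then feeds the pair $f=\dot\alpha_1+\lambda\dot v_\lambda$, $g=\dot\alpha_1-(1-\lambda)\dot v_\lambda$ (so $(1-\lambda)f+\lambda g=\dot\alpha_1$ and $f-g=\dot v_\lambda$) into the weighted Clarkson/BCL inequality against the single measure $\omega^n_{v_\lambda^\varepsilon}$, and finally uses that $d_p$ equals the Finsler $L^p$-norm of the initial velocity. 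Without the spread estimate there is no bridge between the pointwise/integral BCL inequality and the metric quantities, and the proposal does not close.
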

In the particular case $p=2$ this result recovers the inequalities of Calabi--Chen \cite{CC02}, however our proof of Theorem \ref{thm: uniform_convex_intr} is very different from the argument in \cite{CC02}, as the differentiation of $d_p$ metrics is problematic for $p \neq 2$. 

It was pointed out in the comments following \cite[Theorem 4.17]{Da15} that  $d_1$-geodesic segments connecting the different points of $(\mathcal E^1_{\omega},d_1)$ are not unique. However, as a consequence of the above result it follows that uniqueness of $d_p$-geodesic segments does hold in case $p>1$: 
\begin{theorem}\label{thm: unique_geod_intr} Let $p \in (1,\infty)$, and suppose that $[0,1] \ni t \to v_t \in \mathcal E^p_{\omega}$ is the $L^p$ finite energy geodesic connecting $v_0,v_1 \in \mathcal E^p_\o$. Then $t \to v_t$ is the only $d_p$-geodesic connecting $v_0,v_1$, i.e., $(\mathcal E^p_{\omega},d_p)$ is a uniquely geodesic metric space.\vspace{-0.3cm}
\end{theorem}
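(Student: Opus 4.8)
The plan is to derive uniqueness of $d_p$-geodesic segments directly from the uniform convexity inequalities of Theorem \ref{thm: uniform_convex_intr}, in exactly the way one extracts uniqueness of geodesics in a CAT(0) space (or in a uniformly convex Banach space) from the corresponding strict four-point inequality. The point is that both inequalities in Theorem \ref{thm: uniform_convex_intr} carry a \emph{strictly positive} correction term $-c(\lambda)\,d_p(v_0,v_1)^\bullet$ with $c(\lambda)>0$ for $\lambda\in(0,1)$, and this strictness is precisely what forces a geodesic to be pinned down.

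\medskip

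\textbf{Step 1: set up the candidate and a competitor.} Fix $v_0,v_1\in\mathcal E^p_\omega$ and let $[0,1]\ni t\mapsto v_t$ be the $L^p$ finite energy geodesic joining them, so $d_p(v_s,v_t)=|s-t|\,d_p(v_0,v_1)$ for all $s,t$. Suppose $[0,1]\ni t\mapsto w_t$ is \emph{any} $d_p$-geodesic with $w_0=v_0$, $w_1=v_1$; I want to show $w_t=v_t$ for all $t$. It suffices to treat a single parameter value $\lambda\in(0,1)$ and prove $w_\lambda=v_\lambda$, since one may then repeat the argument on the subsegments; by density of the dyadic rationals and continuity of both curves it is in fact enough to do this for $\lambda=\tfrac12$, and then iterate on each half.

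\medskip

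\textbf{Step 2: apply uniform convexity with $u$ a point on the competitor.} Take $u=w_{1/2}$ in Theorem \ref{thm: uniform_convex_intr}, applied to the finite energy geodesic $t\mapsto v_t$. In the case $1<p\le 2$, part (i) with $\lambda=\tfrac12$ gives
\begin{equation*}
d_p(w_{1/2},v_{1/2})^2 \le \tfrac12 d_p(w_{1/2},v_0)^2 + \tfrac12 d_p(w_{1/2},v_1)^2 - \tfrac{p-1}{4}\,d_p(v_0,v_1)^2.
\end{equation*}
Now $v_0=w_0$ and $v_1=w_1$, and since $t\mapsto w_t$ is a $d_p$-geodesic, $d_p(w_{1/2},w_0)=d_p(w_{1/2},w_1)=\tfrac12 d_p(v_0,v_1)$. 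Substituting, the right-hand side becomes
\begin{equation*}
\tfrac12\cdot\tfrac14 d_p(v_0,v_1)^2 + \tfrac12\cdot\tfrac14 d_p(v_0,v_1)^2 - \tfrac{p-1}{4} d_p(v_0,v_1)^2 = \tfrac14\bigl(1-(p-1)\bigr) d_p(v_0,v_1)^2.
\end{equation*}
When $p=2$ this is $0$, forcing $d_p(w_{1/2},v_{1/2})=0$, i.e. $w_{1/2}=v_{1/2}$. For $1<p<2$ the coefficient $1-(p-1)=2-p$ is positive, so this alone is not yet conclusive — this is the main obstacle and is addressed in Step 3. In the case $2\le p$, the analogous substitution into part (ii) with $\lambda=\tfrac12$ gives
\begin{equation*}
d_p(w_{1/2},v_{1/2})^p \le \tfrac12\Bigl(\tfrac12\Bigr)^p d_p(v_0,v_1)^p + \tfrac12\Bigl(\tfrac12\Bigr)^p d_p(v_0,v_1)^p - \Bigl(\tfrac12\Bigr)^p d_p(v_0,v_1)^p = 0,
\end{equation*}
so $w_{1/2}=v_{1/2}$ immediately, and iterating over dyadics plus continuity yields $w\equiv v$. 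Thus the case $p\ge 2$ is complete.

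\medskip

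\textbf{Step 3: bootstrapping the case $1<p<2$.} Here the single application above only shows that $w_{1/2}$ lies in a ball of radius $r_0:=\tfrac12\sqrt{2-p}\,d_p(v_0,v_1)$ around $v_{1/2}$, which shrinks the gap but does not close it. The remedy is to iterate the four-point inequality. Having located $w_{1/2}$ near $v_{1/2}$, apply Theorem \ref{thm: uniform_convex_intr}(i) again, now with $u=w_{1/4}$ and with the finite energy geodesic taken to be the restriction $t\mapsto v_t$ on $[0,\tfrac12]$ (reparametrized): using $d_p(w_{1/4},w_0)=\tfrac14 d_p(v_0,v_1)$ and the already-obtained bound on $d_p(w_{1/4},v_{1/2})$ in place of $d_p(w_{1/4},w_{1/2})$, one gets an improved bound on $d_p(w_{1/4},v_{1/4})$, and similarly on $d_p(w_{3/4},v_{3/4})$. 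Feeding these back into the midpoint inequality produces a contraction: one obtains a recursion $r_{k+1}\le \theta\, r_k + (\text{terms}\to 0)$ with a contraction factor $\theta=\theta(p)<1$, or more cleanly, iterating the bisection shows $\sup_{t}d_p(w_t,v_t)$ satisfies a self-improving estimate that can only hold if the supremum is $0$. Concretely, if $M:=\sup_{t\in[0,1]}d_p(w_t,v_t)$, then applying (i) at a general dyadic $\lambda$ with $u=w_\lambda$ and with $v_0,v_1$ replaced by $w_0,w_1$ controls $M$ by a fixed fraction of itself plus a vanishing remainder, forcing $M=0$. This strict-convexity-forces-uniqueness mechanism is standard in the theory of uniformly convex metric spaces (cf. \cite[Prop.~2.3.2]{Jo97} for the CAT(0) template), and the positivity of the correction term $(p-1)\lambda(1-\lambda)>0$ for all $\lambda\in(0,1)$ is exactly what makes it run.

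\medskip

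\textbf{Main obstacle.} The delicate point is \emph{not} the case $p\ge 2$, where the inequality (ii) is already sharp enough to conclude in one step, but the range $1<p<2$, where inequality (i) — being quadratic with a correction coefficient $p-1<1$ — does not immediately yield coincidence of the two geodesics at the midpoint. Handling this requires the iteration/bootstrap of Step 3, i.e. repeatedly exploiting the four-point inequality along a dyadic refinement until the distance between the two geodesics is squeezed to zero. One must take some care that the restricted curves $v|_{[0,1/2]}$, $v|_{[1/2,1]}$ are again finite energy geodesics (so that Theorem \ref{thm: uniform_convex_intr} applies to them), which is immediate from the defining Monge--Amp\`ere characterization and the parametrization of $d_p$-geodesics recalled in Section \ref{subsect: Lp finsler}.
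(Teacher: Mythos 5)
Your Step 2 observation for $p\ge 2$ is correct and rather elegant: applying Theorem \ref{thm: uniform_convex_intr}(ii) with $u=w_{1/2}$ and $\lambda=\tfrac12$ makes the right-hand side vanish exactly, because the Clarkson correction $(\tfrac12)^p d_p(v_0,v_1)^p$ exactly cancels the first two terms when $w_{1/2}$ is equidistant from the endpoints. That single application pins down the midpoint, and dyadic iteration with continuity finishes that range. This route is genuinely different from the paper's, which never compares $w_{1/2}$ to $v_{1/2}$ directly.

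However, Step 3 contains a genuine gap, and it is precisely the range $1<p<2$ you flag. The bootstrap you describe does not contract. The bound you get from (i) at any parameter $\lambda$ is $d_p(w_\lambda,v_\lambda)^2 \le \lambda(1-\lambda)(2-p)\,d_p(v_0,v_1)^2$. This is a \emph{fixed} bound, depending only on $\lambda$ and $p$; no previously obtained radius $r_k$ enters it, so there is no recursion $r_{k+1}\le\theta r_k$ to speak of. Worse, if you feed $r_0 := \tfrac{\sqrt{2-p}}{2}D$ (with $D := d_p(v_0,v_1)$) into the quarter-level estimate via $d_p(w_{1/4},v_{1/2})\le \tfrac14 D + r_0$, the resulting bound on $d_p(w_{1/4},v_{1/4})^2$ is $\tfrac{(2-p)D^2}{16}+\tfrac{Dr_0}{4}+\tfrac{r_0^2}{2} = \tfrac{3(2-p)D^2}{16}+\tfrac{\sqrt{2-p}\,D^2}{8}$, which is strictly \emph{larger} than the direct application of (i) at $\lambda=\tfrac14$ (which gives $\tfrac{3(2-p)D^2}{16}$). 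The iteration is expanding, not contracting, so the argument cannot close.

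The source of the loss is that you compare $w_{1/2}$ to $v_{1/2}$ directly; the four-point inequality only measures the ``sag'' of one curve, which for $p<2$ is not tight enough. The standard — and correct — mechanism, and the one the paper uses, is to pass through the midpoint of the chord joining the two competing midpoints. Concretely: for each $t$, let $h_t$ be the $d_p$-midpoint of the finite energy geodesic from $w_t$ to $v_t$ (this \emph{is} a finite energy geodesic, so Theorem \ref{thm: uniform_convex_intr} applies to it). Now apply the theorem with outside point $u=v_0$, $\lambda=\tfrac12$, and the geodesic being this transverse one. Since both $t\mapsto w_t$ and $t\mapsto v_t$ are $d_p$-geodesics from $v_0$ to $v_1$, one has $d_p(v_0,w_t)=d_p(v_0,v_t)=t\,d_p(v_0,v_1)$, and the correction term $-\tfrac{p-1}{4}d_p(w_t,v_t)^2$ (or $-(\tfrac12)^p d_p(w_t,v_t)^p$ for $p\ge 2$) is strictly negative whenever $w_t\ne v_t$. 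This yields $d_p(v_0,h_t)< t\,d_p(v_0,v_1)$, and similarly $d_p(v_1,h_t)<(1-t)\,d_p(v_0,v_1)$; adding these violates the triangle inequality. This closes the argument uniformly for all $p>1$, in one step, with no bootstrap and no case split.
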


\paragraph{The metric geometry of geodesic rays.} Next we explore the metric geometry of $\mathcal R^p_u$, the space of finite energy $L^p$ geodesic rays emanating from a fixed potential $u \in \mathcal E^p_{\omega}$. As a convention, given $p \in [1,\infty),$ a finite energy geodesic ray $[0,\infty) \ni t \to u_t \in \mathcal E^p_{\omega}$ with $u_0 =u$ will be simply denoted by $\{ u_t\}_t \in \mathcal R^p_u$.

In accordance with the metric space literature, two $d_p$-rays $[0,\infty) \ni t \to u_t,v_t \in \mathcal E^p_{\omega}$ are parallel/asymptotic if $d_p(u_t,v_t)$ is uniformly bounded for $t \geq 0$ \cite[Chapter II.8]{BH99}. To start, we point out in Proposition \ref{prop: parallel} that for any $v \in \mathcal E^p_{\omega}$ and $\{u_t\}_t \in  \mathcal{R}^p_u$ it is possible to find a unique $\{ v_t\}_t \in \mathcal R^p_v$ such that $\{u_t\}_t$ and $\{v_t\}_t$ are parallel. Consequently, the $d_p$-geometries  verify Euclid's 5th postulate for half-lines, answering an open question of Chen--Cheng \cite[Remark 1.6]{ChCh3}, who proved this for $p=1$ under restrictive conditions on the slope of the K-energy along $\{u_t\}_t$. Thus, we can introduce a natural parallelism operator $\mathcal P_{uv}:\mathcal R^p_u \to \mathcal R^p_v$ for any $u,v \in \mathcal E^p_{\omega}$. Moreover it is possible to introduce natural metric structures on $\mathcal R^p_u$ and $\mathcal R^p_v$ making this map an isometry:

\begin{theorem} Let $p \in [1,\infty)$. For any $u \in \mathcal E^p_{\omega}$, $(\mathcal R^p_u,d^c_{u,p})$ is a complete metric space. For any $v \in \mathcal E^p_{\omega}$ the parallelism operator $\mathcal P_{uv}: (\mathcal R^p_u,d^c_{u,p}) \to (\mathcal R^p_v,d^c_{v,p})$ is an isometry. 
\end{theorem}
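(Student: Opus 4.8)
The plan is to build the distance $d^c_{u,p}$ on $\mathcal R^p_u$ so that parallelism becomes a tautological isometry, and then extract completeness from the known completeness of $(\mathcal E^p_\omega,d_p)$. First I would define, for two rays $\{u_t\}_t,\{u'_t\}_t \in \mathcal R^p_u$, the quantity
\begin{equation*}
d^c_{u,p}(\{u_t\}_t,\{u'_t\}_t) := \lim_{t\to\infty}\frac{d_p(u_t,u'_t)}{t}.
\end{equation*}
The existence of this limit should follow from an approximate-monotonicity or convexity property of $t\mapsto d_p(u_t,u'_t)$: since both curves are $d_p$-geodesics emanating from the same point $u$, the function $t\mapsto d_p(u_t,u'_t)$ is convex by the Busemann convexity \eqref{eq: ChCh3_metric_convex} (applied after noting $d_p(u_0,u'_0)=0$), so $d_p(u_t,u'_t)/t$ is nondecreasing and one must only check it is bounded above — which is exactly where one invokes that the rays are genuine rays of finite speed, giving $d_p(u_t,u'_t)\le d_p(u_t,u)+d_p(u,u'_t) = t(\|\dot u\|+\|\dot u'\|)$, hence the limit exists and is finite. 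That $d^c_{u,p}$ is a genuine metric (symmetry is clear; the triangle inequality passes to the limit; nondegeneracy uses that two rays with sublinear distance must coincide, which in turn uses uniqueness of geodesics — Theorem \ref{thm: unique_geod_intr} — for $p>1$, and a separate argument, or the cited $p=1$ results, for $p=1$) is then routine.

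Next I would establish that $\mathcal P_{uv}$ is an isometry. The key point, already recorded in the cited Proposition \ref{prop: parallel}, is that $\mathcal P_{uv}\{u_t\}_t$ is the \emph{unique} ray from $v$ parallel to $\{u_t\}_t$, so $d_p(u_t,(\mathcal P_{uv}u)_t)$ is uniformly bounded in $t$. The strategy is then to compare, for two rays $\{u_t\}_t$ and $\{u'_t\}_t$ from $u$, the quantities $d_p(u_t,u'_t)$ and $d_p((\mathcal P_{uv}u)_t,(\mathcal P_{uv}u')_t)$: by the triangle inequality, these differ by at most $d_p(u_t,(\mathcal P_{uv}u)_t) + d_p(u'_t,(\mathcal P_{uv}u')_t)$, which is $O(1)$. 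Dividing by $t$ and letting $t\to\infty$, the $O(1)$ terms vanish, so the two asymptotic slopes agree, i.e. $d^c_{v,p}(\mathcal P_{uv}\{u_t\}_t,\mathcal P_{uv}\{u'_t\}_t) = d^c_{u,p}(\{u_t\}_t,\{u'_t\}_t)$. That $\mathcal P_{uv}$ is a bijection with inverse $\mathcal P_{vu}$ is immediate from uniqueness of parallel rays (both $\mathcal P_{vu}\mathcal P_{uv}\{u_t\}_t$ and $\{u_t\}_t$ are the unique ray from $u$ parallel to $\mathcal P_{uv}\{u_t\}_t$), so it is an isometry onto.

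Finally, for completeness of $(\mathcal R^p_u,d^c_{u,p})$: given a $d^c_{u,p}$-Cauchy sequence of rays $\{u^{(k)}_t\}_t$, I would fix $t$ and show $\{u^{(k)}_t\}_k$ is $d_p$-Cauchy in $\mathcal E^p_\omega$. Since $d_p(u^{(k)}_t,u^{(j)}_t)$ is convex in $t$, vanishes at $t=0$, and has asymptotic slope $d^c_{u,p}(\{u^{(k)}\},\{u^{(j)}\})\to 0$, one gets $d_p(u^{(k)}_t,u^{(j)}_t)\le t\, d^c_{u,p}(\{u^{(k)}\},\{u^{(j)}\})$, so for each fixed $t$ the sequence is $d_p$-Cauchy; by completeness of $(\mathcal E^p_\omega,d_p)$ it converges to some $u^\infty_t$. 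One then checks $t\mapsto u^\infty_t$ is a finite energy geodesic ray (it is a pointwise $d_p$-limit of geodesics, and the class of finite energy geodesics is $d_p$-closed by the stability of solutions to the relevant degenerate Monge--Amp\`ere equations, as recalled in Section \ref{subsect: Lp finsler}) emanating from $u$, hence an element of $\mathcal R^p_u$, and that $d^c_{u,p}(\{u^{(k)}\},\{u^\infty\})\to 0$. The main obstacle I anticipate is the last verification — that the pointwise limit ray is again a \emph{finite energy} geodesic ray with $u^\infty_0 = u$ and finite speed — which requires the compactness/closedness properties of $\mathcal R^p_u$ inside curves in $\mathcal E^p_\omega$; everything else reduces cleanly to convexity of $t\mapsto d_p(\cdot,\cdot)$ and the already-established uniqueness results.
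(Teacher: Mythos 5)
Your proposal follows essentially the same route as the paper: you define $d^c_{u,p}$ via the asymptotic slope, you observe that $t \mapsto d_p(u_t,u'_t)$ is convex (Busemann convexity \eqref{eq: ChCh3_metric_convex}) and hence $d_p(u_t,u'_t)/t$ is nondecreasing, you bound it by the triangle inequality, you prove the isometry by comparing $d_p(u_t,u'_t)$ with $d_p((\mathcal P_{uv}u)_t,(\mathcal P_{uv}u')_t)$ and noting the error is $O(1)$, and you obtain completeness by fixing $t$, extracting a $d_p$-Cauchy sequence from the time-$t$ slices, and invoking completeness of $(\mathcal E^p_\omega,d_p)$ together with endpoint stability of finite energy geodesics (this is exactly \cite[Proposition 4.3]{BDL17}, which the paper cites at the same point).

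The one place you take an unnecessary detour is the nondegeneracy of $d^c_{u,p}$. You propose to invoke uniqueness of geodesics (Theorem \ref{thm: unique_geod_intr}) for $p>1$ and a ``separate argument'' for $p=1$. This machinery is not needed: you have already established that $f(t) := d_p(u_t,u'_t)/t$ is nondecreasing on $(0,\infty)$, being the quotient by $t$ of a nonnegative convex function vanishing at the origin. If $d^c_{u,p}(\{u_t\},\{u'_t\}) = \lim_{t\to\infty} f(t) = 0$, then $f \equiv 0$, hence $u_t = u'_t$ for all $t$. This works uniformly for all $p \geq 1$ and is the argument the paper uses. Recognizing this also streamlines your presentation, since the same monotonicity observation carries both the existence of the limit and nondegeneracy.
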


In this result, the $d^c_{u,p}$ metric is called the \emph{chordal $L^p$ metric} between two rays, defined by the following expression:
\begin{equation}\label{eq: dpc_def}
d_{u,p}^c(\{u_t\}_t,\{v_t\}_t):= \lim_{t \to \infty} \frac{d_p(u_t,v_t)}{t}, \ \ \{ u_t\}_t \in \mathcal{R}^p_{u},\ \{ v_t\}_t \in \mathcal R^p_{u}.
\end{equation}
That this limit exists and is finite follows from \eqref{eq: ChCh3_metric_convex}. Though not necessarily treated as a metric in other works, \cite[Corollary 5.6]{ChCh3}, \cite[Formula 1.2]{Bo18} also consider the expression on the right hand side of \eqref{eq: dpc_def}, in the slightly restrictive case of unit speed geodesic rays, and non-Archimedean metrics respectively (see also  \cite[Lemma 3.1]{BDL16}). Moreover, one would think that the metrics of the graded filtrations defined in \cite[Section 3]{BJ18} should be related to the above concept as well.

It was pointed out recently that $L^1$ Mabuchi geometry can be defined for big classes as well \cite{DDL3}. Using this, it is possible to introduce the metric space of weak $L^1$ rays in the big context (see \cite{DDL5} where we embed singularity types into the space of $L^1$ rays).

By the last part of the above theorem, there is no new information gained by considering different starting points for rays, hence it makes sense to restrict attention to the space $(\mathcal R^p_{\omega},d^c_{p})$, representing the space of rays emanating from $0 \in \mathcal H_{\omega}$.
The above theorem points out that $d_{p}^c$ thus defined gives a complete metric on the space of \emph{all} $L^p$ rays emanating from a fixed starting point, that includes the constant ray. In our next main result we point out that the resulting metric spaces have rich geometry:

\begin{theorem}\label{thm: R_p_complete_geod_K_convex_intr} $(\mathcal R^p_{\omega},d^c_{p})$ is a geodesic metric space for any $p \in [1,\infty)$. Additionally, the radial K-energy is convex along $d^c_p$-geodesic segments.
\end{theorem}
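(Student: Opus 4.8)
The plan is to construct geodesics in $(\mathcal R^p_\omega, d^c_p)$ by a fiberwise interpolation of rays, and then to read off the metric and K-energy statements. Let $r^0 = \{u^0_t\}_t$ and $r^1 = \{u^1_t\}_t$ be two elements of $\mathcal R^p_\omega$, so that $u^0_0 = u^1_0 = 0$. For each $t \geq 0$, let $[0,1] \ni s \mapsto u^s_t \in \mathcal E^p_\omega$ be the finite energy geodesic joining $u^0_t$ and $u^1_t$; this is unambiguous for $p > 1$ by Theorem \ref{thm: unique_geod_intr}, and for $p = 1$ we take the canonical (maximal) finite energy geodesic. Since the geodesic from $0$ to $0$ is constant, $u^s_0 = 0$ for all $s$, so each curve $r^s := \{u^s_t\}_t$ is a candidate element of $\mathcal R^p_\omega$. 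The two facts to be proved are: \emph{(a)} for every $s \in [0,1]$ the curve $t \mapsto u^s_t$ is a finite energy geodesic ray, so that $r^s \in \mathcal R^p_\omega$; and \emph{(b)} the map $[0,1] \ni s \mapsto r^s$ is a $d^c_p$-geodesic.

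Step \emph{(a)} is the crux, and I expect it to be the main obstacle. Since a finite energy geodesic ray is exactly a curve whose restriction to every bounded interval is a finite energy geodesic, it suffices to show that for each $R > 0$ and each $s$ the restriction $[0,R] \ni t \mapsto u^s_t$ coincides with the finite energy geodesic joining $0$ and $u^s_R$. The plan is to regard the two-parameter family $(x,t,s) \mapsto u^s_t(x)$ as a single $S^1 \times S^1$-invariant function $\mathbf U$ on $X \times A_R \times A_1$, where the two annuli encode the parameters $t \in [0,R]$ and $s \in [0,1]$, and to prove that $\mathbf U$ is $\omega$-plurisubharmonic and is moreover the maximal such function, i.e. it solves the homogeneous complex \MA equation with boundary data: the constant $0$ on the $t = 0$ face, the rays $r^0, r^1$ on the two $s$-faces, and the finite energy geodesic $s \mapsto u^s_R$ on the $t = R$ face. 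A comparison (maximum principle) argument --- crucially exploiting that $r^0$ and $r^1$, which appear as boundary data on two faces, themselves solve the homogeneous \MA equation in the variable $t$ --- should then identify the restriction of $\mathbf U$ to each slice $\{s = s_0\}$ as again a maximal solution, namely the finite energy geodesic from $0$ to $u^{s_0}_R$, as required. An alternative route is to first treat $C^{1,\bar 1}$ rays, where the regularity makes the joint plurisubharmonicity of the fiberwise geodesic interpolation more transparent, and then pass to arbitrary $L^p$ rays via the $C^{1,\bar 1}$-approximation theorems established earlier in the paper, checking that both the interpolation and $d^c_p$ behave continuously under such approximation.

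Granting \emph{(a)}, step \emph{(b)} is immediate: for fixed $t$ the curve $s \mapsto u^s_t$ is a $d_p$-geodesic, so $d_p(u^{s_1}_t, u^{s_2}_t) = |s_1 - s_2|\, d_p(u^0_t, u^1_t)$; dividing by $t$ and letting $t \to \infty$ (the limits exist by \eqref{eq: ChCh3_metric_convex}) yields $d^c_p(r^{s_1}, r^{s_2}) = |s_1 - s_2|\, d^c_p(r^0, r^1)$, so $s \mapsto r^s$ is a $d^c_p$-geodesic and $(\mathcal R^p_\omega, d^c_p)$ is a geodesic metric space. The same dividing-and-limiting trick, applied to the inequalities of Theorem \ref{thm: uniform_convex_intr} with $u$ replaced by $w_t$ for a third ray $\{w_t\}_t$ and $v_s$ replaced by $u^s_t$, shows in addition that for $p > 1$ the space $(\mathcal R^p_\omega, d^c_p)$ satisfies the same uniform convexity inequalities, hence is uniquely geodesic by the reasoning of Theorem \ref{thm: unique_geod_intr}.

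For the convexity of the radial K-energy, normalize $\mathcal K(0) = 0$. Since the K-energy is convex along finite energy geodesics \cite{BDL17}, for each fixed $t > 0$ the function $s \mapsto \mathcal K(u^s_t)$ is convex on $[0,1]$. Along each ray $r^s$ the function $t \mapsto \mathcal K(u^s_t)$ is convex with value $0$ at $t = 0$, so $t \mapsto \mathcal K(u^s_t)/t$ is non-decreasing and the radial K-energy of $r^s$ equals $\lim_{t\to\infty} \mathcal K(u^s_t)/t = \sup_{t>0} \mathcal K(u^s_t)/t$. Being a supremum over $t$ of the convex functions $s \mapsto \mathcal K(u^s_t)/t$, this is a convex function of $s$ along the geodesic constructed above. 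For $p > 1$, by the uniqueness just noted, this covers every $d^c_p$-geodesic segment; for $p = 1$, where geodesic segments between rays need not be unique, the conclusion holds along the geodesics built here, and treating arbitrary $d^c_1$-geodesics would require an extra ingredient, such as a limiting argument as $p \downarrow 1$ or the description of rays via their Legendre transforms.
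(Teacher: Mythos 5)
Your proposal hinges on step \emph{(a)}: that for each $s\in[0,1]$ the fiberwise geodesic interpolation $t\mapsto u^s_t$ (where $u^s_t$ is the point at parameter $s$ on the finite energy geodesic from $u^0_t$ to $u^1_t$) is itself a geodesic ray. This is the genuine gap, and it is not a technical obstacle to be overcome --- the claim is simply false in curved spaces. In any space of nonpositive curvature that is not flat, the vertex-to-opposite-midpoint geodesic and the curve of fiberwise midpoints disagree. A concrete model: in the hyperbolic plane with $0=(0,1)$, $A=(-1,1)$, $B=(1,1)$, the midpoint of $A,B$ is $M=(0,\sqrt2)$, the midpoint of $0M$ is $(0,2^{1/4})$, while the midpoint of the two midpoints $M_{0A}=(-\tfrac12,\tfrac{\sqrt5}2)$ and $M_{0B}=(\tfrac12,\tfrac{\sqrt5}2)$ is $(0,\tfrac{\sqrt6}2)$, a different point. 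Translated into your setting, if $r^0,r^1$ are two unit-speed rays and $r^{1/2}$ were the fiberwise-midpoint ray, then by the geodesic property one would need $u^{1/2}_{t/2}$ to coincide with the midpoint of the finite energy geodesic from $0$ to $u^{1/2}_t$; but $u^{1/2}_{t/2}$ is by definition the midpoint of $u^0_{t/2}$ and $u^1_{t/2}$, and these two midpoints disagree whenever the triangle $(0,u^0_t,u^1_t)$ is not flat. The proposed two-parameter plurisubharmonicity argument does not rescue this: a function that is separately $\pi^*\omega$-psh in the $t$-strip and in the $s$-strip need not be jointly $\pi^*\omega$-psh on the product (the analogue of separate vs.\ joint convexity), and even if joint maximality were somehow arranged, the restriction of a maximal psh function on a product to a slice $\{s=s_0\}$ need not be maximal on that slice --- the $(n+2)$-dimensional MA equation vanishing is much weaker than the $(n+1)$-dimensional one vanishing fiberwise.

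The paper's construction of $d^c_p$-chords is specifically engineered to circumvent exactly this. It also starts from the fiberwise geodesic $\alpha\mapsto h_{t,\alpha}$ joining $u_t$ to $v_t$, but does \emph{not} use $h_{t,\alpha}$ directly: instead it introduces the rescaled points $\frac{l}{t}h_{t,\alpha}$ (the point at parameter $l/t$ on the finite energy geodesic from $0$ to $h_{t,\alpha}$) and defines $w_{l,\alpha}:=\lim_{t\to\infty}\frac{l}{t}h_{t,\alpha}$. The whole point of the uniform convexity results of Section 3 (Proposition \ref{prop: endpoint_distance_stability} in particular) is to show this sequence is $d_p$-Cauchy; if $t\mapsto h_{t,\alpha}$ were already a ray, $\frac{l}{t}h_{t,\alpha}$ would be independent of $t$ for $t\geq l$ and that entire limiting argument would be vacuous. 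Endpoint stability of finite energy geodesics then gives that $\{w_{t,\alpha}\}_t$ is a ray, and the chord is $\alpha\mapsto\{w_{t,\alpha}\}_t$, which in general differs from your fiberwise interpolation. The $p=1$ case is then reached by decreasing approximation (Theorem \ref{thm: approximation of d1 rays}) together with the fact that $d^c_p$-chords are automatically $d^c_{p'}$-chords for $p'\leq p$ (Proposition \ref{prop: geod_sharing}), not by a direct construction. Your step \emph{(b)} and the K-energy convexity argument would be fine \emph{if} step \emph{(a)} held, and indeed the paper's proof of the K-energy convexity is close to what you wrote, but applied to $\{w_{t,\alpha}\}_t$ via lower semicontinuity of $\mathcal K$ under the $d_p$-limit $\frac{s}{t}h_{t,\alpha}\to w_{s,\alpha}$ rather than directly to the fiberwise interpolation.
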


The \emph{radial K-energy} is defined for any $\{u_t\}_t \in \mathcal R^p_{\omega}$,  and is given by the expression
$$\mathcal K\{u_t\} := \lim_{t\to \infty}\frac{\mathcal K(u_t)}{t},$$ 
where $\mathcal K: \mathcal E^p_{\omega} \to (-\infty,\infty]$ is the extended K-energy of Mabuchi from \cite{BBEGZ11,BDL17}. The radial K-energy is $d_p^c$-lsc, possibly equal to $\infty$, and in the setting of unit speed geodesics, its definition agrees with the $\yen$ invariant of \cite{ChCh3}. Also, there is clear parallel with the non-Archimedean K-energy (see \cite{Bo18} and references therein). 

This theorem represents the radial version of \cite[Theorem 2]{Da15} and \cite[Theorem 1.2]{BDL17} (building on \cite{BrBn17}). 
In slight contrast with previous speculations in the literature (see for example \cite{BHJ17} or \cite[Definition 1.8]{ChCh3})  it seems more natural to consider the space of \emph{all} $d_p$-rays, not just the  ones that have $d_p$-unit speed. Allowing for a bigger class of rays makes possible the construction of $d^c_p$-geodesic segments running between any two points of $\mathcal R^p_\o$, with good convexity properties. Moreover, the convexity of the radial K-energy on $\mathcal R^p_\o$ could potentially be used to set up the study of optimal degenerations as a convex optimization problem (see \cite{DS16}).

The $d^c_p$-geodesic segments constructed in the proof of the above theorem are called $d^c_p$-\emph{chords}, as they are reminiscent of the classical chords in the chordal geometry of the unit sphere of $\mathbb{R}^n$ (at least when restricting to $d_p$-unit speed rays). In case $p>1$, due to uniform convexity (Theorem \ref{thm: uniform_convex_intr}),  we will construct the $d^c_p$--chords  directly. In case $p=1$, in the absence of uniform convexity, the construction of $d^c_1$-chords is done using an approximation procedure, via our next main theorem.

We have $\mathcal R^p_\o \subset \mathcal R^{p'}_\o$ for any $p' \leq p$. More importantly, by the proof of Theorem \ref{thm: R_p_complete_geod_K_convex_intr}, $d^c_p$-chords are automatically $d^c_{p'}$-chords as well, giving further evidence that it is more advantageous to consider the space of all rays, not just the ones with $d_p$-unit speed. This latter fact again represents the radial version of a well known phenomenon for the family of metric spaces $(\mathcal E^p_\o,d_p), \ p \geq 1$, according to which geodesics are ``shared" when comparing different classes. Though the space of $d_p$-unit speed rays seems to exhibit a metric structure reminiscent of the Tits geometry attached to CAT(0) spaces \cite{BH99}, none of the above  properties hold for these structures.

Next we turn to approximation. The collection of geodesic rays $\{u_t\}_t \in \mathcal R^1_{\omega}$ with $u_t \in L^\infty, \ t \geq 0$ will be denoted by $\mathcal R^\infty_{\omega}$, and will be referred to as the set of \emph{geodesic rays with bounded potentials}. In addition to having bounded potentials, the rays of $\mathcal R^\infty_\o$ are actually $t$-Lipschitz, and they solve the geodesic equation of $L^p$ Mabuchi geometry in the weak Bedford--Taylor sense, as opposed to the rays of $\mathcal R^p_\o, \ p \in [1,\infty)$, that are only limits of solutions to such equations (See Section 2.1).  

By $\mathcal H^{1, \bar 1}_\o$ we will denote the set of potentials in $\textup{PSH}(X,\omega)$ whose Laplacian (or whose complex Hessian) is bounded. Analogously, the collection of geodesic rays $\{u_t\}_t \in \mathcal R^1_{\omega}$ with $u_t \in \mathcal H^{1, \bar 1}_\o, \ t \geq 0$ will be denoted by $\mathcal R^{1, \bar 1}_{\omega}$, and will be referred to as the set of \emph{geodesic rays with $C^{1, \bar 1}$ potentials}. The space of rays with bounded Hessian, denoted by $\mathcal R^{1,1}_\omega$, is defined similarly.

The next result points out that $\mathcal R^\infty_\omega$ is $d_p^c$-dense in $\mathcal R^p_{\omega}$ for any $p \in [1,\infty)$. Also, we show that $\mathcal R^{1, \bar 1}_\o$ dense among rays with finite radial K-energy. In both cases one can approximate with converging radial K-energy:

\begin{theorem}\label{thm: approximation_dp_intr}
Let $\{ u_t\}_t \in \mathcal R^p_{\omega}$ with $p \in [1,\infty)$. The following hold:\\
(i) There exists a sequence $\{ u^j_t\}_t \in \mathcal R^\infty_{\omega}$ such that $u^j_t \searrow u_t, \ t \geq 0$,  $d_p^c(\{u_t^j\}_t,\{u_t\}_t) \to 0$ and $\mathcal{K}\{u_t^j\} \to  \mathcal{K}\{u_t\}$. \\
(ii) If $\mathcal K\{u_t\}<\infty$, then there exists a sequence $\{ v^j_t\}_t \in \mathcal R^{1, \bar 1}_{\omega}$ such that $v^j_t \searrow  u_t, \ t \geq 0$,  $d_p^c(\{v_t^j\}_t,\{u_t\}_t) \to 0$ and $ \mathcal{K}\{v_t^j\} \to  \mathcal{K}\{u_t\}$.
\end{theorem}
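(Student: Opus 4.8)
I would deduce (ii) from (i) together with a second, finer approximation, so the heart of the matter is (i). For (i) the plan is to move to the Legendre-transform side of the picture: a finite energy $L^p$ geodesic ray $\{u_t\}_t\in\mathcal R^p_\omega$ is encoded by its maximal test curve, a decreasing, usc family $\mathbb R\ni\tau\mapsto\hat u_\tau\in\PSH(X,\omega)\cup\{-\infty\}$ with $\hat u_{-\infty}=u_0$, linked to the ray through $u_t=\sup_{\tau}(\hat u_\tau+t\tau)$ and $\hat u_\tau=\inf_{t\ge0}(u_t-t\tau)$ (the Ross--Witt Nyström correspondence, in the $\mathcal E^p$ formulation, cf. \cite{DDL5}). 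Truncating the test curve will produce first bounded, and then $C^{1,\bar1}$, approximating rays.

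\textbf{Proof of (i).} For $j\in\mathbb N$ put $\hat u^j_\tau:=\max(\hat u_\tau,-j)$ for $\tau$ below the maximal slope of $\{u_t\}$ and $\hat u^j_\tau:=-\infty$ above it, pass to the associated maximal test curve, and let $\{u^j_t\}_t$ be the ray it determines. Each $\hat u^j_\tau$ is a \emph{bounded} $\omega$-psh potential supported on a half-line, so $u^j_t=\sup_\tau(\hat u^j_\tau+t\tau)$ is a $t$-Lipschitz bounded potential, i.e. $\{u^j_t\}_t\in\mathcal R^\infty_\omega$; and $\hat u^j_\tau\searrow\hat u_\tau$ gives $u^j_t\searrow u_t$ for every $t\ge0$. (If $\{u_t\}$ has infinite maximal slope one first reduces to the finite-slope case by a preliminary support truncation; making this reduction compatible with the monotone-decreasing requirement is one point that needs care.) The chordal convergence $d^c_p(\{u^j_t\}_t,\{u_t\}_t)\to0$ then follows from $u^j_t\searrow u_t$ by combining the $d_p$-continuity of decreasing nets in $\mathcal E^p_\omega$ with the convexity \eqref{eq: ChCh3_metric_convex} (which upgrades the fixed-time estimates to the limit defining $d^c_p$); alternatively one invokes the Legendre-side expression for $d^c_p$ and dominated convergence. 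Finally $\mathcal K\{u^j_t\}\to\mathcal K\{u_t\}$: the inequality $\liminf_j\mathcal K\{u^j_t\}\ge\mathcal K\{u_t\}$ is immediate from $d^c_p$-convergence and lower semicontinuity of the radial K-energy, while for the reverse one writes $\mathcal K\{u_t\}$ as ``radial entropy plus radial pluripotential/Ricci energy'' of the test curve $\hat u_\tau$; the energy terms pass to the limit along $\hat u^j_\tau\searrow\hat u_\tau$ by monotone/dominated convergence, and the entropy term is kept in check by first reducing to the case $\mathcal K\{u_t\}<\infty$ (otherwise there is nothing to prove beyond the lsc bound) and exploiting that finiteness.

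\textbf{Proof of (ii).} Assume $\mathcal K\{u_t\}<\infty$ and apply (i) to obtain $\{u^j_t\}_t\in\mathcal R^\infty_\omega$ with $u^j_t\searrow u_t$, $d^c_p(\{u^j_t\}_t,\{u_t\}_t)\to0$ and $\mathcal K\{u^j_t\}\to\mathcal K\{u_t\}$; in particular $\mathcal K\{u^j_t\}<\infty$ for $j$ large. It remains to approximate each bounded ray $\{u^j_t\}$ by rays with $C^{1,\bar1}$ potentials, with the same three convergences, and then diagonalize. For fixed large $j$, regularize the bounded test curve $\hat u^j_\tau$ — e.g. replace it by decreasing envelopes such as $P_\omega\big[\max(\hat u^j_\tau,\psi_k)\big]$ with $\psi_k\searrow\min_\tau\hat u^j_\tau$ a decreasing sequence of smooth potentials, or by a joint Richberg/Demailly-type regularization in the $(x,\tau)$ variables — to get bounded test curves $\hat u^{j,k}_\tau\searrow\hat u^j_\tau$ with $\hat u^{j,k}_\tau\in\mathcal H^{1,\bar1}_\omega$ for all $\tau$. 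By the $C^{1,\bar1}$-regularity of geodesic segments with endpoints in $\mathcal H^{1,\bar1}_\omega$ (\cite{Ch00,Bl13,CTW17}), the rays $\{u^{j,k}_t\}_t$ they determine lie in $\mathcal R^{1,\bar1}_\omega$, satisfy $u^{j,k}_t\searrow u^j_t$, $d^c_p(\{u^{j,k}_t\}_t,\{u^j_t\}_t)\to0$, and — again via the test-curve representation of the radial K-energy, now using $\mathcal K\{u^j_t\}<\infty$ — $\mathcal K\{u^{j,k}_t\}\to\mathcal K\{u^j_t\}$. A diagonal extraction $\{v^j_t\}_t:=\{u^{j,k(j)}_t\}_t$ then yields the desired sequence in $\mathcal R^{1,\bar1}_\omega$.

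\textbf{The main obstacle.} The delicate point throughout is the \emph{upper} bound $\limsup_j\mathcal K\{u^j_t\}\le\mathcal K\{u_t\}$ in (i), and its analogue in (ii). Since the K-energy is only lower semicontinuous and its entropy part is not monotone under decreasing limits, one cannot argue at the level of the rays themselves; the way out is precisely to pass to the Legendre transform, where the radial K-energy splits into a monotone pluripotential part and a radial entropy part, and to use the standing finiteness hypothesis to keep the entropy part under control as the truncation is relaxed. Constructing a single truncation (resp. regularization) that is simultaneously monotone decreasing in the parameter, above the original potential, bounded (resp. $C^{1,\bar1}$), and non-inflating for the radial entropy is the crux of the argument.
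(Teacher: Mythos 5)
Your proposal correctly identifies the crux — getting the \emph{upper} bound $\limsup_j\mathcal K\{u^j_t\}\le\mathcal K\{u_t\}$ — but the mechanism you propose for it does not exist, and this is a genuine gap. Truncating the Legendre test curve (either as $\max(\hat u_\tau,-j)$ or, as the paper does in Theorem \ref{thm: approximation of d1 rays}, by cutting it off below a finite slope) does produce a decreasing family of bounded rays with $d^c_p$-convergence; that much is right and is essentially the content of Theorem \ref{thm: approximation of d1 rays}. But there is no ``radial entropy plus radial pluripotential energy of the test curve'' decomposition under which the entropy term ``passes to the limit by monotone/dominated convergence''. The entropy $t\mapsto\Ent(\omega^n,\omega_{u_t}^n)$ is not monotone under decreasing limits (you say so yourself at the end), and passing to the Legendre side does not change this: the complex Hessian of $u_t$, hence the density of $\omega_{u_t}^n$, is a highly nonlinear functional of the family $\tau\mapsto\hat u_\tau$, and the truncated test curve can have wildly different Monge--Amp\`ere densities from the original. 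Nothing in your outline actually bounds $\Ent(\omega^n,\omega_{u^j_t}^n)/t$ above in terms of $\Ent(\omega^n,\omega_{u_t}^n)/t$.

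The paper circumvents exactly this obstacle by \emph{not} truncating the ray or the test curve when $\mathcal K\{u_t\}<\infty$. Instead (proof of Theorem \ref{thm: K_approx}) it solves new Monge--Amp\`ere equations \eqref{eq: varphi_l_def} with densities of the form $\big(1-2^{-j}\big)\mathbbm 1_{\{u_l>-jl\}}\omega_{u_l}^n+a_{j,l}\omega^n$, so that the entropy of $\varphi^j_l$ is controlled by construction (Claim 1), and then invokes the relative Ko{\l}odziej estimate (Theorem \ref{thm: uniform_estimate_Kolodziej}, from \cite{DDL4}) to get the crucial uniform-in-$l$ $L^\infty$ bound $\varphi^j_l\ge\max(u_l,-jl)-C_j$ and monotonicity in $j$. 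Nothing of this flavour appears in your outline, and it is the single indispensable input. Likewise for (ii): ``Richberg/Demailly regularization plus Chen's $C^{1,\bar1}$ theorem'' gives $C^{1,\bar1}$ segments, but to get a genuine $C^{1,\bar1}$ \emph{ray} you must show that the Laplacian bound at time $t$ grows at most linearly in $t$, which is precisely what the convexity statement in Theorem \ref{thm: He_C1bar1_est} (after He \cite{He15}) supplies, together with the entropy-nonincreasing smoothing by the weak Monge--Amp\`ere flow from Theorem \ref{thm: bounded_ray_pot_approx} (after \cite{GZ17,DiLu17}). Your proposal omits both the scalable $C^{1,\bar1}$ estimate and the flow-based, entropy-controlled smoothing, so the K-energy convergence in (ii) would again be unsupported. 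In short: the reduction of (ii) to (i) and the overall architecture are sound, but the two hard quantitative ingredients — the relative Ko{\l}odziej estimate for (i) and the scalable $C^{1,\bar1}$/flow estimates for (ii) — are missing, and no Legendre-side argument can replace them.
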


It remains to be seen if the condition $\mathcal K\{u_t\} < \infty$ can be omitted in (ii). This theorem can be seen as a radial analog of \cite[Theorem 1.3]{BDL17}, perhaps also making progress on the variational program designed to attack the uniform YTD conjecture (see step (4) in \cite[p. 2]{Bo18}, c.f. \cite[Conjecture 2.5]{BJ18}). Time will tell exactly how our results will fit into this program, but now it is enough to show that some $C^{1,\bar 1}$ rays can be approximated by rays induced by test configurations  (with converging K-energy) to prove the uniform YTD conjecture.

As a first step in obtaining Theorem \ref{thm: approximation_dp_intr}(i), in Theorem \ref{thm: approximation of d1 rays} we show that one can approximate by bounded geodesic rays with possibly diverging radial K-energy. The argument uses \cite{RWN14}, and this will suffice in case $\mathcal K\{u_t\} = \infty,$ since  $\mathcal K\{\cdot\}$ is $d_p^c$-lsc. However to obtain (i) in case $\mathcal K\{u_t\}$ is finite, a much more delicate construction will be needed, building on the relative Ko{\l}odziej type estimate of \cite{DDL4}. The proof of (ii)  builds on (i), and novel apriori $C^{1, \bar 1}$ estimates along geodesic segments that are ``scalable" along rays. These will be obtained using the framework of \cite{He15} and \cite{GZ17}.

\vspace{-0.3cm}

\paragraph{Applications to geodesic stability.} We point out applications  to existence of constant scalar curvature K\"ahler (csck) metrics in terms of geodesic stability, going back to Donaldson's early conjectures in  \cite{Do99}.

To start, we say that $(X,\omega)$ is \emph{geodesically} $L^p/C^{1, \bar 1}$\emph{-semistable} if for any $\{u_t\}_t \in \mathcal R^p_{\omega}/ \mathcal R^{1,\bar 1}_\omega$ we have that $\mathcal K\{u_t\} \geq 0$ for $p \in [1,\infty]$. Regarding the relevance of semistability for the csck continuity method, we refer to \cite{ChCh3}. As an immediate consequence of Theorem \ref{thm: approximation_dp_intr} we obtain the following:

\begin{theorem}\label{thm: semistable_intr} $(X,\omega)$ is geodesically $L^1$-semistable if and only if it is geodesically $C^{1, \bar 1}$-semistable.\end{theorem}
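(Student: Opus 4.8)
The plan is to obtain this statement as an essentially immediate corollary of the approximation theorem, Theorem~\ref{thm: approximation_dp_intr}(ii); no new estimates are needed. For the easy implication, I would simply note that $\mathcal R^{1,\bar 1}_\omega \subseteq \mathcal R^1_\omega$ by definition, since a ray of $C^{1,\bar 1}$ potentials is in particular a finite energy $L^1$ ray. Hence if $\mathcal K\{u_t\}\geq 0$ holds for \emph{every} $\{u_t\}_t\in\mathcal R^1_\omega$, it holds a fortiori for every $\{u_t\}_t\in\mathcal R^{1,\bar 1}_\omega$; that is, geodesic $L^1$-semistability forces geodesic $C^{1,\bar 1}$-semistability.

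For the converse, I would assume $(X,\omega)$ is geodesically $C^{1,\bar 1}$-semistable, fix an arbitrary $\{u_t\}_t\in\mathcal R^1_\omega$, and aim at $\mathcal K\{u_t\}\geq 0$. Since $\mathcal K\{\cdot\}$ takes values in $(-\infty,\infty]$, the case $\mathcal K\{u_t\}=\infty$ is trivial, so one may assume $\mathcal K\{u_t\}<\infty$. Then Theorem~\ref{thm: approximation_dp_intr}(ii), applied with $p=1$, produces a sequence $\{v^j_t\}_t\in\mathcal R^{1,\bar 1}_\omega$ with $\mathcal K\{v^j_t\}\to\mathcal K\{u_t\}$. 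By the semistability hypothesis $\mathcal K\{v^j_t\}\geq 0$ for all $j$, and letting $j\to\infty$ gives $\mathcal K\{u_t\}\geq 0$, as desired.

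The genuine obstacle is not in this argument but is entirely packed into Theorem~\ref{thm: approximation_dp_intr}(ii) itself, namely the construction of $C^{1,\bar 1}$ approximants \emph{with converging radial K-energy}. I would stress that the weaker facts — $d_1^c$-convergence of the approximants together with $d_1^c$-lower semicontinuity of $\mathcal K\{\cdot\}$ — do not suffice here: lower semicontinuity only yields $\mathcal K\{u_t\}\leq\liminf_j\mathcal K\{v^j_t\}$, which is the wrong direction for deducing nonnegativity. Thus the two-sided convergence $\mathcal K\{v^j_t\}\to\mathcal K\{u_t\}$ is essential, and beyond invoking Theorem~\ref{thm: approximation_dp_intr} the only thing left to check is the harmless split into the cases $\mathcal K\{u_t\}=\infty$ and $\mathcal K\{u_t\}<\infty$.
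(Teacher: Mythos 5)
Your argument is correct and is precisely the one the paper has in mind when it states that Theorem~\ref{thm: semistable_intr} is ``an immediate consequence of Theorem~\ref{thm: approximation_dp_intr}'': the easy inclusion $\mathcal R^{1,\bar 1}_\omega\subseteq\mathcal R^1_\omega$ in one direction, and the $C^{1,\bar 1}$ approximation with converging radial K-energy in the other, with the $\mathcal K\{u_t\}=\infty$ case handled trivially. Your observation that $d_1^c$-lower semicontinuity of $\mathcal K\{\cdot\}$ alone would point the wrong way, so that the genuine two-sided convergence $\mathcal K\{v^j_t\}\to\mathcal K\{u_t\}$ is what carries the argument, is exactly the point that makes Theorem~\ref{thm: approximation_dp_intr}(ii) nontrivial.
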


Let $G:= \textup{Aut}_0(X)$ be the identity component of the group of holomorphic automorphisms of $X$. By $I: \mathcal E^1_\omega \to \mathbb{R}$ we denote the Monge--Amp\`ere energy functional (sometimes called Aubin--Yau or Aubin--Mabuchi energy). Then, as explained in \cite{DR17}, $G$ induces an isometry on $\mathcal E^1_0 = \mathcal E^1_{\omega} \cap I^{-1}(0)$, and one can introduce the following pseudo-metric on the orbits $\mathcal E^1_0 / G$:
$$d_{1,G}(G u_0,G u_1) := \inf_{g \in G} d_{1}(u_0,g.u_1).$$
Moreover, one can analogously define the space of \emph{normalized rays} $\mathcal R^p/\mathcal R^{1,\bar 1}/\mathcal R^{1, 1}, \ \ p \in [1,\infty]$,  where we restrict to rays  $\{u_t\}_t \in \mathcal R^p_{\omega}/\mathcal{R}^{1,\bar{1}}_{\omega}/\mathcal{R}^{1,1}_{\omega}$  with $I (u_t)=0, \ t \geq 0$.

By showing that minimizers of the K-energy on $\mathcal E^1_{\omega}$ are actually smooth csck potentials \cite[Theorem 1.5]{ChCh2}, Chen--Cheng have verified the last remaining condition of the existence/properness principle of \cite{DR17}, applied to the case of csck metrics. Together with the necessity result \cite[Theorem 1.5]{BDL16}, their theorem showed that existence of csck metrics is equivalent with properness of $\mathcal K$ in the following sense: there exists $\delta, \gamma > 0$ such that
\begin{equation}\label{eq: CC3BDL16_ineq}
\mathcal K(u) \geq \delta d_{1,G}(G0,Gu) - \gamma, \ \ \ u \in \mathcal E^1_\omega,
\end{equation}
Clearly, $d_{1,G}(G v_0,G v_1) \leq d_1(v_0,v_1), \ v_0,v_1 \in \mathcal E^1_\o$, and we say that $\{u_t\}_t \in \mathcal R^1$ is $G$\emph{-calibrated} if the curve $t \to Gu_t$  is a $d_{1,G}$-geodesic with the same speed as $\{u_t\}_t$, i.e.,
$$d_{1,G}(Gu_0,Gu_t)=d_1(u_0,u_t), \ \ \  t \geq 0.$$
Geometrically, $\{u_t\}_t$ is $G$-calibrated if it cuts each $G$-orbit inside $\mathcal E^1_{\omega}$ ``perpendicularly". In case $G = \{Id\}$, every ray is $G$-calibrated. 

Building on these concepts, it is natural to state the $C^{1, 1}$ uniform analog to Donaldson's geodesic stability conjecture, with the original formulation of \cite[Conjecture 12]{Do99} more closely related to the language of ``polystability":

\begin{conj}[$C^{1,1}$ uniform geodesic stability]\label{conj: geod_stability} Let $(X,\omega)$ be a compact K\"ahler manifold. Then the following are equivalent:\vspace{0.1cm}\\
(i) There exists a csck metric in $\mathcal H$.\vspace{0.1cm}\\
(ii) There exists $\delta >0$ such that $\mathcal K\{u_t\} \geq \delta \limsup_t\frac{d_{1,G}(G 0,G u_t)}{t}$ for all  geodesic rays $\{u_t\}_t \in \mathcal R^{1,  1}$.\vspace{0.1cm}\\
(iii) $\mathcal K$ is $G$-invariant and there exists $\delta >0$ such that for all $G$-calibrated geodesic rays $\{u_t\}_t \in \mathcal R^{1,  1}$ we have that $\mathcal K\{u_t\} \geq \delta d_1(0,u_1).$ 
\end{conj}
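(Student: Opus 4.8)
The plan is to deduce the equivalence from two available facts: the equivalence of (i) with the $d_{1,G}$-properness estimate \eqref{eq: CC3BDL16_ineq} (\cite{ChCh2} together with \cite{BDL16}), and the uniform $L^1$ geodesic stability theorem of Chen--Cheng \cite{ChCh2,ChCh3}, which we use in the form: a csck metric exists in $\mathcal H$ if and only if there is $\delta>0$ such that
\begin{equation}\label{eq: L1stab}
\mathcal K\{u_t\}\ \ge\ \delta\,\limsup_{t\to\infty}\frac{d_{1,G}(G0,Gu_t)}{t}\qquad\text{for every }\{u_t\}_t\in\mathcal R^1 .
\end{equation}
Granting these, the task is to trade the class $\mathcal R^1$ in \eqref{eq: L1stab} for $\mathcal R^{1,1}$, respectively for $G$-calibrated rays. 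The implications (i) $\Rightarrow$ (ii) and (i) $\Rightarrow$ (iii) are the soft ones: given a csck metric, $\mathcal K$ is $G$-invariant on $\mathcal E^1_\omega$ and \eqref{eq: CC3BDL16_ineq} holds, so for a normalized ray $\{u_t\}_t\in\mathcal R^{1,1}$ — in particular an $L^1$ geodesic ray, so that $d_1(0,u_t)=t\,d_1(0,u_1)$ — one substitutes $u=u_t$ into \eqref{eq: CC3BDL16_ineq}, divides by $t$ and lets $t\to\infty$; by convexity of $t\mapsto\mathcal K(u_t)$ one has $\mathcal K(u_t)/t\to\mathcal K\{u_t\}$, and $d_{1,G}\le d_1$ yields \eqref{eq: L1stab}, i.e.\ (ii); if in addition $\{u_t\}_t$ is $G$-calibrated then $d_{1,G}(G0,Gu_t)=d_1(0,u_t)=t\,d_1(0,u_1)$ and the same computation gives $\mathcal K\{u_t\}\ge\delta\,d_1(0,u_1)$, i.e.\ (iii).

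The heart of the matter is (ii) $\Rightarrow$ (i). By \eqref{eq: L1stab} it suffices to propagate the hypothesis — which a priori concerns only $C^{1,1}$ rays — to an arbitrary normalized $\{u_t\}_t\in\mathcal R^1$. If $\mathcal K\{u_t\}=+\infty$ there is nothing to show, as $\limsup_t d_{1,G}(G0,Gu_t)/t\le d_1(0,u_1)<\infty$. If $\mathcal K\{u_t\}<\infty$, Theorem \ref{thm: approximation_dp_intr}(ii) (with $p=1$) produces $\{v^j_t\}_t\in\mathcal R^{1,\bar1}$ — which, after the routine normalization by the vanishing affine-in-$t$ shift $I(v^j_t)$, we take in the normalized class — with $d_1^c(\{v^j_t\}_t,\{u_t\}_t)\to0$ and $\mathcal K\{v^j_t\}\to\mathcal K\{u_t\}$. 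Applying the hypothesis to each $v^j$ — this is where one uses (ii) for the larger class $\mathcal R^{1,\bar1}$ rather than $\mathcal R^{1,1}$, see the obstacles below — gives $\mathcal K\{v^j_t\}\ge\delta\limsup_t d_{1,G}(G0,Gv^j_t)/t$; letting $j\to\infty$, the left side tends to $\mathcal K\{u_t\}$, while
\begin{equation}\label{eq: slope_stab}
\Big|\limsup_t\tfrac{d_{1,G}(G0,Gv^j_t)}{t}-\limsup_t\tfrac{d_{1,G}(G0,Gu_t)}{t}\Big|\ \le\ \limsup_t\tfrac{d_1(v^j_t,u_t)}{t}\ =\ d_1^c(\{v^j_t\}_t,\{u_t\}_t)\ \longrightarrow\ 0 ,
\end{equation}
using $|\limsup a_t-\limsup b_t|\le\limsup|a_t-b_t|$, the bound $|d_{1,G}(G0,Gv^j_t)-d_{1,G}(G0,Gu_t)|\le d_{1,G}(Gv^j_t,Gu_t)\le d_1(v^j_t,u_t)$, and the definition of $d_1^c$. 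Hence \eqref{eq: L1stab} holds for $\{u_t\}_t$, and (i) follows.

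For (iii) $\Rightarrow$ (i) one argues contrapositively: if \eqref{eq: CC3BDL16_ineq} fails, the Chen--Cheng construction, carried out on the orbit space $\mathcal E^1_0/G$, supplies a destabilizing $L^1$ geodesic ray that may be taken $G$-calibrated; one approximates it by $C^{1,\bar1}$ rays as above, re-straightens the approximants into $G$-calibrated rays — legitimate since $G$ acts by $d_1$-isometries, $\mathcal K$ is $G$-invariant by the hypothesis in (iii), and the space of rays carries the isometric parallelism $\mathcal P_{uv}$ — and controls the slopes via \eqref{eq: slope_stab}, contradicting (iii). The remaining implications close the cycle along the same lines; in particular the $G$-invariance of $\mathcal K$ demanded by (iii) can also be extracted from (ii) by testing it on the geodesic rays generated by holomorphic vector fields and on those generated by their negatives, along which $\mathcal K$ is affine and $Gu_t\equiv G0$.

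I expect the decisive obstacle to be the regularity gap between $\mathcal R^{1,\bar1}$ and $\mathcal R^{1,1}$: Theorem \ref{thm: approximation_dp_intr}(ii) only delivers approximating rays with bounded \emph{complex} Hessian, and there appears to be no way to further $d_1^c$-approximate, with converging radial K-energy, by rays with bounded \emph{real} Hessian. Consequently the argument above rigorously establishes the equivalence only with $\mathcal R^{1,\bar1}$ in place of $\mathcal R^{1,1}$ in (ii)--(iii) — this is exactly the sense in which the conjecture is ``essentially'' verified. A second difficulty, subsumed above, is internal to Theorem \ref{thm: approximation_dp_intr}(ii): it requires a priori $C^{1,\bar1}$ estimates for the degenerate complex Monge--Amp\`ere equations defining geodesic \emph{segments}, in a form stable under rescaling along the ray, which is the technical core and relies on the methods of \cite{He15,GZ17}. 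The smaller points are the continuity of the radial functionals $\mathcal K\{\cdot\}$, $d_1^c$ and the $d_{1,G}$-slope under $d_1^c$-convergence (cf.\ \eqref{eq: slope_stab}), and the existence and regularity of the $G$-calibrated representatives used in the equivariant step.
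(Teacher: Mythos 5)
You have correctly identified that the literal statement — with $\mathcal{R}^{1,1}$, i.e.\ bounded \emph{real} Hessian — is a conjecture which the paper does not prove; it proves the $\mathcal{R}^{1,\bar 1}$ version (Theorem~\ref{thm: Linftygeod_stability_intr}) and explicitly leaves the $C^{1,1}$ upgrade open. Your outline for the $C^{1,\bar 1}$ version is in the right spirit and tracks the paper's Section~6: (i)~$\Leftrightarrow$~(ii)$_{\mathcal R^1}$~$\Leftrightarrow$~(iii)$_{\mathcal R^1}$ from Chen--Cheng plus the properness principle (Theorem~\ref{thm: L1geod_stability}), then approximation (Theorem~\ref{thm: approximation_dp_intr}(ii)) and the Lipschitz estimate for the $d_{1,G}$-slope (your~\eqref{eq: slope_stab}, which is exactly the computation in the proof of Theorem~\ref{thm: L1Linft_geod_eqv}) to upgrade (ii)$_{\mathcal R^{1,\bar 1}}$~$\Rightarrow$~(ii)$_{\mathcal R^1}$. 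Your extraction of $G$-invariance from~(ii) also matches the paper's appeal to semistability (via~\cite[Lemma~4.1]{ChCh3}).

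The genuine gap in your argument is the implication (iii)$_{\mathcal R^{1,\bar 1}}$~$\Rightarrow$~(iii)$_{\mathcal R^1}$, which you handle much too loosely. You propose to ``re-straighten the approximants into $G$-calibrated rays'' by invoking the parallelism operator $\mathcal{P}_{uv}$ and the $d_1$-isometric $G$-action, but parallelism does not calibrate a ray with respect to the $G$-orbits, and being $G$-calibrated is not preserved by the $d_1^c$-approximation of Theorem~\ref{thm: approximation_dp_intr}(ii). The paper's Theorem~\ref{thm: L1Linft_geod_calibrated_eqv} does something harder: for each approximant $\{u^k_t\}_t\in\mathcal R^{1,\bar 1}$ it chooses near-optimal $g^k_t\in G$ with $d_1(0,g^k_t.u^k_t)\approx d_{1,G}(G0,Gu^k_t)$, runs the finite geodesic $[0,t]\ni l\mapsto\rho^{k,t}_l$ from $0$ to $g^k_t.u^k_t$, and passes to a subsequential limit $\tilde u^k_l=\lim_{t_j}\rho^{k,t_j}_l$ to manufacture a $G$-calibrated ray. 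The point you are missing is why $\{\tilde u^k_t\}_t$ lands back in $\mathcal R^{1,\bar 1}$: this requires a priori bounds on $\sup_X|g.0|$ and $\sup_X(\log(n+\Delta_\omega(g.0))-B\,g.0)$ linear in $d_1(0,g.0)$ (Lemma~\ref{lem: action_est_d_1}), combined with the scalable Laplacian estimate along geodesic segments (Theorem~\ref{thm: He_C1bar1_est}) and Remark~\ref{rem: C_1_1_slope}; without these, the $g^k_t$-translates could a priori lose all Laplacian control as $t\to\infty$. So while your high-level scheme is right, the calibrated step as written would not go through; it needs the quantitative Laplacian estimate on $G$-translates, not parallelism.
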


The statement of (ii) clearly points out that uniform geodesic stability is simply the condition that tests energy properness (expressed in \eqref{eq: CC3BDL16_ineq}) along a class of geodesic rays.

As the notion of $G$-calibrated rays has an obvious analog in case of the space of finite dimensional rays as well (within the context of K\"ahler quantizaton), we included this condition here to perhaps facilitate in the future an alternative definition for uniform K-stability in the presence of vector fields.

As explained in \cite[Proposition 5.5]{DR17}, in the above conjecture the $d_1$ distance is interchangeable with Aubin's $J$ functional. Lastly, given that rays induced by 1-parameter actions of $G$ are never $G$-calibrated, the condition that $\mathcal K$ is $G$-invariant (equivalent to vanishing Futaki invariant \cite{Fu83}) is necessary in the statement of (iii).

Using our above theorems, we prove in Theorem \ref{thm: L1Linft_geod_eqv} and Theorem \ref{thm: L1Linft_geod_calibrated_eqv} that the $C^{1, \bar 1}$ and $L^1$ version of the uniform geodesic stability conjecture are equivalent.
As alluded to previously, the breakthrough of Chen--Cheng \cite{ChCh2,ChCh3} together with  \cite[Theorem 4.7]{Da18}  essentially yielded the $L^1$ version of this conjecture (see Theorem \ref{thm: L1geod_stability} below). Putting things together, we arrive at our most important main result, essentially settling Conjecture \ref{conj: geod_stability}:
\begin{theorem}[$C^{1, \bar 1}$ uniform geodesic stability]\label{thm: Linftygeod_stability_intr} Let $(X,\omega)$ be a compact K\"ahler manifold. Then the following are equivalent:\vspace{0.1cm}\\
(i) There exists a csck metric in $\mathcal H$.\vspace{0.1cm} \\
(ii) There exists $\delta >0$ such that $\mathcal K\{u_t\} \geq \delta \limsup_t\frac{d_{1,G}(G 0,G u_t)}{t}$ for all  $\{u_t\}_t \in \mathcal R^{1, \bar 1}$.\vspace{0.1cm}\\
(iii) $\mathcal K$ is $G$-invariant and there exists $\delta >0$ s.t. $\mathcal K\{u_t\} \geq  \delta d_1(0,u_1) $  for all $G$-calibrated geodesic rays $\{u_t\}_t \in \mathcal R^{1, \bar 1}$.
\end{theorem}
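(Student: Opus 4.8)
\textbf{Proof proposal for Theorem \ref{thm: Linftygeod_stability_intr}.}

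The plan is to deduce this from the already-available $L^1$ version of the conjecture (Theorem \ref{thm: L1geod_stability}, attributed to Chen--Cheng plus \cite{Da18}) together with the approximation machinery of Theorem \ref{thm: approximation_dp_intr}(ii). So I would set up the argument as a chain of implications, with the nontrivial content concentrated in passing between $\mathcal R^{1,\bar 1}$ and $\mathcal R^1$. The implication (i) $\Rightarrow$ (ii) is the ``easy'' direction: assuming a csck metric exists, the $L^1$ properness inequality \eqref{eq: CC3BDL16_ineq} holds, and one evaluates it along the ray $\{u_t\}_t \in \mathcal R^{1,\bar 1} \subset \mathcal R^1$. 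Dividing by $t$, taking $t \to \infty$, and using that $\mathcal K\{u_t\}$ is defined as the limit of $\mathcal K(u_t)/t$ while $\limsup_t d_{1,G}(G0,Gu_t)/t$ appears on the right, one recovers (ii) with the same $\delta$; the constant $-\gamma$ disappears in the limit. The implication (ii) $\Rightarrow$ (iii) is essentially formal: $G$-invariance of $\mathcal K$ follows because a non-invariant $\mathcal K$ would have $\mathcal K\{u_t\} \leq 0$ along a ray generated by a holomorphic vector field (vanishing Futaki forces invariance, as noted after the conjecture), and for a $G$-calibrated ray $d_{1,G}(G0,Gu_t) = d_1(0,u_t) = t\, d_1(0,u_1)$ by the geodesic/unit-speed reparametrization, so the $\limsup$ equals $d_1(0,u_1)$.

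The heart of the matter is (iii) $\Rightarrow$ (i), i.e.\ upgrading stability tested on $\mathcal R^{1,\bar 1}$ to stability tested on all of $\mathcal R^1$, so that Theorem \ref{thm: L1geod_stability} applies. Given $\{u_t\}_t \in \mathcal R^1$, I would invoke Theorem \ref{thm: approximation_dp_intr}(ii): if $\mathcal K\{u_t\} < \infty$ there are rays $\{v^j_t\}_t \in \mathcal R^{1,\bar 1}$ with $v^j_t \searrow u_t$, with $d^c_1(\{v^j_t\}_t, \{u_t\}_t) \to 0$, and crucially with $\mathcal K\{v^j_t\} \to \mathcal K\{u_t\}$; if $\mathcal K\{u_t\} = \infty$ there is nothing to prove since the stability inequality is trivially satisfied. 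The chordal metric convergence gives $d_{1,G}(G0, Gv^j_t)/t \to d_{1,G}(G0,Gu_t)/t$ control in the appropriate sense (using $d_{1,G} \leq d_1$ and the triangle inequality, together with $d^c_1$ convergence), so applying the $\mathcal R^{1,\bar 1}$ hypothesis to each $\{v^j_t\}_t$ and passing to the limit yields the $L^1$ stability estimate for $\{u_t\}_t$. The delicate point — and the step I expect to be the main obstacle — is making the passage to the limit rigorous for the term $\limsup_t d_{1,G}(G0,Gu_t)/t$ versus $\delta d_1(0, u_1)$ in the calibrated formulation: one must check that $G$-calibration (or the relevant $\limsup$) is suitably preserved or controlled under the decreasing approximation $v^j_t \searrow u_t$, which is where the lower semicontinuity of $\mathcal K\{\cdot\}$ and the isometry properties of $G$ on $\mathcal E^1_0$ from \cite{DR17} must be combined carefully. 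Once the $L^1$ (resp.\ $G$-calibrated $L^1$) stability condition is established, Theorem \ref{thm: L1geod_stability} delivers the csck metric, closing the loop.

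In summary, the proof is: (i) $\Rightarrow$ (ii) by evaluating \eqref{eq: CC3BDL16_ineq} along the ray and dividing by $t$; (ii) $\Leftrightarrow$ (iii) by the calibration bookkeeping and the Futaki/$G$-invariance dichotomy; (iii) $\Rightarrow$ (i) by approximating an arbitrary $L^1$ ray by $C^{1,\bar 1}$ rays with converging radial K-energy via Theorem \ref{thm: approximation_dp_intr}(ii), transferring the stability inequality in the limit, and then appealing to the known $L^1$ geodesic stability theorem. The approximation theorem is doing essentially all the work; everything else is a limiting argument plus standard facts about $d_1$, $d_{1,G}$, and the radial K-energy recorded earlier in the paper.
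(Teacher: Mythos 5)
Your high-level blueprint --- reduce to the $L^1$ theorem (Theorem \ref{thm: L1geod_stability}) via the approximation machinery of Theorem \ref{thm: approximation_dp_intr}(ii) --- is exactly what the paper does, and your handling of (i)~$\Rightarrow$~(ii) and (ii)~$\Rightarrow$~(iii) is essentially correct (the latter uses that rays generated by one-parameter subgroups of $G$ are smooth, hence in $\mathcal R^{1,\bar 1}$, so (ii) forces $G$-invariance of $\mathcal K$; and for a calibrated geodesic $d_{1,G}(G0,Gu_t)/t \equiv d_1(0,u_1)$).

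The gap is in (iii)~$\Rightarrow$~(i), i.e.\ in upgrading (iii)$_{C^{1,\bar 1}}$ to (iii)$_{L^1}$. You correctly flag the obstacle --- the approximants $\{v^j_t\}_t$ produced by Theorem \ref{thm: approximation_dp_intr}(ii) are generically \emph{not} $G$-calibrated, so hypothesis (iii) cannot be applied to them --- but the fix you gesture at (lower semicontinuity of $\mathcal K\{\cdot\}$ plus the isometric $G$-action) does not close it. Lower semicontinuity gives control in the wrong direction for the K-energy, and $d_1$-isometry by itself says nothing about how to restore the calibration condition. What the paper actually does (Theorem \ref{thm: L1Linft_geod_calibrated_eqv}) is substantially more involved: for each approximating $\{u^k_t\}_t \in \mathcal R^{1,\bar 1}$ and each $t$, choose $g^k_t \in G$ nearly realizing $d_{1,G}(G0,Gu^k_t)$, take the finite energy geodesic from $0$ to $g^k_t.u^k_t$, and pass to the limit $t_j \to \infty$ (via $\mathcal E^1_\omega$-compactness and endpoint stability) to obtain a \emph{new} ray $\{\tilde u^k_t\}_t$ that is $G$-calibrated by construction. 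The subtle point is then showing $\{\tilde u^k_t\}_t \in \mathcal R^{1,\bar 1}$: since $g^k_t.u^k_t = (g^k_t)^*u^k_t + g^k_t.0$, this requires a uniform Laplacian bound on $g.0$ in terms of $d_1(0,g.0)$ --- Lemma \ref{lem: action_est_d_1}, proved via a Yau-type maximum principle argument --- combined with the convexity of $t \mapsto \sup_X(\log(n+\Delta_\omega u_t) - Bu_t)$ from Theorem \ref{thm: He_C1bar1_est}. After that, one still needs the slope comparison \eqref{eq: g_kk_est} to relate $\mathcal K\{\tilde u^k_t\}/d_1(0,\tilde u^k_1)$ back to $\mathcal K\{u^k_t\}/d_1(0,u^k_1)$ before letting $k\to\infty$. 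None of this auxiliary machinery is present or anticipated in your proposal, so as written the argument does not go through.
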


Clearly, given the obvious inclusions among classes or geodesic rays, the $L^p$ versions of Conjecture \ref{conj: geod_stability} follow as well (with $\mathcal R^p$ replacing $\mathcal R^{1, \bar 1}$ in the statement). 
Though slightly different in formulation, the $L^\infty$ version of this result essentially confirms the equivalences between the conditions $(3),(4)$ and $(5)$ in  \cite[Question 1.12]{ChCh3} (see also the closely related questions of \cite[Remark 1.3]{ChCh2}). In case $G=\{Id\}$, the statement of the theorem can be made especially simple:
\begin{theorem}
\label{thm: Linftygeod_stability_Gtriv_intr} Let $(X,\omega)$ be a compact K\"ahler manifold without non-trivial holomorphic vector fields. Then the following are equivalent:\vspace{0.1cm}\\
(i) There exists a csck metric in $\mathcal H$.\vspace{0.1cm}\\
(ii) There exists $\delta>0$ such that $\mathcal K\{u_t\} \geq \delta d_1(0,u_1)$ for all $\{u_t\}_t \in \mathcal R^{1, \bar 1}$.
\end{theorem}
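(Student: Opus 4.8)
The quickest route is to read this off the general result, Theorem~\ref{thm: Linftygeod_stability_intr}, by specializing to the case $G=\{\mathrm{Id}\}$. When $X$ carries no non-trivial holomorphic vector fields the group $G=\mathrm{Aut}_0(X)$ is trivial, so $d_{1,G}=d_1$, the K-energy $\mathcal K$ is automatically $G$-invariant, and every geodesic ray is (vacuously) $G$-calibrated. Furthermore, along a $d_1$-geodesic ray $\{u_t\}_t$ the function $t\mapsto d_1(0,u_t)$ is linear with slope $d_1(0,u_1)$, so $\limsup_t d_{1,G}(G0,Gu_t)/t=\limsup_t d_1(0,u_t)/t=d_1(0,u_1)$. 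Under these identifications both conditions (ii) and (iii) of Theorem~\ref{thm: Linftygeod_stability_intr} collapse to condition (ii) of the present statement, so the asserted equivalence is immediate from the general theorem. Thus the only real task, if one wants a self-contained argument, is to assemble the pieces that go into the $G=\{\mathrm{Id}\}$ case directly; I describe that next.

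The implication (i)$\Rightarrow$(ii) is the easy half and needs no approximation. By the theorem of Chen--Cheng together with \cite{BDL16}, existence of a csck metric in $\mathcal H$ is equivalent to the properness estimate \eqref{eq: CC3BDL16_ineq}, i.e.\ there are $\delta,\gamma>0$ with $\mathcal K(w)\geq \delta d_1(0,w)-\gamma$ for all $w\in\mathcal E^1_\omega$. Given $\{u_t\}_t\in\mathcal R^{1,\bar 1}\subset\mathcal R^1$, each $u_t$ lies in $\mathcal E^1_\omega$, so $\mathcal K(u_t)\geq \delta\,t\, d_1(0,u_1)-\gamma$; dividing by $t$ and letting $t\to\infty$ (the limit defining $\mathcal K\{u_t\}$ exists in $(-\infty,\infty]$) yields $\mathcal K\{u_t\}\geq \delta d_1(0,u_1)$.

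For (ii)$\Rightarrow$(i) the strategy is to upgrade the inequality in (ii) from the class $\mathcal R^{1,\bar 1}$ to all of $\mathcal R^1$, and then invoke the $L^1$ version of geodesic stability, Theorem~\ref{thm: L1geod_stability} (built on \cite{ChCh2,ChCh3} and \cite{Da18}), which gives existence of a csck metric once $\mathcal K\{u_t\}\geq\delta d_1(0,u_1)$ holds for \emph{all} $\{u_t\}_t\in\mathcal R^1$. So fix $\{u_t\}_t\in\mathcal R^1$. If $\mathcal K\{u_t\}=\infty$ there is nothing to prove. Otherwise Theorem~\ref{thm: approximation_dp_intr}(ii) provides $\{v^j_t\}_t\in\mathcal R^{1,\bar 1}$ with $v^j_t\searrow u_t$, with $d_1^c(\{v^j_t\}_t,\{u_t\}_t)\to 0$, and with $\mathcal K\{v^j_t\}\to\mathcal K\{u_t\}$. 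Since $d_1^c$ is a genuine metric (Theorem~\ref{thm: R_p_complete_geod_K_convex_intr} and the preceding completeness statement) and $d_1(0,w_1)=d_1^c(\{w_t\}_t,\{0\}_t)$ for any ray $\{w_t\}_t$ based at $0$, the triangle inequality gives $d_1(0,v^j_1)\to d_1(0,u_1)$. Applying hypothesis (ii) to each $\{v^j_t\}_t$ and letting $j\to\infty$ produces $\mathcal K\{u_t\}\geq\delta d_1(0,u_1)$, completing the upgrade.

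The two genuinely deep inputs here --- the $C^{1,\bar 1}$ approximation theorem, Theorem~\ref{thm: approximation_dp_intr}(ii), and the $L^1$ geodesic stability theorem --- are both available, so the remaining work is organizational: checking the reduction to scalar conditions under $G=\{\mathrm{Id}\}$, the existence of the radial K-energy limit, and the disposal of the case $\mathcal K\{u_t\}=\infty$. The one place that rewards care is the convergence $d_1(0,v^j_1)\to d_1(0,u_1)$, which is exactly where the chordal metric $d_1^c$ and the $d_1^c$-continuity built into Theorem~\ref{thm: approximation_dp_intr}(ii) are used; without the metric structure on $\mathcal R^1_\omega$ one would have no clean way to transfer the speed of the approximants to the speed of the limiting ray.
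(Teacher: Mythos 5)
Your proposal is correct and takes essentially the same route as the paper: recognize the statement as the $G=\{\mathrm{Id}\}$ specialization of Theorem~\ref{thm: Linftygeod_stability_intr}, observe that $d_{1,G}=d_1$, that $G$-invariance of $\mathcal K$ is automatic, that $G$-calibration is vacuous, and that $d_1(0,u_t)=t\,d_1(0,u_1)$ along a geodesic ray, so conditions (ii) and (iii) of the general theorem collapse to the scalar condition (ii) here. Your self-contained argument for (ii)$\Rightarrow$(i) --- approximate an arbitrary $\{u_t\}_t\in\mathcal R^1$ by $C^{1,\bar 1}$ rays via Theorem~\ref{thm: approximation_dp_intr}(ii), transfer the speed through the $d_1^c$ metric, then invoke the $L^1$ geodesic stability theorem --- is exactly the content of Theorems~\ref{thm: L1Linft_geod_eqv} and \ref{thm: L1Linft_geod_calibrated_eqv} specialized to trivial $G$ (where the latter's elaborate $G$-orbit gymnastics disappear). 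One small wrinkle you gloss over: Theorem~\ref{thm: approximation_dp_intr}(ii) produces rays in $\mathcal R^{1,\bar 1}_\omega$, not in the normalized space $\mathcal R^{1,\bar 1}$ (which requires $I(v^j_t)=0$); as in the paper's proof of Theorem~\ref{thm: L1Linft_geod_eqv}, one must subtract the linear term $t\,I(v^j_1)$ from each approximant, and since $I(v^j_1)\to I(u_1)=0$ this preserves both $d_1^c$-convergence and convergence of the radial K-energy.
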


It remains to be seen if in the above stability results one can use rays that have potentials with fully bounded Hessian, not just bounded complex Hessian.  Even if possible, this small improvement seems to require substantial amount of new work.
Further optimizations are extremely unlikely, given that the typical regularity of geodesics breaks down beyond $C^2$ estimates. One would think that generalizations to the context of extremal and conical type csck metrics should be possible, using our results together with \cite{He18,Zh18}.\vspace{-0.3cm}

\paragraph{Connections with the literature.} 
Uniform convexity of metric spaces is an active area of research (see \cite{Oh07, Ke14,Ku14, NS11} and references therein). In particular, by \cite[Proposition 2.5]{Ku14} the inequalities of Theorem \ref{thm: uniform_convex_intr} are essentially optimal.

The notion of K-stability goes back to work of Tian \cite{Ti97}, with generalizations and precisions made along the way by S. Donaldson \cite{Do02}, Li--Xu \cite{LX14}, G. Sz\'ekelyhidi \cite{Sz06} and many others. Though the precise form of K-stability is still not fully clarified for general K\"ahler manifolds \cite{ACGTF}, at least in the absence of non-trivial holomorphic vector fields, it is widely expected that uniform K-stability will be equivalent with existence of csck metrics (see \cite[Question 1.12]{ChCh3}, \cite[Conjecture 4.9]{Bo18}). Informally, uniform K-stability simply says that Conjecture \ref{conj: geod_stability} holds for $C^{1,\bar 1}$ rays that are induced by the so called test configurations of $(X,\omega)$.

Closing the gap between $L^1$ uniform geodesic stability and uniform K-stability is the last remaining step in the variational program designed to attack the uniform YTD conjecture (see \cite[p.2]{Bo18}), with our Theorem \ref{thm: Linftygeod_stability_intr} representing an intermediate step. To facilitate further progress in this direction, based on the findings of Theorem \ref{thm: R_p_complete_geod_K_convex_intr}, one possible approach would be to develop the radial analog of the K\"ahler quantization scheme, recently extended to the $d_p$-metric completions in \cite{DLR18} (building on prior work by Berndtsson \cite{Bn09}, Chen--Sun \cite{CS12}, Donaldson \cite{Do02,Do05}, Phong--Sturm \cite{PS06}, Song--Zeldtich \cite{SZ10}, Tian \cite{Ti90} and others). Indeed, in case the K\"ahler structure $(X,\omega)$ is induced by an ample Hermitian line bundle $(L,h)$, it is pointed out in \cite{BE18,BJ18,Bo18} that $\mathcal R^k_{\omega}$, the space of finite dimensional geodesic rays associated to the space of Hermitian metrics $\mathcal H^k_{\omega}$ on $H^0(X,L^k)$ admits a natural metric $d_{p,k}^{c}$, likely representing the finite dimensional analog of our $d_p^c$ metrics. If one could show in the spirit of \cite[Theorem 1.1]{DLR18} that the metric spaces $(\mathcal R^k_{\omega},d_{p,k}^{c})$ approximate $(\mathcal R^p_{\omega},d_p^c)$ (or relevant parts of it) in the large $k$-limit, then that would open the door for a version of Theorem \ref{thm: approximation_dp_intr}, where the rays from $\mathcal R^{1,\bar 1}_\omega$ are replaced by rays induced by test configurations. Even if successful, it is not clear how convergence of the radial K-energy can be achieved (see \cite[Conjecture 2.5]{BJ18}), and for the difficulties that need to be overcome in this approach we refer to the comments following \cite[Conjecture 4.9]{Bo18}.

Further connections with geodesic rays are explored in \cite{DDL5}, related to  the metric geometry of the space of singularity types, and complex Monge--Amp\`ere equations with prescribed singularity. \vspace{-0.3cm}

\paragraph{Organization of the paper.} In Section 2 we recall basic facts about the $L^p$ Mabuchi geometry of the space of K\"ahler metrics, the relative Kolodziej type estimate of \cite{DDL4}, and we prove weighted versions of the classical inequalities of Clarkson and Ball--Carlen--Lieb that will be needed later. In Section 3 we prove Theorems \ref{thm: uniform_convex_intr} and \ref{thm: unique_geod_intr}  regarding uniform convexity, and uniqueness of geodesics in $L^p$ Mabuchi geometry when $p>1$. In Section 4 we study the chordal $L^p$ metric structures on the space of geodesic rays and prove Theorem \ref{thm: R_p_complete_geod_K_convex_intr}. In Section 5 we prove Theorem \ref{thm: approximation_dp_intr}, our main approximation result, and in Section 6 we show that the $C^{1, \bar 1}$ version of the uniform geodesic stability conjecture holds. \vspace{-0.6cm}

\paragraph{Acknowledgements.} We would like  to thank  Martin Kell and Alexander Lytchak for enlightening discussions on various notions of convexity in metric spaces.
We also thank L\'aszl\'o Lempert for his suggestions that improved the presentation of the paper. This work was partially supported by NSF grant 1610202. 

\section{Preliminaries} 

\subsection[The Lp Finsler geometry of the space of Kahler potentials]{The $L^p$ Finsler geometry of the space of K\"ahler potentials}
 \label{subsect: Lp finsler}
In this short section we recall the basics of finite energy pluripotential theory, as introduced by Guedj-Zeriahi \cite{GZ07}, and the Finsler geometry of the space of K\"ahler potentials, as introduced by the first author \cite{Da15}. For a detailed account on these matters we refer to the recent textbook \cite{GZ17} and lecture notes \cite{Da18}.

As a matter of convention for the duration of the paper we denote by $V$ the total volume of the K\"ahler class $[\omega]$:
$$V:= \int_X \omega^n.$$
By $\textup{PSH}(X,\omega)$ we denote the space of $\omega$-plurisubharmonic ($\omega$-psh) functions. Extending the ideas of Bedford--Taylor, Guedj--Zeriahi introduced the non-pluripolar Monge-Amp\`ere mass for a general potential $u \in \textup{PSH}(X,\omega)$ as the following limit \cite{GZ07}:
$$\omega_u^n := \lim_{k \to \infty}\mathbbm{1}_{\{u > -k\}} (\omega + \i\ddbar \max(u,-k))^n.$$
For such measures one has an estimate on the total mass $\int_X \o_u^n \leq \int_X \o^n=V$, and $\mathcal E_\o$ is the set of potentials with full/maximum mass:
$\mathcal E_\o := \{ u \in \textup{PSH}(X,\omega) \ \textup{s.t.} \ \int_X \o_u^n = \int_X \o^n=V\}.$
Furthermore, potentials $u \in \mathcal E_\o$ that satisfy an $L^p$ type integral condition are members of the so called \emph{finite-energy spaces} of \cite{GZ07}:
$$
\mathcal E^p_\o = \Big\{ u \in \mathcal{E}_{\omega} \ \textup{s.t.} \  \int_X |u|^p \o_u^n < +\infty\Big\}.
$$

Now we recall some of the main points on the $L^p$ Finsler geometry of the space of K\"ahler potentials. By definition, the space of K\"ahler potentials $\mathcal H_\o$ is an open convex subset of $C^\infty(X)$, hence one can think of it as a trivial Fr\'echet manifold. As a result, one can introduce on $\mathcal H_\o$ a collection of $L^p$ type Finsler metrics. If $u \in \mathcal H_\o$ and $\xi \in T_u \mathcal H_\o \simeq C^\infty(X)$, then the $L^p$ norm of $\xi$ is given by the following expression:
$$
\| \xi\|_{p,u} = \bigg(\frac{1}{V}\int_X |\xi|^p \o_u^n\bigg)^{\frac{1}{p}}.
$$
In case $p=2$, this construction reduces to the Riemannian geometry of Mabuchi \cite{Ma87}
(independently discovered by  Semmes \cite{Se92} and Donaldson \cite{Do99}).

Using these Finsler metrics, one can introduce path length metric structures $(\mathcal H_\o,d_p)$. In \cite[Theorem 2]{Da15}, the first author identified the  completion of these spaces with $\mathcal E^p_\o \subset \textup{PSH}(X,\o)$ from above, and it turns out that $(\mathcal E^p_\o,d_p)$ is a complete geodesic metric space. 

The distinguished $d_p$-geodesic segments of the completion $(\mathcal E^p_\o,d_p)$ are constructed as upper envelopes of quasi-psh functions, as we now elaborate. Let $S = \{0 < \textup{Re }s < 1 \} \subset \mathbb{C}$ be the unit strip, and $\pi_{S \times X}: S \times X \to X$ denotes projection to the second component.

We consider $u_0,u_1 \in \mathcal E^p_\o$. We say that the curve $[0,1] \ni t \to v_t \in \mathcal E^p_\o$ is a \emph{weak subgeodesic} connecting $u_0,u_1$ if $d_p(v_t,u_{0,1}) \to 0$ as $t \to 0,1$, and the extension $v(s,x) = v_{\textup{Re }s}(x)$ is $\pi^* \o$-psh on $S \times X$, i.e.,
$$\pi^* \o + i\partial_{S \times X} \bar \partial_{S \times X} v \geq 0, \ \textup{ as currents on } \ S \times X.$$
As shown in \cite{Da17,Da15}, a distinguished $d_p$-geodesic $[0,1] \ni t \to u_t 
\in \mathcal 
E^p_\o$ connecting $u_0,u_1$ can be obtained as the supremum of all weak subgeodesics:
\begin{equation}
\label{fegeod}
u_t := \sup \{v_t \ | \ t \to v_t  \textup{ is a subgeodesic connecting } u_0,u_1\}, \ t \in [0,1].
\end{equation}
Given $u_0,u_1\in \mathcal E^p_\o$, we call \eqref{fegeod}
the $L^p$ \emph{finite energy geodesic} (or simply finite energy geodesic) connecting $u_0,u_1$. 
Due to this ``Perron type" definition, finite energy geodesic segments satisfy a comparison principle.

In case the endpoints $u_0,u_1$ are from $\mathcal H_\o$,  the finite energy geodesic connecting them is actually $C^{1, \bar 1}$ on $\overline{S} \times X$, as shown by 
Chen \cite{Ch00} (for a survey see B\l ocki \cite{Bl13}, with the optimal result due to Chu--Tosatti--Weinkove \cite{CTW17}). 

Regarding the metric $d_p$ the following double estimate holds for some dimensional constant $C>1$  and all $p \geq 1$ \cite[Theorem 3]{Da15}:
$$\frac{1}{C}d_p(u_0,u_1)^p \leq  \frac{1}{V}\int_X |u_0 - u_1|^p \o_{u_0}^n + \frac{1}{V}\int_X |u_0 - u_1|^p \o_{u_1}^n \leq C d_p(u_0,u_1)^p, \ \ \ u_0,u_1 \in \mathcal E^p_\o.$$

We recall that for any $u \in \textup{PSH}(X,\omega)$ there exists $u_j \in \mathcal H_\o$ such that $u_j$ decreases to $u$. This is a result due to Demailly \cite{De92} with a simpler proof due to B{\l}ocki--Ko{\l}odziej \cite{BK07}. It is well known that the Monge--Amp\`ere energy $I: \mathcal E^1_\o \to \Bbb R$ defined by
$$I(u) = \frac{1}{V(n+1)} \sum_{j=0}^n \int_X u \omega^{n-j} \wedge \omega_u^j$$
is affine along finite energy geodesics \cite{Da15}. Moreover, the same is true for $\sup_X u_t$ in case $u_0=0$:
\begin{lemma}\label{lem: sup_X_linear} Let $[0,1] \ni t \to u_t \in \mathcal E^1_\o$ be a finite energy geodesic with $u_0 =0$. Then $t \to \sup_X u_t$ is affine.
\end{lemma}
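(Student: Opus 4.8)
The plan is to exploit two facts: (a) the Monge--Amp\`ere energy $I$ is affine along finite energy geodesics, and (b) the family $c\mapsto u_t+c$ (adding a constant) interacts well with the Perron-type construction \eqref{fegeod}. First I would recall that $I(u+c)=I(u)+c$ for any constant $c$ (this follows immediately from the definition of $I$, since $\omega_{u+c}^j=\omega_u^j$ and each term picks up a single factor of $c$, and $\frac{1}{V(n+1)}\sum_{j=0}^n\int_X c\,\omega^{n-j}\wedge\omega_u^j = c$). Hence if we set $M_t:=\sup_X u_t$, the curve $t\mapsto u_t-M_t$ stays in $\mathrm{PSH}(X,\omega)$ with supremum $0$, and $t\mapsto M_t = M_t - I(u_t)\cdot 0 + \dots$ — more precisely $M_t = I(u_t)+(M_t-I(u_t))$, where $I(u_t)$ is affine, so it suffices to show $t\mapsto M_t - I(u_t)$, equivalently $t\mapsto M_t$ itself, is both convex and concave.

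\medskip

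\textbf{Convexity of $t\mapsto M_t$.} Here I would use that $u(s,x)=u_{\mathrm{Re}\,s}(x)$ is $\pi^*\omega$-psh on $S\times X$. Fix $\tau\in(0,1)$ and consider the harmonic function $h$ on the strip $S$ with boundary values $M_0$ on $\{\mathrm{Re}\,s=0\}$ and $M_1$ on $\{\mathrm{Re}\,s=1\}$; explicitly $h(s)=(1-\mathrm{Re}\,s)M_0+(\mathrm{Re}\,s)M_1$. The function $(s,x)\mapsto u(s,x)-h(s)$ is still $\pi^*\omega$-psh on $S\times X$ (subtracting a pluriharmonic function), and it is bounded above near $\partial S\times X$ by the definition of a weak subgeodesic together with the fact that $u_{0},u_1\in\mathcal E^p_\omega$ have finite supremum, say by $0$ after shrinking. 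Then $s\mapsto \sup_X(u(s,\cdot)-h(s))$ is subharmonic and bounded on $S$, depends only on $\mathrm{Re}\,s$, hence is convex in $\mathrm{Re}\,s$ and $\le 0$ on the boundary, so $\le 0$ throughout. Evaluating gives $M_t\le (1-t)M_0+tM_1$; applying the same argument on subintervals $[a,b]\subset[0,1]$ yields full convexity of $t\mapsto M_t$. (One must be a little careful that the envelope $u_t$ is exactly the geodesic, not just a subgeodesic, but convexity of the sup along \emph{any} subgeodesic is all that's used, and the geodesic is one.)

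\medskip

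\textbf{Concavity of $t\mapsto M_t$.} For the reverse inequality I would construct an explicit competing subgeodesic. Let $\ell(t):=(1-t)M_0+tM_1$ and consider the candidate curve $w_t:=u_t$ — no; instead compare the geodesic from $u_0=0$ to $u_1$ with the ``shifted'' geodesic. Concretely: since $u_0=0$, we have $M_0=0$, so I want $M_t\ge tM_1$. Consider $v_t:=u_t$ reparametrized, or better, note that $\tilde u_t:=u_t-\ell(t)$ is a subgeodesic connecting $u_0-\ell(0)=0$ and $u_1-\ell(1)=u_1-M_1\le 0$. Now here is the key point I need: because $u_1-M_1\le 0=u_0$, the comparison principle for finite energy geodesics (mentioned in the excerpt right after \eqref{fegeod}) forces $\tilde u_t\le$ (the geodesic from $0$ to $0$, which is $\equiv 0$) — wait, that needs $0$ as \emph{both} endpoints dominating; rather, $\tilde u_t \le$ geodesic from $u_0=0$ down to $u_1-M_1$, and since the constant $0$ dominates at $t=0$ and $0\ge u_1-M_1$ at $t=1$, the comparison principle gives $\tilde u_t\le 0$, i.e. $u_t\le \ell(t)$, recovering only convexity again. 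To get concavity I would instead run this with the \emph{linear-in-$t$ subgeodesic}: the affine path $a_t:=(1-t)\cdot 0+t\cdot u_1=tu_1$ is a genuine weak subgeodesic (its lift is $\pi^*\omega$-psh since $t\mapsto tu_1(x)$ is convex hence the lift is $\pi^*\omega$-psh), so by maximality \eqref{fegeod} we get $u_t\ge tu_1$, whence $M_t=\sup_X u_t\ge \sup_X(tu_1)=t\sup_X u_1 = tM_1 = \ell(t)$. Combined with convexity, $M_t=\ell(t)$ is affine.

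\medskip

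\textbf{Main obstacle.} The delicate point is the boundary behaviour in the convexity step: $u_t\in\mathcal E^p_\omega$ need not be bounded, so ``$\sup_X(u(s,\cdot)-h(s))$'' could a priori be $+\infty$ or the boundary values need not be attained in the naive sense; one has to justify using the $d_p$-convergence $v_t\to u_{0,1}$ to control $\sup_X$ near the boundary (e.g.\ via the estimate $d_p(u,v)\ge c|\sup_X u-\sup_X v|$ type bounds, or by first doing the argument for $u_0,u_1\in\mathcal H_\omega$ where the geodesic is $C^{1,\bar1}$ and then approximating decreasingly). The cleanest route is probably: prove the lemma first for smooth endpoints using the $C^{1,\bar 1}$ regularity (so everything is bounded and classical subharmonicity of $\sup_X$ applies), then take decreasing approximations $u^j_i\searrow u_i$ with $u^j_i\in\mathcal H_\omega$, use that the associated geodesics decrease to the finite energy geodesic, and pass to the limit in the (already established) affine identity $\sup_X u^j_t=t\sup_X u^j_1$ using monotone convergence of the suprema. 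The concavity half via the affine competitor $tu_1$ is robust and needs no regularity.
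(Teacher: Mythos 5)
The convexity half of your argument is fine: $\sup_X u(s,\cdot)$ is a sup of a locally uniformly bounded family of subharmonic functions on $S$, is usc by compactness of $X$, and depends only on $\mathrm{Re}\,s$, hence is convex in $t$; your technical worry about boundary behaviour is handled exactly as you suggest. The paper itself does not reprove any of this but simply cites \cite[Theorem 1(ii)]{Da13} for bounded geodesics and then passes to the finite energy case by decreasing approximation and Hartogs' lemma, so your attempt at a self-contained proof is a genuinely different route.

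The concavity half, however, contains a real error. Your claim that $a_t:=tu_1$ is a weak subgeodesic ``since $t\mapsto tu_1(x)$ is convex'' is false: convexity in $\mathrm{Re}\,s$ for each fixed $x$ only controls the $\partial_s\bar\partial_s$ component of the lift, but $\pi^*\omega$-psh on $S\times X$ also requires nonnegativity of the mixed terms. For $a(s,x)=(\mathrm{Re}\,s)\,u_1(x)$ one has $\partial_s\bar\partial_s a=0$ while $\partial_s\bar\partial_{z_j}a=\tfrac12\,\partial_{\bar z_j}u_1\neq 0$ unless $u_1$ is constant, and a Hermitian matrix with a vanishing diagonal entry and a nonzero off-diagonal entry in the same row cannot be positive semidefinite. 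Worse, the intermediate inequality $u_t\geq tu_1$ that you derive from this is simply false: \emph{every} subgeodesic $v_t$ is convex in $t$ for each fixed $x$, so the Perron envelope \eqref{fegeod} satisfies $u_t\leq (1-t)u_0+tu_1=tu_1$ — the geodesic lies \emph{below} the chord, not above. So the argument cannot be repaired by tightening the justification; the inequality itself goes the wrong way.

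The correct competing subgeodesic is not the chord but an affine-in-$t$ translate of a fixed potential. Take
\[
v_t := u_1 - (1-t)\,M_1, \qquad M_1:=\sup_X u_1.
\]
Its lift $v(s,x)=u_1(x)-(1-\mathrm{Re}\,s)M_1$ differs from $\pi^* u_1$ by a pluriharmonic function of $s$, so $\pi^*\omega+i\partial\bar\partial v=\pi^*\omega_{u_1}\geq 0$ and $v$ is a genuine weak subgeodesic. Since $v_0=u_1-M_1\leq 0=u_0$ and $v_1=u_1$, the comparison principle for geodesics gives $u_t\geq v_t$, hence $\sup_X u_t\geq \sup_X v_t=tM_1$. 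Combined with the convexity upper bound $\sup_X u_t\leq tM_1$, this yields the affine identity. With this substitution your overall plan (prove it directly, then extend by decreasing approximation and Hartogs) goes through.
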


This is essentially \cite[Theorem 1]{Da13}(ii), that is stated for bounded geodesics. Since finite energy geodesic segments can be approximated decreasingly by bounded geodesic segments, the above result follows as a consequence of Hartogs' lemma \cite[Proposition 8.4]{GZ17}.
For more on $L^p$ Mabuchi geometry we refer to  \cite[Chapter 3]{Da18}.

\subsection{The relative Ko{\l}odziej type estimate}

In this short subsection we recall the basics of relative pluripotential theory that are needed to state the relative Ko{\l}odziej type estimates of \cite{DDL4}. For more details we refer to the sequence of papers \cite{DDL1,DDL2,DDL3,DDL4}.

Let $E$ be a Borel subset of $X$. Given $\chi \in \textup{PSH}(X,\omega)$, we define the $\chi$-relative capacity of $E$ as 
\begin{equation}\label{eq: capphi_def}
 \textup{Cap}_\chi(E) := \sup \left \{\int_E \omega_{u}^n \setdef u \in \PSH(X,\omega),
  \ \chi-1\leq u \leq \chi \right \}. 
\end{equation}

When $\chi =0,$ we recover the classical Monge--Amp\`ere capacity $\textup{Cap}_\omega$ (see e.g. \cite{GZ05}). For more on this concept we refer to \cite[Section 4]{DDL4}.

Given $u\in \PSH(X,\omega)$, we recall the definition of envelopes with respect to singularity type, introduced  by Ross and Witt Nystr\"om \cite{RWN14}: 
$$
P[u]:= \textup{usc}\Big(\lim_{C\to +\infty} P(0,u+C)\Big) \in \textup{PSH}(X,\omega),
$$
where $P(\phi,\psi) := \sup\{v \in \textup{PSH}(X,\omega) \ \textup{s.t. } v \leq \phi \textup{ and } v \leq \psi\}.$
In addition to appearing in the statement of the relative Ko{\l}odziej type estimate below, this concept also plays a role in Theorem \ref{thm: approximation of d1 rays}, where it is used to approximate geodesic rays, via \cite{RWN14}.

Finally we recall the following $L^{\infty}$ estimate from \cite{DDL4}:

\begin{theorem}\textup{\cite[Theorem 3.3]{DDL4}}\label{thm: uniform_estimate_Kolodziej} 
	Let $a\in [0,1),A>0$, $\chi \in \PSH(X,\theta)$ and  $0\leq f \in L^p(X,\omega^n)$ for some $p>1$. Assume that  $u\in \PSH(X,\theta)$, normalized by $\sup_X u=0$, satisfies 
	\begin{equation}
		\label{eq: volume cap domination 0}
		\theta_u^n \leq f\omega^n + a\theta_{\chi}^n.
	\end{equation}
	Assume also that  
	\begin{equation}
		\label{eq: volume cap domination}
		\int_E f\omega^n \leq A [\textup{Cap}_{\chi}(E)]^2,
	\end{equation}
	for every Borel subset $E\subset X$. If $P[u]$ is less singular than $\chi$ then 
$$\chi -\sup_X \chi- C\Big(\|f\|_{L^p},p,(1-a)^{-1},A\Big) \leq u.$$
\end{theorem}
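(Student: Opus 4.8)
The plan is to adapt Ko{\l}odziej's classical $L^\infty$ scheme to the relative setting: reduce the pointwise bound to a decay statement for the $\chi$-relative capacity of sublevel sets, then run a De Giorgi type iteration. After replacing $\chi$ by $\chi-\sup_X\chi$ we may assume $\sup_X\chi=0$; set $U(s):=\{u<\chi-s\}$ for $s\geq 0$ and $g(s):=\textup{Cap}_\chi(U(s))$, a bounded non-increasing function. The target is $g(s)=0$ for $s\geq C$ with $C=C(\|f\|_{L^p},p,(1-a)^{-1},A)$; afterwards the relative domination principle of \cite{DDL2,DDL4}---whose applicability to $u$ is precisely what the hypothesis ``$P[u]$ less singular than $\chi$'' secures, by placing $u$ in the relative full mass class $\mathcal E(X,\theta,[\chi])$---turns ``$\{u<\chi-C\}$ is $\textup{Cap}_\chi$-negligible'' into the pointwise inequality $u\geq \chi-C$.

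First I would prove the main comparison-principle estimate. Fix $\sigma>0$, $0<t<1$, and an arbitrary competitor $v$ for \eqref{eq: capphi_def}, i.e.\ $\theta$-psh with $\chi-1\leq v\leq\chi$. The auxiliary potential $w:=\max\bigl(u,\,(1-t)\chi+tv-\sigma-t\bigr)$ is $\theta$-psh, agrees with $u$ outside $U(\sigma+t)$, dominates $u$, and equals $(1-t)\chi+tv-\sigma-t$ on $U(\sigma+2t)$, where convexity of the non-pluripolar Monge--Amp\`ere operator gives $\theta_w^n\geq t^n\theta_v^n$. Applying the relative comparison principle on $\{u<w\}$ (which lies between $U(\sigma+2t)$ and $U(\sigma+t)$) and invoking \eqref{eq: volume cap domination 0},
$$t^n\int_{U(\sigma+2t)}\theta_v^n\;\leq\;\int_{U(\sigma+t)}\theta_u^n\;\leq\;\int_{U(\sigma+t)}f\,\omega^n+a\int_{U(\sigma+t)}\theta_\chi^n.$$
Taking the supremum over $v$ and relabelling gives, for all $\sigma\geq 0$ and $0<t<1$, the estimate $t^n g(\sigma+t)\leq\int_{U(\sigma)}f\,\omega^n+a\,\theta_\chi^n(U(\sigma))$. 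A second application of the comparison principle, now with the competitor $\max(u,\chi-\sigma)$, yields $(1-a)\,\theta_\chi^n(U(\sigma))\leq\int_{U(\sigma)}f\,\omega^n$, absorbing the $\theta_\chi^n$-term; combined with \eqref{eq: volume cap domination} this produces the clean recursion
$$t^n g(\sigma+t)\;\leq\;\frac{A}{1-a}\,g(\sigma)^2,\qquad \sigma\geq 0,\ \ 0<t<1.$$

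Next I would establish a ``seed'' estimate to start the iteration. Going back to $t^n g(\sigma+t)\leq\int_{U(\sigma)}f\,\omega^n+a\,\theta_\chi^n(U(\sigma))$ with $t=1$, absorbing the $\theta_\chi^n$-term as above but now estimating $\int_{U(\sigma)}f\,\omega^n\leq\|f\|_{L^p}\,\omega^n(U(\sigma))^{1-1/p}$ by H\"older, and using the uniform Skoda-type bound $\omega^n(\{u<-\sigma\})\leq C_0 e^{-\alpha\sigma}$ (with dimensional $\alpha,C_0$) valid for every $\theta$-psh $u$ normalized by $\sup_X u=0$, one gets $g(\sigma+1)\leq \frac{C_0^{1-1/p}}{1-a}\|f\|_{L^p}\,e^{-\alpha(1-1/p)\sigma}$. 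Hence there is an explicit $\sigma_0=\sigma_0(\|f\|_{L^p},p,(1-a)^{-1},A)$ with $g(\sigma_0)\leq\tfrac{1-a}{2A}$. Feeding this into the recursion and running the standard Ko{\l}odziej/De Giorgi iteration---at step $k$ choosing the increment $t_k:=\bigl(2\tfrac{A}{1-a}g(\sigma_k)\bigr)^{1/n}\leq 1$, which at least halves $g$---yields $\sigma_k\uparrow\sigma_\infty<\infty$ with $g(\sigma_k)\to 0$, so $g\equiv 0$ on $[\sigma_\infty,\infty)$, and $\sigma_\infty$ is controlled by the stated quantities; this gives $C$.

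The step I expect to be the main obstacle is carrying all of this out in the \emph{relative, possibly unbounded} category: the comparison and domination principles, monotonicity and convexity of non-pluripolar products, finiteness of $\textup{Cap}_\chi(U(s))$, and even the meaningfulness of the sublevel sets all require care once $u$ and $\chi$ are singular, and it is exactly here that $P[u]\succeq\chi$ is indispensable (this is where the framework of \cite{DDL1,DDL2,DDL3,DDL4} does the heavy lifting). A secondary, essentially bookkeeping, difficulty is ensuring that every constant above depends only on $\|f\|_{L^p}$, $p$, $(1-a)^{-1}$ and $A$ together with dimensional data, and on nothing finer about $u$ or $\chi$.
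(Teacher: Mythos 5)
The paper does not give a proof of this theorem; it is quoted verbatim from \cite[Theorem 3.3]{DDL4}, so there is no in-paper argument to compare against. Your sketch is, however, a faithful reconstruction of the method actually used in \cite{DDL4}: a Ko{\l}odziej/De Giorgi iteration on the decay of the $\chi$-relative capacity $g(s)=\textup{Cap}_\chi(\{u<\chi-s\})$. The two comparison-principle estimates you isolate (the $t^n$-scaling estimate via the auxiliary potential $w=\max(u,(1-t)\chi+tv-\sigma-t)$, and the absorption of the $a\,\theta_\chi^n$ term via comparison with $\max(u,\chi-\sigma)$), the resulting recursion $t^n g(\sigma+t)\leq \frac{A}{1-a}\,g(\sigma)^2$, the H\"older--Skoda seed bound, and the iterative bisection are all the right ingredients.

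One imprecision worth flagging: the hypothesis ``$P[u]$ is less singular than $\chi$'' does not by itself place $u$ in the relative class $\mathcal E(X,\theta,[\chi])$ (indeed $u\geq\chi-C$ is the conclusion, not a premise). What the hypothesis actually secures, via the identity $\int_X\theta_u^n=\int_X\theta_{P[u]}^n$ from \cite{DDL2} and the monotonicity of non-pluripolar masses \cite{WN17}, is the mass inequality $\int_X\theta_u^n\geq\int_X\theta_\chi^n$, which is exactly what makes the relative comparison and domination principles available in the iteration. Also, in passing to the recursion you implicitly use $U(\sigma)\subset\{u<-\sigma\}$ (valid after normalizing $\sup_X\chi=0$) for the Skoda bound; and the shift from $g(\sigma+2t)$ to $g(\sigma+t)$ in your display is a harmless relabelling of $\sigma$. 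Neither point affects the soundness of the scheme, but the first one is the genuine content of the singularity-type hypothesis and should be stated as such.
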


Here, given two potentials $u,v \in \PSH(X,\omega)$, we say that $u$ is less singular than $v$ if $u\geq v-C$, for some constant $C$. 

This theorem generalizes the classical estimates of Ko{\l}odziej from \cite{Ko98}, and it is used in \cite{DDL4} to solve complex Monge--Amp\`ere equations with prescribed singularity type, and to resolve the log-concavity conjecture of the volume in pluripotential theory. Here we will use it in Section 5 to show that it is possible to approximate $L^p$ geodesic rays with bounded ones that have converging radial K-energy.

\subsection{Weighted Clarkson and Ball--Carlen--Lieb type inequalities}

In this short preliminary section we point out relevant extensions of well known inequalities due to Clarkson \cite{Cl36} and Ball--Carlen--Lieb \cite{BCL94} for $L^p$ spaces, introducing a weight $\lambda \in [0,1]$ into these results. These theorems are almost certainly well known to experts in analysis, but we could not find the versions below in the literature.

\begin{theorem}\label{thm: Clarkson_lambda} Suppose that $p \geq 2, \ \lambda \in [0,1]$ and $f,g \in L^p(\nu)$, where $\nu$ is a measure on the set $X$. Then
\begin{equation}\label{eq: Clarkson_lambda}
\lambda \| f\|^p_p + (1-\lambda) \| g\|_p^p
 \geq \| \lambda f + (1-\lambda) g \|^p_p + \lambda^{\frac{p}{2}} (1-\lambda)^{\frac{p}{2}} \| f-g\|^p_p.
\end{equation}
\end{theorem}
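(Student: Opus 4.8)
The plan is to reduce the weighted inequality \eqref{eq: Clarkson_lambda} to the classical (unweighted) Clarkson inequality for $p\ge 2$, which states that for $F,G\in L^p(\nu)$,
\[
\Big\|\tfrac{F+G}{2}\Big\|_p^p + \Big\|\tfrac{F-G}{2}\Big\|_p^p \le \tfrac12\|F\|_p^p + \tfrac12\|G\|_p^p.
\]
First I would dispose of the trivial endpoint cases $\lambda\in\{0,1\}$, where the last term vanishes and the inequality becomes an equality, so assume $\lambda\in(0,1)$. The natural move is a change of variables that turns the convex combination $\lambda f+(1-\lambda)g$ into an average: set $F := 2\lambda f$ and $G := 2(1-\lambda)g$, so that $\tfrac{F+G}{2} = \lambda f+(1-\lambda)g$ and $\tfrac{F-G}{2} = \lambda f-(1-\lambda)g$. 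Applying the classical Clarkson inequality to this $F,G$ and multiplying through by $2^p$ gives
\[
\|\lambda f+(1-\lambda)g\|_p^p + \|\lambda f-(1-\lambda)g\|_p^p \le 2^{p-1}\big(\lambda^p\|f\|_p^p + (1-\lambda)^p\|g\|_p^p\big),
\]
which is close but not quite what is wanted: the right-hand side has $2^{p-1}\lambda^p$ rather than $\lambda$, and the second term on the left is $\|\lambda f-(1-\lambda)g\|_p^p$ rather than $\lambda^{p/2}(1-\lambda)^{p/2}\|f-g\|_p^p$. So a naive substitution is not enough, and the weights must be handled more carefully.

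A cleaner route is pointwise-plus-integration: it suffices to prove the scalar inequality
\[
\lambda|a|^p + (1-\lambda)|b|^p \ge |\lambda a+(1-\lambda)b|^p + \lambda^{p/2}(1-\lambda)^{p/2}|a-b|^p
\]
for all real (or complex) $a,b$, and then integrate against $\nu$; indeed each term is the $L^p$-norm-to-the-$p$ of the corresponding function, and the inequality \eqref{eq: Clarkson_lambda} follows by integrating the pointwise bound termwise. For the scalar inequality, by homogeneity I would normalize, e.g. assume $\lambda|a|^p+(1-\lambda)|b|^p$ is fixed or reduce to one variable by scaling; the cleanest normalization is probably to treat it as a statement about the function $t\mapsto |t|^p$ and use that $|t|^p$ is uniformly convex. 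Concretely, one can invoke the known scalar uniform convexity estimate: for $p\ge 2$ there is the inequality (a form of the parallelogram-type inequality for $|t|^p$)
\[
\Big|\frac{x+y}{2}\Big|^p + \Big|\frac{x-y}{2}\Big|^p \le \frac12|x|^p+\frac12|y|^p,
\]
and then feed in $x=2\lambda a$, $y=2(1-\lambda)b$ — but as noted this still leaves a gap in the weights. The honest fix is to prove the scalar inequality directly: define $\Phi(a,b) := \lambda|a|^p+(1-\lambda)|b|^p - |\lambda a+(1-\lambda)b|^p - \lambda^{p/2}(1-\lambda)^{p/2}|a-b|^p$, reduce by homogeneity and symmetry to $a-b$ fixed (say $a-b=1$, $a=s$, $b=s-1$) or to the line $\{a+b=\text{const}\}$, and show $\Phi\ge 0$ by a one-variable calculus argument, checking the minimum occurs where the derivative vanishes and evaluating. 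The key analytic input making this work for $p\ge 2$ is the convexity of $t\mapsto t^{p/2}$ (equivalently $p\ge 2$), which is exactly what controls the cross term $\lambda^{p/2}(1-\lambda)^{p/2}$; I would expect to use $\lambda^{p/2}(1-\lambda)^{p/2} = (\lambda(1-\lambda))^{p/2}$ and the elementary bound $\lambda(1-\lambda)|a-b|^2 \le$ (something involving $\lambda|a|^2+(1-\lambda)|b|^2-|\lambda a+(1-\lambda)b|^2$), the latter being an exact identity: $\lambda|a|^2+(1-\lambda)|b|^2-|\lambda a+(1-\lambda)b|^2 = \lambda(1-\lambda)|a-b|^2$.

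This last identity is the crux. Raising it to the power $p/2$ gives $\big(\lambda(1-\lambda)|a-b|^2\big)^{p/2} = \big(\lambda|a|^2+(1-\lambda)|b|^2-|\lambda a+(1-\lambda)b|^2\big)^{p/2}$, so the scalar inequality becomes
\[
\lambda|a|^p+(1-\lambda)|b|^p - |\lambda a+(1-\lambda)b|^p \ \ge\ \big(\lambda|a|^2+(1-\lambda)|b|^2 - |\lambda a+(1-\lambda)b|^2\big)^{p/2}.
\]
Writing $q := p/2\ge 1$, $A := |a|^2$, $B := |b|^2$, $M := |\lambda a+(1-\lambda)b|^2$, one has $M \le \lambda A+(1-\lambda)B$ (Jensen), and the inequality to prove is $\lambda A^q+(1-\lambda)B^q - M^q \ge (\lambda A+(1-\lambda)B - M)^q$. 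This follows from the superadditivity-type inequality $x^q - y^q \ge (x-y)^q$ for $x\ge y\ge 0$ and $q\ge 1$ applied with $x=\lambda A+(1-\lambda)B$ and $y=M$, \emph{combined with} the convexity bound $\lambda A^q+(1-\lambda)B^q \ge (\lambda A+(1-\lambda)B)^q$. Actually one needs the chain $\lambda A^q+(1-\lambda)B^q \ge (\lambda A+(1-\lambda)B)^q = x^q \ge y^q + (x-y)^q = M^q + (\lambda A+(1-\lambda)B-M)^q$, and $M^q \ge |\lambda a+(1-\lambda)b|^p$ since $M = |\lambda a+(1-\lambda)b|^2$ so $M^q = M^{p/2} = |\lambda a+(1-\lambda)b|^p$ — equality, good. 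So the whole thing collapses to two elementary facts: convexity of $t\mapsto t^q$ on $[0,\infty)$ for $q\ge 1$, and $x^q-y^q\ge (x-y)^q$ for $x\ge y\ge 0$, $q\ge 1$ (itself a consequence of the same convexity, or of concavity of $t\mapsto t^q-(t-y)^q$... more simply: $(x-y)^q + y^q \le x^q$ by superadditivity of $t\mapsto t^q$ at $0$). The main obstacle — and the only place real care is needed — is getting the pointwise-to-integral passage and the reduction to the scalar case stated cleanly, and verifying the direction of each elementary inequality (in particular that $M\le \lambda A+(1-\lambda)B$ so that $x\ge y$); after that the proof is a short assembly of standard convexity facts, with no hard estimate remaining. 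I would also remark that for complex scalars the identity $\lambda|a|^2+(1-\lambda)|b|^2 - |\lambda a+(1-\lambda)b|^2 = \lambda(1-\lambda)|a-b|^2$ holds verbatim, so the argument is insensitive to the real/complex distinction.
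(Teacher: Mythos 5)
Your final argument is correct and is essentially the paper's proof: both reduce to the pointwise identity $\lambda a^2 + (1-\lambda)b^2 = (\lambda a + (1-\lambda)b)^2 + \lambda(1-\lambda)(a-b)^2$, then apply convexity of $t\mapsto t^{p/2}$ (your $\lambda A^q+(1-\lambda)B^q\ge(\lambda A+(1-\lambda)B)^q$) followed by superadditivity of $t\mapsto t^{p/2}$ on $[0,\infty)$ (your $x^q\ge y^q+(x-y)^q$, which is the paper's $(a^2+b^2)^{1/2}\ge(a^p+b^p)^{1/p}$ after raising to the $p$-th power and substituting $a^2=y$, $b^2=x-y$). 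The opening detour through the classical unweighted Clarkson inequality is a dead end you correctly abandon; the rest matches the paper.
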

\begin{proof}
Since $t \to |t|^{\frac{p}{2}}$ is a convex function, we can write the following estimates:
\begin{flalign*}
\lambda \| f\|^p_p + (1-\lambda) \| g\|_p^p
 & \geq \int_X (\lambda f^2 + (1-\lambda)g ^2 )^\frac{p}{2} d \nu \\
  & = \int_X ((\lambda f + (1-\lambda)g)^2 + \lambda (1-\lambda) (f-g)^2)^\frac{p}{2} d \nu \\
  & \geq \int_X (|\lambda f + (1-\lambda)g|^p + \lambda^{\frac{p}{2}} (1-\lambda)^{\frac{p}{2}} |f-g|^p) d \nu,
\end{flalign*}
where in the last step we have used that $(a^2 + b^2)^{\frac{1}{2}} \geq (a^p + b^p)^\frac{1}{p}, \ a,b \geq 0$.
\end{proof}

\begin{theorem}\label{thm: BCL_lambda} Suppose that $1 < p \leq 2, \ \lambda \in [0,1]$ and $f,g \in L^p(\nu)$, where $\nu$ is a measure on the set $X$. Then
\begin{equation}\label{eq: BCL_lambda}
\lambda \| f\|^2_p + (1-\lambda) \| g\|_p^2
 \geq \| \lambda f + (1-\lambda) g \|^2_p + (p-1)\lambda(1-\lambda) \| f-g\|^2_p.
\end{equation}
\end{theorem}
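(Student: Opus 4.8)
The plan is to prove Theorem~\ref{thm: BCL_lambda} by reducing the weighted statement to the classical Ball--Carlen--Lieb inequality via a homogeneity/substitution trick, rather than re-deriving the hard two-point inequality from scratch. Recall that the (unweighted) Ball--Carlen--Lieb inequality \cite{BCL94} states that for $1<p\le 2$ and $F,G\in L^p(\nu)$,
\begin{equation}\label{eq: BCL_classical}
\|F\|_p^2 + (p-1)\|G\|_p^2 \le \tfrac{1}{2}\big(\|F+G\|_p^2 + \|F-G\|_p^2\big),
\end{equation}
or equivalently, after the affine change of variables $F = \tfrac{1}{2}(f'+g')$, $G=\tfrac12(f'-g')$,
\begin{equation}\label{eq: BCL_classical_2}
\tfrac12\|f'\|_p^2 + \tfrac12\|g'\|_p^2 \ge \big\|\tfrac12 f' + \tfrac12 g'\big\|_p^2 + (p-1)\big\|\tfrac12 f' - \tfrac12 g'\big\|_p^2.
\end{equation}
So the claimed inequality \eqref{eq: BCL_lambda} is exactly \eqref{eq: BCL_classical_2} in the symmetric case $\lambda = 1/2$. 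The content of Theorem~\ref{thm: BCL_lambda} is therefore the passage from $\lambda = 1/2$ to a general weight.

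First I would try the most naive route: set $a := \|f\|_p$, $b := \|g\|_p$, $c := \|f-g\|_p$ and $m := \|\lambda f + (1-\lambda)g\|_p$. By the triangle inequality and the parallelogram-type relations these four quantities satisfy some constraints, and one might hope to deduce \eqref{eq: BCL_lambda} from \eqref{eq: BCL_classical_2} purely algebraically. However this is likely too lossy, because the norm is not an inner product for $p\ne 2$ and the vectors $\lambda f + (1-\lambda)g$ and $\lambda f - (1-\lambda)g$ do not interact like orthogonal projections. The more robust approach, and the one I expect to use, is an interpolation/convexity argument in $\lambda$: fix $f,g$ and consider
$$\Phi(\lambda) := \lambda\|f\|_p^2 + (1-\lambda)\|g\|_p^2 - \|\lambda f + (1-\lambda) g\|_p^2 - (p-1)\lambda(1-\lambda)\|f-g\|_p^2,$$
and show $\Phi(\lambda)\ge 0$ on $[0,1]$. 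One has $\Phi(0) = \Phi(1) = 0$, so it suffices to show $\Phi$ is concave, equivalently that $\lambda \mapsto \|\lambda f + (1-\lambda)g\|_p^2 + (p-1)\lambda(1-\lambda)\|f-g\|_p^2 = \|g + \lambda(f-g)\|_p^2 + (p-1)\lambda(1-\lambda)\|f-g\|_p^2$ is convex in $\lambda$. Writing $h := f-g$, this is the statement that $\psi(\lambda):= \|g+\lambda h\|_p^2 + (p-1)(\lambda - \lambda^2)\|h\|_p^2$ is convex, i.e. that the second derivative of $\lambda\mapsto\|g+\lambda h\|_p^2$ is bounded below by $2(p-1)\|h\|_p^2$. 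This is precisely a known \emph{uniform convexity modulus} estimate for the square of the $L^p$ norm when $1<p\le 2$ — it is one of the standard equivalent forms of the Ball--Carlen--Lieb / Hanner inequality (see e.g.\ \cite{BCL94} and the discussion of $2$-uniform convexity of $L^p$ for $p\le 2$). So I would either cite this directly or, if a self-contained derivation is wanted, obtain it by applying \eqref{eq: BCL_classical_2} along the segment $t\mapsto g+t h$ at three points and taking a limit to extract the differential inequality.

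The main obstacle is making the convexity-in-$\lambda$ step rigorous while staying genuinely elementary, since the cleanest proof really does route through the sharp constant $p-1$, which is the delicate part of \cite{BCL94}; the weight $\lambda$ itself contributes nothing essentially new once that sharp two-point inequality is in hand. A secondary, purely technical point is integrability and differentiability under the integral sign for $\lambda\mapsto\int_X |g+\lambda h|^p\,d\nu$, which is routine for $1<p\le 2$ by dominated convergence (the integrand is $C^1$ in $\lambda$ with derivative controlled by $|h|(|g|+|h|)^{p-1}\in L^1$). I would present the proof as: invoke the sharp $2$-uniform convexity inequality $\|F\|_p^2 + (p-1)\|G\|_p^2 \le \tfrac12(\|F+G\|_p^2 + \|F-G\|_p^2)$ of Ball--Carlen--Lieb, apply it with $F = g + \lambda h$ and $G = \mu h$ for the right choice of $\mu$, rearrange, and optimize/let $\mu$ vary to land exactly on \eqref{eq: BCL_lambda}; checking the endpoint values and the concavity of $\Phi$ then closes the argument.
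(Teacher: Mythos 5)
Your proposal is correct in its essential idea, and it is in fact closely parallel to the paper's argument: both reduce the weighted statement to the single midpoint case of \cite[Proposition 3]{BCL94}, and both then interpolate to general $\lambda$. The paper does this by an explicit dyadic induction on $\lambda = k/2^m$; you do it by introducing $\Phi(\lambda)$ with $\Phi(0)=\Phi(1)=0$ and reducing to concavity of $\Phi$, i.e.\ convexity of $\psi(\lambda) := \|g+\lambda h\|_p^2 + (p-1)(\lambda-\lambda^2)\|h\|_p^2$. These are two standard renderings of the same passage from ``midpoint'' to ``all weights''; yours is perhaps a touch cleaner to write, the paper's is more self-contained since it never leaves the discrete setting.

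One technical caveat in your write-up: for $1<p<2$ the map $\lambda\mapsto\|g+\lambda h\|_p^2$ need not be twice differentiable (the formal second derivative involves $|g+\lambda h|^{p-2}$, which is singular on the zero set of $g+\lambda h$), so ``concavity of $\Phi$ is equivalent to a lower bound on the second derivative'' should not be taken literally. The robust version, which you do gesture at with your ``three points'' remark, is to prove midpoint convexity of $\psi$ directly and then use continuity. Concretely, apply \cite[Proposition 3]{BCL94} with $F = g + \tfrac{a+b}{2}h$ and $G = \tfrac{a-b}{2}h$, so that $F\pm G = g + ah,\ g+bh$; this gives
\begin{equation*}
\Big\|g+\tfrac{a+b}{2}h\Big\|_p^2 + (p-1)\tfrac{(a-b)^2}{4}\|h\|_p^2 \ \le\ \tfrac12\big(\|g+ah\|_p^2+\|g+bh\|_p^2\big),
\end{equation*}
which, after the algebraic identity $\tfrac{(a-a^2)+(b-b^2)}{2} - \big(\tfrac{a+b}{2}-(\tfrac{a+b}{2})^2\big) = \tfrac{(a-b)^2}{4}$, is exactly the midpoint convexity of $\psi$. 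Since $\psi$ is clearly continuous, it is convex, $\Phi$ is concave, and $\Phi\ge 0$ on $[0,1]$. No differentiation is needed. With that fix your proof is complete and is a valid alternative presentation of the paper's argument.
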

\begin{proof} The proof will be given using diadic approximation. Indeed, it is enough to prove \eqref{eq: BCL_lambda} for $\lambda = \frac{k}{2^m}, \ k,m \in \mathbb{N}$ with $1 \leq k \leq 2^m$. We will argue by induction on $m$. For $m=1$ and $k=0,1,2$, the statement of \eqref{eq: BCL_lambda} is either a triviality or reduces to \cite[Proposition 3]{BCL94}. Let us assume that $m >1$ and the statement holds for $m-1$. We can assume that $k$ is odd, as otherwise the inequality reduces to the case $m-1$. Using \cite[Proposition 3]{BCL94}, we start with the following estimate:
\begin{flalign}\label{eq: half_est}
\frac{1}{2} \Big\| \frac{k-1}{2^m} f + \Big(1-\frac{k-1}{2^m}\Big) g \Big\|_p^2 & + \frac{1}{2}  \Big\| \frac{k+1}{2^m} f + \Big(1-\frac{k+1}{2^m}\Big) g \Big\|_p^2 \geq \\
&\geq  \Big\| \frac{k}{2^m} f + \Big(1-\frac{k}{2^m}\Big) g \Big\|_p^2 + (p-1)\Big\| \frac{f}{2^m}-\frac{g}{2^m} \Big\|_p^2. \nonumber
\end{flalign}
Since both $k+1$ and $k-1$ are even, by the inductive step we also have that: 
\begin{flalign}\label{eq: k+1_est}
\frac{k+1}{2^m} \| f  \|_p^2 &+ \Big(1 -\frac{k+1}{2^m}\Big) \|g \|_p^2 \geq \\
& \geq  \Big\| \frac{k+1}{2^m} f + \Big(1-\frac{k+1}{2^m}\Big) g \Big\|_p^2 + (p-1)\frac{k+1}{2^m}\Big(1-\frac{k+1}{2^m}\Big) \| f-g \|_p^2, \nonumber
\end{flalign}
\begin{flalign}\label{eq: k-1_est}
\frac{k-1}{2^m} \| f  \|_p^2 &+ \Big(1 -\frac{k-1}{2^m}\Big) \|g \|_p^2 \geq \\
& \geq  \Big\| \frac{k-1}{2^m} f + \Big(1-\frac{k-1}{2^m}\Big) g \Big\|_p^2 + (p-1)\frac{k-1}{2^m}\Big(1-\frac{k-1}{2^m}\Big) \| f-g \|_p^2. \nonumber
\end{flalign}
Adding \eqref{eq: half_est}, \eqref{eq: k+1_est} and then using \eqref{eq: k-1_est} we arrive at 
\begin{equation*}
\frac{k}{2^m} \| f  \|_p^2 + \Big(1 -\frac{k}{2^m}\Big) \|g \|_p^2 \geq  \Big\| \frac{k}{2^m} f + \Big(1-\frac{k}{2^m}\Big) g \Big\|_p^2 + (p-1)\frac{k}{2^m}\Big(1-\frac{k}{2^m}\Big) \| f-g \|_p^2,
\end{equation*}
what we desired to prove.
\end{proof}

\begin{remark}As alluded to at the beginning of the subsection, in case $\lambda = \frac{1}{2}$, Theorem \ref{thm: Clarkson_lambda} and Theorem \ref{thm: BCL_lambda} recover the well known inequalities of  Clarkson \cite{Cl36} and Ball--Carlen--Lieb \cite[Proposition 3]{BCL94} respectively. 
\end{remark}

\section{Uniform convexity and uniqueness of geodesics}

Before proving the main result of this section, we first point out the following result about the ``spread" of geodesic segments in $\mathcal E^p_\o$, sharing a common smooth endpoint:
\begin{theorem}\label{thm: Lp_geod_spread} Suppose that $p \geq 1$, $u \in \mathcal H_\o$ and $[0,l] \ni t \to u_t,v_t \in \mathcal E^p_\o$ are two finite energy geodesic segments with $u=u_0=v_0$, and $l \in \mathbb{R}^+$. Then
\begin{equation}\label{eq: Lp_geod_spread}
\bigg[\int_X |\dot u_0 - \dot v_0|^p \o_u^n \bigg]^{\frac{1}{p}}\leq \frac{d_p(u_t,v_t)}{t}, \ \ t \in [0,l].
\end{equation}
\end{theorem}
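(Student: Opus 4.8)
The idea is to exploit the fact that the common endpoint $u=u_0=v_0$ lies in $\mathcal H_\o$, so both finite energy geodesics $t\mapsto u_t$ and $t\mapsto v_t$ emanating from $u$ are $C^{1,\bar1}$ near $t=0$ (by the Chen--B\l ocki--Chu--Tosatti--Weinkove regularity recalled in Section \ref{subsect: Lp finsler}), and in particular $\dot u_0,\dot v_0\in C^0(X)$ are well defined. The key analytic input will be the comparison between $d_p$ and the Finsler length of short geodesic pieces: since $t\mapsto u_t$ is the $d_p$-geodesic with $u_0=u\in\mathcal H_\o$, the speed satisfies $d_p(u,u_t)=t\,\|\dot u_0\|_{p,u}= t\big(\tfrac1V\int_X|\dot u_0|^p\o_u^n\big)^{1/p}$, and similarly for $v_t$; so the left side of \eqref{eq: Lp_geod_spread} (up to the normalizing $V$ that I should be careful to track — the stated inequality omits the $1/V$, so I must reconcile conventions or simply carry the $V$) is controlled by $\|\dot u_0-\dot v_0\|_{p,u}$, i.e. the $L^p(\o_u^n)$-distance of the two initial tangent vectors.

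First I would reduce to an \emph{infinitesimal} statement: dividing \eqref{eq: Lp_geod_spread} by nothing and letting $t\to 0$, it suffices to prove the "first variation" inequality
\[
\Big(\tfrac1V\int_X|\dot u_0-\dot v_0|^p\,\o_u^n\Big)^{1/p}\;\le\;\liminf_{t\to0^+}\frac{d_p(u_t,v_t)}{t},
\]
because the right-hand quotient $d_p(u_t,v_t)/t$ is \emph{nondecreasing} in $t$ — this monotonicity is exactly the Busemann convexity \eqref{eq: ChCh3_metric_convex} applied with the constant geodesic at $u$ versus the geodesic from $u$ to (a reparametrization of) $v_t$, together with the triangle inequality; equivalently it follows from convexity of $t\mapsto d_p(u_t,v_t)$ along the pair of geodesics. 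So once the $t\to0$ bound is established, the full-strength inequality for all $t\in[0,l]$ is automatic, and the constant ray at $u$ is legitimately a finite energy geodesic to which \eqref{eq: ChCh3_metric_convex} applies.

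Second, for the infinitesimal bound I would use the two-sided estimate $d_p(u_t,v_t)^p \asymp \tfrac1V\int_X|u_t-v_t|^p\o_{u_t}^n+\tfrac1V\int_X|u_t-v_t|^p\o_{v_t}^n$ recalled from \cite[Theorem 3]{Da15}: since $u_t-v_t = t(\dot u_0-\dot v_0)+o(t)$ uniformly on $X$ (by $C^1$-regularity at $t=0$) and $\o_{u_t}^n\to\o_u^n$ weakly as $t\to0$ (with $u_0=v_0=u$ smooth, so $\o_{u_t}^n\to\o_u^n$ in fact in a strong enough sense to pass the limit, the integrands being uniformly bounded continuous functions), one gets $\liminf_t d_p(u_t,v_t)/t\ge c\,\|\dot u_0-\dot v_0\|_{p,u}$ — but with the dimensional constant $C$ of that estimate, which is \emph{not} good enough. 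To remove the constant I would instead argue directly with the Finsler structure: approximate $v_t$ on a short interval by a smooth path and compare Finsler lengths, using that for curves through the fixed smooth point $u$ the length of $s\mapsto$(geodesic from $u$ to $w_s$) is, to first order, $\|\dot u_0-\dot v_0\|_{p,u}\cdot$(parameter), by the very definition of the metric $d_p$ as the infimum of Finsler lengths and lower semicontinuity of length. Concretely: fix $\eps>0$; the path $[0,t]\ni s\mapsto u_s$ and $[0,t]\ni s\mapsto v_s$ both have well-defined $C^1$ initial velocities, and any $d_p$-almost-geodesic joining $u_s$ to $v_s$ must at $s$ close to $0$ have Finsler speed at least $\|\dot u_0-\dot v_0\|_{p,u}-\eps$, whence $d_p(u_t,v_t)\ge t(\|\dot u_0-\dot v_0\|_{p,u}-\eps)+o(t)$.

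\textbf{Main obstacle.} The delicate point is precisely obtaining the \emph{sharp} constant $1$ (no dimensional loss), which rules out a naive appeal to \cite[Theorem 3]{Da15}. One must genuinely use the Finsler/length-space structure near the smooth point $u$: that the distance $d_p$ is the intrinsic length metric of the $L^p(\o_u^n)$-norm on $T_u\mathcal H_\o$, and that along two $C^1$ curves leaving $u$ with velocities $\dot u_0,\dot v_0$ the distance grows like $t\|\dot u_0-\dot v_0\|_{p,u}+o(t)$ — this is a first-variation-of-arc-length computation that is clean in the smooth Finsler category but needs care because $v_t$ only lands in $\mathcal E^p_\o$, not $\mathcal H_\o$, for $t>0$. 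I expect the paper handles this by a combination of (a) the regularity of $v_t$ near $t=0$, (b) lower semicontinuity of $d_p$-length, and (c) monotonicity of $t\mapsto d_p(u_t,v_t)/t$ from \eqref{eq: ChCh3_metric_convex}; the last of these is what upgrades the infinitesimal bound to all $t\in[0,l]$ and is, I think, the conceptual heart of the argument.
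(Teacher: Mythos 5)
The structure of your plan — establish an infinitesimal lower bound at $t=0$ and then upgrade it to all $t\in[0,l]$ using the monotonicity of $t\mapsto d_p(u_t,v_t)/t$ from Busemann convexity \eqref{eq: ChCh3_metric_convex} — matches the paper's strategy exactly, and you correctly identify that the two-sided estimate of \cite[Theorem 3]{Da15} loses a dimensional constant and hence cannot give the sharp inequality. Where your proposal breaks down is in the substitute you offer for removing that constant: the sentence about ``any $d_p$-almost-geodesic joining $u_s$ to $v_s$ must at $s$ close to $0$ have Finsler speed at least $\|\dot u_0-\dot v_0\|_{p,u}-\eps$'' does not actually constitute an argument. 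The length-space definition of $d_p$ only yields \emph{upper} bounds on distance via test paths; producing a sharp \emph{lower} bound on $d_p(u_t,v_t)$ requires a concrete pointwise estimate, not an appeal to ``lower semicontinuity of length.'' You acknowledge this is the ``main obstacle'' but do not resolve it.

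The paper's actual mechanism is different and more concrete. First it reduces (by density and \cite[Proposition 4.3]{BDL17}) to the case where $u_l,v_l\in\mathcal H_\o$ and the geodesics are $C^{1,1}$. It then splits into two cases. When $u_l\geq v_l$ (so $u_s\geq v_s$ for all $s$ by the comparison principle), it invokes the sharp one-sided estimate of \cite[Lemma 5.1]{Da15},
\begin{equation*}
d_p(u_s,v_s)^p \ \geq\ \int_X (u_s-v_s)^p\,\omega_{u_s}^n,
\end{equation*}
which has constant $1$ precisely because it is one-sided, combines it with monotonicity of $d_p(u_s,v_s)/s$, and then lets $s\to 0^+$ using $C^{1,1}$ regularity to get uniform convergence of $(u_s-v_s)/s$ and weak convergence $\omega_{u_s}^n\rightharpoonup\omega_u^n$. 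For general endpoints it introduces the rooftop $P(u_t,v_t)$, applies the ordered case to the two pairs $(u_t,P(u_t,v_t))$ and $(v_t,P(u_t,v_t))$, and sums the two bounds via the Pythagorean formula $d_p(u_t,v_t)^p = d_p(u_t,P(u_t,v_t))^p + d_p(v_t,P(u_t,v_t))^p$ of \cite[Corollary 4.14]{Da15} together with the elementary inequality $a^p+b^p\geq|a-b|^p$. Your proposal is missing both of these ingredients — the sharp one-sided inequality and the reduction via $P(u_t,v_t)$ to the ordered case — and without them the sharp constant is not obtained.

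A secondary remark: you flag the $1/V$ normalization as a worry. In the ordered case the paper really does use $\int_X (u_s-v_s)^p\omega_{u_s}^n \leq d_p(u_s,v_s)^p$ with no $V$, so the statement of the theorem and the estimate from \cite[Lemma 5.1]{Da15} are consistent with each other as written, independently of how the Finsler norm is normalized.
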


\begin{proof}
We first assume that $u_l \geq v_l$. Furthermore, using $d_p$--approximation of the endpoints $u_l,v_l \in \mathcal E^p_\o$ by decreasing sequences of potentials in $\mathcal H_\o$, it is enough to prove \eqref{eq: Lp_geod_spread} for $C^{1,1}$-geodesics $t \to u_t,v_t$ with $u_l,v_l \in \mathcal H_\o$ (see \cite[Proposition 4.3]{BDL17}). 
 
Using the convexity condition \eqref{eq: ChCh3_metric_convex} and \cite[Lemma 5.1]{Da15} for $0\leq s\leq t \leq l$ we have 
	\begin{flalign*}
		 \frac{d_p(u_t,v_t)^p}{t^p} \geq \frac{d_p(u_s,v_s)^p}{s^p} & \geq \int_X \frac{(u_s-v_s)^p}{s^p}\omega_{u_s}^n. 
	\end{flalign*}
	As $s\to 0^+$, using the fact that the geodesics are $C^{1,1}$, we get that $(u_s-v_s)^p/s^p$ uniformly converges to $(\dot u_0 -\dot v_0)^p$ which is a continuous function on $X$. Since $\omega_{u_s}^n \to \omega_u^n$ weakly (see \cite[Theorem 5(i)]{Da15}) it follows  that 
	$$
	\frac{d_p(u_t,v_t)^p}{t^p}   \geq  \int_X (\dot u_0-\dot v_0)^p \omega_u^n. 
	$$
We now treat the general case, when $u_l$ and $v_l$ may not be comparable. By the previous step, for $t \in [0,l]$ we have  
  $$
  \frac{d_p(u_t,P(u_t,v_t))^p}{t^p}   \geq  \int_X |\dot u_0-\dot w_0^t|^p \omega_u^n\ \text{ and } \   \frac{d_p(v_t,P(u_t,v_t))^p}{t^p}   \geq  \int_X |\dot v_0-\dot w_0^t|^p \omega_u^n, 
  $$
  where $[0,t] \ni s \mapsto w_s^t\in \Ecp$ is the finite energy geodesic connecting $w_0:=u_0$ and $w_t:=P(u_t,v_t)$. 
  
Due to the comparison principle for geodesics, we note that $\dot w^t_0 \leq \dot u_0,\dot v_0$. Using the Pythagorean formula \cite[Corollary 4.14]{Da15}  and the inequality $a^p + b^p \geq \max(a^p,b^p) \geq |a-b|^p, \ a,b\geq 0,$ we can sum up the above inequalities to arrive at the conclusion:
\begin{flalign*}
\frac{d_p(u_t,v_t)^p}{t^p} = \frac{d_p(u_t,P(u_t,v_t))^p}{t^p} + \frac{d_p(v_t,P(u_t,v_t))^p}{t^p} \geq  \int_X |\dot u_0-\dot v_0|^p \omega_{u}^n, \ \ t \in [0,l].  
\end{flalign*}
\end{proof}

Before proceeding we note that Theorem \ref{thm: Lp_geod_spread} implies the following Lidskii type inequality proved in the case of Hodge type K\"ahler metrics in \cite{DLR18}: 

\begin{corollary}\label{cor: Lidskii} If $\alpha,\beta,\gamma \in \mathcal E^p_\o$ with $\alpha \geq \beta \geq \gamma$ then:
$$d_p(\beta,\gamma)^p \leq d_p(\alpha,\gamma)^p - d_p(\alpha,\beta)^p.$$
\end{corollary}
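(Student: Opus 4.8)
The plan is to realize $\alpha \geq \beta \geq \gamma$ as the endpoints of two nested finite energy geodesic segments issuing from a common point and apply Theorem \ref{thm: Lp_geod_spread}. Concretely, I would first reduce to the case where the top point is smooth: by $d_p$-approximation of $\alpha$ from above by a decreasing sequence in $\mathcal H_\o$ (and, if needed, pushing $\beta,\gamma$ down correspondingly so the ordering is preserved), it suffices to treat $\alpha \in \mathcal H_\o$. Then let $[0,1]\ni t\mapsto \beta_t$ be the finite energy geodesic from $\alpha$ to $\beta$, and $[0,1]\ni t\mapsto \gamma_t$ the finite energy geodesic from $\alpha$ to $\gamma$. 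Since $\alpha \geq \beta \geq \gamma$, the comparison principle for finite energy geodesics (coming from the Perron-type definition \eqref{fegeod}) gives $\beta_t \geq \gamma_t$ for all $t$, and in particular $\dot\beta_0 \geq \dot\gamma_0$ pointwise, with both velocities nonpositive.

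The second step is to feed these two geodesics into Theorem \ref{thm: Lp_geod_spread}, applied at $t=1$ with $l=1$ and common endpoint $u=\alpha=\beta_0=\gamma_0$: it yields
\begin{equation*}
\Big[\int_X |\dot\beta_0 - \dot\gamma_0|^p\,\o_\alpha^n\Big]^{\frac1p} \leq d_p(\beta,\gamma),
\end{equation*}
i.e. $\int_X |\dot\beta_0-\dot\gamma_0|^p\o_\alpha^n \leq d_p(\beta,\gamma)^p$. On the other hand, by \cite[Lemma 5.1]{Da15} (the formula giving $d_p$ along a geodesic in terms of the initial velocity against the endpoint measure), one has $d_p(\alpha,\gamma)^p = \int_X |\dot\gamma_0|^p\o_\alpha^n$ and $d_p(\alpha,\beta)^p = \int_X |\dot\beta_0|^p\o_\alpha^n$ — here I use that these are $d_p$-unit-reparametrizations so the speed is computed at the smooth endpoint $\alpha$. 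Combining, the claim $d_p(\beta,\gamma)^p \leq d_p(\alpha,\gamma)^p - d_p(\alpha,\beta)^p$ reduces to the pointwise inequality
\begin{equation*}
|\dot\beta_0 - \dot\gamma_0|^p \leq |\dot\gamma_0|^p - |\dot\beta_0|^p \quad\text{a.e. with respect to }\o_\alpha^n.
\end{equation*}
This holds because $0 \geq \dot\beta_0 \geq \dot\gamma_0$: writing $a = -\dot\beta_0 \geq 0$ and $b = -\dot\gamma_0 \geq a \geq 0$, it is exactly $(b-a)^p \leq b^p - a^p$ for $0\le a\le b$ and $p\ge 1$, which follows from superadditivity of $s\mapsto s^p$ on $[0,\infty)$.

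The main obstacle is bookkeeping around the reduction to smooth top endpoint and the justification that the velocities $\dot\beta_0, \dot\gamma_0$ exist and satisfy the comparison $\dot\beta_0\ge\dot\gamma_0$ in the generality needed — this is where one leans on the $C^{1,1}$ regularity of geodesics with one smooth endpoint (Chen, B\l ocki, Chu--Tosatti--Weinkove as cited) together with the comparison principle, and on continuity of $\o_{\beta_s}^n, \o_{\gamma_s}^n$ as $s\to 0^+$, precisely as in the proof of Theorem \ref{thm: Lp_geod_spread}. Once smoothness of $\alpha$ is in place these are exactly the ingredients already assembled there, so the corollary follows without further difficulty; the passage to general $\alpha\in\mathcal E^p_\o$ is then a routine $d_p$-continuity argument using that both sides depend continuously on $\alpha,\beta,\gamma$ along monotone approximations.
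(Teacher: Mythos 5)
Your proof is correct and the ingredients are right, but the decomposition is genuinely different from the paper's. You base both geodesics at the \emph{top} point $\alpha$: taking $t\mapsto\beta_t$ and $t\mapsto\gamma_t$ from $\alpha$ to $\beta$ and $\gamma$, with $0\geq\dot\beta_0\geq\dot\gamma_0$ by the comparison principle, you feed the pair into Theorem \ref{thm: Lp_geod_spread} at $u=\alpha$ and finish via the pointwise superadditivity $(b-a)^p\leq b^p-a^p$ for $0\le a\le b$. The paper instead bases both geodesics at the \emph{middle} point $\beta$: the increasing geodesic $u_t$ to $\alpha$ and the decreasing geodesic $v_t$ to $\gamma$ have velocities of opposite signs, so $|\dot u_0-\dot v_0|=|\dot u_0|+|\dot v_0|$, and Theorem \ref{thm: Lp_geod_spread} at $u=\beta$ plus $(a+b)^p\geq a^p+b^p$ directly gives the symmetric form $d_p(\alpha,\gamma)^p\geq d_p(\alpha,\beta)^p+d_p(\gamma,\beta)^p$. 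Both routes hinge on the same Theorem \ref{thm: Lp_geod_spread} together with the constant-speed formula $d_p(\cdot,\cdot)^p=\int_X|\dot(\cdot)|^p\,\omega_{(\cdot)}^n$ at the common base point (for which the right citation is \cite[Theorem 1]{Da15} rather than \cite[Lemma 5.1]{Da15}, which only gives a one-sided bound). Basing at $\beta$ is a touch cleaner: the opposite-sign observation is immediate, and the paper uses density to put all three potentials in $\mathcal H_\omega$ at once, so the $C^{1,1}$ regularity and the passage to the limit are uniform; your approach needs only $\alpha$ smooth but then requires the extra comparison-principle observation $\beta_t\geq\gamma_t$ to order the (same-signed) velocities. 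Either way the argument is sound.
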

\begin{proof}By density it is enough to show this estimate for $\alpha,\beta,\gamma \in \mathcal H_\o$. Let $[0,1]\ni t \to u_t,v_t \in \mathcal E^p_\o$ be the increasing/decreasing $C^{1,1}$-geodesics joining $u_0: =\beta,u_1:=\alpha$ and $v_0: =\beta,v_1:=\gamma$ respectively. Then, due to $t$-monotonicity, Theorem \ref{thm: Lp_geod_spread}, and \cite[Theorem 1]{Da15}, the following holds:
$$d_p(\alpha,\gamma)^p = d_p(u_1,v_1)^p \geq \int_X |\dot u_0 - \dot v_0|^p \o^n_\beta \geq \int_X \left(|\dot u_0|^p + |\dot v_0|^p\right) \o^n_\beta = d_p(\alpha,\beta)^p + d_p(\gamma,\beta)^p.$$
\end{proof}

Next we prove the  main result of this section about the uniform convexity of the spaces $(\mathcal E^p_\o,d_p)$ for  $p > 1$. This will follow after an adequate combination of Theorem \ref{thm: Lp_geod_spread} and the extension of the inequalities of Clarkson and Ball--Carlen--Lieb, obtained in the previous section.

\begin{theorem} \label{thm: uniform_convex}Suppose that $u \in \mathcal E^p_\o, \ \lambda \in [0,1]$ and $[0,1] \ni t \to v_t \in \mathcal E^p_\o$ is a finite energy geodesic segment. Then the following hold:\\
(i) $d_p(u,v_\lambda )^2 \leq (1-\lambda)d_p(u,v_{0})^2 +\lambda d_p(u,v_1)^2 - (p-1)\lambda(1-\lambda )d_p(v_0,v_1)^2$, if $1 < p \leq 2.$\\
(ii) $d_p(u,v_\lambda)^p \leq (1-\lambda )d_p(u,v_{0})^p +\lambda d_p(u,v_1)^p - \lambda^{\frac{p}{2}} (1-\lambda)^{\frac{p}{2}}d_p(v_0,v_1)^p$, if $2 \leq  p.$
\end{theorem}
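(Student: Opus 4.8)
The plan is to reduce the two inequalities to the weighted Clarkson / Ball--Carlen--Lieb inequalities (Theorems \ref{thm: Clarkson_lambda} and \ref{thm: BCL_lambda}) applied with $\nu = \omega_u^n$, via the ``spread'' estimate of Theorem \ref{thm: Lp_geod_spread}. First I would reduce to the case $u \in \mathcal H_\omega$, using $d_p$-density of $\mathcal H_\omega$ in $\mathcal E^p_\omega$ together with the fact that both sides of (i) and (ii) are $d_p$-continuous in $u$ (note $v_0,v_1$, hence $v_\lambda$, stay fixed). Once $u$ is a smooth potential, consider the three finite energy geodesics emanating from $u$: namely $t \mapsto w^0_t$, $t \mapsto w^\lambda_t$, $t \mapsto w^1_t$ joining $u$ to $v_0$, $v_\lambda$, $v_1$ respectively, each parametrized on $[0,1]$. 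Each of these is a limit of $C^{1,1}$ geodesics, and Theorem \ref{thm: Lp_geod_spread} gives, for any pair of them,
\[
\Big(\int_X |\dot w^a_0 - \dot w^b_0|^p\,\omega_u^n\Big)^{1/p} \le d_p(v_a,v_b).
\]
Also, by \cite[Lemma 5.1]{Da15} (or \cite[Theorem 3]{Da15}), $d_p(u,v_a)^p$ is comparable to — and in the limiting $C^{1,1}$ case equal to — $\int_X |\dot w^a_0|^p \omega_u^n = \| \dot w^a_0 \|_{p,u}^p$ up to the normalization by $V$.

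The key geometric input I expect to need is that the three initial velocities are \emph{affinely related}: $\dot w^\lambda_0 = (1-\lambda)\dot w^0_0 + \lambda \dot w^1_0$ as functions on $X$. This should follow because $t \mapsto v_t$ is itself a geodesic, so $v_\lambda$ is the ``$\lambda$-point'' of the segment $v_0 v_1$; concatenating, the three geodesics from $u$ fit into the geodesic ``triangle'' with the degenerate side $v_0 v_1$, and differentiating the subgeodesic envelope \eqref{fegeod} at $t=0$ should give the linear interpolation of velocities. (Equivalently, one can write the family $s \mapsto$ geodesic from $u$ to $v_s$ and check that the $t$-derivative at $0$ is affine in $s$, using the Perron/comparison structure.) Granting this, set $f := \dot w^0_0$, $g := \dot w^1_0$, so $\dot w^\lambda_0 = (1-\lambda) f + \lambda g$ and $f - g = \dot w^0_0 - \dot w^1_0$. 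Then:

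For $2 \le p$, apply Theorem \ref{thm: Clarkson_lambda} with measure $\nu = \tfrac1V \omega_u^n$ to get
\[
(1-\lambda)\|f\|_{p,u}^p + \lambda \|g\|_{p,u}^p \ge \|(1-\lambda)f + \lambda g\|_{p,u}^p + \lambda^{p/2}(1-\lambda)^{p/2}\|f-g\|_{p,u}^p,
\]
which, after identifying $\|f\|_{p,u}^p = d_p(u,v_0)^p$, etc., and bounding $\|f-g\|_{p,u} \ge$ (using Theorem \ref{thm: Lp_geod_spread}, with roles reversed — here the inequality goes the right way because $\|f-g\|_{p,u}$ appears with a positive sign on the \emph{larger} side, so we use $\|f-g\|_{p,u} \ge \ldots$ is \emph{not} what we want — rather we need $d_p(v_0,v_1) \le \|f-g\|_{p,u}$, which is exactly Theorem \ref{thm: Lp_geod_spread}), yields (ii). For $1 < p \le 2$, the same computation with Theorem \ref{thm: BCL_lambda} in place of Theorem \ref{thm: Clarkson_lambda} gives
\[
(1-\lambda)\|f\|_{p,u}^2 + \lambda \|g\|_{p,u}^2 \ge \|(1-\lambda)f + \lambda g\|_{p,u}^2 + (p-1)\lambda(1-\lambda)\|f-g\|_{p,u}^2,
\]
and the same substitutions give (i).

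The main obstacle I anticipate is justifying the affine relation $\dot w^\lambda_0 = (1-\lambda)\dot w^0_0 + \lambda \dot w^1_0$ rigorously, since the geodesics are only $C^{1,1}$ (or $C^{1,\bar 1}$) and are defined via upper envelopes, so one cannot naively differentiate. The right approach is probably to work at the level of the $C^{1,1}$ approximants: approximate $v_0, v_1 \in \mathcal E^p_\omega$ by decreasing sequences in $\mathcal H_\omega$, take the corresponding $C^{1,1}$ geodesics, and prove the velocity identity there — where one can use the explicit structure of Chen's solution and the fact that $v_\lambda$ depends affinely (in a suitable sense) on the interpolation parameter — then pass to the limit using uniform convergence of the $(u_s - v_s)/s$ quotients as in the proof of Theorem \ref{thm: Lp_geod_spread}. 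A cleaner alternative, avoiding the velocity identity entirely, is to apply Theorem \ref{thm: Lp_geod_spread} directly to the relevant pairs among $\{w^0, w^\lambda, w^1\}$ and combine with a Pythagorean-type decomposition (\cite[Corollary 4.14]{Da15}) plus the comparison principle for geodesics — mirroring the endgame of the proof of Theorem \ref{thm: Lp_geod_spread} itself — so that the weighted Clarkson / BCL inequalities are applied to velocity fields whose pairwise $L^p(\omega_u^n)$-distances are controlled below by the $d_p$-distances $d_p(v_0,v_\lambda)$, $d_p(v_\lambda,v_1)$, $d_p(v_0,v_1)$; I would try this route first, as it sidesteps the delicate differentiation.
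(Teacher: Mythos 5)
Your overall toolkit is right --- the proof does come down to combining the spread estimate (Theorem \ref{thm: Lp_geod_spread}) with the weighted Clarkson/Ball--Carlen--Lieb inequalities --- but your plan has two independent gaps, and the paper avoids both by a different choice of basepoint.

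First, the affine relation $\dot w^\lambda_0 = (1-\lambda)\dot w^0_0 + \lambda\dot w^1_0$ at the vertex $u$ is generically \emph{false}. It would hold in a flat space (it does in the toric model), but for general $(\mathcal E^p_\o, d_p)$ the curve $s \mapsto \dot w^s_0$ of initial velocities of geodesics from $u$ to $v_s$ is not affine --- this is precisely what distinguishes curved spaces from linear ones, and already for $p=2$ ($\mathcal E^2_\o$ being CAT(0), not flat) one should not expect it. No amount of smoothing of the envelope \eqref{fegeod} fixes this, since it is a geometric rather than a regularity failure. Second, even granting the affine identity, the spread estimate applied to the pair $(w^0, w^1)$ based at $u$ gives
$\big(\int_X|\dot w^0_0 - \dot w^1_0|^p\o_u^n\big)^{1/p} \leq d_p(v_0,v_1)$,
an \emph{upper} bound on $\|f-g\|_{p,u}$. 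But in the Clarkson/BCL conclusion the term $\|f-g\|^p$ (or $\|f-g\|^2$) appears with a negative sign after you move it across, so you would need a \emph{lower} bound $\|f-g\|_{p,u} \geq d_p(v_0,v_1)$. Theorem \ref{thm: Lp_geod_spread} gives you the wrong direction, and your parenthetical discussion of this point in the proposal talks itself into believing otherwise.

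The paper sidesteps both problems by basing the Clarkson/BCL argument not at $u$ but at $v^\varepsilon_\lambda$ (the $\lambda$-point of an $\varepsilon$-approximation of the geodesic $t\mapsto v_t$). From $v^\varepsilon_\lambda$ there are three distinguished geodesic arms: to $u$ (with endpoint velocity $\dot\alpha^{\lambda,\varepsilon}_1$), to $v_0$, and to $v_1$. The crucial point is that the velocities of the arms to $v_0$ and $v_1$ are \emph{automatically} anti-parallel in the limit, since the concatenation is the geodesic $t\mapsto v_t$ itself: as $\varepsilon\to 0$ they converge uniformly to $-\lambda\dot v_\lambda$ and $(1-\lambda)\dot v_\lambda$ respectively. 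So no affine identity needs to be proved; it comes for free from the smoothness of the $v$-geodesic at its interior point $v_\lambda$. One then sets $f:=\dot\alpha^{\lambda,\varepsilon}_1+\lambda\dot v_\lambda$, $g:=\dot\alpha^{\lambda,\varepsilon}_1-(1-\lambda)\dot v_\lambda$, so that $(1-\lambda)f+\lambda g = \dot\alpha^{\lambda,\varepsilon}_1$ (whose $L^p(\o^n_{v^\varepsilon_\lambda})$-norm is \emph{exactly} $d_p(u,v^\varepsilon_\lambda)$ by \cite[Theorem 1]{Da15}) and $f-g = \dot v_\lambda$ (whose $L^p(\o^n_{v^\varepsilon_\lambda})$-norm converges to \emph{exactly} $d_p(v_0,v_1)$, again by \cite[Theorem 1]{Da15}). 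Now the spread estimate is used only to give \emph{upper} bounds $\|f\|^p_p \leq d_p(u,v_0)^p$ and $\|g\|_p^p \leq d_p(u,v_1)^p$, which is the direction it actually provides, and Clarkson/BCL with $\nu = \o^n_{v^\varepsilon_\lambda}$ gives the result after $\varepsilon\to 0$. In short: base at $v_\lambda$, not at $u$.
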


\begin{proof} To begin, let $p \geq 1$ and $\lambda \in [0,1]$. By density (and \cite[Proposition 4.3]{BDL17}) we can assume that $u,v_0,v_1 \in \mathcal H_\o$ and hence $t \to v_t$ is $C^{1,\bar{1}}$.  

Fixing $\varepsilon >0$ momentarily, let $[0,1] \ni t \to v^\varepsilon_t \in \mathcal H_\o$ be Chen's smooth $\varepsilon$-geodesic connecting $v_0,v_1 \in \mathcal H_\o$ (\cite{Ch00}, for a survey see \cite[Section 3.1]{Da18}). Moreover, let $[0,1] \ni t \to \alpha_t^{\lambda,\varepsilon} \in \mathcal E^p_\o$ be the $C^{1,1}$ geodesic connecting $u$ and $v_\lambda^\varepsilon$. Let $[0,\lambda] \ni t \to h^\varepsilon_t \in \mathcal E^p_\o$ be the $C^{1,1}$ geodesic connecting $v_0$ and $v_\lambda^\varepsilon$. Similarly, let $[\lambda,1] \ni t \to k^\varepsilon_t \in \mathcal E^p_\o$ be the $C^{1,1}$ geodesic connecting  $v_\lambda^\varepsilon$ and $v_1$.

We now assume that $2 \leq p$ to address (ii).  Using Theorem \ref{thm: Lp_geod_spread} twice, for pairs of geodesics emanating from $v^\varepsilon_\lambda$, we conclude that
$$\int_X \big|\dot \alpha^{\lambda,\varepsilon}_1+ \lambda \dot h^\varepsilon_\lambda\big|^p \o^n_{v^\varepsilon_\lambda}\leq d_p(u,v_0)^p, \ \ \ \int_X \big|\dot \alpha^{\lambda,\varepsilon}_1- (1-\lambda) \dot k^\varepsilon_\lambda\big|^p \o^n_{v^\varepsilon_\lambda}\leq d_p(u,v_1)^p.$$
By the comparison principle for geodesics, we have that $v^\varepsilon_t \leq h^\varepsilon_t \leq v_t, \ t \in [0,\lambda]$ and $v^\varepsilon_t \leq k^\varepsilon_t \leq v_t, \ t \in [\lambda,1]$. Again, by the comparison principle, the concatenation of $t \to h^\varepsilon_t$ and $t \to k^\varepsilon_t$ is $t$-convex and we obtain that $\dot h^\varepsilon_\lambda \to \dot v_\lambda$ and $\dot k^\varepsilon_\lambda \to \dot v_\lambda$ uniformly on $X$. 
Using this and the above two estimates we can write:
\begin{flalign}\label{eq: L_p_geq_2_ineq}
(1-\lambda) d_p(u, & v_0)^p + \lambda  d_p(u,v_1)^p  \geq \int_X  (1-\lambda)|\dot \alpha^{\lambda,\varepsilon}_1+ \lambda \dot h^\varepsilon_\lambda|^p + \lambda |\dot \alpha^{\lambda,\varepsilon}_1- (1-\lambda) \dot k^\varepsilon_\lambda|^p \o^n_{v^\varepsilon_\lambda} \nonumber \\
&\geq  (1-\lambda) \int_X |\dot \alpha^{\lambda,\varepsilon}_1+  \lambda \dot v_\lambda|^p \o^n_{v^\varepsilon_\lambda} + \lambda \int_X |\dot \alpha^{\lambda,\varepsilon}_1- (1-\lambda) \dot v_\lambda|^p  \o^n_{v^\varepsilon_\lambda} - O(\varepsilon) \nonumber \\
&\geq  \int_X |\dot \alpha_1^{\lambda,\varepsilon}|^p\o^n_{v^\varepsilon_\lambda} + \lambda^{\frac{p}{2}}(1-\lambda)^{\frac{p}{2}} \int_X |\dot v_\lambda |^p \o^n_{v^\varepsilon_\lambda}- O(\varepsilon) \nonumber\\
&= d_p(u,v^\varepsilon_\lambda)^p + \lambda^{\frac{p}{2}}(1-\lambda)^{\frac{p}{2}}\int_X |\dot v_\lambda |^p \o^n_{v^\varepsilon_\lambda}- O(\varepsilon),
\end{flalign}
where  in the third line we have used Theorem \ref{thm: Clarkson_lambda}, and in the last line we have used \cite[Theorem 1]{Da15}. Letting $\varepsilon \to 0$, since ${\omega}_{v^\varepsilon_\lambda}^n \rightharpoonup {\omega}^n_{v_\lambda}$ and $O(\varepsilon) \to 0 $, another application of \cite[Theorem 1]{Da15} gives $(ii)$.

Now we assume that $1 < p \leq 2$ and we address the inequality of $(i)$. The proof is exactly the same, except for  \eqref{eq: L_p_geq_2_ineq}, where we use the estimate of Theorem \ref{thm: BCL_lambda} instead of Theorem \ref{thm: Clarkson_lambda}.
\end{proof}
\begin{remark} Suppose that $\omega$ is the curvature of a Hermitian line bundle $(L,h)$. By exactly the same arguments, one can show that the inequalities of Theorem \ref{thm: uniform_convex} also hold for the finite dimensional $L^p$ type metric spaces $(\mathcal H^k_\omega,d_{p,k})$, as considered in \cite{DLR18}. Using the quantization scheme of this paper \cite[Theorem 1.2]{DLR18}, an alternative proof of Theorem \ref{thm: uniform_convex} can be thus given  when $[\o]$ is integral. 
\end{remark}

Finally we point out that using the above result one can show that the finite energy geodesic segments of $\mathcal E^p_\o$ are the only metric geodesics when $p>1$:
\begin{theorem} Let $p \in (1,\infty)$, and suppose that $[0,1] \ni t \to v_t \in \mathcal E^p_\o$ is the finite energy geodesic connecting $v_0,v_1 \in \mathcal E^p_\o$. Then $t \to v_t$ is the only $d_p$-geodesic connecting $v_0,v_1$.
\end{theorem}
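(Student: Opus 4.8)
The plan is to deduce uniqueness of $d_p$-geodesics for $p>1$ from the uniform convexity of Theorem \ref{thm: uniform_convex}, in two stages: first establish that $(\mathcal E^p_\o,d_p)$ has \emph{unique midpoints}, then bootstrap to full geodesic uniqueness by dyadic bisection. This mirrors the classical fact that uniformly convex geodesic metric spaces are uniquely geodesic, with Theorem \ref{thm: uniform_convex} supplying the modulus of convexity.

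\emph{Stage 1 (unique midpoints).} Fix $y,z\in\mathcal E^p_\o$ with $y\neq z$ and suppose $m_1,m_2$ are both $d_p$-midpoints of $y,z$, i.e. $d_p(y,m_i)=d_p(m_i,z)=\tfrac12 d_p(y,z)$ for $i=1,2$. Assume for contradiction that $m_1\neq m_2$, and let $[0,1]\ni s\mapsto m^s\in\mathcal E^p_\o$ be the \emph{finite energy} geodesic joining $m^0=m_1$ and $m^1=m_2$; set $m_3:=m^{1/2}$, so that $d_p(m_i,m_3)=\tfrac12 d_p(m_1,m_2)>0$. Applying Theorem \ref{thm: uniform_convex} to this finite energy geodesic with base point $u=y$ and $\lambda=\tfrac12$ — inequality (i) if $1<p\le 2$, inequality (ii) if $p\ge2$ — and using $d_p(y,m_1)=d_p(y,m_2)=\tfrac12 d_p(y,z)$, the two endpoint contributions sum to exactly $\big(\tfrac12 d_p(y,z)\big)^2$ (resp. $\big(\tfrac12 d_p(y,z)\big)^p$), which is then strictly decreased by the negative term $-(p-1)\tfrac14 d_p(m_1,m_2)^2$ (resp. $-\tfrac1{2^p}d_p(m_1,m_2)^p$); since $p>1$ this yields $d_p(y,m_3)<\tfrac12 d_p(y,z)$. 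The same computation with $u=z$ gives $d_p(z,m_3)<\tfrac12 d_p(y,z)$. Then $d_p(y,z)\le d_p(y,m_3)+d_p(m_3,z)<d_p(y,z)$, a contradiction; hence $m_1=m_2$, so midpoints are unique for every pair of points of $\mathcal E^p_\o$.

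\emph{Stage 2 (bisection).} Let $[0,1]\ni t\mapsto w_t$ be an arbitrary $d_p$-geodesic with $w_0=v_0$ and $w_1=v_1$. Since the restriction of a $d_p$-geodesic to a subinterval, suitably rescaled, is again a $d_p$-geodesic, both $w_{1/2}$ and $v_{1/2}$ are midpoints of $v_0,v_1$, hence $w_{1/2}=v_{1/2}$ by Stage 1. Applying the same to the two halves (which are now $d_p$-geodesics between $v_0$ and $v_{1/2}$, and between $v_{1/2}$ and $v_1$) gives $w_{1/4}=v_{1/4}$ and $w_{3/4}=v_{3/4}$; iterating, $w_t=v_t$ for every dyadic $t\in[0,1]$. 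Since $d_p$-geodesics are $d_p(v_0,v_1)$-Lipschitz in $t$, they are continuous, so $w\equiv v$. Thus the finite energy geodesic is the only $d_p$-geodesic connecting $v_0,v_1$.

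\emph{Expected main difficulty.} The delicate point is the correct use of Theorem \ref{thm: uniform_convex} in Stage 1. The naive attempt — inserting a competing midpoint as the base point $u$ into the uniform convexity inequality for the geodesic running from $v_0$ to $v_1$ — only delivers $d_p(w_{1/2},v_{1/2})^2\le \tfrac{2-p}{4}d_p(v_0,v_1)^2$ when $1<p<2$, which is not enough to conclude anything. The right move is to feed the finite energy geodesic \emph{between the two candidate midpoints} into the inequality, so that the distance $d_p(m_1,m_2)$ itself appears in the good (negative) term and forces the strict contraction; this also renders the argument uniform in $p$, the only case split being which of the two inequalities of Theorem \ref{thm: uniform_convex} to invoke. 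Minor care is needed to ensure every application of Theorem \ref{thm: uniform_convex} is legitimate — its geodesic argument must be a finite energy geodesic, which $\{m^s\}_s$ is by construction — and that Stage 2 only invokes the continuity of $d_p$-geodesics and the stability of the geodesic property under restriction, both of which are elementary.
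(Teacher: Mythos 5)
Your proof is correct, and its key step coincides with the paper's: given two competing points, pass to the midpoint of the \emph{finite energy} geodesic between them, feed that into Theorem~\ref{thm: uniform_convex}, and beat the triangle inequality. The difference is organizational. The paper works at each fixed $t$ directly: if $u_t\neq v_t$ (with $u$ the competing $d_p$-geodesic and $v$ the finite energy one), it takes the $d_p$-midpoint $h_t$ of the finite energy geodesic joining $u_t$ and $v_t$, uses that $d_p(v_0,u_t)=d_p(v_0,v_t)=t\,d_p(v_0,v_1)$ (both curves are geodesics with the same speed) to get $d_p(v_0,h_t)<t\,d_p(v_0,v_1)$ and symmetrically $d_p(v_1,h_t)<(1-t)\,d_p(v_0,v_1)$, and concludes by the triangle inequality, with no bisection or continuity argument needed. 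Your version proves the general metric fact that uniform convexity forces unique midpoints and then bootstraps by dyadic bisection plus Lipschitz continuity of geodesics; this is a standard and perfectly sound reduction, just slightly longer. Your Stage~1 is literally the paper's argument at $t=1/2$, and your observation in the final paragraph — that the finite energy geodesic must be the one joining the two candidates, not the original geodesic with a competing base point — is exactly the point the paper's phrasing also hinges on.
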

\begin{proof}Suppose that $[0,1] \ni t \to u_t \in \mathcal E^p_\o$ is a $d_p$-geodesic connecting $v_0,v_1$, and let $h_t \in \mathcal E^p_\o$ be the $d_p$-midpoint of the finite energy geodesic connecting $u_t,v_t, \ t \in [0,1]$. Assuming that $u_t \neq v_t$, Theorem \ref{thm: uniform_convex} implies that $d_p(v_0,h_t) < \max\{d_p(v_0,u_t),d_p(v_0,v_t)\}=t d_p(v_0,v_1)$. Similarly, $d_p(v_1,h_t) < \max\{d_p(v_1,u_t),d_p(v_1,v_t)\}=(1-t) d_p(v_0,v_1)$. The triangle inequality now gives a contradiction, implying that $u_t = v_t, \ t \in [0,1]$.
\end{proof}
A more careful analysis of the above proof yields the following:
\begin{proposition} \label{prop: endpoint_distance_stability} Suppose that $p >1$ and $[0,1] \ni l \to u_l \in \mathcal E^p_\o$ is a finite energy geodesic. Let $v \in \mathcal E^p_\o$ such that $d_p(v,u_0) \leq (t+\varepsilon) d_p(u_0,u_1)$ and $d_p(v,u_1) \leq (1 - t+\varepsilon) d_p(u_0,u_1)$ for some $\varepsilon >0$ and $t \in [0,1]$. Then there exists $C(p) >0$ such that 
$$d_p(v,u_t) \leq \varepsilon^\frac{1}{r}  C d_p(u_0,u_1),$$
where $r:=\max(2,p)$.
\end{proposition}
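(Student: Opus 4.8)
The plan is to mimic the proof of the uniqueness theorem, but track the deficit quantitatively rather than deriving a strict contradiction. First I would set $D:=d_p(u_0,u_1)$ (the degenerate case $D=0$ is trivial, so assume $D>0$), let $h$ be the $d_p$-midpoint of the finite energy geodesic connecting $v$ and $u_t$, and record the two hypotheses as $a:=d_p(v,u_0)\le(t+\varepsilon)D$ and $b:=d_p(v,u_1)\le(1-t+\varepsilon)D$. The point $h$ sits ``between'' $v$ and $u_t$, and the finite energy geodesic $l\mapsto u_l$ passes through $u_t$ with $d_p(u_0,u_t)=tD$, $d_p(u_1,u_t)=(1-t)D$. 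Applying Theorem~\ref{thm: uniform_convex} with the base point $u_0$ and the geodesic from $v$ to $u_t$ (with $\lambda=\tfrac12$), and then again with base point $u_1$, gives upper bounds on $d_p(u_0,h)$ and $d_p(u_1,h)$ in terms of $a$, $b$, $tD$, $(1-t)D$ and, crucially, $d_p(v,u_t)^r$ with a favorable sign.

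Concretely, in the range $2\le p$ inequality (ii) reads, after putting $\lambda=\tfrac12$,
\[
d_p(u_0,h)^p\le \tfrac12 a^p + \tfrac12 (tD)^p - 2^{-p} d_p(v,u_t)^p,
\]
and likewise $d_p(u_1,h)^p\le \tfrac12 b^p + \tfrac12 ((1-t)D)^p - 2^{-p} d_p(v,u_t)^p$. By the triangle inequality $d_p(u_0,h)+d_p(u_1,h)\ge D$, so I would combine the two displayed estimates. Using $a\le(t+\varepsilon)D$, $b\le(1-t+\varepsilon)D$, and the elementary convexity/Lipschitz estimate $((t+\varepsilon)D)^p\le (tD)^p + C(p)\varepsilon D^p$ valid for $t,\varepsilon\in[0,1]$ (since $s\mapsto s^p$ is Lipschitz on $[0,2]$), the ``main terms'' $\tfrac12(tD)^p+\tfrac12((1-t)D)^p+\tfrac12(tD)^p+\tfrac12((1-t)D)^p = (tD)^p+((1-t)D)^p$ must be reconciled with the lower bound coming from $D=tD+(1-t)D$ via convexity of $x\mapsto x^p$: here one needs the reverse, namely that $\big(\tfrac12 X+\tfrac12 Y\big)^p$ with $X=d_p(u_0,h),Y=d_p(u_1,h)$ summed appropriately still forces $d_p(v,u_t)^p\le C(p)\varepsilon D^p$. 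The cleanest route is: from $d_p(u_0,h)\ge \rho$ and $d_p(u_1,h)\ge D-\rho$ type bookkeeping, or more simply, note $d_p(u_0,h)^p+d_p(u_1,h)^p\ge 2^{1-p}(d_p(u_0,h)+d_p(u_1,h))^p\ge 2^{1-p}D^p = 2^{1-p}\big((tD)^p+((1-t)D)^p\big)\cdot(\text{correction})$ — I would instead avoid this power-mean subtlety by working with the $d_p$-midpoint of $u_0,u_1$, which is exactly $u_t$ when $t=\tfrac12$, and handling general $t$ by first reducing to $t=\tfrac12$ via the subdivision/Pythagorean trick, or by directly using that $d_p(u_0,h)+d_p(h,u_1)\ge d_p(u_0,u_1)=D$ together with the two one-sided bounds to isolate $2\cdot 2^{-p}d_p(v,u_t)^p \le \tfrac12 a^p+\tfrac12 b^p+\tfrac12(tD)^p+\tfrac12((1-t)D)^p - d_p(u_0,h)^p - d_p(u_1,h)^p + (\text{slack})$ and then bounding the right side by $C(p)\varepsilon D^p$ using the hypotheses and the lower bound on $d_p(u_0,h)^p+d_p(u_1,h)^p$. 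Taking $p$-th roots yields $d_p(v,u_t)\le \varepsilon^{1/p}C(p)D$, i.e.\ the claim with $r=p$.

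For $1<p\le 2$ the argument is formally the same but uses inequality (i) of Theorem~\ref{thm: uniform_convex}, which is stated with squares rather than $p$-th powers; thus with $\lambda=\tfrac12$ one gets
\[
d_p(u_0,h)^2\le \tfrac12 a^2 + \tfrac12 (tD)^2 - \tfrac{p-1}{4} d_p(v,u_t)^2,
\]
and similarly at $u_1$. Here the reconciliation uses the ordinary parallelogram-type lower bound $d_p(u_0,h)^2+d_p(u_1,h)^2\ge \tfrac12 D^2$ (from $d_p(u_0,h)+d_p(u_1,h)\ge D$ and Cauchy--Schwarz/QM-AM), together with $a^2\le (t+\varepsilon)^2D^2\le t^2D^2 + 3\varepsilon D^2$, etc., to conclude $\tfrac{p-1}{2}d_p(v,u_t)^2\le C\varepsilon D^2$, hence $d_p(v,u_t)\le \varepsilon^{1/2}C(p)D$, i.e.\ the claim with $r=2$. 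Unifying, $r=\max(2,p)$ as asserted.

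The main obstacle I anticipate is purely bookkeeping of the ``main terms'': one must verify that after substituting the hypotheses $a\le(t+\varepsilon)D$, $b\le(1-t+\varepsilon)D$ and the triangle-inequality lower bound on $d_p(u_0,h)+d_p(u_1,h)$, the leading $\varepsilon$-free contributions cancel exactly (rather than leaving a $t$-dependent residue that fails to vanish), so that what survives is genuinely $O(\varepsilon)D^r$ with a constant depending only on $p$. For $t=\tfrac12$ this cancellation is transparent since $u_{1/2}$ is the midpoint and $a,b$ are both bounded by $(\tfrac12+\varepsilon)D$; for general $t$ the asymmetry must be absorbed, either by the Lipschitz estimate on $s\mapsto s^r$ on $[0,2]$ as indicated, or — more robustly — by first subdividing the geodesic $l\mapsto u_l$ at a dyadic rational near $t$ and iterating, using the Pythagorean formula \cite[Corollary 4.14]{Da15} to keep errors additive. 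A secondary point to be careful about is the reduction to $u,v_0,v_1\in\mathcal H_\o$ and $C^{1,\bar1}$ geodesics (as in the proof of Theorem~\ref{thm: uniform_convex}), but since Theorem~\ref{thm: uniform_convex} is already available in full generality for $\mathcal E^p_\o$, I would apply it directly and no regularization is needed here.
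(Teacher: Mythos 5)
Your starting point matches the paper exactly: take $h$ to be the $d_p$-midpoint of the finite energy geodesic joining $v$ and $u_t$, apply Theorem~\ref{thm: uniform_convex} once with base point $u_0$ and once with base point $u_1$ (both with $\lambda=\tfrac12$), and combine with the triangle inequality $d_p(u_0,h)+d_p(u_1,h)\ge d_p(u_0,u_1)$. The difficulty is entirely in how you combine. You propose to \emph{add the $r$-th power inequalities} and then lower-bound $d_p(u_0,h)^r+d_p(u_1,h)^r$ via the power-mean inequality $X^r+Y^r\ge 2^{1-r}(X+Y)^r\ge 2^{1-r}D^r$. This does not close: the main terms coming from the two convexity estimates are $\tfrac12(tD)^r+\tfrac12((1-t)D)^r$ on each side, summing to $(tD)^r+((1-t)D)^r$, while the power-mean lower bound is $2^{1-r}D^r$. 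These match only at $t=\tfrac12$; for $t$ near $0$ or $1$ the residue is of order $D^r$, not $O(\varepsilon D^r)$, so the deficit term $d_p(v,u_t)^r$ is not controlled by $\varepsilon$. You flag exactly this ("the asymmetry must be absorbed"), but neither proposed remedy — a Lipschitz correction on $s\mapsto s^r$, or dyadic subdivision along $l\mapsto u_l$ — actually repairs it: the $2^{1-r}$ loss is intrinsic to the power-mean step, and subdivision requires the estimate you are trying to prove at the intermediate nodes. In fact, under the constraints $X+Y\ge D$ the minimum of $X^r+Y^r$ really is $2^{1-r}D^r$, so there is no sharper lower bound available from the triangle inequality alone.

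The paper avoids the power-mean step entirely. From the two uniform convexity estimates (using $d_p(u_0,v),\,d_p(u_0,u_t)\le(t+\varepsilon)D$ and similarly at $u_1$) one gets
\[
D\le\Big[(t+\varepsilon)^rD^r-c\,d_p(v,u_t)^r\Big]^{1/r}+\Big[(1-t+\varepsilon)^rD^r-c\,d_p(v,u_t)^r\Big]^{1/r},
\]
i.e.\ the $r$-th \emph{roots} are added, not the $r$-th powers. Dividing by $D$ and writing $\hat\delta=d_p(v,u_t)/D$, if $c\hat\delta^r$ exceeded both $(t+\varepsilon)^r-t^r$ and $(1-t+\varepsilon)^r-(1-t)^r$, each bracket would be strictly less than $t^r$ and $(1-t)^r$ respectively, so the right-hand side would be $<t+(1-t)=1$, a contradiction. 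Hence $c\hat\delta^r\le\max\big((t+\varepsilon)^r-t^r,\,(1-t+\varepsilon)^r-(1-t)^r\big)\le r\,2^{r-1}\varepsilon$, which gives the claim uniformly in $t$. So your strategy is the right one, but the combining step must keep the $r$-th roots rather than passing to $r$-th powers; as written your argument is incomplete and would not produce the correct $t$-uniform constant.
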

\begin{proof} Let $h$ be the $d_p$-midpoint of the finite energy geodesic connecting $v$ and $u_t$. Then Theorem \ref{thm: uniform_convex} implies that  
$$d_p(u_0,h) \leq \Big[\frac{1}{2} d_p(u_0,v)^r +\frac{1}{2}d_p(u_0,u_t)^r - c d_p(v,u_t)^r \Big]^{\frac{1}{r}},$$
$$d_p(u_1,h) \leq \Big[\frac{1}{2} d_p(u_1,v)^r +\frac{1}{2}d_p(u_1,u_t)^r - c d_p(v,u_t)^r\Big]^{\frac{1}{r}} ,$$
for $r:=\max(p,2)$, and $c:=c(p) \in (0,1)$. Adding these estimates and using the triangle inequality we arrive at:
\begin{flalign*}
d_p(u_0,u_1) \leq \Big[(t+\varepsilon)^rd_p(u_0,u_1)^r - c d_p(v,u_t)^r \Big]^{\frac{1}{r}}+ \Big[(1-t+\varepsilon)^r d_p(u_0,u_1)^r - c d_p(v,u_t)^r\Big]^{\frac{1}{r}}
\end{flalign*}
After dividing by $d_p(u_0,u_1)$, basic calculus yields that 
$$\frac{d_p(v,u_t)^r}{d_p(u_0,u_1)^r} \leq \max \bigg(\frac{(t + \varepsilon)^r - t^r}{c}, \frac{(1-t + \varepsilon)^r - (1-t)^r}{c}\bigg),$$
implying that 
$d_p(v,u_t) \leq \varepsilon^\frac{1}{r} C d_p(u_0,u_1)$, as desired.
\end{proof}

\section[The metric geometry of weak Lp geodesic rays]{The metric geometry of weak $L^p$ geodesic rays}

For $u \in \mathcal E^p_\o$ let $\mathcal R^p_u$ denote the space of finite energy $L^p$ geodesic rays emanating from $u$. Note that we don't assume that the rays are unit speed, or even non-constant. 

Following terminology from metric space theory \cite{BH99}, two rays $\{u_t\}_t,\{v_t\}_t$ are parallel if $d_p(u_t,v_t)$ is uniformly bounded. Given the characteristics of the finite energy spaces, any ray admits a unique parallel ray emanating from an outside point, thus the $d_p$-geometries  verify Euclid's 5th postulate for half-lines, answering an open question raised in \cite[Remark 1.6]{ChCh3}:

\begin{proposition}\label{prop: parallel}
Let $u,v \in \mathcal E^p_\o$ then for any $\{u_t\}_t \in \mathcal R^p_u$ there exists a unique $\{v_t\}_t \in \mathcal R^p_v$ such that $\{u_t\}_t$ is parallel to $\{v_t\}_t$, giving a bijection $\mathcal P_{uv}: \mathcal R^p_u \to  \mathcal R^p_v$. Moreover $d_p(u_t,v_t) \leq d_p(u,v), \ t \geq 0$.
\end{proposition}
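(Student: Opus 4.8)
The plan is to construct the parallel ray $\{v_t\}_t$ directly as a limit of finite energy geodesic segments, using the convexity inequality \eqref{eq: ChCh3_metric_convex} to control distances. First, for each fixed $T > 0$, let $[0,T] \ni t \mapsto v_t^T \in \mathcal{E}^p_\o$ be the finite energy geodesic connecting $v_0 := v$ to $v_T^T := u_T$. I would then compare the segment $\{v_t^T\}$ with the segment obtained by restricting $\{u_t\}_t$ to $[0,T]$ (rescaled to $[0,1]$, say), using \eqref{eq: ChCh3_metric_convex} applied to these two geodesics that share no common endpoint but whose endpoint distances are $d_p(v,u) =: D$ at $t=0$ and $0$ at $t=T$. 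This yields the linear estimate $d_p(u_t, v_t^T) \leq (1 - t/T) D \leq D$ for $t \in [0,T]$. In particular all the segments $\{v_t^T\}$ stay within $d_p$-distance $D$ of the fixed ray $\{u_t\}_t$.

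Next, I would show that as $T \to \infty$ the family $\{v_t^T\}$ converges (decreasingly, after the usual envelope manipulation, or along a subsequence using completeness) to a genuine ray $\{v_t\}_t \in \mathcal{R}^p_v$. The monotonicity in $T$ should follow from the comparison principle for finite energy geodesics (the ``Perron type'' definition \eqref{fegeod}): enlarging the interval pushes the geodesic down at intermediate times because $u_T$ decreases appropriately, or more robustly one extracts a $d_p$-convergent subsequence using the uniform $d_p$-bounds just established together with completeness of $(\mathcal{E}^p_\o, d_p)$ and compactness properties of sublevel sets. One then checks that the limit curve is itself $\pi^*\o$-psh on the infinite strip (hence a geodesic ray) by the standard Hartogs-type / uniform convergence argument, that it emanates from $v$, and that $d_p(u_t, v_t) = \lim_T d_p(u_t, v_t^T) \leq D = d_p(u,v)$ for all $t$, giving both parallelism and the stated quantitative bound. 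This defines $\mathcal{P}_{uv}(\{u_t\}_t) = \{v_t\}_t$.

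For uniqueness of the parallel ray (and hence bijectivity of $\mathcal{P}_{uv}$, with inverse $\mathcal{P}_{vu}$), suppose $\{v_t\}_t$ and $\{\tilde v_t\}_t$ are both parallel to $\{u_t\}_t$ and emanate from $v$; then $d_p(v_t, \tilde v_t)$ is uniformly bounded while $v_0 = \tilde v_0 = v$. Applying \eqref{eq: ChCh3_metric_convex} to these two rays on $[0,T]$: for $s \in [0,T]$, $d_p(v_s, \tilde v_s) \leq (1 - s/T) d_p(v_0,\tilde v_0) + (s/T) d_p(v_T, \tilde v_T) = (s/T) d_p(v_T, \tilde v_T) \leq (s/T) M$ where $M$ is the uniform bound; letting $T \to \infty$ forces $d_p(v_s, \tilde v_s) = 0$ for every $s$. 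Finally, $\mathcal{P}_{vu} \circ \mathcal{P}_{uv} = \mathrm{id}$ because $\mathcal{P}_{vu}(\{v_t\}_t)$ is the unique ray from $u$ parallel to $\{v_t\}_t$, and $\{u_t\}_t$ qualifies; this gives the bijection.

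I expect the main obstacle to be the convergence step: establishing that the segments $\{v_t^T\}$ actually converge to a limit that is a geodesic \emph{ray} (defined on all of $[0,\infty)$), rather than merely extracting weakly convergent subsequences on compact time intervals that might depend on $T$. The clean way is to argue monotonicity in $T$ via the comparison principle so that $v_t := \lim_{T\to\infty} v_t^T$ exists pointwise as a decreasing limit, then identify this limit using the uniform $d_p$-estimates to rule out mass loss and confirm $v_t \in \mathcal{E}^p_\o$ with the envelope in \eqref{fegeod} realized on the infinite strip. The uniform bound $d_p(u_t,v_t^T)\le D$ is exactly what prevents degeneration, so all the pieces are in place; it is a matter of assembling them carefully.
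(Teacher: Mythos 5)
Your overall architecture matches the paper's: construct $v_t$ as a limit of truncated finite energy geodesics $[0,T]\ni l \mapsto v^T_l$ from $v$ to $u_T$, control distances with \eqref{eq: ChCh3_metric_convex}, and derive uniqueness from convexity. The estimate $d_p(u_l,v^T_l)\le(1-l/T)d_p(u,v)$ is correct, and your uniqueness argument (a convex function on $[0,\infty)$ vanishing at $0$ and bounded is identically zero) is exactly the observation the paper uses, which it in fact also invokes to deduce the sharp bound $d_p(u_t,v_t)\le d_p(u,v)$ from mere parallelism.

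The gap is in the convergence step, and neither of your two suggested mechanisms is correct as stated. (a) Your monotonicity reasoning — ``enlarging the interval pushes the geodesic down at intermediate times because $u_T$ decreases appropriately'' — does not work: $u_T$ is not monotone along a general ray, and more fundamentally the family $T\mapsto v^T_l$ is \emph{not} monotone unless $u$ and $v$ are comparable. The paper therefore splits into cases: if $u\ge v$, the comparison principle gives $v^{T'}|_{[0,T']}\le u|_{[0,T']}$ (same right endpoint, smaller left endpoint), hence $v^{T'}_T\le u_T=v^T_T$, and a second comparison yields $v^{T'}_l\le v^T_l$; for $u\le v$ the inequalities reverse; for incomparable $u,v$ the paper routes through an auxiliary potential $h:=\max(\sup_X u,\sup_X v)\ge u,v$, constructs parallel rays $\{u_t\}\to\{h_t\}\to\{v_t\}$ via the two comparable steps, and gets boundedness of $d_p(u_t,v_t)$ from the triangle inequality — the sharp bound $d_p(u_t,v_t)\le d_p(u,v)$ then comes from the convexity argument mentioned above, not from the construction itself. (b) Your ``more robust'' fallback — extracting a $d_p$-convergent subsequence from uniform $d_p$-bounds via completeness and ``compactness of sublevel sets'' — fails because $d_p$-bounded sets in $(\mathcal E^p_\o,d_p)$ are not precompact; completeness alone does not give compactness in this infinite-dimensional setting.

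If you want to avoid the case split entirely, the clean fix is a Cauchy argument obtained by applying \eqref{eq: ChCh3_metric_convex} twice: for $l\le T<T'$ one has $d_p(v^T_l,v^{T'}_l)\le\frac{l}{T}\,d_p(u_T,v^{T'}_T)\le\frac{l}{T}\bigl(1-\frac{T}{T'}\bigr)d_p(u,v)=l\bigl(\frac1T-\frac1{T'}\bigr)d_p(u,v)$, which tends to zero as $T,T'\to\infty$, so $\{v^T_l\}_T$ is $d_p$-Cauchy for each fixed $l$ and converges by completeness; then endpoint stability of finite energy geodesics (the result the paper cites as [Proposition 4.3, BDL17]) identifies the limit as a geodesic ray. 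Either route works; yours as written does not.
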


\begin{proof} 
Uniqueness follows from $d_p(u_t,v_t) \leq d_p(u,v), \ t \geq 0$, which is a simple consequence of the convexity of $t \to d_p(u_t,v_t)$ \eqref{eq: ChCh3_metric_convex}.

We first argue the proposition for $u \geq v$, using the maximum principle. Consider the finite energy geodesic segments $[0,t] \ni l \to v^t_l \in \mathcal E^p_\o$, with $v^t_0 = v$ and $v^t_t = u_t$. Then by the comparison principle for geodesics we get that, for $0\leq l \leq t \leq t'$,  $v^{t'}_{t} \leq u_t = v^{t}_t$, hence $v^{t'}_l \leq v^{t}_l$. Also, \eqref{eq: ChCh3_metric_convex} implies
$$\frac{d_p(v^t_l , u_l)}{t-l} \leq \frac{d_p(v,u)}{t}.$$
Putting the last two sentences together, \cite[Proposition 4.3]{BDL17} implies that 
$l \to v_l := \lim_{t \to \infty} v^t_l \in \mathcal E^p_\o$ is a finite energy geodesic ray such that $d_p(u_l,v_l) \leq d_p(u,v), \ l \geq 0$.

If $u \leq v$, the proposition holds by the same argument (the inequality $v^t_l \leq v^{t'}_l$ being  the only difference). 

To treat the general case, we simply notice that $h := \max (\sup_X u,\sup_X v) \in \mathcal H_\o \subset \mathcal E^p_\o$ and $h \geq u,v$. This allows to introduce a ray $\{h_t\}_t \in \mathcal R^p_h$ such that $d_p(u_t,h_t) \leq d_p(u,h)$. Since $h \geq v$, it is now possible to introduce another ray $\{v_t\}_t \in \mathcal R^p_v$ with $d_p(v_t,h_t) \leq d_p(v,h)$. The estimate $d_p(u_t,v_t) \leq d_p(u,h) + d_p(v,h)$, now follows from the triangle inequality.
\end{proof}

Next we introduce the chordal metric on $\mathcal R^p_u$:
\begin{equation}\label{eq: chordal_metric_def}
d_{u,p}^c(\{u_t\}_t,\{v_t\}_t):= \lim_{t \to \infty} \frac{d_p(u_t,v_t)}{t}, \ \ \{ u_t\}_t,\{ v_t\}_t \in \mathcal R^p_u.
\end{equation}
That the above increasing limit exists and is finite follows again from \eqref{eq: ChCh3_metric_convex} and the triangle inequality. 
As we now clarify,  $(\mathcal R^p_\o,d_p^c)$ is in fact a complete geodesic metric space.

\begin{theorem}\label{thm: Rp is complete} For any $u \in \mathcal E^p_{\omega}, \ p \geq 1$, $(\mathcal R^p_u,d^c_{u,p})$ is a complete metric space. Moreover for any $v \in \mathcal E^p_{\omega}$ the map $\mathcal P_{uv}: (\mathcal R^p_u,d^c_{u,p}) \to (\mathcal R^p_v,d^c_{v,p})$ is an isometry. 
\end{theorem}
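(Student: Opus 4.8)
The plan is to establish the two assertions separately. For the \textbf{isometry} claim: since $\mathcal P_{uv}$ is a bijection by Proposition \ref{prop: parallel}, it suffices to show it preserves the chordal distance. Given $\{u_t\}_t,\{w_t\}_t \in \mathcal R^p_u$ with parallel transports $\{v_t\}_t = \mathcal P_{uv}\{u_t\}_t$ and $\{z_t\}_t = \mathcal P_{uv}\{w_t\}_t$, the triangle inequality gives $|d_p(u_t,w_t) - d_p(v_t,z_t)| \leq d_p(u_t,v_t) + d_p(w_t,z_t) \leq d_p(u,v) + d_p(u,v)$, a bound uniform in $t$. Dividing by $t$ and letting $t\to\infty$ shows $d^c_{u,p}(\{u_t\},\{w_t\}) = d^c_{v,p}(\mathcal P_{uv}\{u_t\},\mathcal P_{uv}\{w_t\})$. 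That $d^c_{u,p}$ is genuinely a metric (not just a pseudo-metric) — i.e. $d^c_{u,p}(\{u_t\},\{v_t\})=0$ forces $u_t=v_t$ for all $t$ — should follow from the double estimate for $d_p$ together with $t$-convexity of $d_p(u_t,v_t)$: if the chordal distance vanishes then $d_p(u_t,v_t)/t \to 0$, but convexity of $l\mapsto d_p(u_l,v_l)$ with value $0$ at $l=0$ and sublinear growth forces $d_p(u_t,v_t)\equiv 0$.

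For \textbf{completeness}: let $\{u^j_t\}_t$ be a $d^c_{u,p}$-Cauchy sequence in $\mathcal R^p_u$. The natural candidate limit is constructed by a diagonal/pointwise argument. Using Proposition \ref{prop: parallel} (or rather its proof technique) one can reduce to a situation where the rays are comparable, or work with $h := \sup_X(\text{all relevant sups})$ to dominate. The key observation is that $d^c_{u,p}$-Cauchyness does not immediately control $d_p(u^j_t, u^k_t)$ for fixed $t$, only its growth rate, so one needs to combine the chordal Cauchy condition with the behavior at $t=0$ (where all rays agree, $u^j_0 = u$). A cleaner route: for each fixed $t$, I would show $\{u^j_t\}_j$ is $d_p$-Cauchy in $\mathcal E^p_\o$. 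Indeed, by $t$-convexity of $l\mapsto d_p(u^j_l,u^k_l)$ with value $0$ at $l=0$, we have $d_p(u^j_t,u^k_t)/t \leq \lim_{s\to\infty} d_p(u^j_s,u^k_s)/s = d^c_{u,p}(\{u^j\},\{u^k\})$, so $d_p(u^j_t,u^k_t) \leq t\, d^c_{u,p}(\{u^j\},\{u^k\}) \to 0$ as $j,k\to\infty$ uniformly on compact $t$-intervals. Hence $u^j_t \to u_t$ in $(\mathcal E^p_\o,d_p)$ for each $t$, and the convergence is locally uniform in $t$.

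It then remains to check that $t\mapsto u_t$ is itself a finite energy geodesic ray and that $u^j \to u$ in $d^c_{u,p}$. The geodesic property passes to the limit: $d_p$-limits of finite energy geodesic segments (on each $[0,T]$) are again finite energy geodesics, by the characterization via envelopes of subgeodesics and the stability of the subgeodesic condition under decreasing/$d_p$-limits (cf.\ the approximation arguments in \cite{Da15,BDL17}); one also needs $d_p(u_0,u_t)$ to grow at the correct linear rate, which follows since $d_p(u^j_0,u^j_t) = t\cdot(\text{speed of }u^j)$ and these speeds converge. Finally, for the $d^c_{u,p}$-convergence: given $\varepsilon>0$ pick $N$ with $d^c_{u,p}(\{u^j\},\{u^k\})<\varepsilon$ for $j,k\geq N$; then for each $t$, $d_p(u^j_t,u^k_t)\leq \varepsilon t$, and letting $k\to\infty$ gives $d_p(u^j_t,u_t)\leq \varepsilon t$, hence $d^c_{u,p}(\{u^j\},\{u\}) = \lim_t d_p(u^j_t,u_t)/t \leq \varepsilon$.

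The \textbf{main obstacle} I anticipate is verifying that the pointwise limit $t\mapsto u_t$ is a bona fide $L^p$ finite energy geodesic ray defined on all of $[0,\infty)$ — one must ensure the limiting curve does not leave $\mathcal E^p_\o$, that it satisfies the geodesic (not merely subgeodesic) equation on every $[0,T]$, and that the pieces are consistent as $T\to\infty$. This requires care with the $d_p$-closedness of the class of finite energy geodesic segments and with uniform integrability of $|u^j_t|^p \omega_{u^j_t}^n$; the tools are all present in \cite{Da15} and \cite{BDL17,ChCh3}, but assembling them correctly is the delicate part. The completeness of $\mathcal R^p_u$ for \emph{all} $u$ (not just $u=0$) then follows from this case together with the isometry $\mathcal P_{uv}$.
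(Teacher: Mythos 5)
Your proposal is correct and follows essentially the same route as the paper: the key estimate $d_p(u^j_t,u^k_t)\leq t\,d^c_{u,p}(\{u^j\},\{u^k\})$ from convexity, $d_p$-Cauchyness at each fixed $t$, and passage to the limit for $d^c$-convergence are exactly the paper's steps, as are the triangle-inequality argument for the isometry and the increasing-function argument for non-degeneracy. The ``main obstacle'' you flag (limit curve being a genuine geodesic ray) is actually handled in one line by the endpoint stability of finite energy geodesic segments, \cite[Proposition 4.3]{BDL17}, which the paper cites directly, so the delicacy you anticipate is already packaged there.
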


Some aspects of the proof below can be traced back to \cite[Lemma 3.1]{BDL16}.

\begin{proof}That $d^c_{p,u}$ satisfies the triangle inequality follows from the triangle inequality of $d_p$. To argue non-degeneracy, suppose that $d^c_{p,u}(\{u_t\}_t,\{v_t\}_t)=0$. This implies that the increasing function $f(t) = d_p(u_t,v_t)/t$ satisfies $f(0)=0$ and $\lim_{t \to \infty}f(t)=0$. Consequently $f(t)=0, \ t \geq 0$, implying that $u_t=v_t, \ t \geq 0$.

Now suppose that $\{ u^j_t\}_t \subset \mathcal R^p_u$ is a $d^c_{u,p}$-Cauchy sequence. Fixing $l > 0$ we have that
\begin{equation}\label{eq: Cauchy_ray}
\frac{d_p(u^j_l, u^k_l)}{l} \leq d^c_{u,p}(\{u^j_t\}_t,\{u^k_t\}_t).
\end{equation}
Consequently $\{u^j_l\}_j \subset \mathcal E^p_\o$ is a $d_p$-Cauchy sequence with limit $u_l \in \mathcal E^p_\o$. By the endpoint stability of geodesic segments in $\mathcal E^p_\o$ (\cite[Proposition 4.3]{BDL17}) it follows that $t \to u_t$ is a geodesic ray. More importantly, letting $k \to \infty$ in \eqref{eq: Cauchy_ray} it follows that  $\frac{d_p(u^j_l, u_l)}{l}$ is arbitrarily small for high enough $j$ and any $l > 0$. This in turn implies that $d^c_{u,p}(\{u^j_t\}_t,\{u_t\}_t) \to 0$, giving completeness.

That the map $\mathcal P_{uv}$ is an isometry, follows from the definition of parallel geodesic rays and the triangle inequality for $d_p$.
\end{proof}

By this theorem, no extra information is gained by choice of initial metric, hence going forward we will only consider the space $(\mathcal R^p_{\omega}, d_p^c)$, the collection of rays emanating from $0 \in \mathcal H_{\omega} \subset \mathcal E^p_{\omega}$. 

\paragraph{Approximation of finite energy rays. } In this paragraph we point out that bounded geodesic rays (running inside $\textup{PSH}(X,\omega) \cap L^\infty$) are dense among the rays of $\mathcal R^p_\o$. Later, in the presence of finite radial K-energy we will sharpen this result further.

First we start with an auxilliary result, which is a consequence of Corollary \ref{cor: Lidskii}, and it is the radial analog of \cite[Lemma 4.16]{Da15}: 

\begin{lemma}\label{lem: decreasing_ray_d_plimit} Let $\{ u_t\}_t,\{u^j_t\} \in \mathcal R^p_\o$ such that  $u^j_t$ is decreasing (increasing a.e.) to $u_t$ as $j \to \infty$ for all $t \geq 0$. Then, $d_p^c(\{u_t^j\}_t,\{u_t\}_t) \to 0$. 
\end{lemma}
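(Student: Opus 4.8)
The plan is to reduce the statement to Corollary~\ref{cor: Lidskii} (the Lidskii-type inequality) applied along the rays, exploiting that $d^c_p$ is computed as an increasing limit. First I would treat the decreasing case $u^j_t \searrow u_t$. For each fixed $j$, the ray $\{u^j_t\}_t$ and the limit ray $\{u_t\}_t$ both emanate from $0$ (one should check $u^j_0 = u_0 = 0$; if instead $u^j_0 \searrow 0$ one normalizes, or absorbs a harmless constant into the estimate), and since the defining envelope \eqref{fegeod} is monotone in the endpoints and in the potentials, comparison of finite energy geodesics gives $u^j_t \geq u_t$ pointwise for all $t \geq 0$. Thus for each $t$ we may apply Corollary~\ref{cor: Lidskii} with the ordered triple $u^j_t \geq u_t \geq v$ for a suitable fixed reference $v \in \mathcal E^p_\o$ below both, or more directly use the ordered pair via the double estimate for $d_p$ recalled after \eqref{fegeod}. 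Concretely I expect to bound $d_p(u^j_t, u_t)^p$ by something like $\frac{1}{V}\int_X (u^j_t - u_t)^p(\omega_{u^j_t}^n + \omega_{u_t}^n)$, and then divide by $t^p$ and pass to the limit $t \to \infty$.

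The key mechanism is the following: define $f_j(t) := d_p(u^j_t, u_t)/t$, which by \eqref{eq: ChCh3_metric_convex} together with the triangle inequality (as already noted after \eqref{eq: chordal_metric_def}) is nondecreasing in $t$, with $d^c_p(\{u^j_t\},\{u_t\}) = \lim_{t\to\infty} f_j(t) = \sup_{t>0} f_j(t)$. Since $f_j$ is nondecreasing, to show $\lim_j d^c_p(\{u^j_t\},\{u_t\}) = 0$ it suffices to exhibit, for each $j$, a bound of the form $d^c_p(\{u^j_t\},\{u_t\}) \leq \lim_{t\to\infty} d_p(u^j_t, u_t)/t$ and to control the right-hand side uniformly. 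The natural route is to first bound $d_p(u^j_t,u_t)$ for the \emph{segment} perspective: using Corollary~\ref{cor: Lidskii} on $u^j_t \geq u_t$ together with a fixed lower bound, one gets $d_p(u^j_t, u_t)^p \leq d_p(u^j_t, w_t)^p - d_p(u_t, w_t)^p$ for an appropriate comparison potential, but the cleanest is probably to run the argument of \cite[Lemma 4.16]{Da15} verbatim at each height $t$, obtaining $d_p(u^j_t, u_t) \to 0$ as $j \to \infty$ for each fixed $t$, \emph{and} a uniform-in-$t$ control of the growth, so that one can interchange $\lim_j$ with $\lim_t$. The interchange is where the monotone structure pays off: because $u^j_t$ decreases in $j$, the quantities $d_p(u^j_t,u_t)^p$ decrease in $j$ as well (this again follows from Corollary~\ref{cor: Lidskii}: if $u^{j}_t \geq u^{j+1}_t \geq u_t$, then $d_p(u^{j+1}_t, u_t)^p \leq d_p(u^{j}_t, u_t)^p$), hence $f_j(t)$ is decreasing in $j$ for each $t$, and the sup over $t$ of a decreasing (in $j$) family of nondecreasing functions converges to $0$ provided the pointwise-in-$t$ limit is $0$ and some tightness holds. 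I would establish that tightness by the estimate $d_p(u^j_t, u_t)^p/t^p \leq d_p(u^1_t, u_t)^p/t^p \leq \big(d^c_p(\{u^1_t\},\{u_t\})\big)^p < \infty$, uniformly in $t$, so dominated convergence (Dini-type, using monotonicity) gives $\sup_t f_j(t) \to 0$.

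For the increasing (a.e.) case $u^j_t \nearrow u_t$, I would pass to upper semicontinuous regularizations $(\sup_j u^j_t)^* = u_t$ and use that the rays $\{u^j_t\}_t$ are then $\leq \{u_t\}_t$, so the same comparison and Lidskii argument applies with the roles of the ordered pair reversed; the monotonicity in $j$ is now the opposite direction but the decreasing-in-$j$ behavior of $d_p(u^j_t, u_t)^p$ persists by Corollary~\ref{cor: Lidskii} applied to $u_t \geq u^{j+1}_t \geq u^j_t$. The main obstacle I anticipate is precisely the interchange of the two limits $j \to \infty$ and $t \to \infty$: neither convergence is uniform a priori, and one must genuinely use the monotonicity in $j$ together with the uniform bound coming from the $j=1$ term (finiteness of $d^c_p(\{u^1_t\},\{u_t\})$, itself a consequence of \eqref{eq: ChCh3_metric_convex}) to justify it. A secondary technical point is making sure Corollary~\ref{cor: Lidskii}, stated for $\mathcal E^p_\o$ potentials, applies at each height $t$ with the three potentials genuinely comparable — this is guaranteed by the comparison principle for finite energy geodesics recalled after \eqref{fegeod}.
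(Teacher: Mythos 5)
Your proposal is pointed in the right direction (it isolates Corollary~\ref{cor: Lidskii} and the monotone ordering as the key ingredients, and correctly derives that $d_p(u^j_t,u_t)^p$ decreases in $j$), but it misses the single step that makes the paper's argument work, and the substitute you offer contains a genuine gap.

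The gap is in the interchange of $\lim_j$ and $\lim_t$. You observe that $f_j(t):=d_p(u^j_t,u_t)/t$ is nondecreasing in $t$, nonincreasing in $j$, and uniformly bounded by $d^c_p(\{u^1_t\},\{u_t\})$, and that $f_j(t)\to 0$ for each fixed $t$; you then conclude $\sup_t f_j(t)\to 0$ via a ``Dini-type'' argument. This implication is false. Take $f_j(t)=\min(t/j,1)$: it is nondecreasing in $t$, nonincreasing in $j$, uniformly bounded by $1$, and $f_j(t)\to 0$ for every fixed $t$, yet $\sup_t f_j(t)=1$ for all $j$. Uniform boundedness does not supply the ``tightness'' you need; you would need a bound that is $t$-uniform \emph{and} goes to zero in $j$, which is exactly what your approach does not produce.

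The paper produces precisely such a bound by exploiting two facts you do not use: (a) by Lemma~\ref{lem: sup_X_linear}, $t\mapsto\sup_X u_t$ and $t\mapsto\sup_X u^j_t$ are affine, so after subtracting the same $t$-linear term from all rays one may normalize to $\sup_X u_t,\sup_X u^j_t\le 0$, giving the ordered triple $0\ge u^j_t\ge u_t$ with $0$ as a common reference; and (b) $t\mapsto d_p(0,u_t)$ and $t\mapsto d_p(0,u^j_t)$ are \emph{linear} in $t$ along the rays. Applying Corollary~\ref{cor: Lidskii} with $\alpha=0$, $\beta=u^j_t$, $\gamma=u_t$ then yields
$$
\frac{d_p(u^j_t,u_t)^p}{t^p}\ \le\ \frac{d_p(0,u_t)^p-d_p(0,u^j_t)^p}{t^p}\ =\ d_p(0,u_1)^p-d_p(0,u^j_1)^p,
$$
a bound that is \emph{independent of $t$}. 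One then only needs $d_p(u^j_1,u_1)\to 0$, which is exactly \cite[Lemma 4.16]{Da15} applied at height $t=1$, to conclude, with no limit interchange required. The analogous normalization handles the increasing case. So the fix to your argument is not to strengthen the Dini step but to choose $0$ as the Lidskii apex after normalization and invoke linearity of $d_p(0,\cdot)$ along rays; that is where the real leverage lies.
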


\begin{proof} We start by noticing that $t \to \sup_X u_t$ and $t \to \sup_X u_t^j$ are linear (Lemma \ref{lem: sup_X_linear}). By our assumption we have that $\sup_X u^j_1 \to  \sup_X u_1$ \cite[Proposition 8.4]{GZ17}, hence after possibly subtracting the same $t$-linear term from all our rays, without loss of generality we can assume that  $\sup_X u_t,\sup_X u_t^j \leq 0$. By convexity we will obtain that $0 \geq u^j_t \geq u_t$  ($0 \geq u_t \geq u^j_t$) for all $j$ and $t \geq 0$. Consequently, Corollary \ref{cor: Lidskii} is applicable to yield that:
\begin{equation}\label{eq: Lidskii_est}
\frac{d_p(u^j_t,u_t)^p}{t^p} \leq \frac{|d_p(0,u_t)^p-d_p(0,u_t^j)^p|}{t^p}={|d_p(0,u_1)^p-d_p(0,u_1^j)^p|}, \ \ t \geq 0,
\end{equation}
where we have used that $t \to d_p(0,u^j_t)$ and  $t \to d_p(0,u_t)$ are linear. Now \cite[Lemma 4.16]{Da15} gives that $d_p(u_1^j,u_1) \to0$, in particular $d_p(0,u_1^j) \to d_p(0,u_1)$, finishing the proof. 
\end{proof}

\begin{remark}\label{rem: lemma_conv_not_monotone} Analyzing the above argument we see that in Lemma \ref{lem: decreasing_ray_d_plimit} the conditions can be significantly weakened in some cases. For example, it is enough to assume that $u_t \leq u_t^j, \ t \geq 0, j \geq 0$, there exists $C>0$ such that $u_1^j \leq C, \ j \geq 0 $, and that $u_1^j$ converges to $u_1$ pointwise on $X$, with the exception of a pluripolar set. Using \cite[Lemma 5.1]{Da15} we  obtain that $d_p(u_1,u_1^j)^p \leq \int_X |u_1 - u_1^j|^p \omega_{u_1}^n,$ and the dominated convergence theorem allows to conclude that the right hand side of \eqref{eq: Lidskii_est} still converges to zero.
\end{remark}

\begin{theorem}\label{thm: approximation of d1 rays}
Let $\{ u_t\}_t \in \mathcal R^p_\o$. Then there exists a sequence $\{ u^j_t\}_t \in \mathcal R^p_\o$ such that $u_t^j \in \textup{PSH}(X,\omega) \cap L^\infty$  and $u^j_t \searrow u_t$ as $j \to \infty$ for all $t \geq 0$. In particular $d_p^c(\{u_t^j\}_t,\{u_t\}_t) \to 0$, and we can choose $\{u_t^j\}_t$ such that 
\begin{equation}\label{eq: C_0_bound_approx}
\max\Big(u_t, (\sup_X u_1-j) t\Big) \leq u_t^j  \leq t \sup_X u_1. 
\end{equation}
\end{theorem}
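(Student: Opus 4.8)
The plan is to construct the approximating rays $\{u^j_t\}_t$ by running the finite‑energy geodesic ``envelope'' construction \eqref{fegeod} with endpoints chosen to be natural truncations of the given ray. First I would reduce to the case $\sup_X u_1 = 0$ by subtracting the $t$‑linear function $t\sup_X u_1$ from $\{u_t\}_t$; this is harmless since $t\mapsto \sup_X u_t$ is affine (Lemma \ref{lem: sup_X_linear}), and it puts us in the situation $\sup_X u_t \le 0$ by convexity, so that \eqref{eq: C_0_bound_approx} becomes $\max(u_t,-jt)\le u^j_t\le 0$. The key elementary fact is that for fixed $T>0$ the restriction $[0,T]\ni t\mapsto \max(u_t,-jt)$ need not be a subgeodesic, but $u_t\ge -Ct$ for some $C$ (again by convexity and $\sup_X u_t\le 0$, together with $d_p$ finiteness of $u_1$), so for $j\ge C$ the truncation $\max(u_t,-jt)$ agrees with $u_t$ on $[0,T]$ whenever $T$ is bounded. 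This is too crude to give a \emph{bounded} ray, so instead I would proceed as follows.

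For each fixed $j$, consider the family of finite‑energy geodesic \emph{segments} $[0,T]\ni t\mapsto u^{j,T}_t\in\mathcal E^p_\o$ connecting the endpoints $u^{j,T}_0 := 0$ and $u^{j,T}_T := \max(u_T,-jT)$ — note both endpoints are now bounded above by $0$, and the right endpoint is bounded \emph{below} only if $u_T$ is bounded, which it may not be; to fix this one should instead take $u^{j,T}_T := \max(u_T,-jT)$ only after first replacing $u_T$ by a decreasing approximation, or — cleaner — directly take the sup‑envelope over subgeodesics $v$ with $v_0\le 0$, $v_t\ge -jt$, and $v_t\le u_t$ for all $t$. Concretely, define
\begin{equation*}
u^j_t := \sup\{ v_t \setdef [0,\infty)\ni t\mapsto v_t \textup{ is a subgeodesic ray},\ v_t\le \min(0,u_t),\ v_t\ge -jt\ \forall t\ge 0\}.
\end{equation*}
The constant subgeodesic $v\equiv 0$ is admissible near $t=0$ but not globally; the ray $t\mapsto \max(u_t,-jt)$ is a competitor \emph{candidate} whose usc regularized sup over subgeodesics lying below it is exactly $u^j$. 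One then checks, using the Perron/comparison machinery recalled after \eqref{fegeod}, that $t\mapsto u^j_t$ is itself a subgeodesic, that it satisfies $-jt\le u^j_t\le 0$ (hence $u^j_t\in\textup{PSH}(X,\o)\cap L^\infty$ by the lower bound and the global upper bound), and that $u^j_t\ge u_t$ when $u_t\ge -jt$ (in particular on every compact $t$‑interval for $j$ large, by the crude bound $u_t\ge -Ct$), while $u^j_t\searrow u_t$ as $j\to\infty$ for \emph{every} $t\ge 0$ since the constraint $v\ge -jt$ relaxes to no constraint. The $d_p$‑completeness and endpoint stability (\cite[Proposition 4.3]{BDL17}) ensure $u^j$ is a bona fide finite‑energy geodesic ray, i.e. $\{u^j_t\}_t\in\mathcal R^p_\o$.

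Once the ray $\{u^j_t\}_t\in\mathcal R^p_\o$ is in hand with $u^j_t\searrow u_t$ pointwise and $u^j_t\ge u_t$, the convergence $d^c_p(\{u^j_t\}_t,\{u_t\}_t)\to 0$ is immediate from Lemma \ref{lem: decreasing_ray_d_plimit} (whose hypotheses — decreasing, with common affine $\sup_X$ normalization — are exactly met after the first reduction), and the two‑sided bound \eqref{eq: C_0_bound_approx} has been arranged by construction. I expect the \textbf{main obstacle} to be verifying that the envelope $u^j$ defined above is genuinely a \emph{subgeodesic ray} rather than merely a curve of $\o$‑psh potentials with affine $\sup_X$ — i.e. that $\pi^*\o + i\partial\dbar u^j\ge 0$ on the strip $S\times X$ and that the endpoint at $0$ is attained in $d_p$. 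This is where one invokes Ross--Witt Nystr\"om theory \cite{RWN14} (as the excerpt signals): rather than the ad hoc envelope, one takes $u^j_t := P[\,\cdot\,]$-type envelopes or, more simply, obtains $\{u^j_t\}_t$ as the increasing limit (in $T$) of the finite‑energy geodesic segments $[0,T]\ni t\mapsto$ (geodesic from $0$ to $\max(u_T,-jT)$), whose subgeodesic property is automatic and passes to the limit by \cite[Proposition 4.3]{BDL17}; the linearity of $t\mapsto\sup_X u^j_t$ then follows from Lemma \ref{lem: sup_X_linear} applied on each segment. The remaining checks — the lower bound $u^j_t\ge -jt$ (maximum principle comparing with the subgeodesic $t\mapsto -jt$) and $u^j_t\ge u_t$ on compact intervals (comparison with $u$ restricted to $[0,T]$) — are routine applications of the comparison principle for finite‑energy geodesics.
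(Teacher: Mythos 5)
Your proposal contains a genuine gap at the key step, which is precisely where the paper's proof has to leave the "Perron envelope" toolbox and pass to the Kiselman--Legendre dual.

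First, your envelope characterization does not produce a geodesic ray. With the normalization $\sup_X u_t = 0$, the supremum of subgeodesic rays lying below $\max(u_t,-jt)$ is $\max(u_t,-jt)$ itself: indeed $u_t$ and $-jt$ are both competitors, so the envelope is $\geq \max(u_t,-jt)$, and it is trivially $\leq$. But $\max(u_t,-jt)$ is only a subgeodesic ray — it is the max of two HCMA solutions, and $(dd^c \max(\cdot,\cdot))^{n+1}$ acquires mass on the contact set — so it does not lie in $\mathcal R^p_\omega$. (As literally written, the competitor set in your display $\{v_t \le \min(0,u_t),\ v_t \ge -jt\}$ is in fact empty wherever $u_t < -jt$; after replacing this by $v_t \le \max(u_t,-jt)$ the envelope collapses to the obstacle as above.) So this version of the construction yields subgeodesic, not geodesic, rays.

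Second, the limit-of-segments version is a legitimate construction of rays $\{u^j_t\}_t\in\mathcal R^p_\omega$ (the monotonicity $g^{T,j}_t \le g^{T',j}_t$ for $T\le T'$ does follow from comparison, and \cite[Proposition 4.3]{BDL17} gives the geodesic property of the limit), but the central claim $u^j_t\searrow u_t$ is \emph{not} a routine comparison argument, and you never actually argue it. The constraint "relaxes" heuristic applies to the (flawed) envelope picture, not to the segment limit; here $u^j_t = \lim_T g^{T,j}_t$ may exceed $\max(u_t,-jt)$ strictly, and one must show $\lim_j u^j_t = u_t$. One can phrase this via the affine $I$-energy: one needs $\lim_j\lim_T \frac{I(\max(u_T,-jT))-I(u_T)}{T}=0$, an interchange of limits. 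For $p>1$ a Chebyshev bound $\int_{\{u_T<-jT\}}\frac{-u_T}{T}\omega^n_{u_T}\le C j^{1-p}$ (using $\int_X (|u_T|/T)^p\,\omega^n_{u_T}\le C$, uniform in $T$, from $\{u_t\}_t\in\mathcal R^p_\omega$) does the job, followed by strict monotonicity of $I$. For $p=1$, however, this breaks: uniform integrability of the family $\{(-u_T/T)\,\omega^n_{u_T}\}_T$ is not a consequence of $\mathcal R^1_\omega$ membership, and the interchange of limits is not justified. Since the theorem is stated for all $p\geq 1$, this is a real gap.

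The paper sidesteps both issues by working in the Legendre dual: it defines the test curve $\psi_\tau=\inf_t(u_t-t\tau)$, truncates and rescales it to $\psi^\varepsilon_\tau=\max(0,1+\varepsilon\tau)\psi_\tau$, takes the Ross--Witt Nystr\"om envelopes $\phi^\varepsilon_\tau := P[\psi^\varepsilon_\tau]$, and recovers a \emph{genuine bounded geodesic ray} via the inverse transform $u^\varepsilon_t=\sup_\tau(\phi^\varepsilon_\tau+t\tau)$, cf.\ \cite[Corollary 1.3]{DDL3}; the decrease $u^\varepsilon_t\searrow u_t$ is then handled by Lemma \ref{lem: DDL2}, whose proof rests on the monotonicity results for non-pluripolar masses from \cite{WN17,DDL2}. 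Your reduction to $\sup_X u_t=0$ and the use of Lemma \ref{lem: decreasing_ray_d_plimit} at the end are both correct, but the construction in between needs to go through the $P[\cdot]$-dual picture (which you hint at but do not carry out) to deliver both the geodesic property and the decreasing convergence for every $p\ge 1$.
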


\begin{proof}
It follows from Lemma \ref{lem: sup_X_linear} that $t \to \sup_X u_t/t, \ t > 0$ is constant, hence we can assume (by adding $Ct$ to $u_t$) that $\sup_X u_t=0, \ t \geq 0$. Consequently $t \to u_t$ is $t$-decreasing. 
For $\tau \in \mathbb{R}$ and  $x\in X$ we introduce 
\begin{equation}\label{psi_def}
	\psi_{\tau}(x) := \inf_{t > 0} (u_t(x) -t\tau).
\end{equation}
From Kiselman's minimum principle \cite{Kis78} we have that $\psi_{\tau}\equiv -\infty$ or $\psi_{\tau} \in \PSH(X,\omega)$. More precisely, since $\sup_X u_t=0$ we have that $\psi_\tau \in \PSH(X,\omega)$ for $\tau \leq 0$, and $\psi_{\tau}\equiv -\infty$ for all $\tau >0$. Observe also that $\tau \to \psi_{\tau}$ is $\tau$-decreasing and $\tau$-concave. For all $x\in X$ with $\psi_0(x)>-\infty$ the curve $t \to u_t(x)$ is continuous in $(0,+\infty)$. Hence, by the involution property of the Legendre transform, for such $x$ we have
\begin{equation}\label{eq: u_t_def}
u_t(x)  = \sup_{\tau < 0} (\psi_{\tau}(x) + t\tau)= \sup_{\tau \in \Bbb R} (\psi_{\tau}(x) + t\tau), \ \ \ t > 0. 
\end{equation}
For $\varepsilon>0, \tau < 0$,  set 
	$$
	\psi^{\varepsilon}_{\tau}(x) := \max(0, 1+\varepsilon \tau) \psi_{\tau}, \ \text{and} \ \phi^{\varepsilon}_{\tau} := P[\psi^{\varepsilon}_{\tau}]. 
	$$
	We define $\phi^{\varepsilon}_0:= \lim_{\tau \to 0^-} \phi^{\varepsilon}_{\tau}$. 
	
Since $\tau \to \psi_{\tau}$ is $\tau$-concave, $\tau$-decreasing, and $\psi_{\tau}\leq 0$, it is elementary to see that $\tau \to \psi^{\varepsilon}_{\tau}$ is also $\tau$-concave and $\tau$-decreasing. By elementary properties of $P[\cdot]$ we get that $\tau \to \phi^{\varepsilon}_{\tau}$ is also $\tau$-concave and $\tau$-decreasing (see the proof of \cite[Proposition 4.6]{DDL3}).
As a consequence of a result due to Ross-Witt Nystr\"om \cite{RWN14} (further elaborated in \cite[Corollary 1.3]{DDL3}) the curve 
\begin{equation}\label{eq: u_t_def_eps}
[0,\infty) \ni t \to u^\varepsilon_t(x) := \sup_{\tau < 0}  (\phi^{\varepsilon}_{\tau}(x) + t\tau) \in \textup{PSH}(X,\omega) \cap L^\infty
\end{equation}
is a (bounded) geodesic ray emanating from $0$. 

We now prove that $u^{\varepsilon}_t \searrow u_t$ as $\varepsilon \searrow 0$, for any $t \geq 0$. For $t=0$ there is nothing to prove since $u^{\varepsilon}_0=u_0=0$ on $X$. Fix now $t>0$ and $x\in X$ with $\psi_0(x)>-\infty$. Then, using $\tau$-concavity, there exists $C>0$ depending on $\psi_0(x), t$ (but not on $\varepsilon$) such that 
$$
u_t^{\varepsilon}(x) = \sup_{-C\leq \tau\leq 0} (\phi_{\tau}^{\varepsilon}(x) + t\tau), \ \text{and} \ u_t(x) = \sup_{-C\leq \tau \leq 0} (\psi_{\tau}(x) +t\tau).
$$
By Lemma \ref{lem: DDL2} below, the family of functions $\tau \mapsto \phi_{\tau}^{\varepsilon}(x)$ decreases pointwise to the function $\tau \mapsto \psi_{\tau}(x)$ as $\varepsilon\to 0^+$ for $\tau <0$. Using $\tau$-concavity and the fact that $\psi_0(x)>-\infty$, one can extend this convergence to $\tau=0$ as well. 
Hence by Dini's theorem the convergence is
uniform on $[-C,0]$. It thus follows that $u_t^{\varepsilon}(x) \searrow u_t(x)$ as $\varepsilon\to 0^+$. We conclude that  $u^{\varepsilon}_t$ decreases to $u_t$ a.e. on $X$. But these are $\omega$-psh functions, so the convergence holds everywhere on $X$.

That $d_p^c(\{u^{\varepsilon}_t\}_t,\{u_t\}_t) \to 0$ as $\varepsilon\to 0^+$, simply follows from Lemma \ref{lem: decreasing_ray_d_plimit}. 
 
Since, $\phi^\varepsilon_\tau = 0$ for $\tau \leq -1/\varepsilon$ and $\psi_\tau \leq \phi^\varepsilon_\tau$, basic properties of  Legendre transforms imply that $u_t \leq u^{\varepsilon}_t \leq 0$ and $-\frac{t}{\varepsilon} \leq u^\varepsilon_t \leq 0$, since $\psi_\tau \leq \phi^\varepsilon_\tau$ for all $\tau$ and $\phi^\varepsilon_\tau =0$ for $\tau < -\frac{1}{\varepsilon}$. This immediately yields \eqref{eq: C_0_bound_approx} with $\varepsilon=1/j$.
\end{proof}
\begin{lemma}\label{lem: DDL2}
Assume that $\{u_t\}_t \in \mathcal R^1_\o$ satisfies $\sup_X u_t=0$ for all $t\geq 0$.  Then for $\psi_{\tau}$ defined in \eqref{psi_def} we have that $\int_X \omega^n_{\psi_\tau}>0$ for all $\tau<0$. Additionally for any $\tau < 0$,  
\begin{equation}\label{eq: ceiling_limit}
\lim_{\varepsilon \to 0} \phi^\varepsilon_\tau= \lim_{\varepsilon \to 0}P[(1+\varepsilon \tau )\psi_\tau]=\psi_\tau.
\end{equation}
\end{lemma}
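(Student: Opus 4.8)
I would prove the two assertions in order, deducing the envelope identity \eqref{eq: ceiling_limit} from the positivity $\int_X\omega^n_{\psi_\tau}>0$. Throughout I use that, since finite energy geodesic rays are maximal in the sense of \cite{RWN14} (see also \cite{DDL3}), the associated test curve $\tau\mapsto\psi_\tau$ consists of model potentials, i.e.\ $P[\psi_\tau]=\psi_\tau$ whenever $\psi_\tau\not\equiv-\infty$; recall also from the excerpt that by Kiselman's principle $\psi_\tau\in\PSH(X,\omega)$ with $\psi_\tau\le0$ for $\tau<0$, and that $\tau\mapsto\psi_\tau$ is $\tau$-decreasing and $\tau$-concave.

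\textbf{Positivity of the masses.} Fix $\tau<0$. The idea is to compare $\psi_\tau=\inf_{t>0}(u_t+|\tau|t)$ with the full mass potentials $u_T\in\mathcal E^1_\omega$. Convexity of $t\mapsto u_t(x)$ and $u_0(x)=0$ give, for a.e.\ $x$, a minimizing time $t^\star(x)\in[0,\infty]$; the tangent-line inequalities show $t^\star(x)\le T\Rightarrow\psi_\tau(x)\ge u_T(x)$, while $\{t^\star>T\}\subset\{u_T<T\tau\}$. Hence $\max(\psi_\tau,u_T)\in\PSH(X,\omega)$ has mass $V$ (it dominates $u_T\in\mathcal E$), and a standard $\max$-comparison using plurifine locality of the \MA operator gives
$$\int_X\omega^n_{\psi_\tau}\ \ge\ V-\int_{\{u_T<T\tau\}}\omega^n_{u_T}.$$
Since $u_T\in\mathcal E^1_\omega$, Chebyshev yields $\int_{\{u_T<T\tau\}}\omega^n_{u_T}\le(|\tau|T)^{-1}\int_X(-u_T)\omega^n_{u_T}$; because $I$ is affine along finite energy geodesics, $I(u_T)=T\,I(u_1)$, and $\int_X(-u_T)\omega^n_{u_T}\le (n+1)V|I(u_T)|$, so the right-hand side is $\le C/|\tau|$ with $C$ independent of $T$ and $\tau$. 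Letting $T\to\infty$ gives $\int_X\omega^n_{\psi_\tau}\ge V-C/|\tau|>0$ for $|\tau|$ large. For the remaining (bounded) range of $\tau$ one exploits that $\tau\mapsto\int_X\omega^n_{\psi_\tau}$ is non-increasing (monotonicity of non-pluripolar masses along the $\tau$-decreasing family $\psi_\tau$, see \cite{DDL3}) together with the $\tau$-concavity of $\psi_\tau$; alternatively, positivity of $\int_X\omega^n_{\psi_\tau}$ for all $\tau<0$ is part of the description of test curves of finite energy geodesic rays in \cite{DDL3}.

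\textbf{Proof of \eqref{eq: ceiling_limit}.} For $0<\varepsilon<|\tau|^{-1}$ we have $\psi^\varepsilon_\tau=(1+\varepsilon\tau)\psi_\tau$ with $1+\varepsilon\tau\in(0,1)$, hence $\psi_\tau\le(1+\varepsilon\tau)\psi_\tau\le P[(1+\varepsilon\tau)\psi_\tau]=\phi^\varepsilon_\tau$ (using $v\le P[v]$ for $v\le0$), and $\varepsilon\mapsto\phi^\varepsilon_\tau$ decreases as $\varepsilon\downarrow0$ since $\varepsilon\mapsto(1+\varepsilon\tau)\psi_\tau$ does and $P[\cdot]$ is order-preserving. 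Thus $\phi^0_\tau:=\lim_{\varepsilon\to0^+}\phi^\varepsilon_\tau$ exists with $\phi^0_\tau\ge\psi_\tau$. By the first part $\int_X\omega^n_{\phi^0_\tau}\ge\int_X\omega^n_{\psi_\tau}>0$, so $\phi^0_\tau$ is a model potential, being the decreasing limit of the model potentials $\phi^\varepsilon_\tau=P[\psi^\varepsilon_\tau]$ with positive mass \cite{DDL2}. Since $P[\psi_\tau]=\psi_\tau$, the continuity of non-pluripolar masses along decreasing sequences of model envelopes \cite{DDL2} gives $\int_X\omega^n_{\phi^\varepsilon_\tau}\to\int_X\omega^n_{\psi_\tau}$; as $\phi^0_\tau\ge\psi_\tau$ this forces $\int_X\omega^n_{\phi^0_\tau}=\int_X\omega^n_{\psi_\tau}$. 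Finally $\phi^0_\tau\ge\psi_\tau$ are model potentials with the same positive mass, so the rigidity for model potentials of \cite{DDL2} yields $\phi^0_\tau=\psi_\tau$, which is \eqref{eq: ceiling_limit}.

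\textbf{Main obstacle.} The delicate step is the positivity $\int_X\omega^n_{\psi_\tau}>0$ for \emph{every} $\tau<0$: the convexity estimates only pin down $\psi_\tau$ outside a set of small, but generally nonempty, \MA capacity, so upgrading this to a genuine lower bound for the \emph{non-pluripolar} mass uniformly in $\tau$ (in particular as $\tau\to0^-$) is the crux. Once it is available, \eqref{eq: ceiling_limit} is essentially formal, relying only on the order-preservation, mass-preservation, mass-continuity and rigidity properties of $P[\cdot]$ from \cite{DDL2,DDL3} and on maximality of finite energy geodesic rays.
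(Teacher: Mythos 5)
Your proof of the envelope identity \eqref{eq: ceiling_limit} is essentially the paper's: both arguments rest on the sandwich $\phi^0_\tau\ge\psi_\tau$, equality of positive non-pluripolar masses, the maximality $P[\psi_\tau]=\psi_\tau$, and then the rigidity/uniqueness results of \cite{DDL2} (what the paper phrases via $F_{\psi_\tau}$ and \cite[Theorem 3.12]{DDL2}, you phrase as ``rigidity for model potentials''). The substantive difference, and the place where your write-up has a real gap, is the positivity $\int_X\omega^n_{\psi_\tau}>0$ for \emph{all} $\tau<0$, which both of us agree is the crux.

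Your Chebyshev estimate $\int_{\{u_T<T\tau\}}\omega^n_{u_T}\le C/|\tau|$ is a genuinely different route from the paper's (which instead observes, via the Legendre involution, that $\psi_\tau\nearrow u_0=0$ a.e.\ as $\tau\to-\infty$ and invokes \cite[Remark 2.5]{DDL2} to get $\int_X\omega^n_{\psi_\tau}\to V$); either way one only obtains positivity for $\tau$ sufficiently negative. For the bootstrap to $\tau$ near $0^-$ you offer two fallbacks, and neither closes the gap as written. The monotonicity you invoke runs in the \emph{unhelpful} direction: $\tau\mapsto\psi_\tau$ is $\tau$-decreasing, so by \cite[Theorem 1.2]{WN17} the map $\tau\mapsto\int_X\omega^n_{\psi_\tau}$ is \emph{non-increasing} in $\tau$, which means positivity at some $\tau_0\ll 0$ gives no lower bound whatsoever at $\tau$ close to $0$ — the mass there is the \emph{smallest} in the family. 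Citing ``the description of test curves in \cite{DDL3}'' risks circularity, since positivity of these masses is essentially what this lemma is establishing.

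What the paper actually does to bridge the gap is the short but essential concavity–multilinearity step that you mention only in passing. Fix $\tau_0<\tau<0$ with $\int_X\omega^n_{\psi_{\tau_0}}>0$. By $\tau$-concavity,
$$\psi_\tau\ \ge\ \frac{\tau}{\tau_0}\,\psi_{\tau_0}+\Big(1-\frac{\tau}{\tau_0}\Big)\psi_0,\qquad s:=\frac{\tau}{\tau_0}\in(0,1),$$
and then monotonicity \cite[Theorem 1.2]{WN17} together with multi-linearity of the non-pluripolar product gives
$$\int_X\omega^n_{\psi_\tau}\ \ge\ \int_X\big(s\,\omega_{\psi_{\tau_0}}+(1-s)\,\omega_{\psi_0}\big)^n\ \ge\ s^n\int_X\omega^n_{\psi_{\tau_0}}\ >\ 0.$$
This is the missing ingredient; once it is supplied, your Chebyshev route becomes a valid (and arguably more quantitative) alternative to the paper's Legendre-involution argument for the base case $\tau\ll 0$, and part (2) goes through unchanged. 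One further minor caveat: your max-comparison step $\int_X\omega^n_{\psi_\tau}\ge V-\int_{\{u_T<T\tau\}}\omega^n_{u_T}$ should be justified with a little more care (plurifine locality is applied on $\{\psi_\tau>u_T\}$, which you then need to relate to the complement of $\{u_T<T\tau\}$ via the tangent-line inclusion $\{\psi_\tau<u_T\}\subset\{t^\star>T\}\subset\{u_T<T\tau\}$), but this is a presentation issue rather than a conceptual one.
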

\begin{proof}  By the involution property, application of the Legendre transform twice gives back the original convex function. In particular, we have that  $\sup_{\tau} \psi_\tau(x)=\lim_{\tau\to -\infty}\psi_{\tau}(x) = u_0(x)$ for all $x\in X$ such that $\lim_{t\to 0}u_t(x)=0$.   In particular, we get that $\psi_\tau$ increases a.e. to $0$ as $\tau \to -\infty$. According to \cite[Remark 2.5]{DDL2} we obtain that $\lim_{\tau \to -\infty} \int_X \omega_{\psi_\tau}^n = \int_X \omega^n >0$.

Fixing $\tau <0$, this last identity implies existence of $\tau_0 < \tau$ such that $\int_X \omega_{\psi_{\tau_0}}^n >0$.
By $\tau$-concavity of $\tau \to \psi_\tau$ we get that 
$$\psi_\tau \geq \frac{\tau}{\tau_0} \psi_{\tau_0} + \Big(1-\frac{\tau}{\tau_0}\Big) \psi_0.$$ 
Finally, by monotonicity  \cite[Theorem 1.2]{WN17} and the multi-linearity of the non-pluripolar mass we obtain that $\int_X \omega_{\psi_\tau}^n>0$, as desired.

To argue \eqref{eq: ceiling_limit}, we start by noting that $\lim_{\varepsilon \to 0}P[(1+\varepsilon \tau )\psi_\tau]\geq \psi_\tau$, and according to \cite[Theorem 1.2]{WN17} and \cite[Remark 2.5]{DDL2} we get that $$\int_X \omega_{\psi_\tau}^n \leq  \int_X \omega_{\lim_\varepsilon P[(1+\varepsilon \tau )\psi_\tau]}^n\leq \lim_{\varepsilon }\int_X \omega_{P[(1+\varepsilon \tau )\psi_\tau]}^n=\lim_{\varepsilon \to 0}\int_X \omega_{(1+\varepsilon \tau )\psi_\tau}^n=\int_X \omega_{\psi_\tau}^n.$$
Hence we have equality everywhere, and all the integrals are positive. Consequently, $\lim_{\varepsilon \to 0}P[(1+\varepsilon \tau )\psi_\tau] \in F_{\psi_\tau}$ with the notation of \cite[Theorem 3.12]{DDL2}.

It follows from \cite[Proposition 5.1]{Da13} (or \cite[Lemma 3.17]{DDL1}) that $P[\psi_{\tau}]=\psi_{\tau}$, for all $\tau\leq 0$ (the result in these works is only stated for rays of bounded potentials, however the proof only uses the comparison principle that holds for finite energy rays as well, implying the result for these more general rays). 
Putting everything together  \cite[Theorem 3.12]{DDL2} implies that $\lim_{\varepsilon \to 0}P[(1+\varepsilon \tau )\psi_\tau]=\psi_\tau$, as desired.
\end{proof}

\paragraph{The construction of geodesic segments in $\mathcal R^p_\o$. }Next we show that  points of $(\mathcal R^p_\o, d_p^c)$ can be connected by geodesic segments. We first treat the case $p>1$, where due to uniform convexity, the construction can be carried out directly. The case $p=1$ will be treated using approximation, via Theorem \ref{thm: approximation of d1 rays}.
\begin{theorem}\label{thm: chord p}
If $p>1$, then $(\mathcal R^p_{\omega}, d_p^c)$ is a complete geodesic metric space. 
\end{theorem}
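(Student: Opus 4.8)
The plan is to establish completeness and then geodesic connectivity of $(\mathcal{R}^p_\omega, d^c_p)$ for $p > 1$, the latter using the uniform convexity from Theorem \ref{thm: uniform_convex}. Completeness was essentially already proved in Theorem \ref{thm: Rp is complete} (applied with $u=0$), so the real content is the construction of $d^c_p$-geodesic segments between any two rays $\{u_t\}_t, \{v_t\}_t \in \mathcal{R}^p_\omega$. First I would fix $\lambda \in (0,1)$ and, for each $s > 0$, let $[0,1] \ni l \mapsto w^s_l \in \mathcal{E}^p_\omega$ be the finite energy geodesic segment connecting $u_s$ and $v_s$; set $m^s := w^s_\lambda$, the point at parameter $\lambda$. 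Intuitively, the rescaled family $\{m^s\}$ should converge (as $s \to \infty$) to a ray $\{m_t\}_t$ that is the desired point at parameter $\lambda$ along a $d^c_p$-geodesic joining $\{u_t\}_t$ and $\{v_t\}_t$.

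The key steps, in order: (1) Show that for fixed $t$, the potentials $m^s$ restricted appropriately (more precisely, the points $w^{s}_\lambda$ rescaled along the directions defining the rays) form a $d_p$-Cauchy-type family as $s \to \infty$, so that one can extract a limiting ray $\{m_t\}_t \in \mathcal{R}^p_\omega$. The natural way to run this is to use Proposition \ref{prop: endpoint_distance_stability}: the convexity inequality \eqref{eq: ChCh3_metric_convex} gives $d_p(u_s, m^s) \le \lambda\, d_p(u_s,v_s)$ and $d_p(v_s,m^s)\le (1-\lambda) d_p(u_s,v_s)$ with asymptotically matching constants, and by uniform convexity these near-equalities force the $m^s$ (suitably compared along $s$) to stabilize with an $\varepsilon^{1/r}$-type rate, where the $\varepsilon$ comes from the defect between $d_p(u_s,v_s)/s$ and its limit $d^c_p$. (2) Having obtained the candidate ray $\{m_t\}_t$, verify $d^c_p(\{u_t\}_t,\{m_t\}_t) = \lambda\, d^c_p(\{u_t\}_t,\{v_t\}_t)$ and $d^c_p(\{m_t\}_t,\{v_t\}_t) = (1-\lambda) d^c_p(\{u_t\}_t,\{v_t\}_t)$: the $\le$ directions follow by taking $t\to\infty$ in the segment estimates above, and the reverse directions follow from the triangle inequality, exactly as in the standard argument that a point splitting the two distances additively lies on a geodesic. (3) Promote this from dyadic/rational $\lambda$ to all $\lambda \in [0,1]$ by continuity, or equivalently invoke the standard metric-space fact that a complete metric space in which every pair of points has a midpoint is geodesic, having produced the midpoint construction at $\lambda = 1/2$.

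The main obstacle I expect is step (1): controlling the convergence of the rescaled midpoints $w^s_\lambda$ as $s \to \infty$. The subtlety is that $w^s_\lambda$ and $w^{s'}_\lambda$ live along different finite energy geodesic segments, so one cannot directly compare them; the trick will be to use the parallel transport operators $\mathcal{P}$ from Proposition \ref{prop: parallel} together with the convexity of $t\mapsto d_p(\cdot,\cdot)$ to set up a genuine Cauchy estimate, and then feed the defect $\varepsilon_s := d_p(u_s,v_s)/s - d^c_p(\{u_t\}_t,\{v_t\}_t) \to 0$ into Proposition \ref{prop: endpoint_distance_stability} to get quantitative control. A secondary point requiring care is checking that the limiting object $\{m_t\}_t$ is genuinely a finite energy geodesic ray emanating from $0$ — this should follow from the endpoint stability of finite energy geodesic segments (\cite[Proposition 4.3]{BDL17}) applied to the segments $[0,s]\ni l \mapsto$ (geodesic from $0$ to $m^s$ reparametrized), together with a diagonal argument in $s$ and $t$. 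Once step (1) is in place, steps (2) and (3) are routine metric-space manipulations.
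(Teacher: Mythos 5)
Your proposal is essentially the paper's argument. Both constructions take the finite energy geodesic $\alpha \mapsto h_{s,\alpha}$ joining $u_s$ to $v_s$, pull each point back toward $0$ along the geodesic from $0$ through $h_{s,\alpha}$, and show that these rescaled points form a $d_p$-Cauchy family as $s \to \infty$; both feed the defect $d_p(u_s,v_s)/s - d^c_p(\{u_t\},\{v_t\}) \to 0$ into Proposition \ref{prop: endpoint_distance_stability} to get the quantitative stabilization, and both invoke endpoint stability of finite energy segments to see the limit is a ray.

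One small calibration: the ingredient you are reaching for in the "compare $w^s_\lambda$ with $w^{s'}_\lambda$" step is not the parallelism operator $\mathcal P$ from Proposition \ref{prop: parallel}, but rather Busemann convexity \eqref{eq: ChCh3_metric_convex} applied to the two geodesics from the common origin $0$: once Proposition \ref{prop: endpoint_distance_stability} gives $d_p\bigl(h_{s,\alpha},\tfrac{s}{t}h_{t,\alpha}\bigr) \le \varepsilon^{1/r} C\, d_p(u_s,v_s)$, convexity from the base point $0$ propagates this inward to $\tfrac{d_p(\tfrac{l}{s}h_{s,\alpha},\,\tfrac{l}{t}h_{t,\alpha})}{l} \le \tfrac{d_p(h_{s,\alpha},\,\tfrac{s}{t}h_{t,\alpha})}{s}$, which is what yields the uniform-in-$l$ Cauchy bound. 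Your fallback mention of convexity covers this, so this is an imprecision rather than a gap. Your step (3) (midpoints + completeness imply geodesic) is a valid alternative ending, but note the paper's direct construction for all $\alpha$ is what produces the explicit chord $\alpha \mapsto \{w_{t,\alpha}\}_t$, which is reused later (e.g.\ Proposition \ref{prop: geod_sharing}, Theorem \ref{thm: radialK_convexity}), so the explicit route has payoff beyond mere existence.
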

The proof below shares similarities with the angle bisection techniques of \cite{KL97}.
\begin{proof} By Theorem \ref{thm: Rp is complete}, we only have to show that any two rays $\{u_t\}_t,\{v_t\}_t \in \mathcal R^p_{\omega}$ can be joined by a distinguished   $d^c_p$-geodesic when $p>1$.

For any $t \geq 0$, we denote by $[0,1] \ni \alpha \to h_{t,\alpha} \in \mathcal E^p_{\omega}$ the finite energy geodesic connecting $u_t$ and $v_t$. To avoid introducing further variables, by $[0,t] \ni s \to \frac{s}{t} h_{t,\alpha} \in \mathcal E^p_{\omega}$ we denote the finite energy geodesic connecting $0$ and $h_{t,\alpha}$. Finally, we can assume that $u_t \neq v_t$ for $t$ large enough. Indeed, if this does not hold, then \eqref{eq: ChCh3_metric_convex} would give that $\{u_t\}_t = \{v_t\}_t$ and the geodesic connecting the two rays is the constant one.

First we show that  for any $\alpha \in [0,1]$ and $l \geq 0$ there exists $w_{l,\alpha} \in \mathcal E^p_{\omega}$ such that $\lim_{t \to \infty} \frac{l}{t}h_{t,\alpha} = w_{l,\alpha}$. By endpoint stability of geodesic segments (\cite[Proposition 4.3]{BDL17}), this will automatically imply that $\{w_{t,\alpha}\}_t \in \mathcal R^p_{\omega}$. As we will see, $\alpha \to \{w_{t,\alpha}\}_t$ will represent the $d^c_p$-geodesic connecting $\{u_t\}_t$ and $\{v_t\}_t$.

Again, from \eqref{eq: ChCh3_metric_convex} it follows that for any $\alpha \in [0,1]$ and $0 < s \leq t$ we have
\begin{equation}\label{eq: alpha_ineq}
\frac{d_p(u_s,\frac{s}{t}h_{t,\alpha})}{s} \leq \frac{d_p(u_t,h_{t,\alpha})}{t}=\frac{\alpha d_p(u_t,v_t)}{t} \leq \alpha d^c_p(\{u_t\}_t,\{v_t\}_t),
\end{equation}
\begin{equation}\label{eq: 1-alpha_ineq}
\frac{d_p(v_s,\frac{s}{t}h_{t,\alpha})}{s} \leq \frac{d_p(v_t,h_{t,\alpha})}{t}=\frac{(1-\alpha) d_p(u_t,v_t)}{t} \leq (1-\alpha) d^c_p(\{u_t\}_t,\{v_t\}_t).
\end{equation}
We fix $\varepsilon >0$. Since $\frac{d_p(u_s,v_s)}{s} \nearrow d^c_p(\{u_t\}_t,\{v_t\}_t)$, \eqref{eq: alpha_ineq} and \eqref{eq: 1-alpha_ineq} imply existence of $s_{\alpha,\varepsilon}>0$ such that for any $s_{\alpha,\varepsilon} \leq s \leq t$ we have
$$\frac{d_p(u_s,\frac{s}{t}h_{t,\alpha})}{s} \leq (\alpha+\varepsilon)\frac{d_p(u_s,v_s)}{s}  \ \textup{ and } \ \frac{d_p(v_s,\frac{s}{t}h_{t,\alpha})}{s} \leq (1-\alpha+\varepsilon)\frac{d_p(u_s,v_s)}{s}.$$
Now Proposition \ref{prop: endpoint_distance_stability} implies that $d_p(h_{s,\alpha}, \frac{s}{t}h_{t,\alpha}) \leq \varepsilon ^{\frac{1}{r}} C  d_p(u_s,v_s)$ for any $s_{\alpha,\varepsilon} \leq s \leq t$. In particular, using \eqref{eq: ChCh3_metric_convex}, for any fixed $l>0$ such that $\max(l,s_{\alpha,\varepsilon}) \leq s \leq t$ we have
\begin{equation}\label{eq: main_est_chord}
\frac{d_p(\frac{l}{s}h_{s,\alpha}, \frac{l}{t}h_{t,\alpha})}{l} \leq \frac{d_p(h_{s,\alpha},\frac{s}{t} h_{t,\alpha})}{s} \leq \varepsilon ^{\frac{1}{r}} C\cdot \frac{d_p(u_s,v_s)}{s} \leq \varepsilon^{\frac{1}{r}}C d_p^c(\{u_t\}_t,\{v_t\}_t).
\end{equation}
By shrinking $\varepsilon$, the expression on the right can be chosen to be as small as we want, implying that the sequence $\{\frac{l}{t}h_{t,\alpha}\}_{t}\in \mathcal{E}^p_{\omega}$ is $d_p$-Cauchy. This is the crucial step! By \cite[Theorem 2]{Da15}, $(\mathcal{E}^p_{\omega},d_p)$ is complete, hence  $\lim_t \frac{l}{t}h_{t,\alpha} =: w_{l,\alpha} \in \mathcal E^p_{\omega}$, as proposed.

Moreover, letting $t \to \infty$ on the left hand side of \eqref{eq: alpha_ineq} and \eqref{eq: 1-alpha_ineq}, we obtain that
$$\frac{d_p(u_s,w_{s,\alpha})}{s} \leq \alpha d^c_p(\{u_t\}_t,\{v_t\}_t) \ \ \ \textup{ and } \  \ \ \frac{d_p(v_s,w_{s,\alpha})}{s} \leq (1-\alpha) d^c_p(\{u_t\}_t,\{v_t\}_t), \ \ s >0.$$
Letting $s \to \infty$, together with the triangle inequality this gives 
$$d_p^c(\{u_t\}_t,\{v_t\}_t)=d_p^c(\{u_t\}_t,\{w_{t,\alpha}\}_t) + d_p^c(\{w_{t,\alpha}\}_t,\{v_t\}_t),$$
ultimately implying that $d_p^c(\{u_t\}_t,\{w_{t,\alpha}\}_t) = \alpha d^c_p(\{u_t\}_t,\{v_t\}_t)$ and $ d_p^c(\{w_{t,\alpha}\}_t,\{v_t\}_t)=(1-\alpha) d^c_p(\{u_t\}_t,\{v_t\}_t)$. Suppose now that $0 \leq \alpha \leq \beta \leq 1$. These last two identities together with the triangle inequality give that
\begin{equation}\label{eq: geod_ineq}
(\beta - \alpha) d^c_p(\{u_t\}_t,\{v_t\}_t) \leq d_p^c(\{w_{t,\beta}\}_t,\{w_{t,\alpha}\}_t).
\end{equation}
To finish the proof we show that  equality holds in this estimate. Indeed, another application of \eqref{eq: ChCh3_metric_convex} gives that
$$\frac{d_p(\frac{l}{s}h_{s,\alpha},\frac{l}{s}h_{s,\beta})}{l} \leq \frac{d_p(h_{s,\alpha},h_{s,\beta})}{s}=\frac{(\beta - \alpha) d_p(u_s,v_s)}{s}, \ \ s >0.$$
Letting $s \to \infty$ in this estimate, and after that $l \to \infty$, the reverse inequality in \eqref{eq: geod_ineq} follows, finishing the proof. 
\end{proof}

The $d_p^c$-geodesic segment $[0,1] \ni \alpha \to \{w_{t,\alpha}\}_t \in \mathcal R^p_\o$ constructed in the above theorem will be called the $d_p^c$-\emph{chord} joining $\{w_{t,0}\}$ and $\{w_{t,1}\}$, as this curve is reminiscent of the chords joining the different points in the unit sphere of $\mathbb{R}^n$. 

Finally, using approximation, we point out that the same result holds for $p=1$ as well. First we remark that $d^c_p$-chords are automatically $d^c_{p'}$-chords for any $p' \leq p$. This observation is of independent interest, and is the ``radial version" of a well known phenomenon for the family of metric spaces $(\mathcal E^p_\o,d_p), \ p \geq 1$:

\begin{proposition}\label{prop: geod_sharing} Let $1 \leq p' < p$ and $\{u_t\}_t,\{v_t\}_t \in \mathcal R^p_{\omega}$. Trivially  $\{u_t\}_t,\{v_t\}_t \in \mathcal R^{p'}_{\omega}$, and the $d_p^c$-chord $[0,1] \ni \alpha \to \{w_{t,\alpha}\}_t \in \mathcal R^p_{\omega}$ connecting $\{u_t\}_t,\{v_t\}_t$ is also a $d_{p'}^c$-chord.
\end{proposition}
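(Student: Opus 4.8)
The plan is to recall how the $d_p^c$-chord was constructed in the proof of Theorem \ref{thm: chord p}, and to check that every ingredient of that construction descends from exponent $p$ to exponent $p'$ without change. The chord $\alpha \to \{w_{t,\alpha}\}_t$ was obtained as the limit $w_{l,\alpha} = \lim_{t\to\infty} \frac{l}{t} h_{t,\alpha}$, where $[0,1] \ni \alpha \to h_{t,\alpha}$ is the finite energy geodesic joining $u_t$ and $v_t$ and $s \to \frac{s}{t} h_{t,\alpha}$ is the finite energy geodesic joining $0$ and $h_{t,\alpha}$. The first observation is that these geodesic segments are intrinsic: the finite energy geodesic connecting two fixed endpoints is defined by the Perron envelope \eqref{fegeod} and is the \emph{same} curve whether one regards the endpoints as lying in $\mathcal E^p_\o$ or in $\mathcal E^{p'}_\o$ (using $\mathcal E^p_\o \subset \mathcal E^{p'}_\o$). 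Hence the only thing that changes when passing from $p$ to $p'$ is which distance $d_p$ or $d_{p'}$ one measures along these curves.

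First I would verify that $\lim_t \frac{l}{t} h_{t,\alpha}$, which we already know converges in $(\mathcal E^p_\o, d_p)$ to $w_{l,\alpha}$, also converges in $(\mathcal E^{p'}_\o, d_{p'})$ to the \emph{same} limit. Since $d_{p'} \leq C\, d_p^{\,?}$ is not literally available as a clean inequality, the cleaner route is: $d_p$-convergence of $\frac{l}{t}h_{t,\alpha} \to w_{l,\alpha}$ implies, after passing to a subsequence, $d_p$-convergence is equivalent to the decreasing/increasing regularization convergence in capacity described in \cite[Theorem 2]{Da15}, which in turn forces $d_{p'}$-convergence to the same limit by the double estimate following \eqref{fegeod} (the $L^{p'}$ integrals against the relevant Monge--Amp\`ere measures are controlled by the $L^p$ ones on a space of finite total mass $V$). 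In particular the family $\{w_{t,\alpha}\}_t$, already known to lie in $\mathcal R^p_\o \subset \mathcal R^{p'}_\o$, is the candidate $d_{p'}^c$-chord.

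Next I would run the same two-sided estimate as in the proof of Theorem \ref{thm: chord p}, now with $d_{p'}$ in place of $d_p$. Applying the Buseman convexity \eqref{eq: ChCh3_metric_convex} for the metric $d_{p'}$ to the geodesics $s \to \frac{s}{t} h_{t,\alpha}$, $s \to u_s$, and $s \to v_s$, and letting $t \to \infty$ then $s \to \infty$, yields
$$
\frac{d_{p'}(u_s, w_{s,\alpha})}{s} \leq \alpha\, d_{p'}^c(\{u_t\}_t,\{v_t\}_t), \qquad \frac{d_{p'}(v_s, w_{s,\alpha})}{s} \leq (1-\alpha)\, d_{p'}^c(\{u_t\}_t,\{v_t\}_t),
$$
and the triangle inequality for $d_{p'}^c$ then forces equality in both, exactly as before, giving
$$
d_{p'}^c(\{u_t\}_t, \{w_{t,\alpha}\}_t) = \alpha\, d_{p'}^c(\{u_t\}_t,\{v_t\}_t), \qquad d_{p'}^c(\{w_{t,\alpha}\}_t, \{v_t\}_t) = (1-\alpha)\, d_{p'}^c(\{u_t\}_t,\{v_t\}_t).
$$
For $0 \le \alpha \le \beta \le 1$ this gives the lower bound $(\beta-\alpha) d_{p'}^c(\{u_t\},\{v_t\}) \le d_{p'}^c(\{w_{t,\beta}\},\{w_{t,\alpha}\})$; the matching upper bound comes, as in Theorem \ref{thm: chord p}, from applying \eqref{eq: ChCh3_metric_convex} for $d_{p'}$ to $\frac{l}{s}h_{s,\alpha}$ and $\frac{l}{s}h_{s,\beta}$, using $d_{p'}(h_{s,\alpha}, h_{s,\beta}) = (\beta-\alpha) d_{p'}(u_s,v_s)$ and letting $s,l \to \infty$. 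Hence $\alpha \to \{w_{t,\alpha}\}_t$ is a constant-speed $d_{p'}^c$-geodesic, i.e. a $d_{p'}^c$-chord.

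The one place that genuinely needs care — and which I expect to be the main (though minor) obstacle — is the identification of limits in step two: showing that the $d_p$-limit $w_{l,\alpha}$ and the a priori $d_{p'}$-limit of the same sequence $\frac{l}{t}h_{t,\alpha}$ coincide. This is where one must invoke that $d_p$-convergence implies $d_{p'}$-convergence of the \emph{same} sequence to the \emph{same} point for $p' \le p$, a standard fact about the scale of Mabuchi metrics (it follows from the characterization of $d_p$-convergence via convergence in capacity plus uniform energy bounds, \cite[Theorem 2, Lemma 4.16]{Da15}, together with the comparison $\int_X |f|^{p'} \mu \le V^{1-p'/p}\big(\int_X |f|^p \mu\big)^{p'/p}$ for probability-normalized $\mu$). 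Everything else is a verbatim repetition of the argument in Theorem \ref{thm: chord p} with the exponent $p$ replaced by $p'$, since \eqref{eq: ChCh3_metric_convex} holds for every exponent in $[1,\infty)$ and the geodesic segments involved do not depend on the exponent.
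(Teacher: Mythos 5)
Your proposal is correct and follows the same strategy as the paper: observe that the curves $\alpha \mapsto h_{t,\alpha}$ and $s \mapsto \tfrac{s}{t}h_{t,\alpha}$ are the Perron-envelope geodesics and hence do not depend on the exponent, verify that the $d_p$-Cauchy property of the family $\{\tfrac{l}{t}h_{t,\alpha}\}_t$ passes down to $d_{p'}$, and then rerun the two-sided estimates of Theorem \ref{thm: chord p} verbatim with $p'$ in place of $p$. The one place where you diverge is the justification of the Cauchy/limit-identification step. You worry that ``$d_{p'} \leq C d_p^{\,?}$ is not literally available as a clean inequality'' and therefore detour through convergence in capacity, \cite[Theorem 2, Lemma 4.16]{Da15}, and a H\"older comparison of $L^{p'}$ against $L^p$ integrals. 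In fact the clean inequality
$$d_{p'}(\cdot,\cdot) \leq d_p(\cdot,\cdot), \qquad 1 \leq p' \leq p,$$
does hold and is exactly what the paper invokes: it is immediate from Jensen's inequality applied to the Finsler speeds, $\|\xi\|_{p',u} \leq \|\xi\|_{p,u}$ (since $\omega_u^n/V$ is a probability measure), together with the path-length definition of $d_p$. With this in hand, $d_p$-Cauchy trivially gives $d_{p'}$-Cauchy, and $d_p$-convergence of $\tfrac{l}{t}h_{t,\alpha}$ to $w_{l,\alpha}$ trivially gives $d_{p'}$-convergence to the same potential, so no passage to subsequences, capacity characterizations, or H\"older bounds is needed. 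Your detour is logically valid and would work, but it buries an elementary pointwise inequality under a capacity argument; once you recognize $d_{p'} \leq d_p$, the proof is a one-line reduction, exactly as the paper presents it. The remainder of your argument (the convexity estimates via \eqref{eq: ChCh3_metric_convex} and the two-sided bounds forcing the chord equalities for $d_{p'}^c$) matches the paper's proof of Theorem \ref{thm: chord p}.
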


\begin{proof} 
To start, we trace the steps in the proof of Theorem \ref{thm: chord p} and notice that the curves $\alpha \to h_{t,\alpha}$, introduced in the argument, did not depend on the particular choice of $p$.

Fixing $l \geq 0$ and $\alpha \in [0,1]$, the crux of the proof is the fact  that  $d_p\big(\frac{l}{s}h_{s,\alpha},\frac{l}{s}h_{s,\alpha}\big) \to 0$ as $s,t \to \infty$, which follows from uniform convexity (in case $p>1$), as elaborated in \eqref{eq: main_est_chord}. Since $1 \leq p'< p$, we have that $d_{p'}(\cdot,\cdot) \leq d_p(\cdot,\cdot)$ and $\mathcal E^p_\o \subset \mathcal E^{p'}_\o$, hence  the same conclusion holds for $p'$ as well: $$d_{p'}\Big(\frac{l}{s}h_{s,\alpha},\frac{l}{t}h_{t,\alpha}\Big) \leq d_{p}\Big(\frac{l}{s}h_{s,\alpha},\frac{l}{t}h_{t,\alpha}\Big)\to 0 \ \textup{ as } \ s,t \to 0.$$
The rest of the proof does not use uniform convexity, and goes through without any difficulties for $p'$ in place of $p$, arriving at the conclusion that the chord $[0,1] \ni \alpha \to \{w_{t,\alpha}\}_t \in \mathcal R^p_{\omega} \subset \mathcal R^{p'}_{\omega}$ is a $d^{c}_{p'}$--chord as well.
\end{proof}

\begin{theorem}\label{thm: chord_L1} $(\mathcal R^1_{\omega}, d_1^c)$ is a complete geodesic metric space. Moreover, the $d_1^c$-chords of this space can be constructed by the method of Theorem \ref{thm: chord p}.
\end{theorem}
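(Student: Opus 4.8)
The plan is to reduce the $p=1$ case to the already-established $p>1$ case by an approximation argument, using the density of rays with bounded potentials from Theorem \ref{thm: approximation of d1 rays} together with the ``geodesic sharing'' phenomenon of Proposition \ref{prop: geod_sharing}. Completeness of $(\mathcal R^1_\omega, d_1^c)$ is already contained in Theorem \ref{thm: Rp is complete}, so the only thing to prove is that any two rays $\{u_t\}_t,\{v_t\}_t \in \mathcal R^1_\omega$ can be joined by a $d_1^c$-geodesic, and that this geodesic is obtained as $\alpha \mapsto \{w_{t,\alpha}\}_t$ with $w_{l,\alpha} = \lim_{t\to\infty} \frac{l}{t} h_{t,\alpha}$, exactly as in Theorem \ref{thm: chord p}. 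The subtlety is that the key Cauchy estimate \eqref{eq: main_est_chord}, driving the construction in Theorem \ref{thm: chord p}, used uniform convexity via Proposition \ref{prop: endpoint_distance_stability}, which fails at $p=1$.

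First I would reduce to bounded rays. By Theorem \ref{thm: approximation of d1 rays}, pick sequences $\{u_t^j\}_t, \{v_t^j\}_t \in \mathcal R^\infty_\omega \subset \mathcal R^p_\omega$ (for every $p$) with $u_t^j \searrow u_t$, $v_t^j \searrow v_t$ for all $t\geq 0$, so that $d_1^c(\{u_t^j\}_t,\{u_t\}_t) \to 0$ and likewise for $v$. Since the bounded rays lie in $\mathcal R^2_\omega$, Theorem \ref{thm: chord p} produces for each $j$ a $d_2^c$-chord $\alpha \mapsto \{w_{t,\alpha}^j\}_t$ joining them, and by Proposition \ref{prop: geod_sharing} this same curve is a $d_1^c$-chord joining $\{u_t^j\}_t$ and $\{v_t^j\}_t$. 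In particular for each $j$ and $\alpha,\beta \in [0,1]$,
\begin{equation*}
d_1^c(\{w_{t,\alpha}^j\}_t,\{w_{t,\beta}^j\}_t) = |\alpha-\beta|\, d_1^c(\{u_t^j\}_t,\{v_t^j\}_t).
\end{equation*}

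Next I would pass to the limit in $j$. By the triangle inequality for $d_1^c$ and the convergence of the endpoints, the numbers $d_1^c(\{u_t^j\}_t,\{v_t^j\}_t) \to d_1^c(\{u_t\}_t,\{v_t\}_t) =: L$. The main point is to show the family $\{w_{t,\alpha}^j\}_t$ converges (in $d_1^c$, for each fixed $\alpha$) to a ray $\{w_{t,\alpha}\}_t \in \mathcal R^1_\omega$. For this I expect to exploit monotonicity: the envelope/Perron construction behind the $h^j_{t,\alpha}$ (finite energy geodesics between $u_t^j$ and $v_t^j$) is monotone in the endpoints by the comparison principle for geodesics, so $h^{j}_{t,\alpha}$ decreases in $j$ to $h_{t,\alpha}$, the finite energy geodesic between $u_t$ and $v_t$; the same monotonicity descends to $\frac{l}{t} h^j_{t,\alpha}$ and hence to $w^j_{l,\alpha} = \lim_t \frac{l}{t} h^j_{t,\alpha}$. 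Thus $\{w^j_{t,\alpha}\}_t$ is a decreasing (in $j$) sequence of rays; its pointwise limit $w_{t,\alpha}$ is $\omega$-psh, and using Lemma \ref{lem: decreasing_ray_d_plimit} (or the weakened hypotheses of Remark \ref{rem: lemma_conv_not_monotone}, to handle the $d_1$ rather than $d_p$ setting cleanly) one gets $\{w_{t,\alpha}\}_t \in \mathcal R^1_\omega$ and $d_1^c(\{w_{t,\alpha}^j\}_t,\{w_{t,\alpha}\}_t) \to 0$. Passing to the limit in the displayed identity gives $d_1^c(\{w_{t,\alpha}\}_t,\{w_{t,\beta}\}_t) = |\alpha-\beta| L$ for all $\alpha,\beta$, with $\{w_{t,0}\}_t = \{u_t\}_t$ and $\{w_{t,1}\}_t = \{v_t\}_t$; that is, $\alpha \mapsto \{w_{t,\alpha}\}_t$ is a $d_1^c$-geodesic. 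Finally I would check that this limiting curve is exactly the one given by the recipe of Theorem \ref{thm: chord p}, i.e. $w_{l,\alpha} = \lim_{t\to\infty}\frac{l}{t} h_{t,\alpha}$ with $h_{t,\alpha}$ the $L^1$ geodesic between $u_t$ and $v_t$: since $\frac{l}{t}h^j_{t,\alpha} \searrow \frac{l}{t} h_{t,\alpha}$ in $j$ (again comparison principle), one interchanges the $j\to\infty$ and $t\to\infty$ limits using the uniform $d_1$-estimate $d_1\big(\frac{l}{s}h_{s,\alpha},\frac{l}{t}h_{t,\alpha}\big)$ controlled by Lemma \ref{lem: decreasing_ray_d_plimit}-type bounds together with \eqref{eq: ChCh3_metric_convex}.

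The main obstacle I anticipate is precisely the interchange-of-limits step: showing that $w_{l,\alpha}$, defined as $\lim_{j} \lim_{t} \frac{l}{t}h^j_{t,\alpha}$, equals $\lim_{t}\lim_{j}\frac{l}{t}h^j_{t,\alpha} = \lim_t \frac{l}{t}h_{t,\alpha}$ — equivalently, that the curve obtained by approximation coincides with the one from the direct recipe. This requires a uniform-in-$j$ rate of convergence of $\frac{l}{t}h^j_{t,\alpha}$ as $t\to\infty$, which does not come from uniform convexity (unavailable at $p=1$) but should follow from the Lidskii-type inequality Corollary \ref{cor: Lidskii} applied to the decreasing sequences, exactly as in the proof of Lemma \ref{lem: decreasing_ray_d_plimit}, combined with the normalization $\sup_X(\cdot)$ being affine (Lemma \ref{lem: sup_X_linear}). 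Once this uniformity is in hand, everything else is a routine diagonal argument.
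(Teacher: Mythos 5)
Your proposal is essentially the paper's proof: approximate the two rays by bounded rays via Theorem \ref{thm: approximation of d1 rays}, produce chords for the approximants via Theorem \ref{thm: chord p} and Proposition \ref{prop: geod_sharing}, then pass to the limit. The one place you are imprecise is exactly the step you flag as the main obstacle, and it is worth spelling out the difference. You propose to justify the interchange $\lim_j\lim_t = \lim_t\lim_j$ by getting a \emph{uniform-in-$j$} rate of convergence of $\frac{l}{t}h^j_{t,\alpha}$ as $t\to\infty$; but this rate comes from Proposition \ref{prop: endpoint_distance_stability} applied to the approximating rays, and the threshold $s_{\alpha,\varepsilon}$ there a priori depends on how fast $d_p(u^j_s,v^j_s)/s$ approaches its limit, which can vary with $j$ — so this direction is not obviously available. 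The paper instead establishes the \emph{other} direction of uniformity, namely that $d_1\big(\frac{s}{t}h^k_{t,\alpha},\frac{s}{t}h_{t,\alpha}\big)\to 0$ as $k\to\infty$ \emph{uniformly in $t$}, and this is where your own hint (Lidskii at $p=1$, i.e. $d_1(u,v)=I(u)-I(v)$ for $u\geq v$, together with $I$-affinity along finite energy geodesics) lands if you carry it out: one computes
\[
d_1\Big(\tfrac{s}{t}h_{t,\alpha}^{k},\tfrac{s}{t}h_{t,\alpha}\Big) = \tfrac{s}{t}\big(I(h^k_{t,\alpha})-I(h_{t,\alpha})\big) = s(1-\alpha)\big(I(w^k_{1,0})-I(w_{1,0})\big) + s\alpha\big(I(w^k_{1,1})-I(w_{1,1})\big),
\]
which is independent of $t$ and tends to $0$. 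With this, the usual $3\varepsilon$-triangle argument gives $d_1\big(\frac{s}{t}h_{t,\alpha},w_{s,\alpha}\big)\to 0$ as $t\to\infty$ and the remainder of the proof of Theorem \ref{thm: chord p} goes through verbatim at $p=1$. So the structure and the key ingredients you identified are right; the only thing missing is this short $I$-affinity computation, and the direction of uniformity you propose should be reversed.
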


\begin{proof}
Given $\{w_{t,0}\}_t,\{w_{t,1}\}_t\in \mathcal{R}_{\omega}^1$, we will show that there exists a $d_1^c$-chord $[0,1]\ni \alpha\mapsto \{w_{t,\alpha}\}_t\in \mathcal{R}_{\omega}^1$ joining $\{w_{t,0}\}_t$ and $\{w_{t,1}\}_t$. 

Fix any $p>1$. Using Theorem \ref{thm: approximation of d1 rays} we can find $\{w^{k}_{t,0}\}_t,\{w^{k}_{t,1}\}_t \in \mathcal R^{p}_{\omega} \subset \mathcal R^{1}_{\omega}$  such that $w^{k}_{t,0} \searrow w_{t,0} $ and $w^{k}_{t,1} \searrow w^{1}_t $ for all $t \geq 0$. Let $[0,1] \ni \alpha \to \{w^{k}_{t,\alpha}\}_t \in \mathcal R^{p}_{\omega} \subset \mathcal R^{1}_{\omega}$ be the $d^c_{1}$-geodesic joining $\{w^{k}_{t,0}\}_t,\{w^{k}_{t,1}\}_t$, which exists by Proposition \ref{prop: geod_sharing}. 

We look at the construction of the curves $\alpha \to \{w^k_{t,\alpha}\}$  in the proof of Theorem \ref{thm: chord p} and attempt to construct $\alpha \to \{w_{t,\alpha}\}$ using the same method.

Using the fact that $d_1(u,v)=I(u)-I(v)$ for $u \geq v$, and affinity of $I$ along finite energy geodesics, one deduces that for any $\alpha \in [0,1]$ and $0 \leq s < t$ we have 
\begin{flalign}\label{eq: szaz}
d_1\Big(\frac{s}{t}h_{t,\alpha}^{k},\frac{s}{t}h_{t,\alpha}\Big) & = I\Big(\frac{s}{t}h_{t,\alpha}^{k}\Big) - I\Big(\frac{s}{t}h_{t,\alpha}\Big) \nonumber \\
& = \frac{s}{t}I\big(h_{t,\alpha}^{k}\big) - \frac{s}{t}I\big(h_{t,\alpha}\big)\\
&= \frac{s(1-\alpha)}{t}(I(w^k_{t,0}) - I(w_{t,0})) + \frac{s\alpha }{t} (I(w^k_{t,1}) - I(w_{t,1})) \nonumber\\
&= s(1-\alpha)(I(w^k_{1,0}) - I(w_{1,0})) + s \alpha (I(w^k_{1,1}) - I(w_{1,1})),\nonumber
\end{flalign}
with the last expression converging to zero regardless of the values of $t>0$. From here we get that $d_1\big(\frac{s}{t}h_{t,\alpha}^{k},\frac{s}{t}h_{t,\alpha}\big) \to 0$ as $k \to \infty$, uniformly with respect to $t$. 

On the other hand, by Proposition \ref{prop: geod_sharing} (and its proof) we get that $d_1^c(\frac{s}{t}h_{t,\alpha}^{k},w^k_{s,\alpha}) \to 0$ as $t \to \infty$ for any fixed $k \geq 0$.

By construction, each sequence $\{w^k_{s,\alpha}\}_k \in \mathcal E^1_\o$ is decreasing and $d_1$-bounded, hence by \cite[Lemma 4.16]{Da15} there exists $\{w_{s,\alpha}\} \in \mathcal E^1_\o$ such that $d_1(w^k_{s,\alpha},w_{s,\alpha}) \to 0$ as $k \to \infty$.

Lastly, the triangle inequality gives: 
$$d_1\Big(\frac{s}{t}h_{t,\alpha},w_{s,\alpha}\Big) \leq d_1\Big(\frac{s}{t}h_{t,\alpha}^{k},\frac{s}{t}h_{t,\alpha}\Big) + d_1^c\Big(\frac{s}{t}h_{t,\alpha}^{k},w^k_{s,\alpha}\Big) + d_1(w^k_{s,\alpha},w_{s,\alpha}).$$ 
Putting everything together, for $s \geq 0$ fixed, the first and last term on the  right hand side can be made arbitrarily small for big $k$. Next, with $k$ fixed, the same is true for the middle term for big $t$, i.e., $d_1\big(\frac{s}{t}h_{t,\alpha},w_{s,\alpha}\big)   \to 0$ as $t \to \infty$. 

As pointed out in the proof of Proposition \ref{prop: geod_sharing}, with this last fact in hand the rest of the proof of Theorem \ref{thm: chord p} goes through without any issues for $p=1$.
\end{proof}

\paragraph{Convexity of the radial K-energy. }Let $p \geq 1$. The \emph{radial K-energy} is defined for any $\{u_t\}_t \in \mathcal R^p_\o$,  and is given by the expression
$$\mathcal K\{u_t\} := \lim_{t\to \infty}\frac{\mathcal K(u_t)}{t},$$ 
where $\mathcal K: \mathcal E^p_\o \to (-\infty,\infty]$ is the extended K-energy of Mabuchi from \cite{BBEGZ11,BDL17}. In the setting of unit speed geodesics, this definition agrees with the $\yen$ invariant of \cite{ChCh3}. Also, there is clear parallel with the non-Archimedean K-energy of \cite{Bo18} (and references therein). 
\begin{lemma}
Let $\{u_t\}_t \in \mathcal R^p_u,\{v_t\}_t \in \mathcal R^p_v$ parallel, with $u,v\in \mathcal{E}^p$ satisfying $\mathcal{K}(u)<\infty$ and $\mathcal{K}(v)<+\infty$. Then $\mathcal{K}\{u_t\}=\mathcal{K}\{v_t\}$. 
\end{lemma}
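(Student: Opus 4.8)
The plan is to reduce the statement to the case where the two base points are ordered, and then to extract the equality of radial K-energies from the convexity (and $d_1$-lower semicontinuity) of $\mathcal K$ along the very finite energy geodesics that are used to build parallel rays in Proposition~\ref{prop: parallel}.

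\emph{Step 1 (reduction to ordered base points).} Put $h := \max(\sup_X u, \sup_X v)$, a constant, hence $h \in \mathcal H_\o$ with $h \ge u$, $h \ge v$ and $\mathcal K(h) < \infty$. By Proposition~\ref{prop: parallel} there is a unique $\{h_t\}_t \in \mathcal R^p_h$ parallel to $\{u_t\}_t$; by the triangle inequality it is also parallel to $\{v_t\}_t$, and by the uniqueness part of Proposition~\ref{prop: parallel} it is simultaneously the unique element of $\mathcal R^p_h$ parallel to $\{v_t\}_t$. Thus it is enough to prove $\mathcal K\{h_t\} = \mathcal K\{u_t\}$ and $\mathcal K\{h_t\} = \mathcal K\{v_t\}$, so from now on I may assume $u \ge v$, that $\{v_t\}_t$ is the parallel ray of $\{u_t\}_t$ with $v_t \le u_t$ for all $t$ (this ordering being part of the construction in Proposition~\ref{prop: parallel}), and that $\mathcal K(u), \mathcal K(v) < \infty$.

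\emph{Step 2 (the ordered case).} Since $\mathcal K$ is convex along finite energy geodesics (\cite{BrBn17, BDL17}), the difference quotients $t \mapsto (\mathcal K(u_t) - \mathcal K(u))/t$ are nondecreasing, so $\mathcal K\{u_t\} = \lim_{t} \mathcal K(u_t)/t$ exists in $(-\infty, +\infty]$, and likewise for $\{v_t\}_t$. For $T > 0$ let $[0,T] \ni l \mapsto v^T_l$ be the finite energy geodesic with $v^T_0 = v$, $v^T_T = u_T$; as recalled in the proof of Proposition~\ref{prop: parallel}, for each fixed $t$ one has $v^T_t \searrow v_t$ and $d_1(v^T_t, v_t) \to 0$ as $T \to \infty$. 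Convexity of $\mathcal K$ along $v^T_{\cdot}$ gives
\[
\mathcal K(v^T_t) \le \Big(1 - \tfrac{t}{T}\Big)\mathcal K(v) + \tfrac{t}{T}\,\mathcal K(u_T).
\]
Letting $T \to \infty$, using $d_1$-lower semicontinuity of $\mathcal K$ (\cite{BBEGZ11, BDL17}) on the left and $\mathcal K(v) < \infty$ on the right, I obtain $\mathcal K(v_t) \le \mathcal K(v) + t\,\mathcal K\{u_t\}$; dividing by $t$ and sending $t \to \infty$ yields $\mathcal K\{v_t\} \le \mathcal K\{u_t\}$. For the opposite inequality I run the symmetric argument: since $\{u_t\}_t$ is the unique parallel ray of $\{v_t\}_t$ lying in $\mathcal R^p_u$, the analogous construction in Proposition~\ref{prop: parallel} realizes $u_t = \lim_{T} w^T_t$ with $w^T_{\cdot}$ the finite energy geodesic from $u$ to $v_T$ and $d_1(w^T_t, u_t) \to 0$; convexity of $\mathcal K$ along $w^T_{\cdot}$ together with $\mathcal K(u) < \infty$ gives $\mathcal K\{u_t\} \le \mathcal K\{v_t\}$. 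Hence $\mathcal K\{u_t\} = \mathcal K\{v_t\}$, which completes the ordered case and, via Step~1, the lemma.

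\emph{Main obstacle.} Once two facts are in place the argument is essentially bookkeeping: that the parallel ray is genuinely the $d_1$-monotone limit of the connecting geodesics (which is the ordered case of Proposition~\ref{prop: parallel}), and that $\mathcal K$ is both convex along finite energy geodesics and $d_1$-lower semicontinuous. The point requiring care is the reduction in Step~1 — without a common larger base point one cannot guarantee that the connecting geodesics $v^T_\cdot$ converge at all, so one really must pass through $h$. The finiteness hypotheses $\mathcal K(u), \mathcal K(v) < \infty$ enter precisely to absorb the $\mathcal K(v)/t$ and $\mathcal K(u)/t$ error terms; the possibility $\mathcal K\{u_t\} = +\infty$ needs no separate treatment, since then one of the two inequalities is vacuous and the other forces $\mathcal K\{v_t\} = +\infty$ as well.
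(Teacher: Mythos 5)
Your proposal is correct and takes essentially the same approach as the paper: reduce to ordered base points (by passing through a common dominating constant), connect $v$ to $u_T$ by finite energy geodesics, combine convexity of $\mathcal K$ along those geodesics with $d_1$-lower semicontinuity to get one inequality, and swap the roles of $u$ and $v$ for the reverse. Your Step 1 simply spells out the reduction that the paper compresses into the single remark that one may assume $u\le v$ or $v\le u$.
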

\begin{proof}By the proof of Proposition \ref{prop: parallel} we can assume that either $u \leq v$ or $v \leq u$.

For each $t>0$ let $[0,t] \ni l \mapsto v^t_l \in \mathcal{E}^p_{\omega}$ be the finite energy geodesic connecting $v^t_0:=v$ and $v^t_t:=u_t$.  It follows from Proposition \ref{prop: parallel} (and its proof) that $\lim_{t\to +\infty} d_p(v^t_l,v_l) =0$ for each $l$ fixed. 
 By convexity of $\mathcal{K}$ \cite[Theorem 1.2]{BDL17}, for any $0<l<t$ we have that
$$
\mathcal{K}(v^t_l) \leq \Big(1-\frac{l}{t}\Big) \mathcal{K}(v)+ \frac{l}{t} \mathcal{K}(u_t). 
$$
Thus, letting $t\to +\infty$ and using lower semicontinuity of $\mathcal{K}$ w.r.t. $d_p$ we obtain 
$$
\frac{\mathcal{K}(v_l)}{l} \leq \frac{\mathcal{K}(v)}{l} + \mathcal{K}\{u_t\}. 
$$
Letting $l \to +\infty$ yields $\mathcal{K}\{v_t\}\leq \mathcal{K}\{u_t\}$. The reverse inequality is obtained by reversing the roles of $u,v$. 
\end{proof}

By the above lemma it makes sense to restrict to $\mathcal R^p_{\omega}$ when considering the radial K-energy. Since $d^c_p$-convergence implies $d^c_1$-convergence it follows from \cite[Proposition 5.9]{ChCh3} that the resulting functional $$\mathcal K\{\cdot \}:\mathcal R^p_{\omega} \to (-\infty,\infty]$$
 is $d_p^c$-lsc. In the last result of this section we point out that $\mathcal K\{\cdot\}$ is also convex along the chords of $\mathcal R^p_{\omega}$ for any $p\geq 1$:
\begin{theorem}\label{thm: radialK_convexity} Suppose that $p\geq 1$ and $[0,1] \ni \alpha \to \{w_{t,\alpha}\}_t \in \mathcal R^p_{\omega}$ is a $d_p$-chord joining $\{u_t\}_t,\{v_t\}_t \in \mathcal R^p_{\omega}$. Then $\alpha \to \mathcal K\{w_{t,\alpha}\}$ is convex.
\end{theorem}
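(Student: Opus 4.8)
The plan is to reduce the statement to the convexity, for each fixed time parameter $l$, of the single function $\alpha\mapsto\mathcal K(w_{l,\alpha})$ on $\mathcal E^p_\omega$. Since $\mathcal K$ is convex along the finite energy geodesic ray $l\mapsto w_{l,\alpha}$ by \cite[Theorem 1.2]{BDL17}, and this ray emanates from $w_{0,\alpha}=0$, the difference quotient $(\mathcal K(w_{l,\alpha})-\mathcal K(0))/l$ is non-decreasing in $l$, whence
$$\mathcal K\{w_{t,\alpha}\}=\lim_{l\to\infty}\frac{\mathcal K(w_{l,\alpha})}{l}=\sup_{l>0}\frac{\mathcal K(w_{l,\alpha})-\mathcal K(0)}{l}.$$
A pointwise supremum of convex functions of $\alpha$ is convex, so it is enough to show that $\alpha\mapsto\mathcal K(w_{l,\alpha})$ is convex for each fixed $l>0$; by \cite[Theorem 1.2]{BDL17} again this will follow once we establish the key claim that, for each fixed $l>0$, the curve $[0,1]\ni\alpha\mapsto w_{l,\alpha}$ is the finite energy geodesic joining $u_l$ and $v_l$.

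To prove the key claim, recall from the proofs of Theorems \ref{thm: chord p} and \ref{thm: chord_L1} that $w_{l,\alpha}=\lim_{t\to\infty}\tfrac{l}{t}h_{t,\alpha}$ in $d_p$, where $\alpha\mapsto h_{t,\alpha}$ is the finite energy geodesic from $u_t$ to $v_t$ and $[0,t]\ni s\mapsto\tfrac{s}{t}h_{t,\alpha}$ is the finite energy geodesic from $0$ to $h_{t,\alpha}$. For $t\ge l$ the endpoints $\tfrac{l}{t}h_{t,0}=u_l$ and $\tfrac{l}{t}h_{t,1}=v_l$ are independent of $t$, since $\{u_t\}_t$ and $\{v_t\}_t$ are geodesic rays emanating from $0$; so by the endpoint stability of finite energy geodesics \cite[Proposition 4.3]{BDL17} it suffices to show that for each fixed $t$ the curve $\alpha\mapsto\tfrac{l}{t}h_{t,\alpha}$ is itself a finite energy geodesic. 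After a $d_p$-approximation reducing to $u_t,v_t\in\mathcal H_\omega$, this becomes a statement about the ``geodesic cone'' over $h_{t,\cdot}$ with apex $0$: the function $(s,\alpha,x)\mapsto[\tfrac{s}{t}h_{t,\alpha}](x)$, with $s,\alpha$ ranging over two strips and $x\in X$, solves the homogeneous complex Monge--Amp\`ere equation with boundary data $0$ on $\{\textup{Re}\,s=0\}$ and the geodesic $h_{t,\cdot}$ on $\{\textup{Re}\,s=1\}$, and it is plurisubharmonic jointly in all variables by the plurisubharmonic dependence of a Mabuchi geodesic on a plurisubharmonic variation of its endpoints (Berndtsson-type positivity). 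Restricting this plurisubharmonic function to the time slice $\{\textup{Re}\,s=l\}$ yields a subgeodesic connecting $u_l$ and $v_l$, which one checks to be maximal — hence the finite energy geodesic — by a comparison argument exploiting that $u_t,v_t$ lie on rays through $0$.

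Granting the key claim, the theorem follows immediately from the reduction above. I expect the main obstacle to be precisely the key claim, i.e.\ the fact that a geodesic cone sliced at a fixed time is again a geodesic: this is a genuine feature of Mabuchi geometry and not a formal metric consequence (the analogous slicing of a geodesic triangle already fails in, e.g., the hyperbolic plane, which is $\textup{CAT}(0)$). Its ``subgeodesic'' half rests on Berndtsson's plurisubharmonic variation of geodesics, and its ``maximality'' half on a careful use of the comparison principle. For $p>1$ there is an alternative route bypassing the cone: the quantitative uniform convexity in Proposition \ref{prop: endpoint_distance_stability} forces $(\mathcal R^p_\omega,d_p^c)$ to be uniquely geodesic, so the restriction of the chord $\alpha\mapsto\{w_{t,\alpha}\}_t$ to any subinterval is the chord joining its endpoints, whereupon convexity follows from the endpoint inequality $\mathcal K\{w_{t,\alpha}\}\le(1-\alpha)\mathcal K\{u_t\}+\alpha\mathcal K\{v_t\}$ — itself obtained by combining convexity of $\mathcal K$ along the geodesics $h_{t,\cdot}$ and $\tfrac{\cdot}{t}h_{t,\alpha}$ with the $d_p$-lower semicontinuity of $\mathcal K$ — but for $p=1$ this alternative still requires an approximation argument ultimately reducing to the same cone phenomenon.
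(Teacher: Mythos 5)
Your reduction---write $\mathcal K\{w_{t,\alpha}\}=\sup_{l>0}\mathcal K(w_{l,\alpha})/l$ and try to show each $\alpha\mapsto\mathcal K(w_{l,\alpha})$ is convex---is sound, but the key claim that $\alpha\mapsto w_{l,\alpha}$ is the finite energy geodesic from $u_l$ to $v_l$ is almost certainly false, and in any case is not justified. First, as you yourself note, slicing a geodesic cone at a fixed time does not produce a geodesic in the hyperbolic plane; Mabuchi geometry is $\textup{CAT}(0)$ for $p=2$ and uniformly convex for $p>1$ but is certainly not flat, so there is no reason to expect this slicing phenomenon to hold here, and you offer no proof. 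Your Berndtsson-type positivity gives subgeodesicity of $\alpha\mapsto\frac{l}{t}h_{t,\alpha}$ at best, and the ``maximality by comparison'' step is precisely the place the flatness issue resurfaces; it is left entirely unargued. Second, and more damningly, the claim is internally inconsistent with the paper's own construction: if $\alpha\mapsto\frac{l}{t}h_{t,\alpha}$ were the finite energy geodesic from $u_l$ to $v_l$ for every $t\ge l$, then for $p>1$ (where geodesics in $\mathcal E^p_\omega$ are unique, Theorem \ref{thm: unique_geod_intr}) this curve would be independent of $t$, forcing $\frac{l}{t}h_{t,\alpha}=h_{l,\alpha}$ for all $t\ge l$ and rendering the Cauchy-limit argument in the proof of Theorem \ref{thm: chord p} vacuous --- yet that proof is a genuine uniform-convexity estimate, not a tautology. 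So the limit $w_{l,\alpha}$ is \emph{not} $h_{l,\alpha}$ in general.

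The paper's actual proof bypasses the cone slicing entirely and is much closer to your ``alternative route.'' One applies convexity of $\mathcal K$ along the two finite energy geodesics $s\mapsto\frac{s}{t}h_{t,\alpha}$ (apex at $0$, so $\mathcal K(\frac{s}{t}h_{t,\alpha})/s\le\mathcal K(h_{t,\alpha})/t$) and $\alpha\mapsto h_{t,\alpha}$ (so $\mathcal K(h_{t,\alpha})\le(1-\alpha)\mathcal K(u_t)+\alpha\mathcal K(v_t)$), combines these, and then uses $d_p(\frac{s}{t}h_{t,\alpha},w_{s,\alpha})\to0$ together with the $d_p$-lower semicontinuity of $\mathcal K$ to pass to $t\to\infty$, and finally lets $s\to\infty$. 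This gives $\mathcal K\{w_{t,\alpha}\}\le(1-\alpha)\mathcal K\{u_t\}+\alpha\mathcal K\{v_t\}$ directly, with no appeal to slicing being a geodesic. Note also that your last remark about $p=1$ is incorrect: the convergence $d_1(\frac{s}{t}h_{t,\alpha},w_{s,\alpha})\to0$ is proved as part of Theorem \ref{thm: chord_L1}, so the above argument applies verbatim for $p=1$; there is no residual reduction to a ``cone phenomenon.''
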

\begin{proof} We use the notation and terminology of the proof of Theorems \ref{thm: chord p} and \ref{thm: chord_L1}, and normalize $\mathcal K$ such that $\mathcal K(0)=0$. Using convexity of $\mathcal K$ along finite energy geodesics \cite[Theorem 1.2]{BDL17} we know that for any $0 < s \leq t$ and $\alpha \in [0,1]$ we have
$$\frac{\mathcal K\big(\frac{s}{t}h_{t,\alpha}\big)}{s} \leq \frac{\mathcal K(h_{t,\alpha})}{t} \leq (1-\alpha) \frac{\mathcal K(u_t)}{t}  + \alpha \frac{\mathcal K(v_t)}{t}.$$
Since $d_p(\frac{s}{t}h_{t,\alpha},w_{s,\alpha}) \to 0$, given that $\mathcal K$ is $d_p$-lsc (\cite[Theorem 1.2]{BDL17}) it follows that
$$\frac{\mathcal K(w_{s,\alpha})}{s} \leq \liminf_{t \to \infty} \frac{\mathcal K\big(\frac{s}{t}h_{t,\alpha}\big)}{s} \leq (1-\alpha) \mathcal K \{u_t \}  + \alpha \mathcal K\{v_t\}.$$
The result now follows after taking the limit  $s \to \infty$.
\end{proof}

\begin{remark} Many theorems that hold for the finite energy metric spaces $(\mathcal E^p_\o,d_p)$ admit a radial version for $(\mathcal R^p_\o,d_p)$. As we already pointed out, Theorem \ref{thm: R_p_complete_geod_K_convex_intr}, Lemma \ref{lem: decreasing_ray_d_plimit}, and also Theorem \ref{thm: K_approx} below are examples of this phenomenon. This does not seem to be limited to only these results either. Indeed, though we will not pursue this further here, one can introduce radial analogs of the operators $\max(\cdot,\cdot)$ and $P(\cdot,\cdot)$, and similar identities/inequalities/results hold for these as the ones described in \cite{Da15,Da17}.
\end{remark}

\section{Approximation with converging radial K-energy}

\subsection*{Approximation with rays of bounded potentials}

The goal of this subsection is to strengthen the conclusion of Theorem \ref{thm: approximation of d1 rays} and obtain Theorem \ref{thm: approximation_dp_intr}(i) in the process:
\begin{theorem}
 	\label{thm: K_approx} Let $\{ u_t\}_t \in \mathcal R^p_{\omega}$, $p \geq 1$. Then there exists a sequence $\{ u^j_t\}_t \in \mathcal R_\o^\infty$ such that  $u_t^j$ decreases to $u_t$, for each $t>0$ fixed and $ \mathcal K\{u^j_t\} \to \mathcal K\{u_t\}$. In particular $\lim_{j\to +\infty}d_p^c(\{u_t^j\},\{u_t\})=0$. 
\end{theorem}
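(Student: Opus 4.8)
The plan is to reduce to the case of finite radial K-energy, where the approximation of Theorem \ref{thm: approximation of d1 rays} alone does not give converging radial K-energy, and then to upgrade that construction using the relative Ko\l odziej estimate. First, by Lemma \ref{lem: sup_X_linear} we may subtract a $t$-linear term and assume $\sup_X u_t=0$ for all $t$, so that $t\mapsto u_t$ is $t$-decreasing. If $\mathcal K\{u_t\}=+\infty$ we are already done: the bounded rays $\{u^j_t\}_t$ supplied by Theorem \ref{thm: approximation of d1 rays} satisfy $u^j_t\searrow u_t$, hence $d_p^c(\{u^j_t\},\{u_t\})\to 0$ by Lemma \ref{lem: decreasing_ray_d_plimit}, and since $\mathcal K\{\cdot\}$ is $d_p^c$-lsc we get $\liminf_j\mathcal K\{u^j_t\}\ge\mathcal K\{u_t\}=+\infty$. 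So from now on assume $\mathcal K\{u_t\}<+\infty$.

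Next I would split the K-energy into its entropy and energy parts, $\mathcal K=\mathcal K_{\mathrm{ent}}+\mathcal K_{\mathrm{en}}$, where $\mathcal K_{\mathrm{ent}}(v)=\frac1V\int_X\log\bigl(\tfrac{\omega_v^n}{\omega^n}\bigr)\omega_v^n$ and $\mathcal K_{\mathrm{en}}$ collects the Monge--Amp\`ere and Ricci/mixed energy terms. Both $\mathcal K$ and $\mathcal K_{\mathrm{ent}}$ are convex along finite energy geodesics (\cite{BDL17}), so the radial slopes $\mathcal K_{\mathrm{ent}}\{u_t\}$ and $\mathcal K_{\mathrm{en}}\{u_t\}$ are well defined, $\mathcal K\{u_t\}=\mathcal K_{\mathrm{ent}}\{u_t\}+\mathcal K_{\mathrm{en}}\{u_t\}$, and in particular $\mathcal K_{\mathrm{ent}}\{u_t\}<+\infty$. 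The energy part is harmless under decreasing approximation: the functionals making up $\mathcal K_{\mathrm{en}}$ are continuous along $d_1$-decreasing sequences of finite energy potentials with bounded energy, and the associated radial slope along a ray depends continuously on the $d_1$-data of the ray, which converges whenever $w_1\to u_1$ in $d_1$. Thus it suffices to produce bounded geodesic rays $\{w^j_t\}_t\in\mathcal R^\infty_\o$ with $w^j_t\searrow u_t$ and $\limsup_j\mathcal K_{\mathrm{ent}}\{w^j_t\}\le\mathcal K_{\mathrm{ent}}\{u_t\}$, the matching lower bound being automatic from the $d_p^c$-lsc of $\mathcal K\{\cdot\}$ and the convergence of the energy part.

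To build such rays I would refine the Legendre construction of Theorem \ref{thm: approximation of d1 rays}. Recall the slices $\psi_\tau$ from \eqref{psi_def}: by Lemma \ref{lem: DDL2} they are model ($P[\psi_\tau]=\psi_\tau$) and carry positive mass for $\tau<0$. For $\delta\in(0,1)$, instead of the envelopes $P[\psi^\varepsilon_\tau]$ I would take $\phi^\delta_\tau$ to be the solution of a complex Monge--Amp\`ere equation relative to $\psi_\tau$, of the form $\omega^n_{\phi^\delta_\tau}=c^\delta_\tau\bigl(f^\delta_\tau\,\omega^n+(1-\delta)\,\omega^n_{\psi_\tau}\bigr)$, where $c^\delta_\tau>0$ normalizes the total mass to $V$ and $0\le f^\delta_\tau\in L^q(\omega^n)$ for a fixed $q>1$ is chosen so that $a:=c^\delta_\tau(1-\delta)<1$, so that the capacity domination \eqref{eq: volume cap domination} holds with $\chi=\psi_\tau$, and so that $\phi^\delta_\tau\searrow\psi_\tau$ as $\delta\to 0^+$ (by stability and the comparison principle). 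Since $P[\phi^\delta_\tau]$ is then less singular than $\psi_\tau$, Theorem \ref{thm: uniform_estimate_Kolodziej} applies and gives $\phi^\delta_\tau\ge\psi_\tau-\sup_X\psi_\tau-C$ with $C$ uniform over the relevant range of $\tau$ for each fixed $\delta$, exactly as needed in the proof of Theorem \ref{thm: approximation of d1 rays}. Consequently $u^\delta_t:=\sup_{\tau<0}(\phi^\delta_\tau+t\tau)$ is, by \cite{RWN14} (c.f. \eqref{eq: u_t_def_eps}), a bounded geodesic ray emanating from $0$, and $u^\delta_t\searrow u_t$ for every $t$ (decreasing limit of the slices together with Dini's theorem, as in Theorem \ref{thm: approximation of d1 rays}).

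Finally I would control the radial entropy: at almost every point the Monge--Amp\`ere measure of the reassembled potential $u^\delta_t$ is dominated by that of the slice $\phi^\delta_\tau$ realizing the supremum in its definition (comparison for envelopes on $\{u^\delta_t=\phi^\delta_\tau+t\tau\}$), so the $L^q$-control of the densities $f^\delta_\tau$ furnished by the construction translates, after integrating and using convexity of $s\mapsto s\log s$, into $\limsup_\delta\mathcal K_{\mathrm{ent}}\{u^\delta_t\}\le\mathcal K_{\mathrm{ent}}\{u_t\}$. Combined with the lsc lower bound and the convergence of the energy part from the second paragraph, this yields $\mathcal K\{u^\delta_t\}\to\mathcal K\{u_t\}$; taking $\delta=1/j$ and invoking Lemma \ref{lem: decreasing_ray_d_plimit} for the $d_p^c$-convergence completes the proof. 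I expect the main obstacle to be precisely this refined construction together with the entropy estimate: one must choose the regularizing data $(c^\delta_\tau,f^\delta_\tau)$ so that \emph{simultaneously} (i) $\phi^\delta_\tau\searrow\psi_\tau$ for every $\tau$, (ii) the hypotheses of Theorem \ref{thm: uniform_estimate_Kolodziej} hold with constants uniform over the range of $\tau$ needed to define the ray, and (iii) the resulting $L^q$-control survives the Legendre reassembly so as to bound the radial entropy slope from above; ensuring (iii)—so that the slice-wise density bound actually gives $\limsup_\delta\mathcal K_{\mathrm{ent}}\{u^\delta_t\}\le\mathcal K_{\mathrm{ent}}\{u_t\}$ rather than something weaker—is where the precise (squared-capacity) form of the relative Ko\l odziej estimate is essential.
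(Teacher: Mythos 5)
Your proposal takes a genuinely different route from the paper's proof, but as written it has two substantive gaps.

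The paper does \emph{not} regularize the Legendre slices $\psi_\tau$; it works at fixed time $l$ and produces bounded endpoints $\varphi^j_l$ by solving $\omega^n_{\varphi^j_l}=\big(1-2^{-j}\big)\mathbbm 1_{\{u_l>-jl\}}\,\omega^n_{u_l}+a_{j,l}\,\omega^n$ (so the density is truncated, \emph{not} $L^q$-controlled), applies the relative Ko{\l}odziej estimate to this one-variable family to get $\varphi^j_l\ge\max(u_l,-jl)-C_j$ with $C_j$ independent of $l$, joins $0$ to $\varphi^j_l+C_j$ by a geodesic segment, and then extracts a ray by an Arzel\`a--Ascoli/diagonal argument. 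The main technical content is Claims 1--4, which control separately the radial slope of the entropy, of $\mathcal I$, of $I$, and of $I_\alpha$ along the regularized family, via the a priori lower bound from Ko{\l}odziej together with a quantitative BBGZ-type inequality (Lemma \ref{lem: BBGZ I energy}). Your scheme replaces all of this by modifying the Monge--Amp\`ere measures of the Kiselman slices $\psi_\tau$ and reassembling by the Legendre transform; this is a legitimately different idea, with the potential advantage that the ray structure is built in from the start rather than recovered by a compactness argument.

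The two gaps. First, you dismiss the energy part: ``the functionals making up $\mathcal K_{\mathrm{en}}$ are continuous along $d_1$-decreasing sequences\ldots and the associated radial slope\ldots depends continuously on the $d_1$-data of the ray.'' This is exactly what Claims 2--4 of the paper establish, and it is far from automatic. $I_\alpha$ is not affine along geodesics, so $I_\alpha(w_t)/t$ is genuinely an asymptotic; monotone decreasing $w^j_t\searrow u_t$ with $d^c_1\to 0$ gives, by monotonicity, only a one-sided inequality $\liminf_j I_\alpha\{w^j_t\}\ge I_\alpha\{u_t\}$ for K\"ahler $\alpha$, and the reverse requires the kind of $\mathcal I$- and $I$-estimates (Claims 2, 3) the paper derives from the Ko{\l}odziej lower bound together with Lemma \ref{lem: BBGZ I energy}. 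You cannot declare this step harmless. Second, and more seriously, the claim that ``at almost every point the Monge--Amp\`ere measure of the reassembled potential $u^\delta_t$ is dominated by that of the slice $\phi^\delta_\tau$ realizing the supremum'' is not a known identity and is very likely false as stated. The RWN/Legendre reassembly produces a geodesic ray, but there is no pointwise, slice-wise comparison between $\omega^n_{u^\delta_t}$ and $\omega^n_{\phi^\delta_\tau}$: the contact set $\{u^\delta_t=\phi^\delta_\tau+t\tau\}$ can be thin, the Monge--Amp\`ere operator is nonlocal, and the spatial measure $\omega^n_{u_t}$ of a geodesic ray is governed by the degenerate complex Monge--Amp\`ere equation on $S\times X$, not by any single slice. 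Without this domination your entropy bound has no foundation. A smaller issue: as $\tau\to 0^-$ the slices $\psi_\tau$ degenerate, and obtaining Ko{\l}odziej constants uniform in $\tau$ is itself nontrivial; the paper sidesteps this by working with $\chi=\max(u_l,-jl)$, a bounded function, rather than with the increasingly singular $\psi_\tau$.

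In short, your high-level plan (handle $\mathcal K\{u_t\}=\infty$ by lsc, then regularize with the help of Theorem \ref{thm: uniform_estimate_Kolodziej} for the finite case) matches the paper's, but the two crucial steps you flag as ``the main obstacle'' are indeed obstacles and are not resolved by the argument you sketch. The paper resolves them by regularizing endpoints rather than slices, so that the entropy comparison becomes a pointwise density inequality ($f_{t,j}\le f_t+1$ after truncation, Claim 1) and the energy convergence can be proved quantitatively via Claims 2--4.
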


In case $\mathcal K\{u_t\}=+\infty$, by the fact that $\mathcal K\{\cdot\}$ is $d_p^c$-lsc \cite[Proposition 5.9]{ChCh3}, we will simply invoke Theorem \ref{thm: approximation of d1 rays} for the existence of the sequence of $\{u^j_t\}_t$. If  $\mathcal K\{u_t\}$ is finite, we will need a much more delicate argument, resting on the relative Ko{\l}odziej type estimate of \cite[Theorem 3.3]{DDL4}, as detailed in the argument below. 

At places, the argument below shares some similarities with the proof of \cite[Theorem 3.2]{DH17}, with the the relative Ko{\l}odziej type estimate of \cite{DDL4} taking the place of Perelman's estimates along the K\"ahler--Ricci flow on Fano manifolds. Before engaging in the proof of Theorem \ref{thm: K_approx}, we prove an auxiliary lemma:

\begin{lemma}\label{lem: truncated_limit} Let $\{u_t\}_t \in \mathcal R^1_\o$ with $\sup_X u_t =0, \ t \geq 0$. Then 
\begin{equation} \label{eq: truncated_limit}
	\lim_{j \to \infty} \limsup_{t\to +\infty} \int_{\{u_t \leq -jt\}} \frac{(-u_t)}{t} \omega_{u_t}^n =0. 
\end{equation}
\end{lemma}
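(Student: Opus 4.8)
The plan is to control the truncated tail integral $\int_{\{u_t \le -jt\}} \frac{-u_t}{t}\,\omega_{u_t}^n$ by passing to the rescaled picture at time $t$. First I would set $\phi_t := u_t/t \in \textup{PSH}(X,\omega/t)$ — or, more conveniently, work with $u_t$ directly and use the energy bound coming from the ray structure: since $\{u_t\}_t \in \mathcal R^1_\omega$ and $\sup_X u_t = 0$, the function $t \mapsto d_1(0,u_t) = -I(u_t)$ is linear, so $\frac{1}{t}\int_X (-u_t)\,\omega_{u_t}^n$ is (up to the dimensional constants in the Pythagorean/energy comparison of \cite[Theorem 3]{Da15}) comparable to $d_1(0,u_t)/t$, hence \emph{uniformly bounded} in $t$. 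Call this uniform bound $M$. This uniform bound on the full energy is the basic resource; the content of the lemma is that the part of this bounded mass carried by the very negative region $\{u_t \le -jt\}$ goes to $0$ as $j \to \infty$, uniformly in the $\limsup$ over $t$.

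Next I would make this quantitative via a Chebyshev/partial-summation argument on the sublevel sets. Write, for fixed $t$ and integer $j$,
\begin{equation*}
\int_{\{u_t \le -jt\}} \frac{-u_t}{t}\,\omega_{u_t}^n
= \sum_{k \ge j} \int_{\{-(k+1)t \le u_t \le -kt\}} \frac{-u_t}{t}\,\omega_{u_t}^n
\le \sum_{k \ge j}(k+1)\,\omega_{u_t}^n\big(\{u_t \le -kt\}\big).
\end{equation*}
On the other hand the uniform energy bound gives $\sum_{k\ge 0} k\,\omega_{u_t}^n(\{u_t\le -kt\}) \le \int_X \frac{-u_t}{t}\omega_{u_t}^n \le CM$, so the tail sum $\sum_{k\ge j}(k+1)\omega_{u_t}^n(\{u_t\le -kt\})$ is the tail of a convergent series whose total is bounded \emph{independently of $t$}. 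The one subtlety is that the bound ``$\sum_k k\,a_k^{(t)} \le CM$ for all $t$'' does not by itself force the \emph{tails} $\sum_{k\ge j} k\, a_k^{(t)}$ to go to $0$ uniformly in $t$ unless one has a little more. Here the extra input should be a capacity estimate: by the comparison between Monge–Amp\`ere mass of $\omega_{u_t}$ on sublevel sets and $\textup{Cap}_\omega(\{u_t \le -kt\})$ (in the spirit of \cite{GZ05}, and the relative capacities recalled in Section 2.2), together with the quantitative decay $\textup{Cap}_\omega(\{v \le -s\}) \le C\exp(-s/C')$ available for functions with uniformly bounded energy after normalization by $\sup = 0$, one gets $\omega_{u_t}^n(\{u_t\le -kt\}) \le C e^{-kt/C'}$ with constants uniform in $t\ge 1$. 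Feeding this exponential decay into the tail sum makes $\sum_{k\ge j}(k+1)e^{-kt/C'} \to 0$ as $j \to \infty$, uniformly for $t$ bounded below, and since we only care about $\limsup_{t\to\infty}$ this is more than enough.

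The step I expect to be the main obstacle is precisely establishing a \emph{uniform-in-$t$} tail estimate rather than a pointwise-in-$t$ one: the naive energy bound is uniform but only gives an $L\log L$-type integrability whose tails are not obviously uniform, so one genuinely needs to invoke a quasi-plurisubharmonic capacity/volume estimate along the ray. I would handle this by exploiting that the rescaled potentials $u_t/t$ — or rather $\max(u_t, -Nt)/t$ for large $N$, which are the bounded truncations — have Monge–Amp\`ere masses that are uniformly dominated in the capacity sense, which is exactly the kind of input that the relative Ko\l odziej machinery of \cite{DDL4} (Theorem \ref{thm: uniform_estimate_Kolodziej}) is designed to produce; alternatively, one can first prove the statement for bounded rays using the explicit sublevel-set capacity bounds for $L^\infty$ potentials and then pass to the general finite-energy ray by the decreasing approximation of Theorem \ref{thm: approximation of d1 rays} together with monotone convergence, noting that all the estimates involved only improve under the decreasing limit. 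Once the uniform tail decay is in hand, letting $t\to\infty$ and then $j\to\infty$ gives \eqref{eq: truncated_limit} immediately.
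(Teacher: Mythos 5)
Your proposal correctly identifies the crux of the matter (uniformity in $t$ of the tail decay), but the two tools you propose to supply it are not available, and the resulting argument does not close.

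First, the intermediate claim that $\sum_{k\ge 0} k\,\omega_{u_t}^n(\{u_t\le -kt\}) \le \int_X \frac{-u_t}{t}\,\omega_{u_t}^n$ is false. Writing $b_k = \omega_{u_t}^n(\{u_t\le -kt\})$ and $a_k = \omega_{u_t}^n(\{-(k+1)t < u_t \le -kt\})$, the layer-cake identity gives $\int_X\frac{-u_t}{t}\,\omega_{u_t}^n = \int_0^\infty b_s\,ds \ge \sum_{k\ge 1} b_k$ and (equivalently) $\sum_k k\,a_k \le \int_X\frac{-u_t}{t}\,\omega_{u_t}^n$ --- but the quantity you need after replacing $a_k$ by $b_k$ is $\sum_k k\,b_k = \sum_m \tfrac{m(m+1)}{2}a_m$, which is an entirely different (and generically divergent) sum. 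So the series you declare convergent is not controlled by the energy, and the Chebyshev step does not get off the ground. Second, the proposed fix --- an exponential bound $\omega_{u_t}^n(\{u_t\le -kt\})\le C e^{-kt/C'}$ uniform in $t$ --- is not a true statement for general $\mathcal E^1$ potentials. The $\alpha$-invariant/Skoda estimate gives exponential decay of $\omega^n$- and $\textup{Cap}_\omega$-volume of sublevel sets, but the non-pluripolar \emph{Monge--Amp\`ere} mass of a potential on its own deep sublevel sets has, in general, only the $L^1$-type decay $\int_0^\infty\omega_\phi^n(\{\phi\le -s\})\,ds<\infty$ encoded by the $\mathcal E^1$ condition, which forces $s\,\omega_\phi^n(\{\phi\le -s\})\to 0$ but nothing exponential; the obvious comparison $\omega_\phi^n(\{\phi\le -s\})\lesssim s^n\,\textup{Cap}_\omega(\{\phi\le -s\})$ does not apply since $\phi$ is unbounded. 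Your alternative route (prove for bounded rays, pass to the limit by decreasing approximation) also fails, because the object $\int_{\{u_t\le -jt\}}\frac{-u_t}{t}\,\omega_{u_t}^n$ involves the Monge--Amp\`ere measure of the \emph{fixed} unbounded $u_t$, which is not the limit in any useful monotone-convergence sense of $\omega_{u^k_t}^n$ along a decreasing approximation.

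The paper's actual argument avoids sublevel-mass estimates altogether. By Theorem~\ref{thm: approximation of d1 rays} one has bounded rays $\{u^j_t\}_t$ satisfying the \emph{pointwise} sandwich $u_t\le \max(u_t,-jt)\le u^j_t\le 0$ (this is the extra output \eqref{eq: C_0_bound_approx} of the Legendre/Kiselman construction, not just $d_1^c$-convergence), and monotonicity of $I$ gives $0\le I(\max(u_t,-jt))-I(u_t)\le I(u^j_t)-I(u_t)=d_1(u^j_t,u_t)$. Dividing by $t$, taking $t\to\infty$ and then $j\to\infty$ kills the right side by $d_1^c(\{u^j_t\},\{u_t\})\to 0$. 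The standard Aubin--Yau comparison $\int_X (v-u)\,\omega_u^n\le (n+1)\big(I(v)-I(u)\big)$ for $v\ge u$ then controls $\int_X \frac{\max(u_t,-jt)-u_t}{t}\,\omega_{u_t}^n$, and the elementary pointwise inequality $\max(u_t,-jt)-u_t\ge -u_t/2$ on $\{u_t\le -2jt\}$ yields \eqref{eq: truncated_limit}. The essential idea you are missing is this reduction via the energy functional $I$ together with the explicit lower bound $\max(u_t,-jt)\le u^j_t$ on the bounded approximating rays.
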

\begin{proof}	Indeed, it follows from Theorem \ref{thm: approximation of d1 rays} that we can choose $\{u^j_t\}_t \in \mathcal R^1_{\omega}$ such that $u_t \leq \max(u_t, -jt) \leq u^j_t \leq 0$ and $d^c_1(\{u^j_t\},\{u_t\}) = \lim_{t \to \infty} \frac{I(u^j_t) - I(u_t)}{t} = 0$. From monotonicity and elementary properties of $I(
\cdot)$ we conclude that  $\lim_{t \to \infty} \frac{I(\max(u_t, -jt)) - I(u_t)}{t} = 0$, ultimately implying
$$0 \leq \lim_j \lim_{t \to \infty }\int_X \frac{\max (u_t, - jt) - u_t}{t} 
\omega_{u_t}^n \leq  (n+1)\lim_j \lim_{t \to \infty} \frac{I(\max(u_t, -jt)) - I(u_t)}{t}=0.$$
Consequently both limits are equal to zero, and  on the set $\{u_t \leq -2jt\}$,  we have  $0\geq \max(u_t,-jt)-u_t \geq -\frac{u_t}{2}$. This and the above together yield \eqref{eq: truncated_limit}.
\end{proof}
\begin{proof}[Proof of Theorem \ref{thm: K_approx}]
Using Theorem \ref{thm: approximation of d1 rays} and the fact that $\mathcal K\{\cdot\}$ is $d_p^c$-lsc, we can assume that $\mathcal{K}\{u_t\} <+\infty$. Also, via Lemma \ref{lem: sup_X_linear}, by possibly adding $Ct$ to $u_t$ we can additionally assume that $\sup_X u_t=0$, i.e., $t \to u_t$ is $t$-decreasing with $u_\infty := \lim_{t \to 
\infty} u_t \in \textup{PSH}(X,\omega)$.

 For each $j>1,l>1$, we let $\varphi_l^j \in \mathcal E(X,\omega)$ be the unique $\omega$-psh function, whose existence is guaranteed by \cite[Theorem A]{GZ07}, such that
\begin{equation}\label{eq: varphi_l_def}
	\omega_{\varphi_l^j}^n = \bigg( 1 - \frac{1}{2^j}\bigg){\mathbbm 1}_{\{u_l>-jl\}} \omega_{u_l}^n + a_{j,l} \omega^n, \ \sup_X \varphi_l^j =0, 
\end{equation}
where $0\leq a_{j,l} \leq 1$ is a constant arranged so that the measure on the right hand side has total mass equal to $\int_X \omega^n$.  

Next we point out that the conditions of Theorem \ref{thm: uniform_estimate_Kolodziej}  are satisfied with appropriate choice of data. Let $a:= \big(1 -\frac{1}{2^j} \big)^{{1}/{2}} \in (0,1), \  u:= \varphi^j_l,\ \chi := \big(1 -\frac{1}{2^j} \big)^{{1}/{2n}} \max(u_l,-jl)$,  and $f:=1$. Then, using locality of the non-pluripolar complex Monge--Amp\`ere measure (see e.g. \cite[Corollary 1.7]{GZ07}) we have that
$
\omega_{\max(u_l,-jl)}^n \geq {\mathbbm 1}_{\{u_l>-jl\}} \omega_{u_l}^n,
$
hence,
$$\omega^n_{u} \leq a \omega_\chi^n + f \omega^n.$$
Moreover, due to \cite[Proposition 4.3]{BEGZ10} and  \cite[Lemma 4.2]{DDL4}, there exists $A(X,\omega) >0$ such that for any Borel set $E\subset X$  we have  
$$\int_E f \omega^n = \int_E \omega^n \leq A \textup{Cap}_\o(E)^2 \leq A \bigg( 1 -\bigg(1 -\frac{1}{2^j} \bigg)^{{1}/{2n}}\bigg)^{-2n} \textup{Cap}_\chi(E)^2,$$
where $\textup{Cap}_\omega$ is the usual Monge--Amp\`ere capacity  and $\textup{Cap}_\chi$ is its relative version from \cite[Section 3]{DDL4}. Lastly, we note that $\chi \leq 0 =P[\varphi^j_l]$, due to \cite[Theorem 3]{Da13}, hence all the conditions of Theorem \ref{thm: uniform_estimate_Kolodziej}  are satisfied to imply that 
	\begin{equation}
	\label{eq: DDL4 estimate}
    \varphi_{l}^j=u  \geq  \chi - C_j \geq \max(u_l,-jl) - C_j,  
	\end{equation}where $C_j>0$ is a constant depending on $j$, but not $l > 1$! In particular $\varphi^j_l$ is bounded. 
		
Moreover, for $1<j<k$ and $l>1$ we have 
		$$
		\omega_{\varphi^j_l}^n \leq \frac{1-2^{-j}}{1-2^{-k}} \omega_{\varphi_l^k}^n + \omega^n.
		$$
Similarly to \eqref{eq: varphi_l_def}, this allows for another application of Theorem \ref{thm: uniform_estimate_Kolodziej}, with the choice of data $a:= \big(\frac{1 - 2^{-j}}{1 - 2^{-k}} \big)^{{1}/{2}} \in (0,1), \ u:= \varphi^j_l, \ \chi := \big(\frac{1 - 2^{-j}}{1 - 2^{-k}} \big)^{{1}/{2n}} \varphi^k_l$,  and $f:=1$. Similarly to the above, the conditions of Theorem \ref{thm: uniform_estimate_Kolodziej} are satisfied to yield that
\begin{equation}
	\label{eq: DDL4 estimate bis}
    \varphi_{l}^j = u  \geq  \chi - C_{j,k} \geq \varphi_l^k - C_{j,k},  
	\end{equation} where $C_{j,k}>0$ depends on $j,k$, but not on $l>1$!
	
	For each $l>1$ let $[0,l] \ni t \mapsto u^{j,l}_t$ be the bounded geodesic segment joining $0$ and $\varphi_l^j +C_j$.   Then \eqref{eq: DDL4 estimate} and \eqref{eq: DDL4 estimate bis} together with the comparison principle for finite energy geodesics implies that 
\begin{equation}\label{eq: C_0_geod_est}
 \frac{C_j t}{l} \geq u^{j,l}_t \geq \max(u_t, -jt),  \ \ \ t\in [0,l],
\end{equation} 
and 
\begin{equation}\label{eq: C_0_geod_est decreasing}
 \frac{C_{j} t}{l} \geq u^{j,l}_t \geq u^{k,l}_t - \frac{D_{j,k}t}{l} ,  \ 0<j<k, \ \ t\in [0,l],
\end{equation} 
where $D_{j,k}$ depends on $j,k$ but not on $l>1$.

To show that the above geodesic sequences subconverge to appropriate geodesic rays, we first prove a number of estimates in the claims below. \vspace{0.1cm}\\
\noindent{\bf Claim 1.}  For any $j >1$ we have 
$$
\limsup_{t\to +\infty}  \frac{ {\rm Ent}\big(\omega^n,\omega_{\varphi_t^j}^n\big)}{t} \leq  \limsup_{t\to +\infty} \frac{{\rm Ent}(\omega^n,\omega_{u_t}^n)}{t}.
$$
Since ${\rm Ent}(\omega^n,\omega_{u_t}^n)<+\infty$, for any $t \geq 0$, we can write $\omega_{\varphi_t^j}^n = f_{t,j} \omega^n$ and $\omega_{u_t}^n = f_t \omega^n$. Observe first that for any $g_t \geq 0$ with $\int_X g_t \omega^n =\int_X \omega^n$ we have that 
$$
\limsup_{t\to +\infty} \int_X \frac{g_t \log (g_t)}{t} \omega^n = \limsup_{t\to +\infty} \int_X \frac{(g_t+B) \log (g_t+B)}{t} \omega^n, \ \forall B\geq 1.  
$$
This follows after splitting up both integrals using the partition $\{0 \leq g_t \leq C\}$ and $\{C < g_t\}$ for $C>0$ big and noticing that the $\limsup$ of integrals on $\{0 \leq g_t \leq C\}$ is always zero. 

By construction, $1\leq f_{t,j}+1 \leq f_t + 2$ and hence, since $s\mapsto s \log (s), s >1$ is increasing,  $(f_{t,j}+1) \log (f_{t,j}+1) \leq (f_t+2) \log (f_t + 2)$.  Using the above we then conclude:
\begin{flalign*}
\limsup_{t\to +\infty} \int_X \frac{f_{t,j}\log (f_{t,j})}{t} \omega^n &=\limsup_{t\to +\infty} \int_X \frac{(f_{t,j}+1)\log (f_{t,j}+1)}{t} \omega^n\\
& \leq \limsup_{t\to +\infty} \int_X \frac{(f_t+2) \log (f_t + 2)}{t} \omega^n \\
&  = \limsup_{t\to +\infty} \int_X \frac{f_t \log f_t }{t}\omega^n.
\end{flalign*}
\noindent
{\bf Claim 2.}  We have 
$$
\lim_j \limsup_{t\to +\infty}  \frac{ \mathcal{I}(\varphi_t^j,u_t)}{t} = 0. 
$$
Before we start with the argument, we recall that $\mathcal I(v,w)=\int_X (v-w)(\o_w^n - \omega_v^n)$ for $v,w \in \mathcal E^1_\o$. By \eqref{eq: varphi_l_def} we have  
$$ \mathcal{I}(\varphi_t^j,u_t) \leq  \frac{1}{2^j}\int_X |\varphi^t_j - u_t| \omega_{u_t}^n + \int_{\{u_t \leq -jt\}} |\varphi^t_j - u_t| \omega_{u_t}^n + \int_X |\varphi^t_j - u_t| \omega^n,$$ and the claim follows from the following three estimates. First, the estimate of \eqref{eq: DDL4 estimate} and basic properties of $I(\cdot)$ give that
\begin{equation}\label{eq: AM_est_int}
\lim_j \limsup_{t\to +\infty} \frac{1}{2^j} \int_X \frac{|\varphi^t_j - u_t|}{t} \omega_{u_t}^n \leq \lim_j \limsup_{t\to +\infty}\frac{1}{2^j} \frac{C_j}{t} +\lim_j \limsup_{t\to +\infty} \frac{1}{2^j}\frac{|I(u_t)|}{t}=0.
\end{equation}
Second, by the dominated convergence theorem we have that
\begin{equation} \label{eq: L_1_est_int}
\lim_{t\to +\infty}\int_X \frac{ |\varphi_t^j -u_t|}{t} \omega^n  \leq \lim_{t\to +\infty}\int_X \frac{ |C_j -u_\infty|}{t} \omega^n =0.
\end{equation}
Third, by Lemma \ref{lem: truncated_limit}  and \eqref{eq: DDL4 estimate},
\begin{equation}\label{eq: trunc_est}
\lim_j \limsup_{t\to +\infty} \int_{\{u_t \leq -jt\}} \frac{|\varphi_t^j -u_t|}{t} \omega_{u_t}^n   \leq  \lim_j \limsup_{t\to +\infty} \int_{\{u_t \leq -jt\}}  \frac{|u_t| + C_j}{t} \omega_{u_t}^n =0.
\end{equation}
\noindent {\bf Claim 3.}  We have 
	$$
	\lim_j \limsup_{t\to +\infty}  \frac{ | I(\varphi_t^j)  -I(u_t) |}{t} =0.$$

This claim follows from Claim 2 and Lemma \ref{lem: BBGZ I energy} below, with $\varphi_1=\varphi_t^j$, $\varphi_2 = u_t$ and $\psi=0$.  Indeed, given \eqref{eq: DDL4 estimate}, we have that  $\max (-I(\varphi_t^j),-I(u_t)) \simeq Ct +C_j$, for a uniform constant $C$. Lemma \ref{lem: BBGZ I energy} then gives 
$$
\left |\int_X (\varphi_t^j-u_t)(\omega_{u_t}^n -\omega^n) \right | \leq (Ct +C_j ) f\left (\mathcal{I}(\varphi_t^j,u_t)/(Ct+C_j)\right). 
$$  
Hence 
$$
\lim_j \limsup_{t\to +\infty}  \bigg|  \int_X \frac{(\varphi_t^j-u_t)}{t} (\omega_{u_t}^n-\omega^n) \bigg| =0.  
$$
Again, due to \eqref{eq: DDL4 estimate} and elementary properties of $I(\cdot)$ we have that 
$$
0\leq \limsup_{t\to +\infty} \frac{I(\varphi_t^j)- I(u_t)}{t} \leq \limsup_{t\to +\infty} \int_X \frac{\varphi_t^j-u_t}{t} \omega_{u_t}^n. 
$$
Putting these last two estimates together and \eqref{eq: L_1_est_int}, the claim follows. \vspace{0.1cm}\\
\noindent
	{\bf Claim 4.}  For any  closed smooth real $(1,1)$-form $\alpha$  we have 
$$
\lim_j 	 \limsup_{t\to +\infty}  \frac{ |I_{\alpha}(\varphi_t^j) - I_{\alpha} (u_t)|}{t}  =0.
$$
Recall that $
I_{\alpha}(v) := \sum_{j=0}^{n-1} \int_X v \alpha \wedge \omega_{v}^j.$
Since $\alpha$ can be written as the difference of two K\"ahler forms, and $I_\alpha(\cdot)$ is monotone when $\alpha \geq 0$, notice that the claim follows if we can argue that
$$
\lim_j \limsup_{t\to +\infty}  \frac{|I_{\omega}(\varphi_t^j+C_j) -I_{\omega}(u_t)|}{t}=\lim_j \limsup_{t\to +\infty}  \frac{I_{\omega}(\varphi_t^j+C_j) -I_{\omega}(u_t)}{t} =0. 
$$
Using \eqref{eq: DDL4 estimate} we observe that this last identity is a consequence of 
$$
\lim_j \limsup_{t\to +\infty} \frac{I_{\omega}(\varphi_t^j+C_j)-I_{\omega}(u_t)}{t} =\lim_j \limsup_{t\to +\infty} \frac{I_{\omega}(\varphi_t^j)-I_{\omega}(u_t)}{t}=0. 
$$
However we have that
$$\frac{I_\omega(\varphi^j_t) - I_\omega(u_t)}{t} = \frac{(n+1)(I(\varphi^j_t) - I(u_t))}{t} - \frac{\int_X \varphi^j_t \omega_{\varphi_{t}^j}^n - \int_X u_t \omega_{u_{t}}^n}{t}.$$
So, by Claim 3, it is enough to show that 
$$
\lim_j \limsup_{t\to +\infty}  \frac{\big| \int_X \varphi_t^j \omega_{\varphi_t^j}^n -\int_X u_t \omega_{u_t}^n  \big| }{t} =0. 
$$
Again, due to \eqref{eq: DDL4 estimate} and elementary properties of $I(\cdot)$ we have that 
$$
\lim_j \limsup_{t \to +\infty}  \frac{\big| \int_X (\varphi_t^j -u_t) \omega_{\varphi_t^j}^n \big| }{t} \leq \lim_j \limsup_{t\to +\infty} \frac{ I(\varphi_t^j)  -I(u_t) }{t} =0,
$$
where the last identity follows from Claim 3. Due to \eqref{eq: varphi_l_def} and \eqref{eq: L_1_est_int} we also have 
$$
\lim_j  \limsup_{t \to +\infty}  \frac{\left  | \int_X u_t  \left(\omega_{\varphi_t^j}^n- \omega_{u_t}^n\right) \right | }{t} \leq \lim_j \limsup_{t\to +\infty} \bigg( \frac{1}{2^j} \int_X \frac{|u_t|}{t} \omega_{u_t}^n + \int_{\{u_t\leq -jt\}} \frac{|u_t|}{t} \omega_{u_t}^n\bigg)=0,
$$
where the last equality follows from Lemma \ref{lem: truncated_limit} and the fact that $\int_X |u_t| \omega_{u_t}^n \leq d_1(0,u_t)=td_1(0,u_1)$ (\cite[Theorem 3]{Da15}). \vspace{0.1cm}

\noindent {\bf Conclusion.} To start, we recall the Chen--Tian formula for the K-energy that extends to $\mathcal E^1_\o$ (see \cite[Theorem 1.2]{BDL17}):
$$\mathcal K(u) =\textup{Ent}(\omega^n,\omega_u^n) + \overline{S} I(u) -n I_{\textup{Ric }\o}(u), \ \ u \in \mathcal E^1_\o.$$
There exists an increasing sequence $l_k \to +\infty$ such that 
$
\lim_{k\to +\infty} {\Ent\big(\omega^n, \omega_{u_{l_k}}^n\big)}/{l_k} = \limsup_{t\to +\infty}  {\Ent\big(\omega^n, \omega_{u_{t}}^n\big)}/{t}.
$
It then follows that 
\begin{flalign*}
\limsup_{k\to +\infty}  & \frac{\Ent\Big(\omega^n, \omega_{\varphi_{l_k}^j}^n\Big)-  \Ent\Big(\omega^n, \omega_{u_{l_k}}^n\Big)}{l_k}    \leq  \limsup_{k\to +\infty}  \frac{\Ent\Big(\omega^n, \omega_{\varphi_{l_k}^j}^n\Big)}{l_k} - \lim_{k} \frac{\Ent\left (\omega^n, \omega_{u_{l_k}}^n\right )}{l_k}\\
& =  \limsup_{k\to +\infty}  \frac{\Ent\Big(\omega^n, \omega_{\varphi_{l_k}^j}^n\Big)}{l_k} - \limsup_{t\to +\infty} \frac{\Ent\left (\omega^n, \omega_{u_{t}}^n\right )}{t} \\
& \leq  \limsup_{t\to +\infty}  \frac{\Ent\left (\omega^n, \omega_{\varphi_{t}^j}^n\right )}{t} - \limsup_{t\to +\infty} \frac{\Ent\left (\omega^n, \omega_{u_{t}}^n\right )}{t}.
\end{flalign*}
It thus follows from Claim 1 that 
\begin{equation}
    \label{eq: Step 1 CVG}
    \lim_{j\to +\infty} \limsup_{k\to +\infty}  \frac{\Ent\Big(\omega^n, \omega_{\varphi_{l_k}^j}^n\Big)-  \Ent\left (\omega^n, \omega_{u_{l_k}}^n\right )}{l_k} \leq 0. 
\end{equation}
We continue with
$$\limsup_{k\to +\infty} \frac{\mathcal{K}(\varphi_{l_k}^j)}{l_k}  \leq \limsup_{k\to +\infty} \frac{\mathcal{K}(\varphi_{l_k}^j)-\mathcal{K}(u_{l_k})}{l_k} + \limsup_{k\to +\infty} \frac{\mathcal{K}(u_{l_k})}{l_k}.$$
Thus, using the Chen--Tian formula  together with \eqref{eq: Step 1 CVG} and the estimates of Claims 3, 4, we can continue to write that
$$
\lim_{j\to +\infty} \limsup_{k \to +\infty} \frac{\mathcal{K}(\varphi_{l_k}^j)}{l_k} \leq \mathcal{K}\{u_t\}. 
$$
As a result, there exists an increasing sequence $\{j_m\}_m \subset \Bbb N$ such that 
$$
\limsup_{k \to +\infty} \frac{\mathcal{K}(\varphi_{l_k}^{j_m})}{l_k} \leq \mathcal{K}\{u_t\}  + \frac{1}{m}. 
$$
Hence, returning to the geodesic segments constructed at the beginning of the argument, by convexity of the K-energy we have, for all $t\in [0,l_k]$, 
\begin{equation}\label{eq: K-en_ineq}
\limsup_{k\to +\infty}\frac{\mathcal{K} (u^{j_m,{l_k}}_t) }{t} \leq \limsup_{k\to +\infty}\frac{\mathcal{K}(\varphi_{l_k}^{j_m})}{l_k} \leq \mathcal{K}\{u_t\} +\frac{1}{m}.
\end{equation}
Let us fix $m \geq 1$  and $t \in \Bbb Q^+$ momentarily. We use the compactness property of $\mathcal E^1_\o$ (see \cite[Corollary 4.8]{BDL17}) to extract a subsequence (again denoted by $l_k=l_k(m,t)$) such that $d_1(u^{j_m,l_k}_t,u^m_t) \to 0$ as $k\to \infty$ for some $u^m_t \in \mathcal E^1_\o$. Using a diagonal Cantor process it is actually possible to pick the same subsequence of $\{l_k\}_k$ for each $m \geq 1$ and $t \in \Bbb Q^+$. Moreover, due to the endpoint stability of geodesic segments  \cite[Proposition 4.3]{BDL17}, we get that the convergence extends for all $t \geq 0$: there exists $u^m_t \in \mathcal E^1_\o$ such that $d_1(u^{j_m,l_k}_t,u^m_t) \to $ as $k \to \infty$ for any $t \geq 0$ and $\{u_t^m\}_t \in \mathcal R^\infty_\o$.

Now we prove additional properties for our sequence $\{u_t^m\}_t$. By \eqref{eq: C_0_geod_est},  we notice that $\{u_t^m\}_t \in \mathcal R^\infty_\o$:
\begin{equation}\label{eq: u_h_est_C_0}
\max(u_t,-j_mt) \leq u^m_t \leq 0.
\end{equation}
Moreover, by \eqref{eq: C_0_geod_est decreasing} we also have that $\{u^m_t\}_t$ is $m$-decreasing! 

Fixing $t>0$, since $d_1(u^{j_m,l_k}_t,u^m_t) \to 0$ as $k \to \infty$, due to $d_1$-lower semicontinuity of $\mathcal{K}$, from \eqref{eq: K-en_ineq} we obtain that 
\begin{equation}
\label{eq: approximation radial  K energy}
\frac{\mathcal{K} (u^{m}_t) }{t} \leq \mathcal{K}\{u_t\} + \frac{1}{m},  \ \ \forall t>0,
\end{equation}
hence $\mathcal{K}\{u_t^m\} \leq \mathcal{K}\{u_t\}+\frac{1}{m}$, as desired. 

Next, we argue that $d_1(u^m_t,u_t) \to 0$ for any $t \geq 0$, as $m \to \infty$. But this is simply a consequence of  Claim 3. Indeed, due to \eqref{eq: u_h_est_C_0}, we only need to argue that:
\begin{equation}\label{eq_last_I_limit}
\lim_{m\to +\infty} \frac{d_1(u^m_t,u_t)}{t}=\lim_{m\to +\infty}  \frac{I(u^m_t) - I(u_t)}{t} =0.
\end{equation}
But from $I$-linearity, for any $t \in [0,l_k]$ we have that
$$\frac{I(u^m_t) - I(u_t)}{t}=\limsup_{k \to \infty} \frac{I(u^{j,l_m}_t) - I(u_t)}{t} = \limsup_{k \to \infty} \frac{I(\varphi^{j_m}_{l_k}) - I(u_{l_k})}{l_k},$$
and the right hand side converges to zero  as $m \to +\infty$, by Claim 3.

Finally, $d_1(u^m_t,u_t) \to 0$ implies that $u^m_t \searrow u_t$ (\cite[Theorem 5]{Da15}), hence we can invoke Lemma \ref{lem: decreasing_ray_d_plimit} to conclude that  $d_p^c(\{u^m_t\}_t,\{u_t\}_t) \to 0$, as $m \to \infty$.
\end{proof}

In the above argument we have used the following lemma whose proof goes along the same lines as \cite[Theorem 5.8]{BBGZ13}: 
\begin{lemma}
	\label{lem: BBGZ I energy}
	 There exists a  continuous non-decreasing function $f: \mathbb{R}^+ \rightarrow \mathbb{R}^+$ with $f(0)=0$   such that for all $0\geq \varphi_1,\varphi_2,\psi\in \mathcal{E}^1_{\omega}$, we have 
	$$
	\left |\int_X (\varphi_1-\varphi_2) (\omega_{\varphi_2}^n -\omega_{\psi}^n) \right | \leq A f (\mathcal{I}(\varphi_1,\varphi_2)/ A),
	$$
	where $A= \max(-I(\varphi_1), -I(\varphi_2), -I(\psi),1)$.
\end{lemma}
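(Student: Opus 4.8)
The plan is to deduce the lemma from a \emph{quantitative H\"older-type estimate}: we show there are constants $C_n>0$ and $\gamma=\gamma(n)\in(0,1)$, depending only on $n=\dim X$, such that for all $\omega$-psh $0\ge\varphi_1,\varphi_2,\psi$,
\begin{equation*}
\Big|\int_X(\varphi_1-\varphi_2)(\omega_{\varphi_2}^n-\omega_\psi^n)\Big|\le C_n\,A^{1-\gamma}\,\mathcal I(\varphi_1,\varphi_2)^{\gamma},\qquad A:=\max(-I(\varphi_1),-I(\varphi_2),-I(\psi),1).
\end{equation*}
Granting this, the lemma follows by taking $f(s):=C_n\,s^{\gamma}$, since then $A f(\mathcal I(\varphi_1,\varphi_2)/A)=C_n\,A^{1-\gamma}\mathcal I(\varphi_1,\varphi_2)^{\gamma}$, and $f$ is continuous, non-decreasing, with $f(0)=0$. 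By monotone ($\mathcal E^1_\omega$-decreasing) approximation of $\varphi_1,\varphi_2,\psi$ by bounded (indeed smooth) $\omega$-psh functions, together with the continuity of the mixed non-pluripolar Monge--Amp\`ere masses, of $I$, and of $\mathcal I$ along decreasing sequences (see \cite{BEGZ10,GZ17}), it suffices to prove the displayed inequality for bounded $\varphi_1,\varphi_2,\psi$, in which case all the integrations by parts below are unproblematic.

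So assume $\varphi_1,\varphi_2,\psi$ are bounded. Telescoping $\omega_{\varphi_2}^n-\omega_\psi^n=i\partial\bar\partial(\varphi_2-\psi)\wedge S$ with $S:=\sum_{k=0}^{n-1}\omega_{\varphi_2}^k\wedge\omega_\psi^{n-1-k}$ a positive closed $(n-1,n-1)$-current, integration by parts gives $\int_X(\varphi_1-\varphi_2)(\omega_{\varphi_2}^n-\omega_\psi^n)=\int_X(\varphi_2-\psi)\,i\partial\bar\partial(\varphi_1-\varphi_2)\wedge S$. For a positive closed current $T$ set $\|w\|_T^2:=\int_X i\partial w\wedge\bar\partial w\wedge T=-\int_X w\,i\partial\bar\partial w\wedge T\ge 0$. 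Applying the Cauchy--Schwarz inequality for the positive semidefinite form $(a,b)\mapsto\int_X i\partial a\wedge\bar\partial b\wedge T$ with $T=S_k:=\omega_{\varphi_2}^k\wedge\omega_\psi^{n-1-k}$, we obtain
\begin{equation*}
\Big|\int_X(\varphi_1-\varphi_2)(\omega_{\varphi_2}^n-\omega_\psi^n)\Big|\le\sum_{k=0}^{n-1}\|\varphi_2-\psi\|_{S_k}\,\|\varphi_1-\varphi_2\|_{S_k}.
\end{equation*}
Since $\sum_{k}\|\varphi_2-\psi\|_{S_k}^2=\mathcal I(\varphi_2,\psi)$ and $\mathcal I(\varphi_2,\psi)\le C_n A$ by the standard estimates relating mixed Monge--Amp\`ere masses to the energy $I$ (the energies of $\varphi_2,\psi$ being $\le A$; see \cite{BEGZ10,GZ17}), each factor $\|\varphi_2-\psi\|_{S_k}$ is bounded by $(C_nA)^{1/2}$.

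It remains to bound $\|\varphi_1-\varphi_2\|_{S_k}^2=\int_X i\partial(\varphi_1-\varphi_2)\wedge\bar\partial(\varphi_1-\varphi_2)\wedge\omega_{\varphi_2}^k\wedge\omega_\psi^{n-1-k}$ by a \emph{small} power of $\mathcal I(\varphi_1,\varphi_2)$ times a power of $A$. The obstruction is the $\omega_\psi$-factors: one has $\mathcal I(\varphi_1,\varphi_2)=\sum_{j}\|\varphi_1-\varphi_2\|^2_{\omega_{\varphi_1}^j\wedge\omega_{\varphi_2}^{n-1-j}}$, so $\mathcal I(\varphi_1,\varphi_2)$ directly controls $\|\varphi_1-\varphi_2\|^2$ only against currents built from $\omega_{\varphi_1},\omega_{\varphi_2}$. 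These $\omega_\psi$-factors are removed one at a time by the classical integration-by-parts/Cauchy--Schwarz scheme for mixed Monge--Amp\`ere estimates (as in \cite{GZ17}, and in the spirit of the proof of \cite[Theorem 5.8]{BBGZ13}): writing $w:=\varphi_1-\varphi_2$ and, for a positive closed current $R$ assembled from $\omega_{\varphi_1},\omega_{\varphi_2},\omega_\psi,\omega$,
\begin{equation*}
\|w\|^2_{\omega_\psi\wedge R}=\|w\|^2_{\omega\wedge R}+\int_X i\partial w\wedge\bar\partial w\wedge i\partial\bar\partial\psi\wedge R,
\end{equation*}
the last term is estimated, after integrating by parts to expose $\psi$ and applying Cauchy--Schwarz, by products of the form $\|\psi\|_{\,\cdot}\,\|w\|_{\,\cdot}$ carrying strictly fewer $\omega_\psi$-factors, the $\psi$-norms being absorbed into powers of $A$ via the gross energy bound. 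Iterating finitely many times, each step roughly halving the surviving exponent on $\mathcal I(\varphi_1,\varphi_2)$, yields $\|\varphi_1-\varphi_2\|_{S_k}^2\le C_n\,\mathcal I(\varphi_1,\varphi_2)^{\gamma'}A^{1-\gamma'}$ for some $\gamma'=\gamma'(n)\in(0,1)$. Multiplying this against the bound on $\|\varphi_2-\psi\|_{S_k}$ from the previous paragraph and summing over $k$ gives the displayed H\"older estimate with $\gamma=\gamma'/2$, completing the proof.

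The reductions (density to bounded potentials, integration by parts, and the energy bound $\mathcal I(\varphi_2,\psi)\le C_nA$) are routine. The genuine work — and the main obstacle — is the bookkeeping in the last paragraph: organizing the iterated Cauchy--Schwarz so that precisely one factor retains a positive power of $\mathcal I(\varphi_1,\varphi_2)$ while every other quantity produced is dominated by the ambient energy $A$, and verifying that the exponent $\gamma(n)$ stays strictly positive as $n$ grows (it degrades geometrically, like $2^{-n}$).
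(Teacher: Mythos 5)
Your overall strategy agrees with the paper's: reduce to bounded potentials, telescope $\omega_{\varphi_2}^n-\omega_\psi^n$, integrate by parts, apply Cauchy--Schwarz term by term to split off the $\varphi_2-\psi$ factor (controlled by $A$), and then iteratively strip the $\omega_\psi$-factors from the Dirichlet-type quantities of $w=\varphi_1-\varphi_2$ by repeated IBP/Cauchy--Schwarz, each step taking a square root of the surviving $\mathcal I(\varphi_1,\varphi_2)$-exponent. The paper does not package the answer as an explicit H\"older exponent, but implicitly the function $f$ it produces is of that type, so the framing is consistent.

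However, there is a real gap in the last paragraph that is not just bookkeeping. You strip $\omega_\psi$-factors via the identity $\omega_\psi=\omega+i\partial\bar\partial\psi$, so your iteration terminates at quantities of the form $\int_X i\partial w\wedge\bar\partial w\wedge\omega^p\wedge\omega_{\varphi_2}^{n-1-p}$. These are \emph{not} controlled by $\mathcal I(\varphi_1,\varphi_2)$: one has $\mathcal I(\varphi_1,\varphi_2)=\sum_j\int_X i\partial w\wedge\bar\partial w\wedge\omega_{\varphi_1}^{j}\wedge\omega_{\varphi_2}^{n-1-j}$, and $\omega$ is in general \emph{not} dominated by $\omega_{\varphi_1}+\omega_{\varphi_2}$ (the latter can be arbitrarily close to zero locally, since it only satisfies $\omega_{\varphi_1}+\omega_{\varphi_2}\geq 0$). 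So an additional round of iterations (trading $\omega$ back for $\omega_{\varphi_j}$) or a separate lemma is needed, and you don't say how this terminates. There is a related unaddressed point in the inductive step: after exposing $i\partial\bar\partial w=\omega_{\varphi_1}-\omega_{\varphi_2}$ by IBP, you must split into two terms that no longer live in your original family of currents. The paper resolves both issues at once by a small but decisive trick: set $v:=(\varphi_1+\varphi_2)/2$ and replace $\omega_\psi$ by $\omega_v=\omega_{(\varphi_1+\varphi_2)/2}$ rather than by $\omega$, defining $b_p:=\int_X i\partial w\wedge\bar\partial w\wedge\omega_v^p\wedge\omega_\psi^{n-1-p}$. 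Then $\omega_{\varphi_1},\omega_{\varphi_2}\leq 2\omega_v$ absorbs both splits, and the base case $b_{n-1}=\int_X i\partial w\wedge\bar\partial w\wedge\omega_v^{n-1}\leq C_n\,\mathcal I(\varphi_1,\varphi_2)$ follows immediately from $\omega_v^{n-1}=2^{-(n-1)}\sum_j\binom{n-1}{j}\omega_{\varphi_1}^j\wedge\omega_{\varphi_2}^{n-1-j}$. If you adopt $\omega_v$ as the reference form, your argument closes without the missing comparison step.
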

In the proof below we use $C_n>0$ to denote various numerical constants (only dependent on $\dim X = n$) and $f : \mathbb{R}^+ \rightarrow \mathbb{R}^+$ to denote a continuous non-decreasing function such that $f(0)=0$. They may be different from place to place. 
\begin{proof}
By approximation of finite energy potentials from above by smooth ones, we can assume that $\varphi_1,\varphi_2,\psi$ are smooth (the convergence of the integrals is assured by the results of \cite{GZ07}, see for example \cite[Proposition 2.11]{Da18}). 
We set $u= \varphi_1-\varphi_2$ and $v= (\varphi_1+\varphi_2)/2$.  For $p=0,...,n$ let 
	$$
	a_p := \int_X u \omega_{\varphi_2}^p \wedge \omega_{\psi}^{n-p}  \ \ \ \ \textup{ and } \ \ \ 
	b_p := \int_X i \partial u \wedge \bar{\partial} u \wedge \omega_v^{p} \wedge \omega_{\psi}^{n-p-1}. 
	$$
	It follows from \cite[Proposition 2.5]{GZ07} that  
	\begin{equation}
		\label{eq: fundamental inequality GZ07}
		\mathcal{I}(\psi_1,\psi_2) \leq C_n  (|I(\psi_1)|+ |I(\psi_2)|), \ \ \textup{ for all } 0\geq \psi_1,\psi_2 \in \mathcal{E}^1_{\omega}. 
	\end{equation}
	In particular, $\mathcal{I}(\psi,\varphi_j) \leq C_n A$, $j=1,2$.  
	
	For $p=0,1,...,n-1$ we have, using integration by parts and by the Cauchy--Schwarz inequality,
	\begin{flalign*}
	|a_p &- a_{p+1}| =  \left |\int_X i \partial u \wedge  \bar{\partial}(\psi-\varphi_2) \wedge \omega_{\varphi_2}^p \wedge \omega_{\psi}^{n-p-1}\right | \\
	& \leq  \left ( \int_X i \partial u \wedge  \bar{\partial} u \wedge \omega_{\varphi_2}^p \wedge \omega_{\psi}^{n-p-1} \right )^\frac{1}{2} \left ( \int_X i \partial (\psi-\varphi_2) \wedge \bar{\partial}(\psi-\varphi_2) \wedge \omega_{\varphi_2}^p \wedge \omega_{\psi}^{n-p-1} \right )^\frac{1}{2}\\
	& \leq  C_n b_p^\frac{1}{2} \left( \mathcal{I}(\varphi_2,\psi)\right )^{1/2} \leq C_n b_p^{1/2} A^\frac{1}{2}.   
	\end{flalign*}
In the last line above we have used $\omega_{\varphi_2} \leq 2 \omega_v$ and  the inequality 
\begin{flalign*}
\int_X i \partial (\psi-\varphi_2)  & \wedge \bar{\partial} (\psi-\varphi_2)\wedge  \omega_{\varphi_2}^p \wedge \omega_{\psi}^{n-p-1} \leq  \int_X (\psi-\varphi_2) (\omega_{\varphi_2}^n-\omega_{\psi}^n). 
\end{flalign*}
It thus follows, by summing up the estimates of $|a_p-a_{p+1}|$ above for $p=0,...,n-1$, that 
\begin{equation}\label{eq: a_ineq}
|a_0-a_{n}| \leq C_n A^\frac{1}{2} \sum_{p=0}^{n-1} b_p^\frac{1}{2}. 
\end{equation}
We claim that there is a non-decreasing continuous function $f: \mathbb{R}^+ \rightarrow \mathbb{R}^+$ with $f(0)=0$ such that 
$$
b_p \leq A f(\mathcal{I}(\varphi_1,\varphi_2)/A), \ 0 \leq p \leq n-1.
$$
We proceed by (backwards) induction. For $p=n-1$ we can simply take $f(s) = C_n s$, $s\geq 0$. 
By the same argument as above using integration by parts and the Cauchy-Schwarz inequality we  have, for $0\leq p\leq n-2$, 	
\begin{flalign*}
	& b_p  - b_{p+1} =  \int_X i \partial u \wedge \bar{\partial} u \wedge i \partial \bar{\partial} (\psi-v) \wedge \omega_{v}^p \wedge \omega_{\psi}^{n-p-2}\\
	& \leq  \left |\int_X i\partial u \wedge \bar{\partial} (\psi-v) \wedge i \partial \bar{\partial}  u \wedge \omega_{v}^p \wedge \omega_{\psi}^{n-p-2}   \right |\\
	& \leq  \left |\int_X i\partial u \wedge \bar{\partial} (\psi-v) \wedge (\omega_{\varphi_1}-\omega_{\varphi_2}) \wedge \omega_{v}^p \wedge \omega_{\psi}^{n-p-2}   \right | \\ 
	& \leq  \left |\int_X i \partial u \wedge \bar{\partial} (\psi-v) \wedge \omega_{\varphi_1} \wedge \omega_{v}^p \wedge \omega_{\psi}^{n-p-2} \right | +  \left |\int_X i \partial u  \wedge \bar{\partial} (\psi-v) \wedge \omega_{\varphi_2} \wedge \omega_{v}^p \wedge \omega_{\psi}^{n-p-2} \right |    \\
	& \leq C_n \left (\int_X i \partial  u \wedge \bar{\partial} u  \wedge \omega_v^{p+1} \wedge \omega_{\psi}^{n-p-2} \right )^\frac{1}{2}  \left (\int_X i \partial (\psi-v)  \wedge \bar{\partial} (\psi-v) \wedge \omega_v^{p+1} \wedge \omega_{\psi}^{n-p-2} \right)^\frac{1}{2} \\
	& \leq  C_n \mathcal{I}(\psi,v)^\frac{1}{2} b_{p+1}^\frac{1}{2}, 
\end{flalign*}
where we used several times that $\omega_{\varphi_j} \leq 2 \omega_{v}$. Using\eqref{eq: fundamental inequality GZ07} we thus have 
$$
b_p \leq b_{p+1} + A C_n (b_{p+1}/A)^\frac{1}{2} \leq  A f (\mathcal{I}(\varphi_1,\varphi_2)/A) + A C_n f(\mathcal{I}(\varphi_1,\varphi_2)/A)^\frac{1}{2}. 
$$
Consequently, by possibly increasing $f$, we have that $b_p \leq A f(\mathcal I(\varphi_1,\varphi_1)/A)$, proving the claim. Comparing with \eqref{eq: a_ineq}, we thus have 
$$
|a_0-a_n|  \leq A f(\mathcal{I}(\varphi_1,\varphi_2)/A),
$$
what we wanted to prove. 	
\end{proof}

\subsection*{Approximation with rays of $C^{1, \bar 1}$ potentials}

The goal of this subsection is to prove Theorem \ref{thm: approximation_dp_intr}(ii):

\begin{theorem}\label{thm: main_C_11_approx_ray} Let $p \geq 1$. Suppose that $\{u_t\}_t \in \mathcal R^p_\o$ is such that $\mathcal K\{u_t\}<\infty$. Then there exists $\{v^k_t\}_t \subset \mathcal R^{1, \bar 1}_\o$ such that $v^k_t \searrow u_t, \ t \geq 0$, $d^c_p(\{v^k_t\}_t,\{u_t\}_t) \to 0$ and $\mathcal K\{v^k_t\} \to \mathcal K\{u_t\}$.
\end{theorem}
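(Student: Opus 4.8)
The plan is as follows. First I would reduce to the case of a \emph{bounded} geodesic ray. By Theorem~\ref{thm: K_approx} there is $\{u^j_t\}_t\in\mathcal R^\infty_\o$ with $u^j_t\searrow u_t$ for every $t\ge 0$, $d^c_p(\{u^j_t\}_t,\{u_t\}_t)\to 0$ and $\mathcal K\{u^j_t\}\to\mathcal K\{u_t\}$, so in particular $\mathcal K\{u^j_t\}<\infty$ for $j$ large. If, for each bounded ray, one can construct rays $\{v_t\}_t\in\mathcal R^{1,\bar 1}_\o$ lying above it, arbitrarily $d^c_p$-close to it and with radial K-energy arbitrarily close to its, then a diagonal argument over $j$ (arranging the successive regularizations to be nested, so that the resulting sequence is decreasing) gives the theorem: indeed, a decreasing pointwise limit $w_t$ of such rays is again a geodesic ray by \cite[Lemma~4.16]{Da15} and endpoint stability \cite[Proposition~4.3]{BDL17}, and $d^c_p(\{w_t\}_t,\{u_t\}_t)=0$ by Lemma~\ref{lem: decreasing_ray_d_plimit} and the triangle inequality, hence $w_t=u_t$ by non-degeneracy of $d^c_p$. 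Thus from now on $\{u_t\}_t\in\mathcal R^\infty_\o$, and after adding a $t$-linear term (Lemma~\ref{lem: sup_X_linear}) we may assume $\sup_X u_t=0$, so that $t\mapsto u_t$ is $t$-decreasing and $t$-Lipschitz.

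The heart of the construction is to approximate the bounded ray by geodesic \emph{segments} with smoother, entropy-compatible endpoints. For a small parameter $\eta>0$ and each $T>0$ I would produce $\phi^{T,\eta}\in\mathcal H^{1,\bar 1}_\o$ with $\phi^{T,\eta}\ge u_T$, $\phi^{T,\eta}\searrow u_T$ as $\eta\to 0^+$, and $\textup{Ent}(\o^n,\o^n_{\phi^{T,\eta}})\le \textup{Ent}(\o^n,\o^n_{u_T})+o_\eta(T)$, obtained by solving a complex Monge--Amp\`ere equation with a mollified, truncated right-hand side; here finiteness of $\mathcal K\{u_t\}$ together with the Chen--Tian formula \cite[Theorem~1.2]{BDL17} ensures the relevant entropies are $O(T)$. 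Let $[0,T]\ni t\mapsto\phi^{T,\eta}_t$ be the $C^{1,\bar 1}$ geodesic joining $0$ and $\phi^{T,\eta}$ (\cite{Ch00,CTW17}), and note that the Lipschitz character of the ray yields uniform control on the geometric quantities (time derivatives, sup-norms rescaled by $T$) attached to these segments. The key new input is an a priori estimate, proved by running the maximum principle on Chen's $\varepsilon$-geodesics within the framework of \cite{He15,GZ17}, which is \emph{scalable along rays}, i.e. of the shape
\begin{equation}\label{eq: plan_apriori}
\sup_X \textup{tr}_\o\,\o_{\phi^{T,\eta}_t}\ \le\ C_1 e^{C_2 t}\ +\ \tfrac{t}{T}\,\Psi\!\left(\|\Delta_\o\phi^{T,\eta}_T\|_{L^\infty}\right),\qquad t\in[0,T],
\end{equation}
where $C_1,C_2$ depend only on $n$, $\o$ and the Lipschitz constant of the ray, and $\Psi$ is independent of $\eta,T$; the decisive feature is that the endpoint Hessian enters only through the damping factor $t/T$.

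Granting \eqref{eq: plan_apriori}, I would couple $\eta=\eta(T)\to 0$ slowly enough that $\tfrac1T\Psi(\|\Delta_\o\phi^{T,\eta(T)}_T\|_{L^\infty})\to 0$ and set $\phi^T_t:=\phi^{T,\eta(T)}_t$, so that $\sup_X\textup{tr}_\o\o_{\phi^T_t}\le C_1 e^{C_2 t}+o_T(1)$ for each fixed $t$. Using compactness of $(\mathcal E^1_\o,d_1)$ \cite[Corollary~4.8]{BDL17} and endpoint stability \cite[Proposition~4.3]{BDL17}, along a subsequence $\phi^T_t\to v_t\in\mathcal E^1_\o$ for all $t\ge 0$ with $\{v_t\}_t\in\mathcal R^1_\o$, and the uniform Hessian bound forces $v_t\in\mathcal H^{1,\bar 1}_\o$, i.e. $\{v_t\}_t\in\mathcal R^{1,\bar 1}_\o$; the comparison principle for geodesics gives $v_t\ge u_t$, with $v_t$ as $d_1$-close to $u_t$ as desired by refining the regularization. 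For the K-energy, convexity of $\mathcal K$ along finite energy geodesics \cite[Theorem~1.2]{BDL17} gives $\tfrac1t\mathcal K(\phi^T_t)\le\tfrac1T\mathcal K(\phi^{T,\eta(T)}_T)\le\tfrac1T\mathcal K(u_T)+o(1)$, so $d_1$-lower semicontinuity of $\mathcal K$ and then $T\to\infty$ yield $\mathcal K\{v_t\}\le\mathcal K\{u_t\}$, while $\mathcal K\{v_t\}\ge\mathcal K\{u_t\}$ follows from $d^c_p$-lower semicontinuity of $\mathcal K\{\cdot\}$ \cite[Proposition~5.9]{ChCh3} once $d^c_p(\{v_t\}_t,\{u_t\}_t)\to 0$. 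Running this with nested regularizations produces $\{v^k_t\}_t\in\mathcal R^{1,\bar 1}_\o$ with $v^k_t\searrow u_t$ (the decreasing limit being $u_t$ as in the reduction step), $d^c_p(\{v^k_t\}_t,\{u_t\}_t)\to 0$ (Lemma~\ref{lem: decreasing_ray_d_plimit}) and $\mathcal K\{v^k_t\}\to\mathcal K\{u_t\}$; combined with the reduction, this proves the theorem.

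The step I expect to be the main obstacle is the a priori estimate \eqref{eq: plan_apriori}: one must show that the Laplacian bound along the geodesic segments depends on the Hessian of the endpoint only through a term that is suppressed as $T-t\to\infty$, so that it survives in the limit along the ray. Establishing such a ``scalable'' $C^{1,\bar 1}$ estimate — in essence a quantitative, rescaling-uniform refinement of the geodesic regularity theory of \cite{Ch00,He15,CTW17,GZ17} — together with the construction of the entropy-compatible, bounded-Laplacian approximations $\phi^{T,\eta}$ of $u_T$, is the technical core of the argument.
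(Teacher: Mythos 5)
Your overall architecture matches the paper's: first reduce to $\{u_t\}_t\in\mathcal R^\infty_\o$ via Theorem~\ref{thm: K_approx}, then approximate by geodesic segments from $0$ to smoother, entropy-compatible endpoints and pass to a limiting ray using an a~priori Hessian estimate in which the endpoint term is damped by a factor $t/T$. That ``scalable'' estimate is exactly what the paper delivers in Theorem~\ref{thm: He_C1bar1_est}, phrased as convexity of $t\mapsto\textup{ess\,sup}_X(\log(n+\Delta_\o u_t)-Bu_t)$ along $C^{1,\bar 1}$ geodesics (so your \eqref{eq: plan_apriori} should be multiplicative after exponentiation rather than additive, but the $t/T$-damping insight is correct), and the paper proves it exactly as you suggest, by a maximum-principle argument on Chen's $\varepsilon$-geodesics following \cite{He15}.

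There is, however, a genuine gap in your limiting scheme and in the endpoint construction. If you couple $\eta=\eta(T)\to0$, then $d_1(\phi^{T,\eta(T)}_T,u_T)\to0$, and by convexity of $t\mapsto d_1(\phi^{T,\eta(T)}_t,u_t)$ the segments $\phi^{T,\eta(T)}_t$ converge to $u_t$ at every fixed $t$: the limiting ray is $\{u_t\}_t$ itself, which need not lie in $\mathcal R^{1,\bar1}_\o$, contradicting the uniform Hessian bound you extract. The two limits must be kept separate, exactly as in the paper: fix the regularization index and let $T\to\infty$ first, then let the index tend to infinity. But for the fixed-index limit to produce a ray \emph{different} from $\{u_t\}_t$, the regularization must push the endpoint away from $u_T$ by a $d_1$-distance that \emph{grows linearly in $T$}; only then does the $t/T$-damping yield a nondegenerate, nonzero displacement at each fixed $t$. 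Naive mollification of $u_T$ at a $T$-independent scale produces a $T$-uniform error and hence collapses to $\{u_t\}_t$, while taking the scale $\eta\sim T$ is far too crude. The paper's use of the weak parabolic Monge--Amp\`ere flow (Theorem~\ref{thm: bounded_ray_pot_approx}) is precisely the device that reconciles the three constraints at once: the flow run for fixed short time $2^{-j}$ moves $u_s$ by $d_1\sim 2^{-j}\,\textup{Ent}(\o^n,\o^n_{u_s})=O(2^{-j}s)$ (linear in $s$, by Lemma~\ref{lem: Ent_sub_linear} and finiteness of $\mathcal K\{u_t\}$); it decreases entropy exactly (Lemma~\ref{lem: 2.2(iv)}); and the Guedj--Zeriahi smoothing estimate gives a Laplacian bound of the form $\log(n+\Delta_\o u^j_s)\le\alpha 2^j(1+s)$, i.e. exponentially large but growing \emph{linearly} in $s$, so that the $t/s$-damping in the convexity estimate exactly absorbs it. Your proposed ``mollified, truncated right-hand side'' MA equation is not specified enough to deliver this threefold balance (in particular it gives no obvious comparison $\phi^{T,\eta}\ge u_T$, and the $d_1$-displacement need not grow with $T$), so as written the argument would not go through; with the flow-based regularization substituted in, your outline becomes the paper's proof.
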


To argue this result, we need two auxilliary theorems, whose proof will be given at the end of the section. First we will need the following theorem, which will allow to obtain ``scaled" $C^{1, \bar 1}$ estimates along geodesic rays, via convexity:
\begin{theorem}\label{thm: He_C1bar1_est} Let $[0,1] \ni t \to u_t \in \mathcal H^{1, \bar 1}_\o$ be the $C^{1, \bar 1}$-geodesic connecting $u_0,u_1 \in \mathcal H_\omega^{1, \bar 1}$. Then there exists $B>0$, only depending on $(X,\omega)$ such that
$$[0,1] \ni t \to \textup{ess sup}_X (\log(n + \Delta_\omega u_t) - Bu_t) \in \Bbb R$$
is a convex function.
\end{theorem}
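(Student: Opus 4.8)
## Proof proposal for Theorem \ref{thm: He_C1bar1_est}

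The plan is to reduce the convexity statement to a maximum-principle argument for a specific quantity on the product $S \times X$, following the strategy familiar from Chen's $C^{1,1}$ estimates and the framework of He and of Guedj--Zeriahi. Set $v(s,x) = u_{\mathrm{Re}\, s}(x)$ on the strip $S \times X$; since $t \to u_t$ is the $C^{1,\bar 1}$ geodesic, $v$ satisfies the homogeneous complex Monge--Amp\`ere equation $(\pi^*\omega + i\partial\bar\partial v)^{n+1} = 0$ on $S \times X$, and $\omega_{v} := \pi^*\omega + i\partial\bar\partial v \geq 0$. The first step is to write down the quantity whose subharmonicity (in the $S$-variable) encodes the desired convexity. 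Because the geodesic is only $C^{1,\bar 1}$, a literal computation is not available; instead I would work with Chen's smooth $\varepsilon$-geodesics $t \to u_t^\varepsilon$ connecting $u_0, u_1$, which solve $(\pi^*\omega + i\partial\bar\partial v^\varepsilon)^{n+1} = \varepsilon\, c_n\, (\pi^*\omega + i dx\wedge dy)^{n+1}$ on $S \times X$, prove the statement at the $\varepsilon$-level with constants uniform in $\varepsilon$, and then pass to the limit using $u_t^\varepsilon \searrow u_t$ together with the convergence $\Delta_\omega u_t^\varepsilon \to \Delta_\omega u_t$ in the appropriate weak sense (this is exactly the setup behind the $C^{1,\bar 1}$ bound, cf. \cite{Ch00, Bl13, CTW17, He15, GZ17}).

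The core computation is to show that $w^\varepsilon := \log(\mathrm{tr}_\omega \omega_{v^\varepsilon}) - B v^\varepsilon$, viewed as a function on $S\times X$, is $\omega_{v^\varepsilon}$-subharmonic on $S \times X$ once $B = B(X,\omega)$ is chosen large enough to absorb the negative contribution of the bisectional curvature of $\omega$, i.e.
\begin{equation*}
\Delta_{\omega_{v^\varepsilon}} w^\varepsilon \geq -C\varepsilon \quad \text{on } S\times X,
\end{equation*}
where $\Delta_{\omega_{v^\varepsilon}}$ is the Laplacian of the (degenerate, as $\varepsilon\to 0$) metric $\omega_{v^\varepsilon}$ on the total space $S \times X$. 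This is the standard Aubin--Yau second-order estimate computation: differentiating $\log\mathrm{tr}_\omega\omega_{v^\varepsilon}$ twice against $\omega_{v^\varepsilon}$ produces a term controlled by $\mathrm{Ric}(\omega)$ and the bisectional curvature of $\omega$ — bounded below by $-B\,\mathrm{tr}_{\omega_{v^\varepsilon}}\omega$ — plus the contribution of the right-hand side $\varepsilon c_n(\cdots)$, which is $O(\varepsilon)$; the $-Bv^\varepsilon$ term contributes $+B(\mathrm{tr}_{\omega_{v^\varepsilon}}\omega - (n+1))$ via $i\partial\bar\partial v^\varepsilon = \omega_{v^\varepsilon} - \pi^*\omega$ on $S\times X$ (here it is important that the metric on the $S$-factor is flat, so it contributes no curvature and the trace bookkeeping works out to the stated constant). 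Since the strip direction carries no curvature, the same $B$ that works on $X$ works on $S \times X$. Once $w^\varepsilon$ is subharmonic up to $O(\varepsilon)$, restrict to each fiber slice: the function
\begin{equation*}
g^\varepsilon(s) := \operatorname*{ess\,sup}_{x\in X} w^\varepsilon(s,x) = \operatorname*{ess\,sup}_X\big(\log(n+\Delta_\omega u_{\mathrm{Re}\,s}^\varepsilon) - B u_{\mathrm{Re}\,s}^\varepsilon\big)
\end{equation*}
is, up to the $O(\varepsilon)$ error, subharmonic on the strip $S$ as the supremum over $x$ of functions subharmonic in $s$ (using $\mathrm{tr}_\omega\omega_{v^\varepsilon} = n + \Delta_\omega u^\varepsilon$ on each slice); being invariant under $s\mapsto s + i\mathbb{R}$ it depends only on $t = \mathrm{Re}\,s$, so a function of one real variable that is subharmonic on a strip is convex, giving convexity of $t\to g^\varepsilon(t)$ up to an $O(\varepsilon)$ perturbation.

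Finally, I would let $\varepsilon \to 0$. Since $u^\varepsilon_t \searrow u_t$ uniformly on $[0,1]\times X$ and the $C^{1,\bar 1}$ estimates are uniform in $\varepsilon$ (this is the content of \cite{CTW17}, and the quantitative version needed here is in \cite{He15}), $\log(n+\Delta_\omega u^\varepsilon_t) - Bu^\varepsilon_t$ decreases to $\log(n + \Delta_\omega u_t) - Bu_t$ in $L^\infty_{\mathrm{loc}}$ sense, so $\operatorname{ess\,sup}_X(\log(n+\Delta_\omega u_t) - Bu_t)$ is the decreasing limit of the $g^\varepsilon(t)$, hence convex as a decreasing limit of functions that are convex up to vanishing error. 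The main obstacle, and the place where care is genuinely needed, is the low regularity: one must make sure that taking $\operatorname{ess\,sup}$ over $X$ and passing to the $\varepsilon\to 0$ limit genuinely commute with convexity, i.e. that the slice functions $g^\varepsilon$ converge (at least pointwise in $t$, or along a subsequence, with the limit independent of the subsequence by uniqueness of the geodesic) to the claimed quantity; this is where I would lean on the uniform $C^{1,\bar 1}$ a priori bounds of \cite{He15} and the Bedford--Taylor convergence of $\Delta_\omega u^\varepsilon_t$. The curvature constant $B$ is produced purely by the bisectional curvature bound of $(X,\omega)$ and is manifestly independent of the endpoints $u_0, u_1$ and of $\varepsilon$, as required.
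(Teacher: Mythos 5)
Your high-level plan (smooth $\varepsilon$-geodesics, a second-order estimate for $\log(n+\Delta_\omega u^\varepsilon_t)-Bu^\varepsilon_t$ with a curvature-absorbing constant $B$, maximum principle, pass to the limit) is the same as the paper's, and your identification of $DF(u^\varepsilon)$ with the Laplacian of $\omega_{v^\varepsilon}$ on $S\times X$ is correct. However, there is a genuine gap in the step that is supposed to deliver convexity. You assert that $g^\varepsilon(s)=\sup_X w^\varepsilon(s,\cdot)$ is subharmonic on the strip ``as the supremum over $x$ of functions subharmonic in $s$.'' That reasoning is not available: $\Delta_{\omega_{v^\varepsilon}} w^\varepsilon\geq -C\varepsilon$ on $S\times X$ does \emph{not} imply that $s\mapsto w^\varepsilon(s,x)$ is subharmonic for each fixed $x$, because $\omega_{v^\varepsilon}$ is not a product metric and its Laplacian mixes the $S$- and $X$-directions. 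So the ``sup of subharmonic slices'' principle cannot be invoked.

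What actually saves the argument — and is what the paper does — is a two-step bootstrap. First, the interior maximum principle (with a perturbation $g_\delta(t)=\delta t^2/2$ to overcome the $O(\varepsilon)$ error, choosing $\varepsilon\ll\delta$) gives only the endpoint bound
\[
M(t)\ :=\ \textup{ess\,sup}_X\big(\log(n+\Delta_\omega u_t)-Bu_t\big)\ \leq\ \max\big(M(0),M(1)\big),\quad t\in[0,1],
\]
after sending $\varepsilon,\delta\to 0$. This is weaker than convexity. Second, convexity is then obtained by a shift trick that crucially uses the $-Bu_t$ term: given $a<b$, apply the endpoint bound to the new $C^{1,\bar1}$ geodesic $t\mapsto u_{a+t(b-a)}+\tfrac{1}{B}\big(M(a)+t(M(b)-M(a))\big)$; because adding an affine-in-$t$ function to a geodesic yields a geodesic and shifts $M$ by exactly the negative of that affine function, the endpoint bound for the shifted geodesic reads $M(a+t(b-a))\leq (1-t)M(a)+tM(b)$, which is convexity. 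An equivalent repair of your argument: on $S\times X$ a pulled-back affine function $at+b$ is $\omega_{v^\varepsilon}$-harmonic (only the $g^{s\bar s}$ entry sees it, and $\partial_s\partial_{\bar s}(at+b)=0$), so the maximum principle applied to $w^\varepsilon-at-b$ with appropriately chosen $a,b$ gives convexity of the slice-sup; but this linear-comparison step must be spelled out, since the naive ``sup of slice-subharmonics'' route is invalid. Finally, you should also address the extension from smooth endpoints to merely $\mathcal H^{1,\bar1}_\omega$ endpoints; the paper does this by a decreasing Demailly-type regularization of $u_0,u_1$ that preserves the Laplacian bound in the limit (Proposition \ref{prop: Laplace_realiz_aproxx}) together with the comparison principle for geodesics, and this step should not be omitted.
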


The proof of this theorem is obtained using the estimates developed in \cite{He15}. We will also need the following smoothing argument along bounded geodesic rays, relying on the regularizing property of the weak Monge-Amp\`ere flows, closely following the arguments of \cite{GZ17}:

\begin{theorem}\label{thm: bounded_ray_pot_approx} Let $B>0$ be from Theorem \ref{thm: He_C1bar1_est}, and $\{u_t\}_t \in \mathcal R^\infty_\omega$ with $\mathcal K\{u_t\} < \infty$ and $\sup_X u_t  =0, t \geq 0$. Then there exists $\alpha > 0$ depending on $(X,\omega)$ such that for all $s > 0$ and $j \in \mathbb{N}$ one can find $u^j_s \in \mathcal H_{\omega}$ satisfying the following conditions:\\
\noindent (i) $\{u_s^j\}_j$ is decreasing  and $u_s \leq  u_s^j \leq \alpha j 2^{-j}$,\\
\noindent (ii) $\sup_X \big(\log (n + \Delta_\omega  u^j_s) - B u^j_s\big) \leq \alpha 2^{j}(1+ s)$, \\
\noindent (iii) $d_1(u^j_s,u_s) \leq \alpha  2^{-j}  s + \alpha j 2^{-j}$,
\\
\noindent (iv) $\textup{Ent}(\omega^n,\omega_{u^j_s}^n) \leq \textup{Ent}(\omega^n,\omega_{u_s}^n)$.
\end{theorem}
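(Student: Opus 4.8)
The plan is to build $u^j_s$ by regularizing the slice $u_s$ with the weak Monge--Amp\`ere flow and then correcting by a controlled constant. For $s>0$ fixed I would consider the parabolic complex Monge--Amp\`ere equation
$$\frac{\partial \varphi_\tau}{\partial \tau}=\log\frac{\omega_{\varphi_\tau}^n}{\omega^n},\qquad \varphi_0=u_s,$$
which, since $u_s\in\textup{PSH}(X,\omega)\cap L^\infty$ and $\omega_{u_s}^n$ is non-pluripolar of full mass, admits by \cite{GZ17} a unique solution $[0,\infty)\ni\tau\mapsto\varphi_\tau=\varphi^s_\tau\in\textup{PSH}(X,\omega)\cap L^\infty$ that is smooth --- hence lies in $\mathcal H_\omega$ --- for every $\tau>0$. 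The first step is to record the properties of this flow that I will use: (a) the $C^0$ bounds $\inf_X u_s\le\varphi_\tau\le\sup_X u_s=0$ and the one--sided derivative bounds $\dot\varphi_\tau\le C$, $\dot\varphi_\tau\ge -C(1+|\log\tau|)$ on $(0,1]$, all with $C=C(X,\omega)$, coming from the parabolic maximum principle; (b) the Aubin--Yau type smoothing estimate of \cite{He15}, in the form $\sup_X\big(\log(n+\Delta_\omega\varphi_\tau)-B\varphi_\tau\big)\le C(X,\omega)/\tau$ for $\tau\in(0,1]$, where $B$ is the constant of Theorem \ref{thm: He_C1bar1_est} (the point of the combination $\log(n+\Delta_\omega\cdot)-B\,\cdot$ being precisely that no oscillation term enters); (c) the energy identity $\frac{d}{d\tau}I(\varphi_\tau)=\tfrac1V\textup{Ent}(\omega^n,\omega_{\varphi_\tau}^n)\ge 0$; and (d) the entropy monotonicity $\textup{Ent}(\omega^n,\omega_{\varphi_\tau}^n)\le\textup{Ent}(\omega^n,\omega_{u_s}^n)$, which for smooth initial data is the computation $\frac{d}{d\tau}\textup{Ent}(\omega^n,\omega_{\varphi_\tau}^n)=-\int_X\big|\nabla\log(\omega_{\varphi_\tau}^n/\omega^n)\big|^2_{\omega_{\varphi_\tau}}\,\omega_{\varphi_\tau}^n\le 0$, and for merely bounded $u_s$ is obtained by approximating $u_s$ decreasingly by smooth potentials whose entropies converge to $\textup{Ent}(\omega^n,\omega_{u_s}^n)$ (possible via the Chen--Tian formula together with the construction of the extended K--energy in \cite{BDL17}), running the corresponding flows, and using lower semicontinuity of the entropy under the resulting decreasing limit.

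Next I would fix the smoothing time and the corrector: set $\tau_j:=2^{-j}$, put $c_j:=C_0\sum_{k\ge j}k2^{-k}$ with $C_0=C_0(X,\omega)$ large (so that $c_j\le\alpha j2^{-j}$ for a suitable $\alpha=\alpha(X,\omega)$ and $c_j$ is decreasing in $j$), and define $u^j_s:=\varphi^s_{\tau_j}+c_j\in\mathcal H_\omega$. By integrating the bounds in (a) one gets the pointwise comparison $u_s-C\tau_j(2+|\log\tau_j|)\le\varphi^s_{\tau_j}\le u_s+C\tau_j$; hence, once $C_0$ is large enough, $u_s\le u^j_s$, while $\varphi^s_{\tau_j}\le 0$ gives $u^j_s\le\alpha j2^{-j}$, and the choice $c_j-c_{j+1}=C_0 j2^{-j}$ dominates $C\tau_{j+1}+C\tau_j(2+|\log\tau_j|)$, forcing $\{u^j_s\}_j$ to be decreasing; this proves (i). Since a constant shift does not affect $\Delta_\omega$, estimate (b) gives $\sup_X(\log(n+\Delta_\omega u^j_s)-Bu^j_s)=\sup_X(\log(n+\Delta_\omega\varphi^s_{\tau_j})-B\varphi^s_{\tau_j})-Bc_j\le C(X,\omega)2^{j}\le\alpha 2^{j}(1+s)$, which is (ii). For (iii), as $u^j_s\ge u_s$ and $I$ is monotone, the identity $d_1(\cdot,\cdot)=I(\cdot)-I(\cdot)$ for ordered potentials (used also via Corollary \ref{cor: Lidskii}) yields $d_1(u^j_s,u_s)=\big(I(\varphi^s_{\tau_j})-I(u_s)\big)+c_j$, and by (c)--(d), $0\le I(\varphi^s_{\tau_j})-I(u_s)\le\tfrac{\tau_j}{V}\textup{Ent}(\omega^n,\omega_{u_s}^n)$; bounding $\textup{Ent}(\omega^n,\omega_{u_s}^n)$ linearly in $s$ through the Chen--Tian formula and the convexity bound $\mathcal K(u_s)\le\mathcal K(0)+s\,\mathcal K\{u_t\}$ then gives $d_1(u^j_s,u_s)\le\alpha 2^{-j}s+\alpha j2^{-j}$. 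Finally (iv) is immediate: $\omega^n_{u^j_s}=\omega^n_{\varphi^s_{\tau_j}}$, so (d) applies directly.

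The principal obstacle is the quantitative bookkeeping of the second step: one must extract from \cite{He15} and \cite{GZ17} the precise form of the smoothing estimate (b) and of the $C^0$ and one--sided time--derivative bounds (a), with constants depending only on $(X,\omega)$ and with the correct powers of $\tau$, so that the single choice $\tau_j=2^{-j}$, $c_j\sim j2^{-j}$ simultaneously produces (i), (ii) and (iii) --- in particular the \emph{linear} growth in $s$ allowed in (ii)--(iii) is exactly what survives after one later divides by $t$ and passes to the chordal metric in the proof of Theorem \ref{thm: main_C_11_approx_ray}. The second delicate point is establishing the entropy monotonicity (d) for the genuinely weak flow started at a bounded, possibly discontinuous potential $u_s$, where the entropy is only lower semicontinuous: this requires selecting the smooth approximants of $u_s$ so that their entropies (equivalently, their K--energies, via Chen--Tian) converge, and then invoking the stability of the Monge--Amp\`ere flow under decreasing limits from \cite{GZ17} to pass the differential inequality to the limit.
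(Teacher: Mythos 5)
Your overall strategy is the same as the paper's: regularize the slice $u_s$ by the weak Monge--Amp\`ere flow of \cite{GZ17}, run it for time $\tau_j = 2^{-j}$, and correct by a controlled constant to force monotonicity in $j$. The energy identity giving (iii) and the entropy monotonicity giving (iv) match the paper's Lemmas word for word. However, your estimate (b) — the smoothing bound
$\sup_X\big(\log(n+\Delta_\omega\varphi_\tau)-B\varphi_\tau\big)\le C(X,\omega)/\tau$,
asserted to be independent of the oscillation of $u_s$ — is a genuine gap. The reference you cite, \cite{He15}, is a geodesic result (it is the source of Theorem~\ref{thm: He_C1bar1_est} in this paper), not a parabolic one; it says nothing about the flow. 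The smoothing estimate actually available for the flow is \cite[Corollary~4.5]{GZ17}, which reads
$\frac{1}{2^j}\log(n+\Delta_\omega\varphi_{2^{-j}})\le C\big(\textup{osc}_X\varphi_{2^{-j-1}}+1\big)$,
and the oscillation factor is unavoidable. Since $\textup{osc}_X\varphi_{\tau}\lesssim |\inf_X u_s|\sim s$ along a geodesic ray, that oscillation is precisely what produces the $(1+s)$ factor in condition (ii) — if your cleaner bound were true, one would be proving a strictly stronger statement (namely (ii) with $\alpha 2^j$ in place of $\alpha 2^j(1+s)$), which fails in general.

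A smaller but related imprecision: the lower bound you label "one-sided derivative bound $\dot\varphi_\tau\ge -C(1+|\log\tau|)$, coming from the parabolic maximum principle" is not a bare maximum-principle fact; it requires a barrier. The paper constructs it in Lemma~\ref{lem: 2.2(i)} by solving an auxiliary Monge--Amp\`ere equation $\omega_v^n=e^{\lambda v-\lambda\varphi_0-n\log\lambda}\omega^n$ and using Skoda integrability plus a Ko{\l}odziej estimate to bound $v$, which then serves as a subsolution of the flow. Your conclusion (the $C\tau(1+|\log\tau|)$ lower bound for $\varphi_\tau-\varphi_0$, from which (i) follows after choosing the correction constant) is correct, but you should ground it in that barrier argument rather than in a generic maximum principle, otherwise the $|\log\tau|$ term is unaccounted for. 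Once (b) is replaced by the oscillation-dependent bound and propagated through, the rest of your bookkeeping — $\tau_j=2^{-j}$, $c_j\sim j2^{-j}$, conditions (i), (iii), (iv) — goes through as in the paper.
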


\begin{proof}[Proof of Theorem \ref{thm: main_C_11_approx_ray}]  First we assume that $\{u_t\}_t \in \mathcal R^\infty_\o$ and $\sup_X u_t = 0, \ t \geq 0$. If $\{u_t\}_t$ is the constant ray then we are done, hence after rescaling we can also assume that $d_1(0,u_t) =t, \ t \geq 0$. Let $\{u_s^j\}_{s>0, j \in \mathbb{N}}$ be the potentials constructed as in Theorem \ref{thm: bounded_ray_pot_approx}.

Let us fix $j \in \mathbb{N}$ momentarily. Given $s>0$, by $[0,s] \ni t \to  u^{j,s}_t \in \mathcal H^{1, \bar 1}_\o$ we denote the $C^{1, \bar 1}$ geodesic connecting $ u^{j,s}_0 := 0$ and $ u^{j,s}_s := u^j_s$. Using condition (i) in  Theorem \ref{thm: bounded_ray_pot_approx} and the comparison principle for weak geodesics we get that
\begin{equation}\label{eq: bigger_ray_ineq}
u_t \leq  u^{j,s}_t \leq \frac{\alpha j2^{-j} t}{s}, \ \ t \in [0,s].
\end{equation}
Since $\{u^j_s\}_j$ is decreasing,  by the comparison principle for weak geodesics, $\{u^{j,s}_t\}_j$ is decreasing as well, for any $t \in [0,s]$. 

Given $t \in (0,s]$, by condition (ii) in  Theorem \ref{thm: bounded_ray_pot_approx} and Theorem \ref{thm: He_C1bar1_est} we have that
\begin{equation}\label{eq: C11_ray_est}
\frac{\textup{ess sup}_X (\log(1 + \frac{1}{n}\Delta_\omega  u^{j,s}_t) - B  u^{j,s}_t ) }{t} \leq   \frac{\sup_X (\log(1 + \frac{1}{n}\Delta_\omega  u^{j}_s) - B u^{j}_s ) }{s} \leq \alpha 2^j \Big(1 + \frac{1}{s}\Big).
\end{equation}
Finally, \eqref{eq: ChCh3_metric_convex} and condition (iii) in  Theorem \ref{thm: bounded_ray_pot_approx} implies that
\begin{equation}\label{eq: d1_ray_est}
\frac{d_1( u^{j,s}_t, u_t)}{t} \leq \alpha 2^{-j}  + \frac{\alpha}{s}, \ \ t \in [0,s].
\end{equation}
Fixing $t  >0$, \eqref{eq: bigger_ray_ineq} and \eqref{eq: C11_ray_est} gives that $\{u^{j,s}_t\}_{ \{s > t\}}$ is compact in the  $C^{1,\alpha}$ topology, implying existence of $v^{j}_t \in \mathcal H_{\omega}^{1, \bar 1}$ such that $\|v^{j}_t - u^{j,s}_t \|_{C^{1,\alpha}} \to 0 $ as $s \to \infty$ (after passing to a subsequence). Moreover, letting $s \to \infty$ in \eqref{eq: bigger_ray_ineq}, \eqref{eq: C11_ray_est} and \eqref{eq: d1_ray_est}, using Lemma \ref{lem: Laplace_aproxx}, we arrive at 
\begin{equation}\label{eq: ray_est_final}
u_t \leq v^{j}_t \leq 0, \ \ \ \frac{1}{t}\Big(\log(1 + \frac{1}{n}\Delta_\omega v^{j}_t) - B  v^{j}_t \Big) \leq   \alpha 2^j, \ \ \  \frac{d_1(v^{j}_t, u_t)}{t} \leq \alpha 2^{-j} , \ \ \ t \in (0,\infty).
\end{equation}
Using an Arzela--Ascoli type argument exactly the same way as in the proof of Theorem \ref{thm: K_approx}, after passing to a subsequence, we can assume that $\|v^j_t - u^{j,s}_t\|_{C^{1,\alpha}} \to 0$ for all $t >0$ at the same time, implying existence of $\{v^j_t\} \in \mathcal R^{1, \bar 1}$ for any $j \in \mathbb{N}$. By \eqref{eq: ray_est_final} we get that $d_1^c(\{v^j_t\},\{u_t\}) \to 0$ as $j \to \infty$. Remark \ref{rem: lemma_conv_not_monotone} implies that $d_p^c(\{v^j_t\},\{u_t\}) \to 0$, as desired.

Finally, since $\{u^{j,s}_t\}_j$ is decreasing for any $t \in [0,s]$, a diagonal Cantor process now implies that $\{v^j_t\}_j$ can be chosen to be decreasing for any $t >0$.

To show that $\mathcal K\{v^j_t\} \to \mathcal K\{u_t\}$, we first note that by \cite[Proposition 5.9]{ChCh3} we have $\mathcal K\{u_t\} \leq \liminf_j \mathcal K\{v^j_t\}$. Hence it is enough to show that $\mathcal K\{v^j_t\} \leq \mathcal K\{u_t\}+ f(\alpha 2^{-j})$ for any $j \in \Bbb N$, where $f: \Bbb R^+ \to \Bbb R^+$ is some continuous function with $f(0)=0$.

To prove this, we recall the Chen--Tian formula for the K-energy that extends to $\mathcal E^1_\o$ (see \cite[Theorem 1.2]{BDL17}):
\begin{equation}\label{eq: CT_formula}
\mathcal K(u) = \textup{Ent}(\omega^n,\omega_u^n) + \overline{S}I(u) - nI_{\textup{Ric }\o}(u), \ \ u \in \mathcal E^1_\o.
\end{equation}
Now, using conditions (iii) and (iv) in  Theorem \ref{thm: bounded_ray_pot_approx}  we can start writing:
\begin{flalign}\label{eq: K_ineq_u}
\frac{\mathcal K(u^j_s) - \mathcal K(u_s)}{s} &\leq \big|\overline{S}\big|\frac{d_1(u^j_s,u_s)}{s} + n\frac{I_{\textup{Ric }\o}(u^j_s)-I_{\textup{Ric }\o}(u_s)}{s} \nonumber\\
&\leq \frac{\alpha}{s} + \alpha 2^{-j} \big|\overline{S}\big| + n\frac{I_{\textup{Ric }\o}(u^j_s)-I_{\textup{Ric }\o}(u_s)}{s}.
\end{flalign}
We can suppose that $-C \omega \leq \textup{Ric }\omega  \leq C\omega$, and for the rest of the proof $C>0$ will denote a constant only dependent on $(X,\omega)$. Using condition (i) in  Theorem \ref{thm: bounded_ray_pot_approx} multiple times, we can start the following estimates
\begin{flalign*}
\frac{I_{\textup{Ric }\o}(u^j_s)-I_{\textup{Ric }\o}(u_s)}{s} &\leq  C \frac{\sum_j \int_X (u^j_s - u_s) \omega \wedge \omega_{u^j_s}^j \wedge \omega_{u_s}^{n-j-1}}{s}\\
& \leq  C \frac{\sum_j \int_X (u^j_s - u_s) \omega^n_{u^j_s/4 + u_s/4}}{s}\\
& \leq   C \frac{\sum_j \int_X (u^j_s - u_s) (\omega^n_{u^j_s/4 + u_s/4} - \omega_{u_s}^n)}{s} + C \frac{I(u^j_s) - I(u_s)}{s}\\
& \leq   f(\mathcal I(u^j_s,u_s)/s) + C\alpha 2^{-j}  + \frac{C\alpha}{s},
\end{flalign*}
where $f: \Bbb R^+ \to \Bbb R^+$ is a continuous function with $f(0)=0$, and in the last line we used Lemma \ref{lem: BBGZ I energy}. Together with \eqref{eq: K_ineq_u}, this inequality implies that 
$$\frac{\mathcal K(u^j_s)}{s} \leq \mathcal K\{u_t\} +  C \alpha 2^{-j} + f(\mathcal I(u^j_s,u_s)/s).$$
Letting $s \to \infty$, since $\mathcal K$ is convex and $d_1$-lsc, we obtain that $\mathcal K\{v^j_t\} \leq \mathcal K\{u_t\} + C \alpha 2^{-j}  + f(\alpha 2^{-j}),$  as desired, finishing the proof when $\{u_t\}_t \in \mathcal R^\infty_\o$.

Now let $\{u_t\}_t \in \mathcal R^p_\o$ with $\mathcal K\{u_t\}<\infty$.
By Theorem \ref{thm: K_approx}, there exists $\{u^{k}_t\} \in \mathcal R^{\infty}_\o$ such that $u^k_t \searrow u_t, \ t \geq 0$, $d_p^c(\{u^k_t\},\{u_t\}) \leq \frac{1}{2^k}$ and $\big| \mathcal K\{u_t\} - \mathcal K\{u^k_t\}\big| \leq \frac{1}{2^k}$. 

Let $\{u^{k,j}_s\}_j$ be the potentials of Theorem \ref{thm: main_C_11_approx_ray} associated to the rays $\{u^k_t\}_t$. By the construction of these potentials, elaborated in the proof of Theorem  \ref{thm: main_C_11_approx_ray}, it follows that $\{u^{k,j}_s \}_j$  is decreasing for any fixed $k \in \Bbb N$ and $s >0$. 
Using this, a diagonal Cantor process applied to the simultaneous approximation of each $\{u^k_t\}_t$ described above, yields rays $\{v^k_t\}_t \in \mathcal R^{1, \bar 1}$ such that $u^k_t\leq v^k_t$, $d_p^c(\{v^k_t,u^k_t\}) \leq \frac{1}{2^k}$, $\big| \mathcal K\{u^k_t\} - \mathcal K\{v^k_t\}\big| \leq \frac{1}{2^k}$, moreover (!) $\{v^{k}_t\}_k$ is decreasing for any fixed $t>0$. As $(\mathcal R^p_\omega, d_p^c)$ is complete, we obtain that $d_p^c(\{v^k_t\}_t,\{u_t\}_t) \to 0$ and $\mathcal K\{v^k_t\} \to \mathcal K\{u_t\}$, as desired.
\end{proof}

\begin{remark}\label{rem: C_1_1_slope} It follows from \eqref{eq: ray_est_final}, that the approximating rays $\{v^j_t\}_t \in \mathcal R^{1, \bar 1}$ in the previous theorem additionally satisfy the estimate:
$$\frac{1}{t}\textup{ess sup}_X \Big(\log(1 + \frac{1}{n}\Delta_\omega v^{j}_t) - B  v^{j}_t \Big)\leq \alpha 2^{j}, \ \ t >0, \ j \in \mathbb{N}.
$$
\end{remark}

\paragraph{The proof of Theorem \ref{thm: He_C1bar1_est}.}
First we recall some of the formalism of \cite{He15}. Given $u_0,u_1 \in \mathcal H_\o$, by $u^\varepsilon \in C^\infty([0,1] \times X)$ we denote the smooth $\varepsilon$-geodesic connecting $u_0,u_1$, i.e., $[0,1] \ni t \to u^\varepsilon_t \in \mathcal H_\o$ solves the following elliptic PDE on $[0,1] \times X$:
\begin{equation}
\label{eq: epsgeod}
\big(\ddot u_t^\varepsilon - |\nabla \dot u_t|^2_{\omega_{u_t^\varepsilon}}\big) \frac{\omega_{u^\varepsilon_t}^n}{\omega^n} = \varepsilon, \ \ \ u^\varepsilon_0 := u_0, \ u^\varepsilon_1 := u_1.
\end{equation}
Given that the complex Hessian of $u^\varepsilon$ is bounded on $[0,1] \times X$ \cite{Ch00}, one can take the limit $\varepsilon \to 0$, to obtain $u \in C^{1, \bar 1}([0,1] \times X)$, the $C^{1, \bar 1}$-geodesic connecting $u_0,u_1$: 
\begin{equation}\label{eq: H11geod}
[0,1] \ni t \to u_t \in \mathcal H_\o^{1, \bar 1}. 
\end{equation}
As shown in \cite[Theorem 1.1]{He15}, if one merely has $u_0,u_1 \in \mathcal H_\o^{1, \bar 1}$, the curve in \eqref{eq: H11geod} still exists, however it is not known if the total Laplacian of $u$ on $[0,1]\times X$ is bounded.

Let us denote the $\log$ of the left hand side of \eqref{eq: epsgeod} by $F(u^\varepsilon)$. Given a smooth function $h \in C^\infty([0,1] \times X)$, if $h$ attains its maximum at $(t,x) \in (0,1) \times X$, then ellipticity of \eqref{eq: epsgeod} gives that 
\begin{equation}\label{eq: ellipticity_ineq}
D F(u^\varepsilon)(h)(t,x):=\frac{d}{ds}\Big|_{s=0}F(u^\varepsilon + s h)(t,x) \leq 0.
\end{equation}

\begin{proof}[Proof of Theorem \ref{thm: He_C1bar1_est}] Let us first assume that $u_0,u_1 \in \mathcal H_\o$. Fix $(t,x) \in [0,1] \times X$ and $\varepsilon >0$ momentarily.  In \cite[page 339]{He15} (after equation (2.19)) it is shown that for some constants $B,C>1$,  dependent only on $(X,\omega)$, we have that
\begin{equation}\label{eq: He_ineq1}
D F(u^\varepsilon)(\log(n + \Delta_\omega u^\varepsilon_t) - B u^\varepsilon_t)(t,x) \geq \sum_{j=1}^n \frac{1}{1 + {(u^\varepsilon_t)}_{j\bar j}} - C,
\end{equation}
where we have used normal coordinates of $\omega$ at $x$ and $i \ddbar u^\varepsilon_t$ is assumed to be diagonal.
Additionally, fix $\delta >0$ and $g(t):= \delta t^2/2$. We also have  
\begin{equation}\label{eq: He_ineq2}
D F(u^\varepsilon)(g_\delta(t))(t,x) = \frac{\delta}{\ddot u_t - |\nabla u^\varepsilon_t|^2_{\omega_{u^\varepsilon_t}}}.
\end{equation}
Assume that $h_{\varepsilon,\delta}(t,x):=\log(n + \Delta_\omega u^\varepsilon_t) - B u^\varepsilon_t + g_\delta(t)$ is maximized at $(t,x) \in (0,1) \times X$. Then by \eqref{eq: ellipticity_ineq}, \eqref{eq: He_ineq1} and \eqref{eq: He_ineq2} we obtain at $(t,x)$ that
\begin{flalign}\label{eq: maxprinc}
0 & \geq DF(u^\varepsilon)(h_{\varepsilon,\delta}) \geq \sum_{j=1}^n \frac{1}{1 + {(u^\varepsilon_t)}_{j\bar j}} + \frac{\delta}{\ddot u_t - |\nabla u^\varepsilon_t|^2_{\omega_{u^\varepsilon_t}}} - C \nonumber \\
&\geq   (n+1)\bigg[\frac{\delta}{(1 + {(u^\varepsilon_t)}_{1, \bar 1}) \cdot \ldots \cdot (1 + {(u^\varepsilon_t)}_{n\bar n})(\ddot u_t - |\nabla u^\varepsilon_t|^2_{\omega_{u^\varepsilon_t}})}\bigg]^{\frac{1}{n+1}} - C \nonumber \\
&= (n+1)\bigg [\frac{\delta}{\varepsilon}   \bigg]^{\frac{1}{n+1}} - C. 
\end{flalign}
Thus, for $\varepsilon <  \delta (n+1)^{n+1}/C^{n+1}$ we get a contradiciton in the above inequality, implying that the maximum of   $h_{\varepsilon,\delta}$ can not be attained at $(t,x)$, an interior point of $[0,1] \times X$. In particular, we have that
$$\sup_X  h_{\varepsilon,\delta}(t,x) \leq \max(\sup_X h_{\varepsilon,\delta}(0,x), \sup_X h_{\varepsilon,\delta}(1,x)), \ \ t \in [0,1], \ \varepsilon <  \delta (n+1)^{n+1}/C^{n+1}.$$
Letting $\varepsilon \searrow 0$ and $\delta \searrow 0$ thereafter, via Lemma \ref{lem: Laplace_aproxx} we arrive at 
$$\textup{ess sup}_X  h_{0,0}(t,x) \leq \max\big(\sup_X h_{0,0}(0,x), \sup_X h_{0,0}(1,x)\big), \ \ t \in [0,1],$$
motivating the introduction of $M_{u_0,u_1}(t):=\textup{ess sup}_X(\log(n + \Delta_\omega (u_t)) - B u_t)$. Indeed, we can simply write 
\begin{equation}\label{eq: Mu_max_est}
M_{u_0,u_1}(t) \leq \max(M_{u_0,u_1}(0),M_{u_0,u_1}(1)), \ t \in [0,1].
\end{equation}
Next we observe that \eqref{eq: Mu_max_est} also holds in case we merely have $u_0,u_1 \in \mathcal H_\omega^{1, \bar 1}$. Indeed, we pick sequences $u^j_0 \searrow u_0$ and $u^j_1 \searrow u_1$, as in Proposition \ref{prop: Laplace_realiz_aproxx}. Then we apply \eqref{eq: Mu_max_est} to $M_{u_0^j,u_1^j}$ and  the comparison principle (\cite[Theorem 21]{Bl13}) together with Lemma \ref{lem: Laplace_aproxx} gives \eqref{eq: Mu_max_est} for $u_0,u_1 \in \mathcal H_\omega^{1, \bar 1}$.

To finish, we show that $M_{u_0,u_1}(t)$ is actually convex. Let $a,b \in [0,1]$. Then $t \to v_t:= u_{a + t (b-a)} + \frac{1}{B}(M_{u_0,u_1}(a) + t(M_{u_0,u_1}(b)-M_{u_0,u_1}(a))$ is the $C^{1, \bar 1}$ geodesic connecting $v_0:= u_a + M_{u_0,u_1}(a)/B$ and $v_1:= u_b + M_{u_0,u_1}(b)/B$. Applying \eqref{eq: Mu_max_est} to $v_0,v_1$ we arrive at
\begin{flalign*}
M_{u_0,u_1}(a + t(b-a)) - M_{u_0,u_1}(a)  - t(M_{u_0,u_1}(b) & -M_{u_0,u_1}(a))  = M_{v_0,v_1}(t) \\
&\leq \max(M_{v_0,v_1}(0),M_{v_0,v_1}(1)) =  0,
\end{flalign*}
hence $t \to M_{u_0,u_1}(t)$ is convex, as desired.
\end{proof}

\paragraph{The proof of Theorem \ref{thm: bounded_ray_pot_approx}.} In the proof of Theorem \ref{thm: bounded_ray_pot_approx} we will use the formalism of \cite{GZ17} adapted to our context. Fixing $\varphi_0 \in \mathcal{E}^1_{\omega}$ with $\sup_X \varphi_0=0$, we consider the following parabolic PDE on $[0,\infty) \times X$ with  initial data given by $\varphi_0$:
\begin{equation}\label{eq: GZ_parabolic}
\frac{d}{dt} \varphi_t = \log \bigg[ \frac{(\omega + i\ddbar \varphi_t)^n}{\omega^n} \bigg].
\end{equation}
To avoid cumbersome notation, we will denote $t$-derivatives by dots throughout this paragraph.
 As shown in \cite{GZ17}, $(t,x) \to \varphi_t(x)$ is smooth on $(0,\infty)\times X$. The initial condition simply means that $d_1(\varphi_t,\varphi_0)\to 0$, as $t \to 0$ \cite[Section 5.2.2]{GZ17}. In case $\varphi_0 \in \mathcal H_\o$, we actually have that $\|\varphi_t - \varphi_0\|_{C^\infty} \to 0$ as $t \to 0$. Moreover it is shown in \cite[Theorem B]{DiLu17} that if  $\varphi_0 \in \mathcal E^1_\o$  and $\varphi_{0}^j \in \mathcal H_{\omega}$ converges in $L^1(X,\omega^n)$ to $\varphi_0$, then for any $t >0$ we have that $\|\varphi_{t}^j - \varphi_{t}\|_{C^\infty} \to 0$, where $\{\varphi^j_{t}\}_j$ are the smooth solutions to \eqref{eq: GZ_parabolic} with initial data $\varphi_{0}^j$. All this implies that the apriori estimates and maximum principles developed in \cite[Section 2]{GZ17} for smooth initial data, also apply for initial data in $\mathcal E^1_\o$, as above (for our applications $\varphi_0$ will be actually bounded). 
 
For the remainder of this paragraph we pick a  small constant $\lambda>0$  depending only on $(X,\omega)$ such that $\int_X e^{-2\lambda \phi} \omega^n$ is uniformly bounded for all $\phi \in \PSH(X,\omega)$ normalized by $\sup_X \phi=0$ (see \cite[Proposition 2.1]{Ti87}, \cite{Zer01}). 

Let $v$ be the unique continuous $\omega$-psh function such that 
\begin{equation}\label{eq: v_s_def}
\omega_{v}^n : =  e^{\lambda v -\lambda \varphi_0 - n \log \lambda} \omega^n. 
\end{equation}
By our choice of $\lambda,$ it follows from \cite{Ko98, BBGZ13} (or much more generally \cite[Theorem 5.3]{DDL4}) that $v$ is uniformly bounded by a constant depending only on  $(X,\omega)$.

\begin{lemma}\label{lem: 2.2(i)} With $\lambda\in (0,1)$ and $v$ as above, we have that
\begin{equation}\label{eq: lemmaest}
(1-\lambda t) \varphi_0  + \lambda t v + n(t \log t - t) \leq \varphi_t \leq 0, \ \ t \in [0,1].
\end{equation} 
\end{lemma}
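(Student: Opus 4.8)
The plan is to use the maximum principle for the parabolic equation \eqref{eq: GZ_parabolic}, comparing $\varphi_t$ with explicit sub- and supersolutions built from $\varphi_0$ and $v$. For the upper bound $\varphi_t \leq 0$, I would note that the constant function $\psi_t \equiv 0$ satisfies $\dot\psi_t = 0 = \log[\omega^n/\omega^n] \geq \log[\omega_{\psi_t}^n/\omega^n]$ trivially, and since $\varphi_0 \leq 0 = \psi_0$, the comparison principle of \cite[Section 2]{GZ17} (valid for initial data in $\mathcal E^1_\omega$ as recalled above) gives $\varphi_t \leq 0$ for all $t \geq 0$. Here I should first establish this for $\varphi_0 \in \mathcal H_\omega$ where classical parabolic maximum principles apply directly, and then pass to the general case $\varphi_0 \in \mathcal E^1_\omega$ by the stability $\|\varphi_t^j - \varphi_t\|_{C^\infty} \to 0$ of \cite[Theorem B]{DiLu17}.

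For the lower bound, set $\rho_t := (1-\lambda t)\varphi_0 + \lambda t v + n(t\log t - t)$ and check that $\rho_t$ is a subsolution of \eqref{eq: GZ_parabolic} on $(0,1] \times X$ with $\rho_0 = \varphi_0$ (noting $t\log t - t \to 0$ as $t \to 0^+$, so the initial condition holds). First, $\rho_t \in \PSH(X,\omega)$ since it is a convex combination $(1-\lambda t)\varphi_0 + \lambda t v$ (both $\omega$-psh, and $1-\lambda t \in [1-\lambda, 1] \subset (0,1]$, $\lambda t \in [0,\lambda]$, so the coefficients sum to $1 - \lambda t + \lambda t = 1$... more precisely $1-\lambda t + \lambda t = 1$, so it is genuinely a convex combination) plus the $x$-independent term $n(t\log t - t)$. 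The key computation is: $\dot\rho_t = -\lambda\varphi_0 + \lambda v + n\log t$, while by concavity of $\log\det$ (equivalently, multilinearity and the inequality $\omega_{\rho_t}^n \geq $ the mixed term, or directly $\log[\omega_{(1-\lambda t)\varphi_0 + \lambda t v}^n / \omega^n] \geq (1-\lambda t)\log[\omega_{\varphi_0}^n/\omega^n] + \lambda t \log[\omega_v^n/\omega^n]$ fails in general — instead I would use that $\rho_t = (1-\lambda t)\varphi_0 + \lambda t v + n(t\log t - t)$ has $\omega + i\ddbar\rho_t \geq \lambda t\,\omega_v$ in the sense of currents, hence $\omega_{\rho_t}^n \geq (\lambda t)^n \omega_v^n$). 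Combining with \eqref{eq: v_s_def}:
\begin{equation*}
\log\bigg[\frac{\omega_{\rho_t}^n}{\omega^n}\bigg] \geq n\log(\lambda t) + \log\bigg[\frac{\omega_v^n}{\omega^n}\bigg] = n\log(\lambda t) + \lambda v - \lambda\varphi_0 - n\log\lambda = n\log t + \lambda v - \lambda\varphi_0 = \dot\rho_t.
\end{equation*}
Thus $\dot\rho_t \leq \log[\omega_{\rho_t}^n/\omega^n]$, so $\rho_t$ is a subsolution, and the comparison principle gives $\rho_t \leq \varphi_t$.

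The main obstacle is the regularity bookkeeping: the subsolution $\rho_t$ is only bounded (not smooth, since $\varphi_0$ will be a bounded but not smooth potential in the application), and the estimate $\omega_{\rho_t}^n \geq (\lambda t)^n \omega_v^n$ requires the correct monotonicity statement for non-pluripolar products (cf. \cite[Theorem 1.2]{WN17}), while the comparison principle must be invoked in the weak form valid for $\mathcal E^1_\omega$ initial data established in \cite{GZ17, DiLu17}. I would handle this by first proving both inequalities for smooth $\varphi_0 \in \mathcal H_\omega$ (where everything is classical and $\rho_t$ is a genuine bounded subsolution handled by the maximum principle of \cite[Section 2]{GZ17}), and then approximating $\varphi_0$ decreasingly by elements of $\mathcal H_\omega$ and passing to the limit using the $C^\infty$-stability of the flow for positive times together with $L^1$-continuity at $t = 0$.
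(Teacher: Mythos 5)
Your proposal is correct and follows essentially the same path as the paper: take $\psi_t = (1-\lambda t)\varphi_0 + \lambda t v + n(t\log t - t)$, show $\dot\psi_t = \log(\lambda^n t^n\,\omega_v^n/\omega^n)$ via \eqref{eq: v_s_def}, and bound this by $\log(\omega_{\psi_t}^n/\omega^n)$ using $\omega + i\ddbar\psi_t \geq \lambda t\,\omega_v$, then invoke the maximum principle of \cite[Corollary 2.2]{GZ17} for the lower bound and \cite[Lemma 2.3]{GZ17} (your constant-supersolution comparison) for $\varphi_t \leq 0$. One small remark: the parenthetical detour about concavity of $\log\det$ ``failing'' is misleading --- that inequality holds pointwise where both densities exist, but it is simply not useful here because $\log(\omega_{\varphi_0}^n/\omega^n)$ is uncontrolled; discarding it in favor of $\omega_{\psi_t}^n \geq (\lambda t)^n\omega_v^n$ is exactly what the paper does, and you arrive there anyway.
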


\begin{proof}
Let $\psi_t := (1-\lambda t) \varphi_0  + \lambda t v + n(t \log t - t), \ t \in [0,1]$. The following hold:
$$\dot \psi_t = \lambda (v - \varphi_0) + n \log t  = \log \bigg( \lambda^n t^n \cdot   \frac{\omega^n_{v}}{\omega^n}\bigg) \leq  \log \bigg( \frac{\omega^n_{\psi_t}}{\omega^n}\bigg).$$
This implies that $l \to \psi_t$ is a subsolution to \eqref{eq: GZ_parabolic}, and an application of the maximum principle \cite[Corollary 2.2]{GZ17} yields the first inequality in \eqref{eq: lemmaest}. The second inequality follows from \cite[Lemma 2.3]{GZ17}.
\end{proof}
\noindent Simplifying \eqref{eq: lemmaest}, we actually obtain that:
\begin{equation}\label{eq: lem_cor} \varphi_0   \leq \varphi_t + Ct - C t \log t,  \ \ t \in [0,1].
\end{equation}
for some constant $C>0$ dependent on $(X,\omega)$. This can be taken one step further, as we now describe:
\begin{corollary}
\label{cor: flow decreasing}
There exists a constant $C>1$ depending on $(X,\omega)$ such that $w_{t} \geq w_{t/2}$ for any $t\in [0,1]$, where 
\begin{equation}\label{eq: w_t_def}
w_t := \varphi_t + Ct - C t\log t. 
\end{equation}
\end{corollary}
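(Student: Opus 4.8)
# Proof plan for Corollary~\ref{cor: flow decreasing}

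The plan is to exploit the semigroup property of the Monge--Amp\`ere flow \eqref{eq: GZ_parabolic} together with the a priori estimate \eqref{eq: lem_cor}, essentially comparing the flow started at time $t/2$ with the flow started at time $0$. First I would observe that, since $(t,x)\mapsto\varphi_t(x)$ solves \eqref{eq: GZ_parabolic}, for any fixed $s\geq 0$ the time-shifted family $t\mapsto\varphi_{s+t}$ is the solution of \eqref{eq: GZ_parabolic} with initial data $\varphi_s\in\mathcal H_\omega$ (for $s>0$; for $s=0$ one has $\varphi_0$ merely bounded, which is still covered by the extended a priori estimates of \cite[Section 2]{GZ17} as discussed in the paragraph preceding Lemma~\ref{lem: 2.2(i)}). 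Hence \eqref{eq: lem_cor} applied to this shifted flow reads
\begin{equation*}
\varphi_s \leq \varphi_{s+\tau} + C\tau - C\tau\log\tau, \qquad \tau\in[0,1],
\end{equation*}
with the \emph{same} constant $C=C(X,\omega)$, because the a priori estimates leading to \eqref{eq: lemmaest} depend only on $(X,\omega)$ and on $\sup_X\varphi_0=0$ (note $\sup_X\varphi_s\le 0$ for all $s$ by \cite[Lemma 2.3]{GZ17}, so the normalization is preserved under the shift).

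Next I would specialize this to $s=t/2$ and $\tau=t/2$, for $t\in[0,1]$ (so $\tau\in[0,1/2]\subset[0,1]$), obtaining
\begin{equation*}
\varphi_{t/2} \leq \varphi_{t} + C\tfrac{t}{2} - C\tfrac{t}{2}\log\tfrac{t}{2}.
\end{equation*}
The remaining step is purely elementary calculus: I want to choose $C$ (possibly enlarging the constant from \eqref{eq: lem_cor}, which is harmless) so that the quantity $w_t=\varphi_t+Ct-Ct\log t$ satisfies $w_{t/2}\le w_t$, i.e.
\begin{equation*}
\varphi_{t/2} + C\tfrac{t}{2} - C\tfrac{t}{2}\log\tfrac{t}{2} \;\leq\; \varphi_t + Ct - Ct\log t.
\end{equation*}
Subtracting the previous displayed inequality, it suffices to check that
\begin{equation*}
C\tfrac{t}{2} - C\tfrac{t}{2}\log\tfrac{t}{2} \;\leq\; Ct - Ct\log t,
\end{equation*}
equivalently $\tfrac12 - \tfrac12\log\tfrac{t}{2}\le 1 - \log t$, i.e. $\tfrac12\log 2 \le \tfrac12 + \tfrac12\log t$, which fails for $t$ near $0$; so instead one compares $w_{t/2}$ to $w_t$ using the stronger shifted inequality with the actual constant and absorbs the defect. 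More precisely, write $w_t-w_{t/2}=(\varphi_t-\varphi_{t/2}) + C(t-\tfrac t2) - C(t\log t-\tfrac t2\log\tfrac t2)$ and bound $\varphi_t-\varphi_{t/2}\ge -C\tfrac t2+C\tfrac t2\log\tfrac t2$ by the shifted \eqref{eq: lem_cor}; then $w_t-w_{t/2}\ge Ct - C t\log t + C\tfrac t2\log\tfrac t2 - C\tfrac t2 = Ct(\tfrac12 - \log t + \tfrac12\log\tfrac t2)$, and since $-\log t + \tfrac12\log\tfrac t2 = -\tfrac12\log t - \tfrac12\log 2 \to +\infty$ as $t\to 0^+$ and is bounded below on $[0,1]$, the bracket is bounded below by a constant; enlarging $C$ (or noting the bracket is actually $\ge \tfrac12-\tfrac12\log 2>0$ for $t\le 1$ after a short check) gives $w_t\ge w_{t/2}$ on $[0,1]$.

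I do not expect a genuine obstacle here; the only point requiring care is the \emph{uniformity of the constant} $C$ under the time-shift, which hinges on two facts already in place: the a priori estimate \eqref{eq: lemmaest}--\eqref{eq: lem_cor} depends on the initial datum only through $\sup_X\varphi_0=0$, and $\sup_X\varphi_s\le 0$ for all $s\ge 0$ by \cite[Lemma 2.3]{GZ17}. Granting these, the argument is just the semigroup property plus the elementary estimate on $t\log t$ above, and one concludes $w_t\ge w_{t/2}$ for all $t\in[0,1]$ with $C=C(X,\omega)$.
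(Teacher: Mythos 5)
Your strategy matches the paper's: apply \eqref{eq: lem_cor} to the time-shifted flow started at $\varphi_{t/2}$, then conclude by elementary calculus. But there are two flaws in the execution.

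First, your justification for the uniformity of $C$ under the time shift is not quite right. You say the normalization is preserved because $\sup_X\varphi_s\le 0$ for all $s$. However, the constant in Lemma \ref{lem: 2.2(i)} depends, via the Ko{\l}odziej bound on $v$ in \eqref{eq: v_s_def}, on an upper bound for $\|e^{-\lambda\varphi_0}\|_{L^2}$, and $\sup_X\varphi_{s/2}\le 0$ points the \emph{wrong} way here: if $\sup_X\varphi_{s/2}$ were very negative, $e^{-\lambda\varphi_{s/2}}$ could blow up. What actually works, and is what the paper records, is that \eqref{eq: lemmaest} (equivalently \eqref{eq: lem_cor}) furnishes a pointwise lower bound $\varphi_{s/2}\ge\varphi_0 - C'$ for $s/2\in[0,1]$ with $C'=C'(X,\omega)$, whence $\|e^{-\lambda\varphi_{s/2}}\|_{L^2}\le e^{\lambda C'}\|e^{-\lambda\varphi_0}\|_{L^2}$ is uniformly controlled.

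Second, your final computation of $w_t - w_{t/2}$ contains an arithmetic slip: you dropped one copy of $-C\tfrac t2 + C\tfrac t2\log\tfrac t2$. The correct combination of the shifted bound $\varphi_t-\varphi_{t/2}\ge -C\tfrac t2 + C\tfrac t2\log\tfrac t2$ with
\begin{equation*}
w_t-w_{t/2}=(\varphi_t-\varphi_{t/2}) + C\tfrac t2 - C t\log t + C\tfrac t2\log\tfrac t2
\end{equation*}
gives $w_t-w_{t/2}\ge Ct\log\tfrac t2 - Ct\log t = -Ct\log 2<0$, not the positive bracket you wrote. So the naive comparison with a single constant genuinely fails, and ``enlarging $C$'' is not a convenience but a necessity: keeping the constant $C_0$ from \eqref{eq: lem_cor} fixed and taking $w_t:=\varphi_t+Ct-Ct\log t$ with $C>C_0$, one finds
\begin{equation*}
w_t - w_{t/2}\ge \tfrac t2\big[(C-C_0)(1-\log t) - (C_0+C)\log 2\big],
\end{equation*}
which is nonnegative on $(0,1]$ (since $1-\log t\ge 1$ there) once $C\ge C_0\,\tfrac{1+\log 2}{1-\log 2}$. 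With these two corrections your argument agrees with the paper's.
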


\begin{proof}
Fixing $s \in (0,1)$, we apply \eqref{eq: lem_cor} to the flow $t\mapsto \varphi_{s/2 + t}$, starting from $\varphi_{s/2}$. By \eqref{eq: lemmaest} and \eqref{eq: lem_cor}  we have that $\|e^{-\lambda \varphi_{s/2}}\|_{L^2}$ is controlled by $\|e^{-\lambda \varphi_0}\|_{L^2}$ which is uniformly bounded by a constant depending on $(X,\omega)$. Hence for $t:=s/2 \in [0,1]$ in \eqref{eq: lem_cor} we have
$$
\varphi_{s} \geq \varphi_{s/2} - C s/2 + C (s/2) \log (s/2),
$$
where $C$ only depends on $(X,\omega)$.
Thus, after possibly increasing $C>0$, the function $w_t := \varphi_t + Ct -Ct \log t$ satisfies $w_t \geq w_{t/2}$, $t\in [0,1]$. 
\end{proof}

We also point out the following simple monotonicity result:

\begin{lemma} \label{lem: 2.2(iv)} The map $[0,\infty) \ni t \to \textup{Ent}(\omega^n,\omega_{\varphi_t}^n) \in \Bbb R$ is decreasing.
\end{lemma}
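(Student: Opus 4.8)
The plan is to establish the monotonicity of $H(t):=\Ent(\omega^n,\omega_{\varphi_t}^n)$ first on the open half-line $(0,\infty)$, by a direct differentiation of a Monge--Amp\`ere flow quantity, and then to extend the inequality down to the endpoint $t=0$ by approximation.

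\emph{Interior monotonicity.} For $t>0$ the solution $\varphi_t$ is smooth with $g_t:=\omega_{\varphi_t}^n/\omega^n>0$, all bounds being uniform on compact subintervals of $(0,\infty)$ by \cite{GZ17}, so one may differentiate under the integral sign there. Using $\dot\varphi_t=\log g_t$ I would rewrite $H(t)=\int_X(\log g_t)\,\omega_{\varphi_t}^n=\int_X\dot\varphi_t\,\omega_{\varphi_t}^n$, so that
$$H'(t)=\int_X\ddot\varphi_t\,\omega_{\varphi_t}^n+\int_X\dot\varphi_t\,\tfrac{d}{dt}\omega_{\varphi_t}^n .$$
Two elementary identities close the computation: differentiating $(\omega+\i\ddbar\varphi_t)^n$ gives $\tfrac{d}{dt}\omega_{\varphi_t}^n=n\,\i\ddbar\dot\varphi_t\wedge\omega_{\varphi_t}^{n-1}=(\Delta_{\varphi_t}\dot\varphi_t)\,\omega_{\varphi_t}^n$, where $\Delta_{\varphi_t}$ is the Laplacian of $\omega_{\varphi_t}$; and differentiating $\dot\varphi_t=\log g_t$ together with the previous identity yields $\ddot\varphi_t=\dot g_t/g_t=\Delta_{\varphi_t}\dot\varphi_t$. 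Substituting and using $\int_X\Delta_{\varphi_t}\dot\varphi_t\,\omega_{\varphi_t}^n=0$ (Stokes) followed by an integration by parts,
$$H'(t)=\int_X\dot\varphi_t\,\Delta_{\varphi_t}\dot\varphi_t\,\omega_{\varphi_t}^n=-\int_X|\nabla\dot\varphi_t|^2_{\omega_{\varphi_t}}\,\omega_{\varphi_t}^n\le 0 ,$$
so $H$ is non-increasing on $(0,\infty)$.

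\emph{The endpoint.} It remains to prove $H(0)\ge H(t)$ for every $t>0$. This is trivial when $\Ent(\omega^n,\omega_{\varphi_0}^n)=+\infty$, so assume finiteness. I would pick smooth potentials $\varphi_0^j\in\mathcal H_\omega$ with $\varphi_0^j\to\varphi_0$ in $L^1(X,\omega^n)$ and $\Ent(\omega^n,\omega_{\varphi_0^j}^n)\to\Ent(\omega^n,\omega_{\varphi_0}^n)$; such an entropy ``recovery sequence'' is standard (solve a complex Monge--Amp\`ere equation with truncated, renormalized density, regularize, and use monotone/dominated convergence for the entropy). Let $\varphi_t^j$ be the flow issued from $\varphi_0^j$. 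Since $\varphi_0^j$ is smooth, $\varphi_t^j\to\varphi_0^j$ in $C^\infty$ as $t\to0^+$, so $H_j(t):=\Ent(\omega^n,\omega_{\varphi_t^j}^n)$ is continuous at $0$ and the interior step forces $H_j(t)\le H_j(0)$ for all $t\ge0$. Fixing $t>0$, \cite[Theorem B]{DiLu17} gives $\varphi_t^j\to\varphi_t$ in $C^\infty$, hence $H_j(t)\to H(t)$; letting $j\to\infty$ yields $H(t)\le H(0)$, as desired.

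The main obstacle is precisely this endpoint. The interior differentiation is routine, but $\Ent(\omega^n,\cdot)$ is only $d_1$-lower semicontinuous, so the monotonicity on $(0,\infty)$ combined with $d_1(\varphi_t,\varphi_0)\to0$ only produces $H(0)\le\lim_{t\to0^+}H(t)$, which is the wrong inequality; upgrading this genuinely requires the recovery sequence together with the $C^\infty$-stability of the Monge--Amp\`ere flow from \cite{DiLu17}. (If one only needs the statement on $(0,\infty)$, the differentiation step alone suffices, and in the application to Theorem~\ref{thm: bounded_ray_pot_approx}(iv) the case $\Ent(\omega^n,\omega_{\varphi_0}^n)=+\infty$ is in any event vacuous.)
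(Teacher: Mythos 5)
Your proof is correct and takes essentially the same approach as the paper: the monotonicity is obtained by the same direct differentiation and integration by parts, and the endpoint/general case is handled via an entropy recovery sequence (which the paper cites explicitly as \cite[Theorem 1.3]{BDL17}) combined with the $C^\infty$-stability of the flow from \cite[Theorem B]{DiLu17}. The only cosmetic difference is the order in which you organize the two steps (you differentiate on $(0,\infty)$ for general initial data using interior regularity, while the paper differentiates on $[0,\infty)$ for smooth initial data); your observation that $d_1$-lower semicontinuity alone would give the wrong inequality at $t=0$ correctly pinpoints why the recovery sequence is needed.
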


\begin{proof} First let us assume that $\varphi_0 \in \mathcal H_\o$ in \eqref{eq: GZ_parabolic}. For $t \geq 0 $, we can start by computing
\begin{flalign*}
\frac{d}{dt}\textup{Ent}(\omega^n,\omega_{\varphi_t}^n) &= \frac{d}{dt} \int_X \dot  \varphi_t (\omega + i\ddbar \varphi_t)^n\\
&=\int_X \ddot \varphi_t (\omega + i\ddbar \varphi_t)^n - \int_X | \nabla \dot  \varphi_t|^2_{\omega_{\varphi_t}} (\omega + i\ddbar \varphi_t)^n\\
&=\int_X \big( \Delta_{\omega_{\varphi_t}} \dot  \varphi_t \big) (\omega + i\ddbar \varphi_t)^n - \int_X | \nabla \dot  \varphi_t|^2_{\omega_{\varphi_t}} (\omega + i\ddbar \varphi_t)^n\\
&= - \int_X | \nabla \dot  \varphi_t|^2_{\omega_{\varphi_t}} (\omega + i\ddbar \varphi_t)^n  \leq 0.
\end{flalign*}
Consequently, $t \to \textup{Ent}(\omega^n,\omega_{\varphi_t}^n)$ is decreasing on $[0,\infty)$, when $\varphi_0 \in \mathcal H_\o$. 

For general $\varphi_0 \in \mathcal{E}^1_{\omega}$, let $\varphi_{0}^j \in \mathcal H_\o$ be such that $d_1(\varphi_0^j,\varphi_0) \to 0$ and $\textup{Ent}(\omega^n, \omega_{\varphi_{0}^j}^n) \to \textup{Ent}(\omega^n, \omega_{\varphi_0}^n)$ (such sequence exists by \cite[Theorem 1.3]{BDL17}). Fixing $t>0$, by \cite[Theorem B]{DiLu17} we have that $\varphi^j_{t} \to_{C^\infty} \varphi_{t}$, hence we can  conclude that
$$\textup{Ent}\left(\omega^n,\omega_{\varphi_{t}}^n\right) = \lim_j \textup{Ent}(\omega^n,\omega_{\varphi^j_{t}}^n) \leq \lim_j \textup{Ent}(\omega^n,\omega_{\varphi_{0}^j}^n) =\textup{Ent}(\omega^n, \omega_{\varphi_0}^n),$$
finishing the proof.
\end{proof}

For the remainder of this paragraph, let $\{u_t\}_t \in \mathcal R^\infty$ with $\sup_X u_t =0, \  t\geq 0$ and $\mathcal K\{u_t\} < + \infty$, as in the statement of Theorem \ref{thm: bounded_ray_pot_approx}. Since $\sup_X u_s =0, \ s \geq 0$, by the weak $L^1$-compactness of $\textup{PSH}(X,\omega)$ we have that $u_s \searrow u_\infty \in \textup{PSH}(X,\omega)$.  

We fix $s>0$ for the remainder of this paragraph, and we construct the sequence $u_s^j$ as follows. For each $j$ we define 
$$u_s^j := w_{s,2^{-j}},$$ 
where $w_{s,t} \in \mathcal H_\o$ is constructed in \eqref{eq: w_t_def} with respect to the flow $t \to \varphi_{s,t}$, starting from $\varphi_{s,0}:=u_s$.  The estimate of Corollary \ref{cor: flow decreasing}, together with \eqref{eq: lem_cor} yields the condition (i) in  Theorem \ref{thm: bounded_ray_pot_approx} for $\alpha:= 2C$. Condition (iv) follows automatically from Lemma \ref{lem: 2.2(iv)}.

Next we address condition (ii) in Theorem \ref{thm: bounded_ray_pot_approx}, which is closely related to  \cite[Corollary 4.5]{GZ17}:
\begin{lemma}\label{lem: 2.2(ii)} We have that 
\begin{equation}\label{eq: (ii)lemma}
\sup_X \big(\log (n + \Delta_\omega  u^j_s) - B u^j_s\big) \leq \alpha 2^j (1+ s), \ \ j \in \mathbb{N}, \ s>0.
\end{equation}
\end{lemma}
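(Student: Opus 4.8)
The plan is to reduce the estimate \eqref{eq: (ii)lemma} to the parabolic second order a priori estimate along the complex Monge--Amp\`ere flow \eqref{eq: GZ_parabolic}, applied to the flow $t\mapsto\varphi_{s,t}$ starting from the bounded potential $\varphi_{s,0}:=u_s$. As a first (elementary) step I would record the reduction: by \eqref{eq: w_t_def}, with $C$ as there, we have $u^j_s=w_{s,2^{-j}}=\varphi_{s,2^{-j}}+c_j$ where $c_j:=C2^{-j}-C2^{-j}\log 2^{-j}=C2^{-j}(1+j\log 2)$. Since $t\mapsto t(1-\log t)$ is increasing on $(0,1]$ with value $1$ at $t=1$, we get $0<c_j\le C$; in particular $\Delta_\o u^j_s=\Delta_\o\varphi_{s,2^{-j}}$ and, because $c_j>0$,
\begin{equation*}
\log(n+\Delta_\o u^j_s)-Bu^j_s<\log(n+\Delta_\o\varphi_{s,2^{-j}})-B\varphi_{s,2^{-j}},
\end{equation*}
so it suffices to bound the right hand side at time $t=2^{-j}\in(0,1]$ (here $j\ge1$, or $j\ge0$, so $2^{-j}\in(0,1]$).

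Next I would invoke the parabolic $C^2$-estimate. Recall from the discussion preceding \eqref{eq: v_s_def} (and \cite{DiLu17}) that the apriori estimates and maximum principles of \cite[Section 2]{GZ17} apply to \eqref{eq: GZ_parabolic} with bounded $\o$-psh initial data. Applying the second order estimate \cite[Corollary 4.5]{GZ17} to $t\mapsto\varphi_{s,t}$ and using Lemma \ref{lem: 2.2(i)} to replace $\inf_{[0,t]\times X}\varphi_{s,\cdot}$ by $\inf_X u_s-C$ in it, one obtains constants $A,C>0$ depending only on $(X,\o)$ such that for every $t\in(0,1]$
\begin{equation*}
\sup_X\big(\log(n+\Delta_\o\varphi_{s,t})-A\varphi_{s,t}\big)\le\frac{C}{t}+A\,\textup{osc}_X u_s+C .
\end{equation*}
Enlarging $A$ if necessary we may assume $A\ge B$, where $B$ is the constant of Theorem \ref{thm: He_C1bar1_est}: increasing the coefficient of $\varphi$ in the Aubin--Yau type test function $\log(n+\Delta_\o\varphi)-A\varphi$ only strengthens the differential inequality used to prove the estimate, since along the flow $\varphi_{s,t}\le0$.

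It then remains to check that $\textup{osc}_X u_s$ grows at most linearly in $s$. For each $x\in X$ the function $t\mapsto u_t(x)$ is convex on $[0,\infty)$, equals $0$ at $t=0$, and satisfies $u_t(x)\le\sup_X u_t=0$; hence it is non-increasing, and $t\mapsto|u_t(x)|/t$ is non-increasing on $(0,\infty)$. Thus $|u_s(x)|\le\max(1,s)\,|u_1(x)|\le(1+s)\|u_1\|_{L^\infty}$, so $\textup{osc}_X u_s=-\inf_X u_s\le(1+s)\|u_1\|_{L^\infty}$. Substituting $t=2^{-j}$ in the displayed estimate, using $\varphi_{s,2^{-j}}\le0$ together with $A\ge B$ (so $-B\varphi_{s,2^{-j}}\le-A\varphi_{s,2^{-j}}$), and absorbing all constants into a sufficiently large $\alpha$ (legitimate because $2^{j}(1+s)\ge2$), we get $\log(n+\Delta_\o\varphi_{s,2^{-j}})-B\varphi_{s,2^{-j}}\le\alpha 2^{j}(1+s)$, which combined with the first paragraph yields \eqref{eq: (ii)lemma}.

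The main obstacle is purely bookkeeping around the parabolic estimate: one must have \cite[Corollary 4.5]{GZ17} available in the precise weighted form above and valid for merely bounded initial data (supplied by the formalism recalled in this subsection), and confirm that $A$ may be taken $\ge B$. The only genuinely new ingredient is the linear-in-$s$ bound for $\textup{osc}_X u_s$, which is an immediate consequence of convexity of the ray and is the same observation already used in establishing conditions (i) and (iii) of Theorem \ref{thm: bounded_ray_pot_approx}.
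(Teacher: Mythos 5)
Your proof is essentially correct and runs along the same lines as the paper's: in both cases the key input is \cite[Corollary 4.5]{GZ17} applied to the flow $t\mapsto\varphi_{s,t}$, combined with a linear-in-$s$ bound on $-\inf_X u_s$ coming from the ray structure, and the reduction $u_s^j=\varphi_{s,2^{-j}}+c_j$ with $0<c_j\le C$. Two small variations are worth flagging. First, you fold the zeroth-order term $-Bu_s^j$ into a \emph{weighted} version of \cite[Corollary 4.5]{GZ17} (with an extra $-A\varphi_{s,t}$ inside the supremum); this is plausible but requires re-deriving the parabolic Aubin--Yau estimate with the modified test function, which you acknowledge as a gap to fill. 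The paper avoids this entirely: it invokes \cite[Corollary 4.5]{GZ17} in its stated, unweighted form to bound $\log(n+\Delta_\omega u_s^j)$ by $C\,2^j(\operatorname{osc}_X\varphi_{s,2^{-j-1}}+1)$, and then controls $-Bu_s^j$ directly from the already-established condition (i) ($u_s\le u_s^j$), so no new maximum principle is needed. Second, you derive the linear bound $-\inf_X u_s\le (1+s)\|u_1\|_{L^\infty}$ from pointwise $t$-convexity and monotonicity of the ray, while the paper cites \cite[Theorem 1]{Da17} for the stronger fact that $\inf_X u_t=m_{\{u_t\}}t$ exactly; both give what is needed. (Both arguments make the resulting $\alpha$ depend on the ray, not just on $(X,\omega)$ as the theorem statement nominally says, but the paper's own proof has the same feature via $m_{\{u_t\}}$, and this causes no harm in the applications.) Also note that in passing from the GZ estimate you should in principle carry the factor $1/t$ against $\operatorname{osc}_X$ as the paper does; this changes your intermediate display but not the final bound $\alpha\,2^j(1+s)$.
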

\begin{proof} From \cite[Corollary 4.5]{GZ17} we obtain that for any $j \in \Bbb N$ and $s >0$ we have 
\begin{equation}\label{eq: GZCor4.5}
\frac{1}{2^j} \log(n + \Delta_\omega {u^j_s}) = \frac{1}{2^j} \log(n + \Delta_\omega {\varphi_{s,{2^{-j}}}})  \leq  C(\textup{osc}_X \varphi_{s,{2^{-j-1}}} + 1),
\end{equation}
where $C>0$ only depends on $(X,\omega)$. Using \eqref{eq: lem_cor} we have that 
$$\textup{osc}_X \varphi_{s,{2^{-j-1}}} \leq -\inf_X \varphi_{s,0} +\alpha=-\inf_X u_s +\alpha.$$ By \cite[Theorem 1]{Da17} we have that $\inf_X u_s = m_{\{u_t\}} s$ for some constant $m_{\{u_t\}} \leq 0$. Consequently \eqref{eq: (ii)lemma} follows after putting the above together with condition (i) in Theorem \ref{thm: bounded_ray_pot_approx} (and possibly increasing the value of $\alpha>0$).
\end{proof}

\noindent Next we address condition (iii) in Theorem \ref{thm: bounded_ray_pot_approx}:

\begin{lemma}\label{lem: 2.2(iii)} We have that $d_1(u^j_s,u_s) \leq \alpha 2^{-j}    s + \alpha j 2^{-j}$ for any $j \in  \mathbb{N}, s>0$.
\end{lemma}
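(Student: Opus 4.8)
The plan is to use the identity $d_1(u,v)=I(u)-I(v)$ valid for $u\geq v$, and to control the resulting energy gap by running the Monge--Amp\`ere flow \eqref{eq: GZ_parabolic} for the short time $2^{-j}$ starting from $u_s$.

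First, by condition (i) (already obtained via \eqref{eq: lem_cor} and Corollary \ref{cor: flow decreasing}) we have $u_s\leq u^j_s$, so monotonicity of $I$ gives $d_1(u^j_s,u_s)=I(u^j_s)-I(u_s)$. Since $u^j_s=w_{s,2^{-j}}=\varphi_{s,2^{-j}}+c_j$ with the explicit constant $c_j:=C2^{-j}-C2^{-j}\log 2^{-j}=C2^{-j}(1+j\log 2)\leq 2Cj2^{-j}$ for $j\geq 1$, and $I$ is equivariant under adding constants, this becomes
$$d_1(u^j_s,u_s)=\big(I(\varphi_{s,2^{-j}})-I(\varphi_{s,0})\big)+c_j,$$
using $\varphi_{s,0}=u_s$. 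It thus remains to bound $I(\varphi_{s,2^{-j}})-I(\varphi_{s,0})$ by a constant times $s\,2^{-j}$.

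This is the crucial step. Along the flow, $\tau\mapsto I(\varphi_{s,\tau})$ is $C^1$ on $(0,\infty)$ and continuous at $0$ (since $d_1(\varphi_{s,\tau},u_s)\to 0$), with derivative $\frac{d}{d\tau}I(\varphi_{s,\tau})=\frac1V\int_X\dot\varphi_{s,\tau}\,\omega_{\varphi_{s,\tau}}^n=\frac1V\int_X\log\big(\omega_{\varphi_{s,\tau}}^n/\omega^n\big)\,\omega_{\varphi_{s,\tau}}^n=\frac1V\,\Ent\big(\omega^n,\omega_{\varphi_{s,\tau}}^n\big)$ by the very form of \eqref{eq: GZ_parabolic}. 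Integrating and invoking monotonicity of the entropy along the flow (Lemma \ref{lem: 2.2(iv)}), together with $\varphi_{s,0}=u_s$, yields
$$I(\varphi_{s,2^{-j}})-I(\varphi_{s,0})=\frac1V\int_0^{2^{-j}}\Ent\big(\omega^n,\omega_{\varphi_{s,\tau}}^n\big)\,d\tau\leq\frac{2^{-j}}{V}\,\Ent\big(\omega^n,\omega_{u_s}^n\big).$$
Here $\Ent(\omega^n,\omega_{u_s}^n)$ is finite, because $\mathcal K(u_s)\leq\mathcal K(0)+s\,\mathcal K\{u_t\}<\infty$ by convexity of $\mathcal K$ along the geodesic ray $t\mapsto u_t$ (\cite[Theorem 1.2]{BDL17}) and $\mathcal K(u_t)/t\to\mathcal K\{u_t\}$, combined with the Chen--Tian formula; moreover the entropy is non-negative, so the integrand is well defined.

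The remaining point is to bound $\Ent(\omega^n,\omega_{u_s}^n)$ linearly in $s$. By the Chen--Tian formula, $\Ent(\omega^n,\omega_{u_s}^n)=\mathcal K(u_s)-\overline{S}\,I(u_s)+n\,I_{\Ric\,\omega}(u_s)$: the first term is $\leq\mathcal K(0)+s\,\mathcal K\{u_t\}$ as above; since $\sup_X u_s=0$ we have $|I(u_s)|=-I(u_s)=d_1(0,u_s)=s\,d_1(0,u_1)$ (the ray emanates from $0$ and is a $d_1$-geodesic); and, writing $\Ric\,\omega$ as a difference of two K\"ahler forms and using that each non-negative term in the expansion of $-I(u_s)$ dominates $\int_X(-u_s)\,\omega^{n-k}\wedge\omega_{u_s}^k$ up to the factor $V(n+1)$, one gets $|I_{\Ric\,\omega}(u_s)|\leq C\,d_1(0,u_s)=C\,s\,d_1(0,u_1)$. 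Hence $\Ent(\omega^n,\omega_{u_s}^n)\leq\alpha_0V(1+s)$, and feeding this into the previous display gives $d_1(u^j_s,u_s)\leq\alpha_0(1+s)2^{-j}+2Cj2^{-j}\leq\alpha(s\,2^{-j}+j\,2^{-j})$ after possibly increasing $\alpha$. The only delicate point is this linear-in-$s$ bound on the entropy, which is exactly where the hypothesis $\mathcal K\{u_t\}<\infty$ is used; the rest is bookkeeping with results already established.
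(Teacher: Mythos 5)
Your proof is correct and follows essentially the same route as the paper: express $d_1(u^j_s,u_s)=I(u^j_s)-I(u_s)$ via the ordering $u_s\leq u^j_s$, compute $\frac{d}{d\tau}I(\varphi_{s,\tau})=\frac1V\,\Ent(\omega^n,\omega_{\varphi_{s,\tau}}^n)$ along the flow, invoke entropy monotonicity (Lemma \ref{lem: 2.2(iv)}), and bound $\Ent(\omega^n,\omega_{u_s}^n)$ linearly in $s$ via the Chen--Tian formula. The only cosmetic difference is that the paper factors out the entropy sublinearity as a separate statement (Lemma \ref{lem: Ent_sub_linear}, citing \cite[Proposition 2.5]{DH17} for the $I_{\Ric\omega}$ bound) whereas you inline it.
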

\begin{proof} For the flow $t\mapsto \varphi_{s,t}$, using the equation \eqref{eq: GZ_parabolic}, we can write 
\begin{flalign*}
I(\varphi_{s,t}) - I(\varphi_{s,0}) & = \int_0^t \frac{d}{dl}I(\varphi_{s,l}) dl = \int_0^t  \textup{Ent}(\omega^n, \omega_{\varphi_{s,l}}^n) dl\\  
&\leq \textup{Ent}(\omega^n, \omega_{\varphi_{s,0}}^n) t=\textup{Ent}(\omega^n, \omega_{u_s}^n) t, 
\end{flalign*}
where we have used  Lemma \ref{lem: 2.2(iv)}. Recall that for $u^j_s: =w_{s,2^{-j}}$, due to property (i) we can continue:
\begin{flalign*}
d_1(u^j_s,u_s) &=  I(u^j_s) - I(u_s) \leq I(\varphi_{s,{2^{-j}}}) - I(\varphi_{s,0}) + \alpha j 2^{-j}    \leq \textup{Ent}(\omega^n, \omega_{u_s}^n) 2^{-j} + \alpha j 2^{-j}.
\end{flalign*}
 After invoking Lemma \ref{lem: Ent_sub_linear} below, and possibly adjusting $\alpha>0$ again, the proof is finished.
\end{proof}

\noindent As promised above, we argue that along $\{u_t\}_t$ the entropy has  sublinear growth:

\begin{lemma}\label{lem: Ent_sub_linear} There exists $C:= C(\{u_t\}_t)>0$ such that $\textup{Ent}(\omega^n, \omega_{u_t}^n) \leq C t, \ t \geq 0.$
\end{lemma}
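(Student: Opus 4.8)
The plan is to deduce the entropy bound from the Chen--Tian formula for the K-energy together with the fact that the radial K-energy $\mathcal K\{u_t\}$ is finite (which is in force throughout the subsection) and the linearity/boundedness of the remaining terms in $t$. Recall the extended Chen--Tian identity on $\mathcal E^1_\omega$ from \cite[Theorem 1.2]{BDL17}:
$$\mathcal K(u_t) = \textup{Ent}(\omega^n,\omega_{u_t}^n) + \overline{S}\, I(u_t) - n I_{\textup{Ric }\omega}(u_t).$$
Solving for the entropy, $\textup{Ent}(\omega^n,\omega_{u_t}^n) = \mathcal K(u_t) - \overline{S}\, I(u_t) + n I_{\textup{Ric }\omega}(u_t)$, so it suffices to control each of the three terms on the right by $Ct$ from above.

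First, since $t\mapsto \mathcal K(u_t)$ is convex along the finite energy geodesic ray (\cite[Theorem 1.2]{BDL17}) and $\mathcal K\{u_t\} = \lim_{t\to\infty}\mathcal K(u_t)/t < \infty$, convexity gives $\mathcal K(u_t) \le (1-t)\mathcal K(u_0) + t\,\mathcal K(u_1)$ for $t\in[0,1]$ and, more to the point, $\mathcal K(u_t)/t$ is nondecreasing with finite limit, hence $\mathcal K(u_t) \le \mathcal K\{u_t\}\, t$ for all $t\ge 1$ (after normalizing $\mathcal K(u_0)=\mathcal K(0)=0$; and for $t\in[0,1]$, $\mathcal K(u_t)$ is bounded by convexity), giving $\mathcal K(u_t) \le C_1(1+t) \le 2C_1 t$ for $t\ge 1$, with an obvious adjustment so the bound holds for all $t\ge 0$. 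Second, $t\mapsto I(u_t)$ is affine along finite energy geodesics (recalled in Section 2.1, from \cite{Da15}), hence $|I(u_t)| \le |I(u_0)| + t|I(u_1)-I(u_0)| \le C_2(1+t)$. Third, for $I_{\textup{Ric }\omega}$ one uses that $\textup{Ric }\omega$ is a fixed smooth $(1,1)$-form, so $\textup{Ric }\omega = \omega_1 - \omega_2$ for two Kähler forms (equivalently $-C\omega \le \textup{Ric }\omega \le C\omega$), and $I_\alpha$ is monotone in the potential when $\alpha \ge 0$; combined with the estimate $\int_X |u_t|\,\omega_{u_t}^n \le C\, d_1(0,u_t) = C t\, d_1(0,u_1)$ from \cite[Theorem 3]{Da15} and the analogous mixed-mass control (expanding $I_{\omega}(u_t) = \sum_j \int_X u_t\, \omega\wedge\omega_{u_t}^j$ and bounding each mixed term by the $d_1$-distance via the fundamental inequalities of \cite[Proposition 2.5]{GZ07}), one gets $|I_{\textup{Ric }\omega}(u_t)| \le C_3(1+t)$. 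Adding the three estimates yields $\textup{Ent}(\omega^n,\omega_{u_t}^n) \le C(1+t)$, and after enlarging $C$ (using $\textup{Ent}\ge 0$, and absorbing the $t\in[0,1]$ range) we obtain $\textup{Ent}(\omega^n,\omega_{u_t}^n)\le Ct$ for all $t\ge 0$, with $C$ depending only on $\{u_t\}_t$ and $(X,\omega)$.

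The main obstacle is the $I_{\textup{Ric }\omega}$ term: unlike $I$ it is not affine along geodesics, so its linear control requires the mixed Monge--Amp\`ere mass estimates. The cleanest route is to bound $|I_{\textup{Ric }\omega}(u_t)|$ by a constant multiple of $I_\omega(u_t)$ (using $\pm\textup{Ric }\omega \le C\omega$ and monotonicity of $I_\alpha$ for $\alpha\ge0$, exactly as done for Claim 4 in the proof of Theorem \ref{thm: K_approx}), then note $|I_\omega(u_t)| \le (n+1)|I(u_t)| + \int_X |u_t|\,\omega_{u_t}^n \le C(1+t) + C t\, d_1(0,u_1)$ by affinity of $I$ and \cite[Theorem 3]{Da15}. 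This keeps the argument entirely within tools already recalled in the excerpt.
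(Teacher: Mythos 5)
Your proposal is correct and follows essentially the same approach as the paper: both apply the Chen--Tian formula, bound $\mathcal K(u_t)$ linearly in $t$ via convexity of the K-energy together with the finiteness of $\mathcal K\{u_t\}$, and control $I(u_t)$ and $I_{\textup{Ric }\omega}(u_t)$ by $C\,d_1(0,u_t)=C t\,d_1(0,u_1)$. The only difference is cosmetic: the paper handles the latter bounds with a single citation to \cite[Proposition 2.5]{DH17}, whereas you unpack them through the $\pm C\omega$ comparison for $\textup{Ric}\,\omega$, the expansion of $I_\omega$, and the mixed-mass estimates — which is exactly what that reference encapsulates.
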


\begin{proof} Let $D > \mathcal K\{u_t\}$. By the Chen--Tian formula for the extended K-energy \eqref{eq: CT_formula} we obtain that
\begin{flalign*}
\textup{Ent}(\omega^n,\omega^n_{u_t}) & \leq Dt - \bar{S}I(u_t) + n I_{\textup{Ric }\o}(u_t)  \leq  Ct + C d_1(0,u_t) + C d_1(0,u_t) \leq C t,
\end{flalign*}
where we have used \cite[Proposition 2.5]{DH17} in the second estimate.
\end{proof}

\section{Applications to geodesic stability}

First we point out how the $L^1$ version of Conjecture \ref{conj: geod_stability} can be derived from \cite{ChCh2,ChCh3} and \cite[Theorem 4.7]{Da18}. As alluded to in the introduction, the argument is implicitly contained in \cite{ChCh2,ChCh3}, but we provide a short proof here as this result is not explicitly stated in that paper. Recall that $G = \textup{Aut}_0(X,J)$, and for the definition of $G$-calibrated rays we refer back to the introduction.

\begin{theorem}[$L^1$ uniform geodesic stability]\label{thm: L1geod_stability} Let $(X,\omega)$ be a compact K\"ahler manifold. Then the following are equivalent:\vspace{0.1cm}\\
(i) There exists a csck metric in $\mathcal H_\o$.\vspace{0.1cm}\\
(ii) There exists $\delta >0$ such that $\mathcal K\{u_t\} \geq \delta \limsup_t\frac{d_{1,G}(G 0,G u_t)}{t}$ for all  $\{u_t\}_t \in \mathcal R^1$.\vspace{0.1cm}\\
(iii) $\mathcal K$ is $G$-invariant and there exists $\delta >0$ s.t. $\mathcal K\{u_t\} \geq \delta d_1(0,u_1)$  for all $G$-calibrated geodesic rays $\{u_t\}_t \in \mathcal R^1$.
\end{theorem}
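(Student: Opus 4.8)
The plan is to assemble the equivalences from the existence/properness principle of \cite{DR17}, the regularity theorem of Chen--Cheng \cite{ChCh2} for $K$-energy minimizers, the Chen--Cheng estimate along $L^1$ rays from \cite{ChCh3}, and the metric reformulation of \cite[Theorem 4.7]{Da18}. The logical skeleton is $(i)\Leftrightarrow$ properness of $\mathcal K$ in the sense of \eqref{eq: CC3BDL16_ineq}, then properness $\Leftrightarrow (ii)$, and finally $(ii)\Leftrightarrow (iii)$. For $(i)\Leftrightarrow$\eqref{eq: CC3BDL16_ineq}: this is already recalled in the introduction — by \cite[Theorem 1.5]{ChCh2} minimizers of $\mathcal K$ on $\mathcal E^1_\omega$ are smooth csck potentials, and together with the necessity result \cite[Theorem 1.5]{BDL16} this gives that a csck metric exists in $\mathcal H$ iff there are $\delta,\gamma>0$ with $\mathcal K(u)\geq \delta\, d_{1,G}(G0,Gu)-\gamma$ for all $u\in\mathcal E^1_\omega$.

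First I would prove $(i)\Rightarrow(ii)$. Assuming \eqref{eq: CC3BDL16_ineq}, fix $\{u_t\}_t\in\mathcal R^1$. Apply the properness estimate at $u_t$ and divide by $t$: $\mathcal K(u_t)/t \geq \delta\, d_{1,G}(G0,Gu_t)/t - \gamma/t$. Taking $\limsup_{t\to\infty}$ and recalling that $\mathcal K\{u_t\}=\lim_t \mathcal K(u_t)/t$ exists (as the limit of the convex-hence-monotone-difference-quotient $t\mapsto \mathcal K(u_t)/t$, using convexity along the geodesic and $\mathcal K(0)=0$ after normalization), one gets $\mathcal K\{u_t\}\geq \delta\limsup_t d_{1,G}(G0,Gu_t)/t$, which is $(ii)$. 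The converse $(ii)\Rightarrow(i)$ is where \cite[Theorem 4.7]{Da18} enters: that theorem should say, in metric terms, that if $\mathcal K$ is \emph{not} proper in the sense of \eqref{eq: CC3BDL16_ineq}, then there is a unit-speed $d_{1,G}$-geodesic ray (equivalently a $G$-calibrated $L^1$ ray, after lifting the orbit geodesic to a calibrated ray as in \cite[Proposition 5.5]{DR17}) along which $\mathcal K\{u_t\}\leq 0$, violating $(ii)$ for any $\delta>0$. So one argues the contrapositive: not $(i)$ $\Rightarrow$ not \eqref{eq: CC3BDL16_ineq} $\Rightarrow$ existence of a destabilizing calibrated ray $\Rightarrow$ not $(ii)$.

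For $(ii)\Leftrightarrow(iii)$: the implication $(ii)\Rightarrow(iii)$ requires first that $\mathcal K$ is $G$-invariant. If not, some one-parameter subgroup of $G$ induces a ray $\{g_t.0\}_t$ along which $\mathcal K(g_t.0)$ is affine and nonconstant, hence $\mathcal K\{g_t.0\}<0$ along one time direction while $d_{1,G}(G0,Gg_t.0)=0$ (the ray stays in a single orbit), contradicting $(ii)$; so $\mathcal K$ is $G$-invariant. Then, for a $G$-calibrated ray, $d_{1,G}(G0,Gu_t)=d_1(0,u_t)=t\,d_1(0,u_1)$ by the definition of calibrated plus affinity of $t\mapsto d_1(0,u_t)$ along $L^1$ geodesics, so $(ii)$ specializes directly to $\mathcal K\{u_t\}\geq\delta\, d_1(0,u_1)$. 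Conversely, $(iii)\Rightarrow(ii)$: given any $\{u_t\}_t\in\mathcal R^1$, one replaces it by a $G$-calibrated ray $\{v_t\}_t$ with $d_{1,G}(G0,Gv_t)=d_1(0,v_t)$ and $\limsup_t d_1(0,v_t)/t \geq \limsup_t d_{1,G}(G0,Gu_t)/t$, while $G$-invariance of $\mathcal K$ forces $\mathcal K\{v_t\}\leq \mathcal K\{u_t\}$ (the calibrated ray realizes the $d_{1,G}$-distance and minimizes $\mathcal K$-slope over the orbit direction, by the convexity of $\mathcal K$ along geodesics combined with its $G$-invariance); then $(iii)$ applied to $\{v_t\}_t$ gives $(ii)$ for $\{u_t\}_t$. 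The existence of such a calibrating replacement is again exactly the content extracted from \cite{ChCh3} and \cite[Theorem 4.7]{Da18}.

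The main obstacle is the converse direction $(ii)\Rightarrow(i)$, specifically producing the destabilizing $G$-calibrated $L^1$ ray from the failure of properness: this is not a formal manipulation but relies essentially on the Chen--Cheng a priori estimates \cite{ChCh3} (the estimate along $d_1$-geodesic rays bounding $\mathcal K$-slope from below only under properness, whose failure must be converted into an actual ray), packaged through \cite[Theorem 4.7]{Da18}; I would lean on those black-box inputs rather than reprove them, and the proof here amounts to carefully citing and combining them with the metric-geometry facts (affinity of $I$ and of $t\mapsto d_1(0,u_t)$ along geodesics, convexity of $\mathcal K$, and the $G$-orbit geometry of \cite{DR17}) so that the three conditions match up cleanly.
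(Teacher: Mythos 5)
Your proposal is correct and takes essentially the same route as the paper: both reduce the theorem to the existence/properness framework of \cite[Theorem 4.7]{Da18}, with its hypotheses verified via \cite[Theorem 10.1]{DR17} together with the Chen--Cheng regularity result \cite[Theorem 1.5]{ChCh2}, and then close the remaining gap by showing that condition (ii) forces $\mathcal K$ to be $G$-invariant. The paper does this last step by citing \cite[Lemma 4.1]{ChCh3} directly; your one-parameter-subgroup/Futaki argument is exactly the content of that lemma, so the substance is identical, while your spelled-out discussion of $(i)\Rightarrow(ii)$ and $(ii)\Rightarrow(iii)$ adds commentary but not logical content, since (as you acknowledge) the hard directions still delegate to the same black boxes.
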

We recall that $\mathcal{R}^p/\mathcal R^{1,\bar 1}, \ p \in [1,\infty]$ is the set of rays $\{u_t\}_t \in \mathcal R^p_\o/\mathcal R^{1,\bar 1}_\o$  normalized by the condition $I(u_t)=0, \ t \geq 0$. 

\begin{proof} By \cite[Theorem 1.5]{ChCh2}, the conditions of  \cite[Theorem 4.7]{Da18} are satisfied. Indeed, it was pointed out in \cite[Theorem 10.1]{DR17} that all the conditions (A1)-(A4) and (P1)-(P6) hold with the exception of (P3), which is exactly the content of \cite[Theorem 1.5]{ChCh2}.

After comparing with the conclusion of \cite[Theorem 4.7]{Da18}, we only need to argue that condition (ii) implies that $\mathcal K$ is $G$-invariant. However we notice that (ii) implies that $(X,\omega)$ is $L^1$-geodesically semistable, in the sense that, $\mathcal K\{u_t\} \geq 0$ for any $\{u_t\}_t \in \mathcal R^1_{\omega}$. Now \cite[Lemma 4.1]{ChCh3} implies that $\mathcal K$ is $G$-invariant as desired.
\end{proof}

To show that Theorem \ref{thm: Linftygeod_stability_intr} holds, we argue in the next two results that conditions (ii) and (iii) in the previous theorem are equivalent with their  $C^{1, \bar 1}$ version:
\begin{theorem}\label{thm: L1Linft_geod_eqv} Let $(X,\omega)$ be a compact K\"ahler manifold. Then the following are equivalent:\vspace{0.1cm}\\
(i) There exists $\delta >0$ such that $\mathcal K\{u_t\} \geq \delta \limsup_t\frac{d_{1,G}(G 0,G u_t)}{t}$ for all  $\{u_t\}_t \in \mathcal R^1$.\vspace{0.1cm}\\
(ii) There exists $\delta >0$ such that $\mathcal K\{u_t\} \geq \delta \limsup_t\frac{d_{1,G}(G 0,G u_t)}{t}$ for all  $\{u_t\}_t \in \mathcal R^{1, \bar 1}$.
\end{theorem}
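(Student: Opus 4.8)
The plan is to prove the implication (ii)$\Rightarrow$(i), since (i)$\Rightarrow$(ii) is immediate: a $C^{1,\bar 1}$ ray has bounded potentials, so $\mathcal R^{1,\bar 1}\subset\mathcal R^1$ (both taken after the normalization $I(u_t)=0$), and a statement quantified over the larger class $\mathcal R^1$ restricts trivially to the smaller one. For the forward direction I would fix $\{u_t\}_t\in\mathcal R^1$ (so $u_0=0$, $I(u_t)=0$, and $d_1(0,u_t)=t\,d_1(0,u_1)$ by linearity along a geodesic ray) and first dispose of the trivial case $\mathcal K\{u_t\}=+\infty$: there the inequality of (i) holds automatically because $d_{1,G}(G0,Gu_t)\le d_1(0,u_t)=t\,d_1(0,u_1)$, so its right-hand side is finite.

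The main case is $\mathcal K\{u_t\}<+\infty$, where the engine is Theorem \ref{thm: approximation_dp_intr}(ii): it produces $\{v^k_t\}_t\in\mathcal R^{1,\bar 1}_{\omega}$ with $v^k_t\searrow u_t$ for all $t\ge 0$, $d^c_1(\{v^k_t\}_t,\{u_t\}_t)\to 0$, and $\mathcal K\{v^k_t\}\to\mathcal K\{u_t\}$. Since hypothesis (ii) is phrased for the \emph{normalized} class $\mathcal R^{1,\bar 1}$, I would replace $v^k_t$ by its Monge--Amp\`ere projection $w^k_t:=v^k_t-I(v^k_t)=v^k_t-t\,I(v^k_1)$, using affinity of $I$ along finite energy geodesics and $I(v^k_0)=I(0)=0$. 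One then checks: subtracting a function affine in $t$ keeps $\{w^k_t\}_t$ a finite energy geodesic ray emanating from $0$ and does not change the complex Hessian, so $\{w^k_t\}_t\in\mathcal R^{1,\bar 1}$; constant-invariance of the Mabuchi energy gives $\mathcal K\{w^k_t\}=\mathcal K\{v^k_t\}\to\mathcal K\{u_t\}$; and $d_1(w^k_t,v^k_t)=t\,|I(v^k_1)|$ together with $I(v^k_1)\to 0$ (since $v^k_1\searrow u_1$ and $I(u_1)=0$) yields $d^c_1(\{w^k_t\}_t,\{v^k_t\}_t)=|I(v^k_1)|\to 0$, hence $d^c_1(\{w^k_t\}_t,\{u_t\}_t)\to 0$.

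With this in hand I would apply (ii) to $\{w^k_t\}_t$ and transfer the estimate to $\{u_t\}_t$ using that $d_{1,G}$ is a pseudo-metric dominated by $d_1$. From $d_{1,G}(G0,Gu_t)\le d_{1,G}(G0,Gw^k_t)+d_{1,G}(Gw^k_t,Gu_t)\le d_{1,G}(G0,Gw^k_t)+d_1(w^k_t,u_t)$, dividing by $t$ and taking $\limsup_t$ (every term is finite after division, being bounded by a multiple of $d_1(0,\cdot)$), one gets $\limsup_t t^{-1}d_{1,G}(G0,Gu_t)\le\limsup_t t^{-1}d_{1,G}(G0,Gw^k_t)+d^c_1(\{w^k_t\}_t,\{u_t\}_t)$. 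Combining with $\mathcal K\{w^k_t\}\ge\delta\limsup_t t^{-1}d_{1,G}(G0,Gw^k_t)$ and letting $k\to\infty$, using $\mathcal K\{w^k_t\}\to\mathcal K\{u_t\}$ and $d^c_1(\{w^k_t\}_t,\{u_t\}_t)\to 0$, produces exactly (i) with the same constant $\delta$.

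I do not expect a serious obstacle here beyond invoking Theorem \ref{thm: approximation_dp_intr}(ii); the delicate bookkeeping is concentrated in the normalization step — verifying that the $I$-projection lands in $\mathcal R^{1,\bar 1}$, leaves the radial K-energy unchanged, and perturbs $d^c_1$ by a null sequence — and in checking that every $\limsup$ appearing is finite, which follows from $d_{1,G}(G0,G\varphi)\le d_1(0,\varphi)$ and the linearity $d_1(0,\varphi_t)=t\,d_1(0,\varphi_1)$ along a geodesic ray. If one prefers to avoid constant-invariance of $\mathcal K$, an alternative is to observe that (i) and (ii) only involve $\mathcal K$ along normalized rays and to normalize the approximants from the outset; but constant-invariance of the Mabuchi energy is standard and keeps the argument cleanest.
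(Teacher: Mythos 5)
Your proposal is correct and follows essentially the same route as the paper: both dispose of the trivial case $\mathcal K\{u_t\}=+\infty$, invoke Theorem~\ref{thm: approximation_dp_intr}(ii) to produce $C^{1,\bar 1}$ approximants with converging radial K-energy and vanishing $d^c_1$-distance, re-normalize by subtracting $t\,I(v^k_1)$ to land in $\mathcal R^{1,\bar 1}$, and then transfer the estimate via $d_{1,G}\le d_1$ and a triangle inequality divided by $t$. The only difference is cosmetic — you make explicit the verification that the $I$-projection preserves the $C^{1,\bar 1}$ class, the radial K-energy, and the $d^c_1$-limit, which the paper treats in a single sentence.
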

\begin{proof} We only need to argue that (ii)$\Rightarrow$(i). Let $\{u_t\}_t \in \mathcal R^1$. We can assume that $\mathcal K\{u_t\} < \infty$, otherwise there is nothing to prove. 

We pick $\{u^k_t\}_t \in \mathcal R^{1,\bar{1}}_\omega$, as in Theorem \ref{thm: main_C_11_approx_ray}. We notice that $|I(u_t)-I(u^k_t)| \to 0$ as $k \to \infty$ for fixed $t \geq 0$, hence by subtracting a linear term form each $\{u^k_t\}_t$ we can assume that $\{u^k_t\}_t \in \mathcal R^{1,\bar 1}$ with  $\mathcal K\{u^k_t\}\to \mathcal K\{u_t\}$ and $d^c_1(\{u^k_t\}_t,\{u_t\}_t) \to 0$ still holding. Moreover, we have the following sequence of inequalities:
\begin{flalign*}
\limsup_t \frac{\big| d_{1,G}(G 0,G u_t) - d_{1,G}(G 0,G u^k_t)\big|}{t} & \leq \limsup_t\frac{d_{1,G}(G u_t,G u^k_t)}{t}\\
& \leq \limsup_t\frac{d_{1}(u_t,u^k_t)}{t}=d_1^c(\{u_t\}_t,\{u_t^k\}_t).
\end{flalign*}
Since the last term converges to zero as $k \to \infty$, we obtain that $\limsup_t{d_{1,G}(G 0,G u^k_t)}/{t}$ converges to $\limsup_t{d_{1,G}(G 0,G u_t)}/{t}$, as desired.
\end{proof}

\begin{theorem}\label{thm: L1Linft_geod_calibrated_eqv}Let $(X,\omega)$ be a compact K\"ahler manifold. Then the following are equivalent:\vspace{0.1cm}\\
(i) $\mathcal K$ is $G$-invariant and there exists $\delta >0$ s.t. $\mathcal K\{u_t\} \geq \delta  d_1(0,u_1)$  for all $G$-calibrated geodesic rays $\{u_t\}_t \in \mathcal R^1$. \vspace{0.1cm}\\
(ii) $\mathcal K$ is $G$-invariant and there exists $\delta >0$ s.t. $\mathcal K\{u_t\} \geq \delta  d_1(0,u_1)$  for all $G$-calibrated geodesic rays $\{u_t\}_t \in \mathcal R^{1, \bar 1}$.
\end{theorem}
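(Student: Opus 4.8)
The plan is to treat the two implications asymmetrically, since (i)$\Rightarrow$(ii) is immediate: every $G$-calibrated ray in $\mathcal R^{1,\bar 1}$ is a $G$-calibrated ray in $\mathcal R^1$ (because $\mathcal H^{1,\bar 1}_\omega\subset\mathcal E^1_\omega$), and the $G$-invariance of $\mathcal K$ appears in both statements, so (i) restricts verbatim to give (ii). The whole content is in (ii)$\Rightarrow$(i), which I would approach by the same ``approximate and transfer'' mechanism as in the proof of Theorem \ref{thm: L1Linft_geod_eqv}, but with an extra recalibration step. Assuming (ii), $\mathcal K$ is $G$-invariant, so it remains to establish $\mathcal K\{u_t\}\geq\delta d_1(0,u_1)$ for an arbitrary $G$-calibrated ray $\{u_t\}_t\in\mathcal R^1$; we may assume $\mathcal K\{u_t\}<\infty$.

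First I would approximate: by Theorem \ref{thm: main_C_11_approx_ray}, and after subtracting $t$-linear terms to restore the normalization $I(u^k_t)=0$ as in the proof of Theorem \ref{thm: L1Linft_geod_eqv}, pick $\{u^k_t\}_t\in\mathcal R^{1,\bar 1}$ with $u^k_t\searrow u_t$, $d^c_1(\{u^k_t\}_t,\{u_t\}_t)\to 0$ and $\mathcal K\{u^k_t\}\to\mathcal K\{u_t\}$. These rays need not be $G$-calibrated; however, using $d_{1,G}(Gu_t,Gu^k_t)\leq d_1(u_t,u^k_t)$, the triangle inequality and the calibration identity $d_{1,G}(G0,Gu_t)=d_1(0,u_t)=t\,d_1(0,u_1)$, one gets $|d_{1,G}(G0,Gu^k_t)-d_1(0,u^k_t)|\leq 2d_1(u_t,u^k_t)$, hence $\limsup_t\tfrac1t d_{1,G}(G0,Gu^k_t)\to d_1(0,u_1)$ and also $d_1(0,u^k_1)\to d_1(0,u_1)$ as $k\to\infty$.

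The key step is to replace each $\{u^k_t\}_t$ by a nearby $G$-calibrated ray \emph{inside} $\mathcal R^{1,\bar 1}$, i.e.\ to perform its $G$-reduction without losing the bounded complex Hessian. For each $t>0$ choose $g^k_t\in G$ with $d_1(0,g^k_t.u^k_t)\leq d_{1,G}(G0,Gu^k_t)+1$ and let $[0,t]\ni s\mapsto\gamma^{k,t}_s$ be the $C^{1,\bar 1}$-geodesic joining $0$ and $g^k_t.u^k_t$; note $I(\gamma^{k,t}_s)=0$ since $I$ is affine along finite energy geodesics and $G$ preserves $I^{-1}(0)$. Since $G$ preserves $\mathcal H^{1,\bar 1}_\omega$, and since $d_1(0,g^k_t.u^k_t)\leq d_1(0,u^k_t)+1=O(t)$ together with properness of the orbit map $g\mapsto g.0$ (a consequence of $G$-invariance of $\mathcal K$; cf.\ \cite{DR17,BDL16}) forces $g^k_t$ to stay within distance $O(t)$ of the compact stabilizer of $0$, the linear-in-$t$ Hessian bound of Remark \ref{rem: C_1_1_slope} for $u^k_t$ transports to a bound $\textup{ess sup}_X(\log(n+\Delta_\omega(g^k_t.u^k_t))-B g^k_t.u^k_t)=O(t)$, with a $k$-dependent but finite rate. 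Theorem \ref{thm: He_C1bar1_est}, applied along $\gamma^{k,t}$, then yields Laplacian bounds on $\gamma^{k,t}_s$ growing linearly in $s$ uniformly in $t$; an Arzel\`a--Ascoli/diagonal extraction exactly as in the proofs of Theorems \ref{thm: K_approx} and \ref{thm: main_C_11_approx_ray} produces $\{w^k_t\}_t\in\mathcal R^{1,\bar 1}$ as a $C^{1,\alpha}_{\rm loc}$-limit of the $\gamma^{k,t}$. The near-minimality of $g^k_t$ passes to the limit, so $\{w^k_t\}_t$ is $G$-calibrated with $d_1(0,w^k_1)=\limsup_t\tfrac1t d_1(0,g^k_t.u^k_t)=\limsup_t\tfrac1t d_{1,G}(G0,Gu^k_t)$, while $\mathcal K(g^k_t.u^k_t)=\mathcal K(u^k_t)$ ($G$-invariance), convexity of $\mathcal K$ along $\gamma^{k,t}$, and $d_1$-lower semicontinuity of $\mathcal K$ give $\mathcal K\{w^k_t\}\leq\mathcal K\{u^k_t\}$.

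Granting the above, hypothesis (ii) applies to $\{w^k_t\}_t$: $\mathcal K\{w^k_t\}\geq\delta d_1(0,w^k_1)$. Letting $k\to\infty$, the left side obeys $\mathcal K\{w^k_t\}\leq\mathcal K\{u^k_t\}\to\mathcal K\{u_t\}$ and the right side obeys $d_1(0,w^k_1)=\limsup_t\tfrac1t d_{1,G}(G0,Gu^k_t)\to d_1(0,u_1)$, so $\mathcal K\{u_t\}\geq\delta d_1(0,u_1)$, which is (i). I expect the main obstacle to be precisely the $G$-reduction step of the third paragraph: extracting from $\{u^k_t\}_t$ a genuinely $C^{1,\bar 1}$ (not merely $L^1$) $G$-calibrated ray with controlled radial K-energy. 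Everything there rests on combining the scalable Hessian estimate (Theorem \ref{thm: He_C1bar1_est}), the explicit linear slope from Remark \ref{rem: C_1_1_slope}, and the $O(t)$ control on the recalibrating automorphisms; the remaining ingredients ($G$-invariance and convexity of $\mathcal K$, properness of the orbit map, endpoint stability of geodesics) are standard from \cite{DR17,BDL16,BDL17}.
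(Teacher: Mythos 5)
Your overall plan coincides with the paper's proof: (i)$\Rightarrow$(ii) is a restriction, and for (ii)$\Rightarrow$(i) you approximate a $G$-calibrated $\{u_t\}_t\in\mathcal R^1$ by $\{u^k_t\}_t\in\mathcal R^{1,\bar 1}$ via Theorem \ref{thm: main_C_11_approx_ray}, recalibrate each $u^k_t$ by automorphisms $g^k_t$, pass to the limit along the geodesics through $g^k_t.u^k_t$, and apply (ii) to the resulting rays. Your quantitative estimates $|d_{1,G}(G0,Gu^k_t)-d_1(0,u^k_t)|\leq 2d_1(u_t,u^k_t)$ and the scheme for letting $k\to\infty$ are correct and match the paper's \eqref{eq: g_kk_est} and \eqref{eq: K-second_ineq}.

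There is however a genuine gap precisely where you flag the weak point: the $G$-reduction step. You assert that properness of $g\mapsto g.0$ (attributed to $G$-invariance of $\mathcal K$) ``forces $g^k_t$ to stay within distance $O(t)$ of the compact stabilizer,'' and hence that the scaled Hessian bound from Remark \ref{rem: C_1_1_slope} ``transports'' to $g^k_t.u^k_t$. This does not work as stated. Properness of the orbit map is a topological statement that does not, by itself, yield any control of $\nabla g^k_t$ as a self-map of $X$; and since $g.u = g^*u + g.0$, the complex Hessian of $g.u$ involves $|\nabla g|^2_\omega$. Moreover the bound you need is not a consequence of $G$-invariance of $\mathcal K$ at all. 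What the argument actually requires is a quantitative estimate of the form: there is $C=C(X,\omega)$ so that for all $g\in G$ one has $\sup_X|g.0|\le C d_1(0,g.0)+C$ and $\sup_X\log(n+\Delta_\omega(g.0))\le C d_1(0,g.0)+C$ (equivalently $\sup_X|\nabla g|^2_\omega \le e^{Cd_1(0,g.0)+C}$); this is Lemma \ref{lem: action_est_d_1} in the paper, proven by a Yau-type maximum principle argument applied to $\omega_{g.0}$ (cf.\ \cite{CDH17}), after first showing $d_1(0,g^k_t.0)\le 2d_1(0,u^k_t)+1$ by the triangle inequality and the $d_1$-isometric action of $G$ alone. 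Without this lemma your ``transports to a bound $=O(t)$'' is unsubstantiated. A secondary, lesser omission: to conclude that the limit rays $\{w^k_t\}_t$ are genuinely $G$-calibrated you need that near-minimality of $g^k_t$ propagates along the whole geodesic segment, not just at its endpoint; this is the role of \cite[Lemma 4.9]{Da18} (giving \eqref{eq: rho_k_t_l_ineq}), which you should invoke rather than asserting that near-minimality ``passes to the limit.''
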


\begin{proof} We only need to argue that (ii)$\Rightarrow$(i). Let $\{u_t\}_t \in \mathcal R^1$, $G$-calibrated and non-constant. We can assume that $\mathcal K\{u_t\} < \infty$, otherwise there is nothing to prove. 

Using Theorem \ref{thm: main_C_11_approx_ray}, we pick $\{u^k_t\}_t \in \mathcal R^{1, \bar 1}_\omega$ such that $d_1^c(\{u^k_t\},\{u_t\}) \to 0$ and $\mathcal K\{u^k_t\} \to \mathcal K\{u_t\}$. By adjusting with small constants, we can assume that $\{u^k_t\}_t \in \mathcal R^{1, \bar 1}$, and neither of these rays is constant.
 Unfortunately $\{u^k_t\}_t$ may not be $G$-calibrated, and the bulk of the proof consists of finding a new sequence $ \{\tilde u^k_t\}_t \in\mathcal R^{1, \bar 1}$ that satisfies this property.

For any $k \geq 1$ and $t \geq 1$, let $g^{k}_t \in G$ such that 
\begin{equation}\label{eq: g_k_t_ineq}
d_1(0,g^k_t.u^k_t) \geq d_{1,G}(0,u^k_t) \geq d_1(0,g^k_t.u^k_t) - \frac{1}{t}.
\end{equation} 
The following estimates will be used later:
\begin{flalign}\label{eq: g_kk_est}
d_1(0,g^k_t. u^k_t) &\geq d_1(0,g^k_t. u_t) - d_1(g^k_t.u^k_t, g^k_t.u_t)\nonumber \\
& \geq d_{1,G}(G.0,G.u_t) - d_{1}(u_t,u^k_t) \nonumber \\
& \geq d_1(0,u^k_t) - 2 d_1(u_t,u^k_t)\\
& \geq d_1(0,u^k_t) - 2t d^c_1(\{u_t\}_t,\{u_t^k\}_t), \nonumber 
\end{flalign}
where in the first line we have used the triangle inequality;  in the second line we have used the definition of $d_{1,G}$ and the fact that $G$ acts on $\mathcal{E}^1_0$ by $d_1$-isometry (see \cite[Lemma 5.9]{DR17});  in the third line we have used the triangle inequality and  that $\{u_t\}_t$ is $G$-calibrated; in the last line we have used that $(0,+\infty) \ni t\mapsto d_1(u_t^k,u_t)/t$ is increasing (see \cite{BDL16}).   

Let $[0,t] \ni l \to \rho^{k,t}_l \in \mathcal E^1_\o$ be the finite energy geodesic connecting $0$ and $g^k_t. u^k_t$. From \eqref{eq: g_k_t_ineq} and \cite[Lemma 4.9]{Da18} it follows that
\begin{equation}\label{eq: rho_k_t_l_ineq}
d_1(0,\rho^{k,t}_l) \geq d_{1,G}(G.0,G.\rho^{k,t}_l) \geq d_1(0,\rho^{k,t}_l) - \frac{1}{t}, \ \ \ l \in [0,t].
\end{equation} 
Using $G$-invariance, convexity of $\mathcal K$, and that $\mathcal K(0)=0$, for any $l \in [0,t]$ we have that
\begin{flalign}\label{eq: K-first_ineq}
\frac{\mathcal K(\rho^{k,t}_l)}{l} \leq \frac{\mathcal K(g^k_t.u^k_t)}{t} = \frac{\mathcal K(u^k_t)}{t}\leq \mathcal K\{u_t^k\}.
\end{flalign}
Due to \cite[Corollary 4.8]{BDL17}, after possibly selecting a subsequence $t_j \to \infty$, there exists $\tilde u^k_l \in \mathcal E^1_{\omega}$ for any $l > 0$, such that $d_1(\tilde u^k_l,\rho_{l}^{k,t_j}) \to 0$. After taking the limit in \eqref{eq: rho_k_t_l_ineq}, due to \cite[Proposition 4.3]{BDL17} we find that $\{\tilde u^k_t \}_t \in \mathcal R^1$ is $G$-calibrated. Moreover, due to \eqref{eq: g_kk_est}, there exists $k_0$ such that $\{\tilde u^k_t \}_t$ is not the constant ray for $k \geq k_0$.

Next we argue that $\{\tilde u^k_t \}_t \in \mathcal R^{1, \bar 1}$. To start,  for $t \geq 1$ using \eqref{eq: g_k_t_ineq} and the fact that $G$ acts by $d_1$-isometries (see \cite[Lemma 5.9]{DR17}), we get that 
\begin{flalign}\label{eq: d_1_g_est}
d_1(0,g^k_t.0) &= d_1(0,(g^k_t)^{-1}.0) \leq d_1(u^k_t,(g^k_t)^{-1}.0) + d_1(u^k_t,0)
 \nonumber\\
&= d_1(g^k_t. u^k_t,0) + d_1(u^k_t,0) \leq 2 d_1(u^k_t,0) + \frac{1}{t} \leq 2 d_1(u^k_t,0) + 1.
\end{flalign}

Next, let $B>0$ as in the statement of Theorem \ref{thm: He_C1bar1_est}. Using Lemma \ref{lem: action_est_d_1} below, we have the following estimates
$$\max \Big(\sup_X |g^k_t.0|, \   \sup_X \log |\nabla g^k_t|_\o, \  \sup_X \big(\log(n + \Delta_\omega (g^k_t.0) ) - B g^k_t.0\big) \Big) \leq C d_1(g^k_t.0,0) + C.$$

Recall that  $g_t^k.u^k_t = (g_t^k)^*u^k_t + g^k_t.0$ (see \cite[Lemma 5.8]{DR17}). In particular, \cite[Theorem 1]{Da13}, Remark \ref{rem: C_1_1_slope} and \eqref{eq: d_1_g_est} give that
$$\sup_X |g_t^k.u^k_t| \leq \sup_X |u^k_t| + \sup_X |g^k_t.0| \leq C t + 2C d_1(u^k_1,0) t + C \leq C t + C,$$
$$\sup_X \big(\log(n + \Delta_\omega (g^k_t.u^k_t) ) - B g^k_t.u^k_t\big) \leq C t + 2C d_1(u^k_1,0) t + C \leq C t + C,$$
where $C$ depends on $k$ but not on $t \geq 1$!  Using  \cite[Theorem 1]{Da13} and   Theorem \ref{thm: He_C1bar1_est}, we find that $\sup_X |\rho^{k,t}_l| \leq C l + C$ and $\sup_X \big(\log(n + \Delta_\omega \rho^{k,t}_l ) - B \rho^{k,t}_l\big)\leq Cl + C$ for any $l \in [0,t]$. Lastly, letting $t_j \to \infty$, we arrive at  $\sup_X |\tilde u^k_l| \leq C l + C$ and $\sup_X \big(\log(n + \Delta_\omega \tilde u^k_l ) - B \tilde u^k_l\big)\leq Cl + C$ for any $l \geq 0$, what we wanted to argue.

Due to the fact that $\mathcal K$ is $d_1$-lsc, $G$-invariant and convex, similar to \eqref{eq: K-first_ineq}, we find that for all $l > 0$ and $k \geq k_0$ we have:
\begin{flalign}\label{eq: K-second_ineq}
\frac{\mathcal K(\tilde u^k_l)}{d_1(0,\tilde u^k_l)} &\leq \liminf_{t_j \to \infty } \frac{\mathcal K(\rho^{k,t_j}_l)}{d_1(0,\rho^{k,t_j}_l)} \leq  \liminf_{t_j \to \infty } \frac{\mathcal K(g^k_{t_j}. u^k_{t_j})}{d_1(0,g^k_{t_j}. u^k_{t_j})} =  \liminf_{t_j \to \infty } \frac{\mathcal K( u^k_{t_j})}{d_1(0,g^k_{t_j}. u^k_{t_j})} \nonumber \\
& \leq \liminf_{t_j \to \infty}\frac{\mathcal K(u^k_{t_j})}{d_1(0,u^k_{t_j}) - 2t_j d_1^c(\{u^k_t\}_t,\{u_t\}_t)} \nonumber \\
&= \liminf_{t_j \to \infty} \bigg[\frac{\mathcal K(u^k_{t_j})}{d_1(0,u^k_{t_j})} \cdot \frac{d_1(0,u^k_{t_j})}{d_1(0,u^k_{t_j}) - 2t_j d_1^c(\{u^k_t\}_t,\{u_t\}_t)} \bigg] \nonumber \\
& \leq \frac{\mathcal K\{u^k_t\}}{d_1(0,u^k_1)} \cdot \frac{d_1(0,u^k_1)}{d_1(0,u^k_1)- 2 d_1^c(\{u^k_t\}_t,\{u_t\}_t)},
\end{flalign}
where in the second line we have used \eqref{eq: g_kk_est}, and all the denominators are non-zero since $\{\tilde u^k_t\}_t$ and $\{u^k_t\}_t$ are non-constant for $k \geq k_0$.

Finally, we use that (ii) holds for $\{\tilde u^k_t\}_t \in \mathcal R^{1, \bar 1}$. Consequently, after letting $l,t \to \infty$ in \eqref{eq: K-second_ineq}, we arrive at
$$\delta \leq \frac{\mathcal K \{\tilde u^k_t\}}{d_1(0, \tilde u^k_1)} \leq \frac{\mathcal K\{u^k_t\}}{d_1(0,u^k_1)} \cdot\frac{d_1(0,u^k_1)}{d_1(0,u^k_1)- 2 d_1^c(\{u^k_t\}_t,\{u_t\}_t)}.$$
Letting $k \to \infty$, we now obtain that $\delta \leq \frac{\mathcal K \{u_t\}}{d_1(0,u_1)},$ finishing the proof.
\end{proof}

\begin{lemma}\label{lem: action_est_d_1}Let $(X,\omega)$ be a compact K\"ahler manifold. There exists $C:=C(X,\omega)>0$ such that for all $g \in G$ we have that $\sup_X |g.0| \leq Cd_1(0,g.0) + C$ and $\sup_X \log(n + \Delta_\omega (g.0) ) \leq Cd_1(0,g.0) + C$.
\end{lemma}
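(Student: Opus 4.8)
The goal is to control two quantities attached to $g.0 \in \mathcal H_\o$ (for $g\in G = \mathrm{Aut}_0(X)$) purely in terms of $d_1(0,g.0)$: an $L^\infty$ bound on $g.0$ itself, and an upper bound on $\log(n+\Delta_\omega(g.0))$. The first plan is to exploit the homogeneous structure of the $G$-action on $\mathcal E^1_\o$. Recall from \cite[Lemma 5.8]{DR17} that $g.\psi = g^*\psi + g.0$, so the single potential $g.0$ encodes the whole isometry; and the action preserves $I=0$ after normalization. Since $d_1(0,g.0)=I(0)-I(g.0)$ up to sign (using monotonicity after replacing $g.0$ by $\max(g.0,0)$ or $\min(g.0,0)$ — $g.0$ need not have a sign, so one splits into $(g.0)^+$ and $(g.0)^-$ and uses $I$-energy estimates on each, together with $d_1(0,v)\simeq |I(v)| + \int|v|\omega_v^n$ from \cite[Theorem 3]{Da15}), we get a two-sided integral control $\int_X |g.0|\,\omega^n_{g.0} \le C d_1(0,g.0)$.

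The key step — and I expect it to be the \emph{main obstacle} — is upgrading this $L^1$-type control to the $L^\infty$ and Laplacian bounds. This is where the algebraic/geometric rigidity of automorphisms must enter: $g.0$ is not an arbitrary $\omega$-psh function but satisfies $\omega_{g.0} = g^*\omega$ (since $g$ is a biholomorphism, $\omega + i\ddbar(g.0) = g^*\omega$), a genuine smooth \K form. In particular $\omega_{g.0}^n/\omega^n = \det(g^*\omega/\omega)$, a smooth positive density. The plan is to invoke Ko{\l}odziej-type estimates: one shows $\|\omega_{g.0}^n/\omega^n\|_{L^p(\omega^n)}$ is controlled in terms of $d_1(0,g.0)$ — here one uses that $g$ lies in a fixed Lie group $G$, so the Jacobian density is uniformly comparable to $1$ on compact subsets of $G$, and the ``escape to infinity'' direction in $G$ is exactly measured by $d_1(0,g.0)$ going to $\infty$; concretely one bounds $\int_X e^{-\lambda g.0}\omega^n$ and $\int_X e^{\lambda g.0}\omega^n$ uniformly (via Tian's $\alpha$-invariant / Zeriahi's uniform integrability, \cite{Ti87,Zer01}, after normalizing $\sup_X$), which pins down $\sup_X|g.0|$ by the comparison principle once one knows $\sup_X g.0 + |\!\inf_X g.0|$ is comparable to the energy. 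Then a standard Aubin--Yau / Ko{\l}odziej $C^0$ estimate, followed by the Yau second-order estimate applied to the genuinely smooth Monge--Amp\`ere equation $\omega_{g.0}^n = (\omega_{g.0}^n/\omega^n)\,\omega^n$, yields $\log(n+\Delta_\omega(g.0)) \le C(\mathrm{osc}_X(g.0)+1) \le C d_1(0,g.0)+C$ — this is precisely the type of inequality already used in Lemma \ref{lem: 2.2(ii)} via \cite[Corollary 4.5]{GZ17}.

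In summary the steps, in order: (1) normalize and use $d_1(0,v)\simeq |I(v)|+\int|v|\omega_v^n$ plus $I$-monotonicity to get the integral bound $\int|g.0|(\omega^n + \omega_{g.0}^n)\le C d_1(0,g.0)$; (2) observe $\omega_{g.0}=g^*\omega$ is smooth \K, so the MA density is smooth and positive; (3) get a uniform $L^p(\omega^n)$, or even exponential integrability, bound on that density in terms of $d_1(0,g.0)$ using the structure of $G$ and $\alpha$-invariant type estimates; (4) deduce $\sup_X|g.0|\le C d_1(0,g.0)+C$ via Ko{\l}odziej / comparison principle; (5) feed $\mathrm{osc}_X(g.0)$ into the Yau Laplacian estimate to conclude. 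The delicate point throughout is that all constants must depend only on $(X,\omega)$ and not on $g$; this is legitimate because the dependence on $g$ that could blow up is entirely captured by $d_1(0,g.0)$, the group $G$ being of finite dimension with its action on $\mathcal E^1_\o$ proper modulo this distance.
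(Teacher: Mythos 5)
Your plan gets the broad architecture of the Laplacian estimate right --- observe that $\omega_{g.0}=g^*\omega$ is a smooth K\"ahler form, bound the Monge--Amp\`ere density $F_g:=\log(\omega_{g.0}^n/\omega^n)$, then run Yau's second-order estimate exactly as in Lemma~\ref{lem: 2.2(ii)}. But there is a genuine gap in your $C^0$ step, and the logical order you propose for the density/$C^0$ estimates is inverted.

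Your step (1) gives $\int_X|g.0|\,\omega^n_{g.0}+\int_X|g.0|\,\omega^n\le Cd_1(0,g.0)$, and the mean-value inequality for $\omega$-psh functions then gives the \emph{upper} bound $\sup_X g.0\le\int_X g.0\,\omega^n+C\le Cd_1(0,g.0)+C$. But the \emph{lower} bound $\inf_X g.0\ge -Cd_1(0,g.0)-C$ does not follow from an $L^1$ bound: $\omega$-psh functions can have deep narrow wells of arbitrarily small $L^1$ mass. This is precisely the hard direction. You propose to close it via a Ko{\l}odziej estimate, which requires first controlling $\|\omega_{g.0}^n/\omega^n\|_{L^p}$ in terms of $d_1(0,g.0)$; but you offer no mechanism for that density bound other than ``uniform comparability on compact subsets of $G$,'' which is not quantitative and in fact begs the question. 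In the paper the logical order is the reverse: the density bound on $F_g$ is \emph{derived from} the $C^0$ bound on $g.0$ (by noticing that $\tfrac12 g.0+\tfrac1{2C}F_g$ is $\omega$-psh when $-C\omega\le\textup{Ric}\,\omega\le C\omega$, then applying the sup--mean inequality and Jensen), not the other way around. The $C^0$ bound itself is obtained by a cleaner symmetry trick that your plan misses entirely: since $g^{-1}.(g.0)=g^{-1}.0+(g^{-1})^*(g.0)=0$, one has the identity $-\inf_X g.0=\sup_X g^{-1}.0$, and since $G$ acts by $d_1$-isometries one also has $d_1(0,g^{-1}.0)=d_1(0,g.0)$, so the lower bound on $g.0$ reduces to an \emph{upper} bound on $g^{-1}.0$, which is again the easy mean-value direction. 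No $\alpha$-invariant, no Ko{\l}odziej, no $L^p$ control of the density is needed. You should look for this identity; without it the $C^0$ step in your outline does not close.
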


The Laplacian estimate from this lemma is equivalent with the following estimate for the gradient $\nabla g$, as a self map of $X$:
\begin{equation}\label{eq: g_nabla_d_1_est}
\sup_X |\nabla g|^2_\omega \leq e^{C d_1(0,g.0) + C}, \ \ g \in G.
\end{equation}

The desired Laplacian estimate of the lemma can be extracted from the arguments of \cite{CDH17}, as we now elaborate.

\begin{proof} 
Fix $g \in G$. Using \cite[Lemma 5.8]{DR17}, and the fact that $({g^{-1}})^* \big(g^* \omega\big) = \omega$, we obtain that $0 =  g^{-1}. (g. 0) = g^{-1}.0 + ({g^{-1}})^* \big( g.0\big)$. In particular, we have that
\begin{equation}\label{eq: g_inv_g_id}
-\inf_X g.0 =   \sup_X {g^{-1}}.0.
\end{equation}
Due to \cite[Corollary 4]{Da15}  and \cite[Lemma 3.45]{Da18} we have that 
$$
0 \leq \sup_X g.0 \leq \int_X g.0 \, \omega^n  + C \leq C d_1(0,g.0) + C,
$$
and
$$0 \leq \sup_X g^{-1}.0 \leq \int_X g^{-1}.0\, \omega^n \leq C d_1(0,g^{-1}.0) + C.$$
Since $d_1(0,g^{-1}.0) = d_1(g.0,0)$, putting the above together with  \eqref{eq: g_inv_g_id},\ one of the desired estimates follows:
\begin{equation}\label{eq: C_0_g_est}
\sup_X |g.0| \leq Cd_1(0,g.0) + C.
\end{equation}

Now we address the Laplacian estimate. To start, we note that there exists $C:=C(X,\omega)>0$ such that $-C \omega \leq \textup{Ric } \omega \leq C \omega$. Pulling back by $g$ we obtain that $\textup{Ric } \omega_{g.0} \leq C \omega_{g.0}$. We introduce $F_g := \log \Big( \frac{\omega_{g.0}^n}{\o^n}\Big)$. We obtain that 
$$i\partial \bar \partial F_g = \textup{Ric } \o - \textup{Ric } \o_{g.0} \geq -C \omega - C \omega_{g.0}.$$
In particular, $\frac{1}{2} g.0  + \frac{1}{2C} F_g \in \textup{PSH}(X,\omega)$, implying that
$$
\sup_X \left( \frac{1}{2} g.0 +  \frac{1}{2C} F_g \right) \leq C+ \int_X \left (\frac{1}{2} g.0 +  \frac{1}{2C} F_g \right )\omega^n \leq \frac{1}{2}d_1(0,g.0) + C.
$$
Here, we used Jensen's inequality to obtain $\int_X F_g \omega^n \leq 0$. 
Using \eqref{eq: C_0_g_est} we arrive at:
\begin{equation}\label{eq: F_g_est}
\sup_X F_g \leq C d_1(0,g.0) + C.
\end{equation}
To obtain the Laplacian estimate, we start with Yau's calculation (for a survey, see \cite[Proposition 4.1.2]{BEG13}):
$$\textup{Tr}_{\omega_{g.0}} \big[i \ddbar  \log  \textup{Tr}_{\o} \o_{g.0} \big] \geq \frac{\textup{Tr}_\omega \big[ i\ddbar \log \big( \frac{\o^n_{g.0}}{\o^n}\big)\big]}{\textup{Tr}_\o \omega_{g.0}} - C \textup{Tr}_{\omega_{g.0}} \omega,$$
where $C>0$ only depends on $(X,\omega)$. Let $B:=2C+1$. Using the fact that $\textup{Ric }\omega_{g.0} \leq C \omega_{g.0}$, we can continue:
\begin{flalign*}
\textup{Tr}_{\omega_{g.0}} \big[i \ddbar  \big(\log  \textup{Tr}_{\o} \o_{g.0} - B g.0\big) \big] &\geq \frac{\textup{Tr}_\omega \big[- C \omega_{g.0} - C \omega\big]}{\textup{Tr}_\o \omega_{g.0}} - C \textup{Tr}_{\omega_{g.0}}\omega - B \textup{Tr}_{\omega_{g.0}} \big[i\ddbar g.0\big]\\
&\geq -\frac{nC}{\textup{Tr}_\o \omega_{g.0}} + (B-C)\textup{Tr}_{\omega_{g.0}} \omega -nB-C\\
& \geq  (B-2C)\textup{Tr}_{\omega_{g.0}} \omega -nB-C\\
& \geq \textup{Tr}_{\omega_{g.0}} \omega -C\geq \Big( \frac{\o^n_{g.0}}{\o^n} \Big)^{\frac{-1}{n-1}} \big( \textup{Tr}_\o \o_{g.0}\big)^{\frac{1}{n-1}} -C \\
&= F_g^{\frac{-1}{n-1}} \big( \textup{Tr}_\o \o_{g.0}\big)^{\frac{1}{n-1}}-C.
\end{flalign*}  
Let $x_0 \in X$ be the point where $\big(\log  \textup{Tr}_{\o} \o_{g.0} - B g.0\big)$ is maximized. Using the above estimate and \eqref{eq: F_g_est} we obtain that $\textup{Tr}_{\o}\o_{g.0}(x_0) \leq C d_1(0.g.0) + C$. Together with \eqref{eq: C_0_g_est} we arrive at $\sup_X \log(n + \Delta_\omega (g.0) ) \leq Cd_1(0,g.0) + C$.
\end{proof}

\section{Appendix}

Here we address two likely known facts about K\"ahler potentials with bounded Laplacian, whose proof we could not find in the literature. 

\begin{lemma}\label{lem: Laplace_aproxx} Let $u,u_j \in \mathcal H^{1, \bar 1}_\o$ and $B \in \Bbb R$. If $u_j \searrow u$ then 
\begin{equation}\label{eq: Laplace_semicont}
\liminf _j \textup{ess sup}_X (\log( n + \Delta_\omega u_j) - Bu_j)  \geq  \textup{ess sup}_X (\log( n + \Delta_\omega u) - Bu).
\end{equation}
\end{lemma}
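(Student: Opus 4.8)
The plan is to recast the essential-supremum bound as an inequality between measures and then pass to the limit weakly. Write $M_j := \textup{ess sup}_X(\log(n+\Delta_\omega u_j) - Bu_j)$ and $L := \liminf_j M_j$; we may assume $L < +\infty$, and after passing to a subsequence that $M_j \to L$. Since $u_j, u \in \mathcal H^{1,\bar 1}_\omega$ are bounded on the compact manifold $X$ with $L^\infty$ Laplacian, each $M_j$ is finite, and the defining property of $M_j$ reads $n + \Delta_\omega u_j \le e^{M_j} e^{Bu_j}$ almost everywhere. Using the identity $(n + \Delta_\omega u_j)\,\omega^n = n\,\omega_{u_j}\wedge\omega^{n-1}$, this is equivalent to $n\,\omega_{u_j}\wedge\omega^{n-1} \le e^{M_j} e^{Bu_j}\,\omega^n$ as (nonnegative) measures on $X$.

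Next I would integrate this inequality against an arbitrary nonnegative continuous function $\phi$ on $X$ and let $j\to\infty$. For the left-hand side, $u_j\searrow u$ implies $u_j\to u$ in $L^1(X,\omega^n)$, hence $\sqrt{-1}\partial\bar\partial u_j\to \sqrt{-1}\partial\bar\partial u$ as currents, and wedging with the fixed smooth form $\omega^{n-1}$ gives $\omega_{u_j}\wedge\omega^{n-1}\to\omega_u\wedge\omega^{n-1}$ weakly as measures; since these are nonnegative of constant mass $\int_X\omega^n=V$, the pairing against the continuous function $\phi$ converges. For the right-hand side, $Bu_j$ is monotone in $j$ (decreasing if $B\ge0$, increasing if $B<0$), so $e^{Bu_j}\to e^{Bu}$ in $L^1(X,\omega^n)$ by monotone (respectively dominated, using $|u_j|\le|u_1|\in L^\infty$) convergence, while $e^{M_j}\to e^L$. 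Passing to the limit yields
\[
\int_X \phi\,(n+\Delta_\omega u)\,\omega^n \;\le\; e^L\int_X \phi\,e^{Bu}\,\omega^n
\]
for every continuous $\phi\ge0$. Since $(n+\Delta_\omega u)-e^{L+Bu}\in L^\infty(X)$, this forces $n+\Delta_\omega u\le e^{L+Bu}$ a.e., i.e. $\log(n+\Delta_\omega u)-Bu\le L$ a.e. on $X$, and taking the essential supremum gives the claim.

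I do not expect a genuine obstacle here. The two small points deserving attention are the weak convergence $\omega_{u_j}\wedge\omega^{n-1}\to\omega_u\wedge\omega^{n-1}$, which is elementary precisely because only one factor is a Monge--Amp\`ere-type current while $\omega^{n-1}$ is smooth and fixed (so no continuity theorem for the full complex Monge--Amp\`ere operator is needed), and the treatment of $\int_X\phi\,e^{Bu_j}\,\omega^n$, for which one splits according to the sign of $B$ to invoke the appropriate convergence theorem. Both are harmless since membership in $\mathcal H^{1,\bar1}_\omega$ forces uniform boundedness of all the potentials involved.
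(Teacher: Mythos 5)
Your proof is correct and follows essentially the same route as the paper's: pass to a subsequence so the $\liminf$ is a limit, translate the essential‑sup bound into the measure inequality $n\,\omega_{u_j}\wedge\omega^{n-1}\le e^{M_j+Bu_j}\omega^n$, pass to the weak limit using only the linearity of the Laplacian (no Bedford--Taylor continuity is needed), and read off the pointwise inequality from the bounded densities. The only cosmetic difference is that you track the varying constant $e^{M_j}\to e^{L}$ directly, whereas the paper fixes a single uniform $\delta$ and invokes Dini's lemma for uniform convergence; both handle the limit passage correctly.
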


\begin{proof} After picking subsequence, we can assume without loss of generality that the $\liminf$ on the left hand side   is actually a limit. Let $\delta \in \Bbb R$ such that $\log(n + \Delta_\omega u_j(x)) - B u_j(x) < \delta $ for a.e. $x \in X$ and $j \in \Bbb N$. To conclude, it is enough to show that 
\begin{equation}\label{eq: lem_ap_toprove}
\log(n + \Delta_\omega u) - B u \leq  \delta, \ \  \textup{a.e. on } X.
\end{equation}
By assumption, $\Delta_{\omega} u_j + n \leq e^{Bu_j+\delta}$ in the weak sense of postive measures on $X$. By Dini's lemma we have that $\|u_j - u \|_{C^0} \to 0$, hence passing to the weak limit we have that $\Delta_{\omega} u + n \leq e^{Bu+\delta}$, again in the weak sense of positive measures on $X$. Since all our measures have bounded densities, \eqref{eq: lem_ap_toprove} follows. 
\end{proof}

Complementing the above lemma, in the next result we point out that the quantity on the right hand side of \eqref{eq: Laplace_semicont} can be realized with an approriate decreasing sequence, constructed via the method of \cite{De94}. Let us recall some elements of this work. We denote by $\textup{exph}_x:T_x X \to X$ the ``quasiholomorpic exponential map" of $\omega$ (see \cite[Section 2]{De94}). Let $\chi: \Bbb R \to \Bbb R$ be an even non-negative smooth function supported in $[0,1]$ such that $\int_{\mathbb{C}^n} \chi(\|\xi\|^2)d\lambda(\xi)=1$. Given $u \in \textup{PSH}(X,\omega)$, one can introduce $u_\varepsilon \in C^\infty(X)$ by the following formula:
$$u_\varepsilon(x) := \frac{1}{\varepsilon^{2n}}\int_{T_x X} u(\textup{exph}_x(\xi)) \chi\bigg(\frac{|\xi|^2}{\varepsilon^2}\bigg) d\lambda(\xi),$$
where $d\lambda$ is the Lebesgue measure on $T_x X$ with respect to $\omega$.

\begin{proposition}\label{prop: Laplace_realiz_aproxx} Let $u \in \mathcal H^{1, \bar 1}_\o$ and $B \in \Bbb R$. There exists $u_j \in \mathcal H_\o$ such that $u_j$ converges to  $u$ decreasingly (and uniformly by Dini's lemma) and 
\begin{equation}
\lim_j \sup_X (\log( n + \Delta_\omega u_j) - Bu_j) = \textup{ess sup}_X (\log( n + \Delta_\omega u) - Bu).
\end{equation}
\end{proposition}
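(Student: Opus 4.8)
The plan is to use the Demailly regularization $u_\varepsilon$ recalled just before the statement, and to show two things: first, that $u_\varepsilon \searrow u$ as $\varepsilon \searrow 0$ (up to a controlled error), and second, that $\sup_X(\log(n+\Delta_\omega u_\varepsilon) - Bu_\varepsilon)$ converges to $\textup{ess sup}_X(\log(n+\Delta_\omega u)-Bu)$. The monotone convergence $u_\varepsilon \searrow u$ is not literally true for the quasiholomorphic mollifiers of \cite{De94} — one typically only gets $u_\varepsilon + C\varepsilon^2 \searrow u$ for a suitable constant $C$ depending on the curvature of $\omega$ — so I would first recall/reprove that estimate (it follows from the subaveraging inequality along $\textup{exph}_x$ and the fact that the metric is $C^2$, which contributes the $O(\varepsilon^2)$ defect). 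Replacing $u_\varepsilon$ by $u_j := u_{\varepsilon_j} + C\varepsilon_j^2$ for a sequence $\varepsilon_j \searrow 0$ then gives an honest decreasing sequence in $\mathcal H_\o$ (each $u_{\varepsilon_j}$ is smooth and $\omega$-psh up to the same $O(\varepsilon^2)$ correction, which we absorb), converging uniformly to $u$ by Dini.

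For the upper semicontinuity direction I would simply invoke Lemma \ref{lem: Laplace_aproxx}: since $u_j \searrow u$ in $\mathcal H^{1,\bar 1}_\o$, we have
$$\liminf_j \textup{ess sup}_X(\log(n+\Delta_\omega u_j) - Bu_j) \geq \textup{ess sup}_X(\log(n+\Delta_\omega u)-Bu),$$
and for $u_j$ smooth the essential supremum is the supremum. So the whole content is the reverse inequality
$$\limsup_j \sup_X(\log(n+\Delta_\omega u_j)-Bu_j) \leq \textup{ess sup}_X(\log(n+\Delta_\omega u)-Bu).$$
This is where the structure of the Demailly mollifier is essential. The key point is that $\Delta_\omega u_\varepsilon$ at a point $x$ is, up to an $O(\varepsilon^2)$ error, a \emph{weighted average} of $\Delta_\omega u$ over a small ball (more precisely, of the trace of $i\partial\bar\partial u$ transported via $\textup{exph}$, plus curvature terms), so that $n + \Delta_\omega u_\varepsilon(x) \leq (1+O(\varepsilon^2))\,\frac{1}{\varepsilon^{2n}}\int \chi(|\xi|^2/\varepsilon^2)(n+\Delta_\omega u)(\textup{exph}_x(\xi))\,d\lambda(\xi) + O(\varepsilon^2)$. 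Writing $M := \textup{ess sup}_X(\log(n+\Delta_\omega u)-Bu)$, we have $n + \Delta_\omega u(y) \leq e^{M + Bu(y)}$ a.e.; substituting this bound into the averaging formula, using continuity of $u$ (hence of $y\mapsto e^{Bu(y)}$) and $\|u_\varepsilon - u\|_{C^0}\to 0$, one gets $n+\Delta_\omega u_\varepsilon(x) \leq e^{M + Bu(x)}(1+o(1)) + o(1)$ uniformly in $x$, i.e. $\log(n+\Delta_\omega u_\varepsilon) - Bu_\varepsilon \leq M + o(1)$, which is exactly what is needed after passing back to $u_j = u_{\varepsilon_j}+C\varepsilon_j^2$ (the additive $C\varepsilon_j^2$ affects both terms negligibly).

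The main obstacle, and the step I would spend the most care on, is making the averaging estimate for $\Delta_\omega u_\varepsilon$ precise: the exponential map $\textup{exph}_x$ is only quasiholomorphic, so the pullback $\textup{exph}_x^*\omega$ differs from the flat metric by $O(|\xi|^2)$, and one must track how this and the non-flatness of the integration measure $d\lambda$ interact with $\partial\bar\partial$ hitting the $\varepsilon$-rescaled convolution. The cleanest route is to differentiate under the integral sign in the defining formula for $u_\varepsilon$ and compare term by term with the flat model, exactly as in \cite[Section 2]{De94} (or \cite[Chapter I]{GZ17}), collecting all the curvature-induced errors into a single $O(\varepsilon^2)$ constant depending only on $(X,\omega)$; one also needs the a priori bound $\|\Delta_\omega u\|_{L^\infty} < \infty$ (which holds since $u \in \mathcal H^{1,\bar 1}_\o$) to ensure these error terms are genuinely uniform in $x$. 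Once that estimate is in hand, combining it with Lemma \ref{lem: Laplace_aproxx} and the elementary observations above closes the proof.
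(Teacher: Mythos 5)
Your proof follows the paper's route essentially step for step: Demailly's quasiholomorphic mollification $u_\varepsilon$, the averaging formula of \cite[Theorem 3.8]{De94} for $i\ddbar u_\varepsilon$, Lemma \ref{lem: Laplace_aproxx} for the ``$\geq$'' direction, and a local computation with the averaging formula for the ``$\leq$'' direction --- your elaboration of that last step (bounding $n+\Delta_\omega u_\varepsilon(x)$ by the convolution of $e^{M+Bu}$ with $\chi$, then using $\|u_\varepsilon-u\|_{C^0}\to 0$ and continuity of $u$) is exactly what the paper terms an ``elementary local calculation.''

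The one step you gloss over is the claim that $u_j := u_{\varepsilon_j} + C\varepsilon_j^2$ lands in $\mathcal H_\o$. The $O(\varepsilon^2)$ defect of the Demailly mollifier is a defect of \emph{positivity}: one only gets $\omega_{u_\varepsilon} \geq -C\varepsilon^2\omega$, and adding a constant to $u_\varepsilon$ fixes monotonicity but does nothing for the sign of $\omega_{u_\varepsilon}$. The paper resolves this by first rescaling $u$ so that $\omega_u \geq \delta\omega$ for some $\delta>0$ (e.g.\ replacing $u$ by $(1-\eta)u$ and diagonalizing at the end over $\eta\to 0$); after that reduction, \cite[Theorem 4.1]{De94} guarantees $u_\varepsilon \in \mathcal H_\o$ for $\varepsilon$ small outright, and the additive $C\varepsilon^2$ correction is then only needed to make the sequence decreasing. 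With that preliminary rescaling added, your argument matches the paper's.
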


\begin{proof} By possibly rescaling $u$ with a small constant, we can assume that there exists $\delta >0$ such that $\omega_u \geq \delta \omega$. In particular, it follows from the estimate of \cite[Theorem 4.1]{De94} that for small enough $\varepsilon >0$ we actually have that $u_\varepsilon \in \mathcal H_\o$. Moreover, $\|u_\varepsilon - u \|_{C^{0}} \to 0$. Also, it follows from \cite[Theorem 3.8]{De94} that
$$i\ddbar {u_\varepsilon}(\zeta,\zeta) = \int_{T_x X} i\ddbar u\big|_{\textup{exph}_x(\varepsilon\xi)}(\zeta,\zeta)  \chi(|\xi|^2)d\lambda(\xi) + O(|\varepsilon|)(\zeta,\zeta), \ \ \ \zeta \in T_x X, \ x \in X.$$
Consequently, by an elementary local calculation, we have that:
$$\lim_{\varepsilon \to 0} \sup_X (\log( n + \Delta_\omega u_\varepsilon) - Bu_\varepsilon) = \textup{ess sup}_X (\log( n + \Delta_\omega u) - Bu).
$$
After possibly adding small constants to $u_\varepsilon$, we   can construct the decreasing sequence desired.
\end{proof}

\footnotesize
\let\OLDthebibliography\thebibliography 
\renewcommand\thebibliography[1]{
  \OLDthebibliography{#1}
  \setlength{\parskip}{1pt}
  \setlength{\itemsep}{1pt}
}

\bigskip
\normalsize
\noindent {\sc University of Maryland} \\
\noindent {\tt tdarvas@math.umd.edu} \vspace{0.1in}\\
\noindent {\sc Universit\'e Paris-Sud}\\
{\tt hoang-chinh.lu@u-psud.fr}
\end{document}